\newtheorem{theorem}{Theorem}
\newtheorem{remark}{Remark}
\newtheorem{corollary}{Corollary}
\newtheorem{lemma}{Lemma}
\newtheorem{proposition}{Proposition}
\newtheorem{assumption}{Assumption}
\newlength\myindent
\newcommand{\vp}{V_{\perp}}
\newcommand{\ba}[1]{\begin{align}#1\end{align}}
\newcommand{\bas}[1]{\begin{align*}#1\end{align*}}
\newcommand{\csqone}{10} % this can actually be as small as 5, if n is large
\newcommand{\etakub}{\frac{1}{10}}
\newcommand{\mult}[2]{\the\numexpr #1 * #2 \relax}
\newcommand{\add}[2]{\the\numexpr #1 + #2 \relax}
\newcommand{\mydiv}[2]{\the\numexpr #1/#2 \relax}
\newcommand{\csq}{\mult{\csqone}{2}}
\newcommand{\bb}[1]{\left(#1\right)}
\newcommand{\bbb}[1]{\left[#1\right]}
\newcommand{\bi}{\begin{itemize}}
\newcommand{\ib}{\end{itemize}}
\newcommand{\bk}{\color{black}}
\newcommand{\M}{\mathcal{M}}
\DeclareMathOperator{\diag}{diag}
\newcommand{\B}{B}
\newcommand{\newB}[2]{B_{#2,#1}}
\newcommand{\E}{\mathbb{E}}
\newcommand{\init}{u}
\DeclareMathOperator{\Tr}{Tr}
\DeclareMathOperator{\gap}{gap}
\DeclareMathOperator{\vect}{vec}
\newcolumntype{M}[1]{>{\centering\arraybackslash}m{#1}}
\newcommand*{\addFileDependency}[1]{% argument=file name and extension
\typeout{(#1)}% latexmk will find this if $recorder=0
% however, in that case, it will ignore #1 if it is a .aux or 
% .pdf file etc and it exists! If it doesn't exist, it will appear 
% in the list of dependents regardless)
%
% Write the following if you want it to appear in \listfiles 
% --- although not really necessary and latexmk doesn't use this
%
\@addtofilelist{#1}
%
% latexmk will find this message if #1 doesn't exist (yet)
\IfFileExists{#1}{}{\typeout{No file #1.}}
}\makeatother
\title{Streaming PCA for Markovian Data}
\author[1]{Syamantak Kumar\thanks{syamantak@utexas.edu}}
\affil[1]{Department of Computer Science, University of Texas at Austin}
\author[2]{Purnamrita Sarkar \thanks{purna.sarkar@utexas.edu}}
\affil[2]{Department of Statistics and Data Sciences, University of Texas at Austin}
\date{}
\begin{document}
\maketitle
% \vspace{-1em}
\begin{abstract}
% \vspace{-.5em}
  Since its inception in 1982, Oja's algorithm has become an established method for streaming principle component analysis (PCA). We study the problem of streaming PCA, where the data-points are sampled from an irreducible, aperiodic, and reversible Markov chain. Our goal is to estimate the top eigenvector of the unknown covariance matrix of the stationary distribution. This setting has implications in scenarios where data can solely be sampled from a Markov Chain Monte Carlo (MCMC) type algorithm, and the objective is to perform inference on parameters of the stationary distribution.
  % This scenario becomes significant when data can be obtained solely through a Markov Chain Monte Carlo (MCMC) algorithm, and the objective is to perform inference on parameters related to the stationary distribution of this chain.
  Most convergence guarantees for Oja's algorithm in the literature assume that the data-points are sampled IID. For data streams with Markovian dependence, one typically downsamples the data to get a "nearly" independent data stream. In this paper, we obtain the first sharp rate for Oja's algorithm on the entire data, where we remove the logarithmic dependence on the sample size, $n$, resulting from throwing data away in downsampling strategies. % and  is which show that Oja's algorithm for Streaming PCA converges even when the data matrices are functions of some underlying Markov chain satisfying a standard mixing condition.
\end{abstract}
\vspace{-1em}
\section{Introduction}
\vspace{-.5em}
Streaming Principal Component Analysis (PCA) is an important and well studied problem where the  principal eigenvector of the sample covariance matrix of a dataset is computed one data-point at a time.
%Principal Component Analysis (PCA), invented by Karl Pearson in 1901, is a well-established dimensionality reduction technique that can be used to extract linearly uncorrelated features from high-dimensional datasets. %It is one of the most fundamental problems in machine learning, data analysis, and linear algebra.  
%The many applications of PCA include image processing, visualization, and dictionary learning \cite{jolliffe2002myilibrary}. 
%Mathematically, PCA involves the computation of the principal eigenvectors of the unknown covariance matrix estimated using the sample covariance matrix of the dataset. %This is typically done by extracting principal eigenvectors of the sample covariance matrix. 
%For high dimensional data, it becomes more memory efficient to use streaming PCA, reducing memory use from quadratic to linear in dimensionality. %Streaming PCA, has a rich history in Computer Science and Statistics.  The problem of streaming PCA updates the estimated principal component one data-point at a time.  
One of the most popular algorithms for streaming PCA was introduced by Erikki Oja in 1982 \cite{oja-simplified-neuron-model-1982, oja1985stochastic}. Most existing analyses of Oja's algorithm are done when the data is sampled IID. %Oja's update  has roots in the Hebbian principle put forward by Donald Hebb, a psychologist, in his 1949 book ``Organization of Behavior''~\cite{hebb-organization-of-behavior-1949}. 
%, with the principal components (PCs) corresponding to the eigenvectors \cite{abdi2010principal}. 
%\rd what is the abdi citation?\bk

%We consider the streaming PCA problem where the data are sampled from an irreducible, aperiodic, and reversible Markov Chain. 
However, in many practical applications, the data-points are dependent and are sampled from an MCMC process converging to a target stationary distribution. This naturally arises in the context of token algorithms for Federated PCA settings~\cite{even2023stochastic, DBLP:journals/corr/abs-1907-08059, 9679131} with multiple machines communicating via a fixed and connected graph topology. Each machine contains an arbitrary fraction of data-points and the goal is to design a streaming algorithm that respects this topology and returns the principal component of the whole dataset. This is typically achieved using a Metropolis-Hastings scheme that uses local information to design the transition matrix of a Markov chain with any desired stationary distribution. The stationary distribution, $\pi$, of the random walk is chosen so that the distribution of the samples under $\pi$ matches the uniform distribution over data-points. Governed by this Markov chain, a random walker then travels the network of machines and samples one data-point at a time from the current machine, and computes the update. 
However, even under the stationary distribution, the data-points are dependent, which deviates from the IID setup. 
\textit{Our goal is to obtain a sharp analysis of the $\sin^2$ error of the estimated vector w.r.t true top eigenvector of the unknown covariance matrix in the Markovian setting.}

\paragraph{Estimating the first principal component with streaming PCA}: %Although Oja's algorithm (\ref{alg:oja}) is simple and practically effective, analyzing it has been difficult due to its intrinsic non-convex nature.
Let $X_t$ be a mean zero $d$ dimensional vector with covariance matrix $\Sigma$, and let $\eta_t$ be a decaying learning rate.
The update rule of Oja's algorithm is given as -
\ba{
    w_{t} \leftarrow (I + \eta_{t}X_tX_t^{T})w_{t-1}, \;\; w_{t} \leftarrow \frac{w_{t}}{\|w_{t}\|_{2}} \label{alg:oja}
}
where $w_{t}$ is the estimate of $v_1$ and $\eta_{t}$ is the step-size at timestep $t$. We aim to analyse the $\sin^{2}$ error of Oja's iterate at timestep $t$, defined as $1 - \left\langle w_{t},v_{1} \right\rangle^{2}$, where $v_1$ is the top eigenvector of $\Sigma$.

% \begin{algorithm}
% \caption{Oja's algorithm to compute top eigenvector} \label{alg:oja}
% \begin{algorithmic}
% \STATE \textbf{Input}
% \bindent 
% \STATE $n \in \mathbb{N}$, $A(Z_1)$, $A(Z_2)$ $\ldots$ $A(Z_n)$ $\in \mathbb{R}^{d \times d}$, 
% \STATE the learning rate $\eta_{i}, i \in [n]$
% \eindent
% \STATE Choose $w_0$ uniformly at random from the unit sphere
% \FOR{t in range[1, $n$]}
% \STATE $w_{t} \leftarrow w_{t-1}(I + \eta_{t}A(Z_{t}))$
% \STATE $w_{t} \leftarrow \frac{w_{t}}{\|w_{t}\|_{2}}$ 
% \ENDFOR
% \end{algorithmic}
% \end{algorithm}

\paragraph{Streaming PCA in the IID setting:}
For an IID data stream with $\E\bbb{X_i}=0$ and $\E\bbb{X_iX_i^T}=\Sigma$, there has been a lot of work on determining the non-asymptotic convergence rates for Oja's algorithm and its various adaptations \cite{jain2016streaming, allenzhu2017efficient, chen2018dimensionality, yang2018history, henriksen2019adaoja, DBLP:journals/corr/abs-2102-03646, mouzakis2022spectral, lunde2022bootstrapping, monnez2022stochastic}. Amongst these, \cite{jain2016streaming}, \cite{allenzhu2017efficient} and \cite{DBLP:journals/corr/abs-2102-03646} match the optimal offline sample complexity bound, suggested by the independent and identically distributed (IID) version of Theorem \ref{theorem:offline_sample_complexity} (See Theorem 1.1 in \cite{jain2016streaming}). 
% \rd Can you please write what $\epsilon$ is in the table below? Re: Fixed.\bk

We consider Oja's algorithm in the setting where the data is generated from a reversible, irreducible, and aperiodic Markov chain with stationary distribution $\pi$. We denote by $\E_{\pi}[.]$ the expectation under the stationary distribution. In this setting our goal is to estimate the principal eigenvector of $\E_{\pi}\bbb{X_iX_i^T}$. As in the IID setting, $\E_{\pi}[X_i]=0$. The challenge is that the data, even when it reaches stationarity, is dependent. Here the degree of dependence is captured by the second eigenvalue in the magnitude of the transition matrix $P$ (denoted as $|\lambda_2(P)|$) of the Markov chain. This is closely related to the mixing time of a Markov chain ~\cite{levin2017Markov}, denoted as $\tau_{\text{mix}}$, which is the time after which the conditional distribution of a state is close in total variational distance to its stationary distribution, $\pi$ (See Section~\ref{section:preliminaries}). %Intuitively this means that samples which are $\tau_{\text{mix}}$ apart are "nearly" independent of each other. 
% \begin{figure}
%         \centering
%         \includegraphics[width=0.6\textwidth]{}
%         \caption{Comparison of Oja's algorithm with and without downsampling along with the offline baseline for Bernoulli state distributions. The X axis represents the sample size and the Y axis represents the $\sin^{2}$ error of each algorithm's estimate of the leading eigenvector. The experimental setup is available in Section \ref{section:experiments}. Observe that downsampling performs considerably worse compared to Oja's algorithm on the entire dataset.}
%          \label{fig:online_pca_comparison1}
% \end{figure}

\textbf{Our contribution:}
Using a series of approximations, we obtain an optimal error rate for the $\sin^2$ error, which is worse by a factor of $1/(1-|\lambda_2(P)|)$ from the corresponding error rate of the IID case. Previous work~\cite{chen2018dimensionality} has established rates worse by a poly-logarithmic factor by using downsampling, i.e. applying the update on every $k^{th}$ datapoint. In Figure~\ref{fig:online_pca_comparison1}, we compare Oja's algorithm with its downsampled and offline variants (see Section~\ref{section:experiments} for more details on setup). We see that Oja's algorithm performs significantly better than the downsampled variant, and similarly to the offline variant where for the $i^{th}$ data point we compute the eigenvector of the sample covariance matrix of all data-points up-to $i$. Our work provides a concrete and novel result that explains these observations.
In Table~\ref{tab:rate_comparison}, we compare our bounds with related analyses of Oja's algorithm. The last row shows that we are the first to obtain an error whose main term is \textit{free of logarithmic dependence} on $n$ or $d$ for \textit{streaming} PCA in the \textit{Markovian} case.

We \textit{break the logarithmic barrier} in previous work by considering a series of approximations of finer granularity which uses reversibility of the Markov chain and standard mixing conditions of irreducible and aperiodic Markov chains. Our rates are comparable to the recent work of ~\cite{neeman2023concentration} (Proposition~\ref{theorem:offline_sample_complexity}) that establishes an offline error analysis for estimating the principal component of the empirical covariance matrix of Markovian data by using a Matrix Bernstein inequality. Our results also imply a linearly convergent decentralized algorithm for streaming PCA in a distributed setting. As a simple byproduct of our theoretical result, we also obtain a rate for Oja's algorithm applied on downsampled data, which is worse by a factor of $\log n$, as shown in Figure \ref{fig:online_pca_comparison1}. To our knowledge, this is the first work that analyzes the  Markovian streaming PCA problem without any downsampling that matches the error of the offline algorithm. 

The crux of our analysis uses the mixing properties of the Markov chain. Strong mixing intuitively says that the conditional distribution of a state $s$ in timestep $k$ given the starting state is exponentially close to the stationary distribution of $s$, the closeness being measured using the total variation distance. All previous work on Markovian data exploits this property by conditioning on states many time steps before. However, it is crucial to a) adaptively find how far to look back and b) bound the error of the sequence of matrices we ignore between the current state and the state we are conditioning on. Observe that these two components are related. Looking back too far makes the dependence very small but increases the error resulting from approximating a larger matrix product of intermediate matrices.
 We present a fine analysis that balances these two parts and then uses spectral theory to bound the second part within a factor of a variance parameter that characterizes the variability of the matrices and shows up in the analysis of~\cite{jain2016streaming,neeman2023concentration}.

% \begin{table}[htb]
%     \centering
%     \begin{tabular}{
% |c|c|c|c|c|}
%          \hline
%          % Paper & Markov?& Online? &Log-free& $\sin^2$ error rate & Sample Complexity \\
%          % & & &main  & &  \\
%          % &&&term &&\\
%          \multirow{2}{*}{Paper} & \multirow{2}{*}{Markov?} & \multirow{2}{*}{Online?} & Log-free & \multirow{2}{*}{$\sin^2$ error rate} \\
%          & & & main-term & \\
%          \hline
%          \multirow{2}{*}{Jain et al. } & \multirow{2}{*}{N} & Y & Y&$O\left(\frac{\mathcal{V}}{\gap^2}\frac{1}{n}\right)$ \\
%          \cite{jain2016streaming}& & N & N&$O\left(\frac{\mathcal{V}\log\bb{d}}{\gap^2}\frac{1}{n}\right)$ \\
%          \hline
%          Chen et al.  &  Y & Y & N & $O\bb{\frac{G\tau_{\text{mix}}}{\gap^2}\frac{\log^{1+c}\bb{n}}{n}}$ \\
%          \cite{chen2018dimensionality}&&&&\\
%          \hline
%          Neeman et al.  & Y & N &N &$O\left(\frac{\mathcal{V}\log\bb{d^{2-\frac{\pi}{4}}}}{\bb{1-|\lambda_{2}\bb{P}|}\gap^2}\frac{1}{n}\right)$ \\
%          \cite{neeman2023concentration}&&&&\\
%          \hline
%          \textbf{Theorem \ref{theorem:oja_convergence_rate}} & Y & Y &Y& $O\left(\frac{\mathcal{V}}{\bb{1-|\lambda_{2}\bb{P}|}\gap^2}\frac{1}{n}\right)$ \\
%          \hline
%     \end{tabular}
%     \vspace{5pt}
%     \caption{Comparison of $\sin^2$ error rates. Here $\gap:= \bb{\lambda_{1}-\lambda_{2}}$. }
%     \label{tab:rate_comparison}
% \end{table}

%{\small
\begin{table}[htb]
   \hspace{-3.5em} \centering
    \begin{tabular}{
|c|c|c|c|c|c|}
         \hline
         % Paper & Markov?& Online? &Log-free& $\sin^2$ error rate & Sample Complexity \\
         % & & &main  & &  \\
         % &&&term &&\\
         \multirow{2}{*}{Paper} & \multirow{2}{*}{Markov?} & \multirow{2}{*}{Online?} & Log-free & \multirow{2}{*}{$\sin^2$ error rate} & \multirow{2}{*}{Sample-Complexity} \\
         & & & main-term & & \\
         \hline
         \multirow{2}{*}{Jain et al. } & \multirow{2}{*}{N} & Y & Y&$O\left(\frac{\mathcal{V}}{\gap^2}\frac{1}{n}\right)$ & $O\left(\frac{\mathcal{V}}{\gap^2}\frac{1}{\epsilon}\right)$ \\
         \cite{jain2016streaming}& & N & N&$O\left(\frac{\mathcal{V}\log\bb{d}}{\gap^2}\frac{1}{n}\right)$ & $O\left(\frac{\mathcal{V}\log\bb{d}}{\gap^2}\frac{1}{\epsilon}\right)$ \\
         \hline
         Chen et al.  &  Y &Y&N & - & $O\bb{\frac{G}{\gap^2}\frac{1}{\epsilon}\log^{2}\bb{\frac{G}{\gap^2}\frac{1}{\epsilon}}}$ \\
         \cite{chen2018dimensionality}&&&&&\\
         \hline
         Neeman et al.  & Y & N &N &$O\left(\frac{\mathcal{V}\log\bb{d^{2-\frac{\pi}{4}}}}{\bb{1-|\lambda_{2}\bb{P}|}\gap^2}\frac{1}{n}\right)$ & $O\left(\frac{\mathcal{V}\log\bb{d^{2-\frac{\pi}{4}}}}{\bb{1-|\lambda_{2}\bb{P}|}\gap^2}\frac{1}{\epsilon}\right)$\\
         \cite{neeman2023concentration}&&&&&\\
         \hline
         \textbf{Theorem \ref{theorem:oja_convergence_rate}} & Y & Y &Y& $O\left(\frac{\mathcal{V}}{\bb{1-|\lambda_{2}\bb{P}|}\gap^2}\frac{1}{n}\right)$ & $O\left(\frac{\mathcal{V}}{\bb{1-|\lambda_{2}\bb{P}|}\gap^2}\frac{1}{\epsilon}\right)$ \\
         \hline
    \end{tabular}
    \vspace{5pt}
    \caption{Comparison of $\sin^2$ error rates and sample complexities. Here $\gap := \bb{\lambda_{1}-\lambda_{2}}$, where $\lambda_{1},\lambda_{2}$ are the top 2 eigenvalues of $\Sigma$ and the sample complexities represent the number of samples required to achieve $\sin^{2}$ error at most $\epsilon$. We note that \cite{allenzhu2017efficient} and \cite{DBLP:journals/corr/abs-2102-03646} also match the online sample complexity bound provided in \cite{jain2016streaming}.  Further, for the offline algorithm with IID data, \cite{jin2015robust} removes the $\log\bb{d}$ factor in exchange for a constant probability of success for large enough $n$.\bk}
    \label{tab:rate_comparison}
\end{table}
%\vspace{-2em}

\textbf{Related work on streaming PCA and online matrix decomposition on Markovian data:}
Amongst recent work, \cite{chen2018dimensionality} is very relevant to our setting, since it analyzes Oja's algorithm with Markovian Data samples. Inspired by the ideas of \cite{duchi2012ergodic}, the authors propose a downsampled version of Oja's algorithm to reduce dependence amongst samples and provide a Stochastic Differential Equation (SDE) based analysis to achieve a sample complexity of $O\bb{\frac{G}{\bb{\lambda_{1}-\lambda_{2}}^2}\frac{1}{\epsilon}\log^{2}\bb{\frac{G}{\bb{\lambda_{1}-\lambda_{2}}^2}\frac{1}{\epsilon}}}$ for $\sin^{2}$ error smaller than $\epsilon$, where $G$ is a variance parameter. We obtain a similar rate in Corollary \ref{cor:data-drop} through our techniques. However, comparing with Theorem \ref{theorem:offline_sample_complexity}, we observe that downsampling leads to an extra $O\bb{\log\bb{n}}$ factor. It is important to point out that \cite{chen2018dimensionality} provides an analysis for estimating top $k$ principal components, whereas this paper focuses on obtaining a sharp rate for the first principal component. ~\cite{needell2020Markov} consider the harder problem of online non-negative matrix factorization for Markovian data. Their analysis establishes asymptotic convergence of error, but does not provide a rate.

\begin{wrapfigure}[23]{r}{0.5\linewidth}
        \centering
        % \vspace{-\baselineskip} 
    %     \hspace{-2em}\includegraphics[width=0.6\textwidth]{images/experiments/algorithm_comparison_only3_bernoulli.png}
     \includegraphics[width=0.6\textwidth]{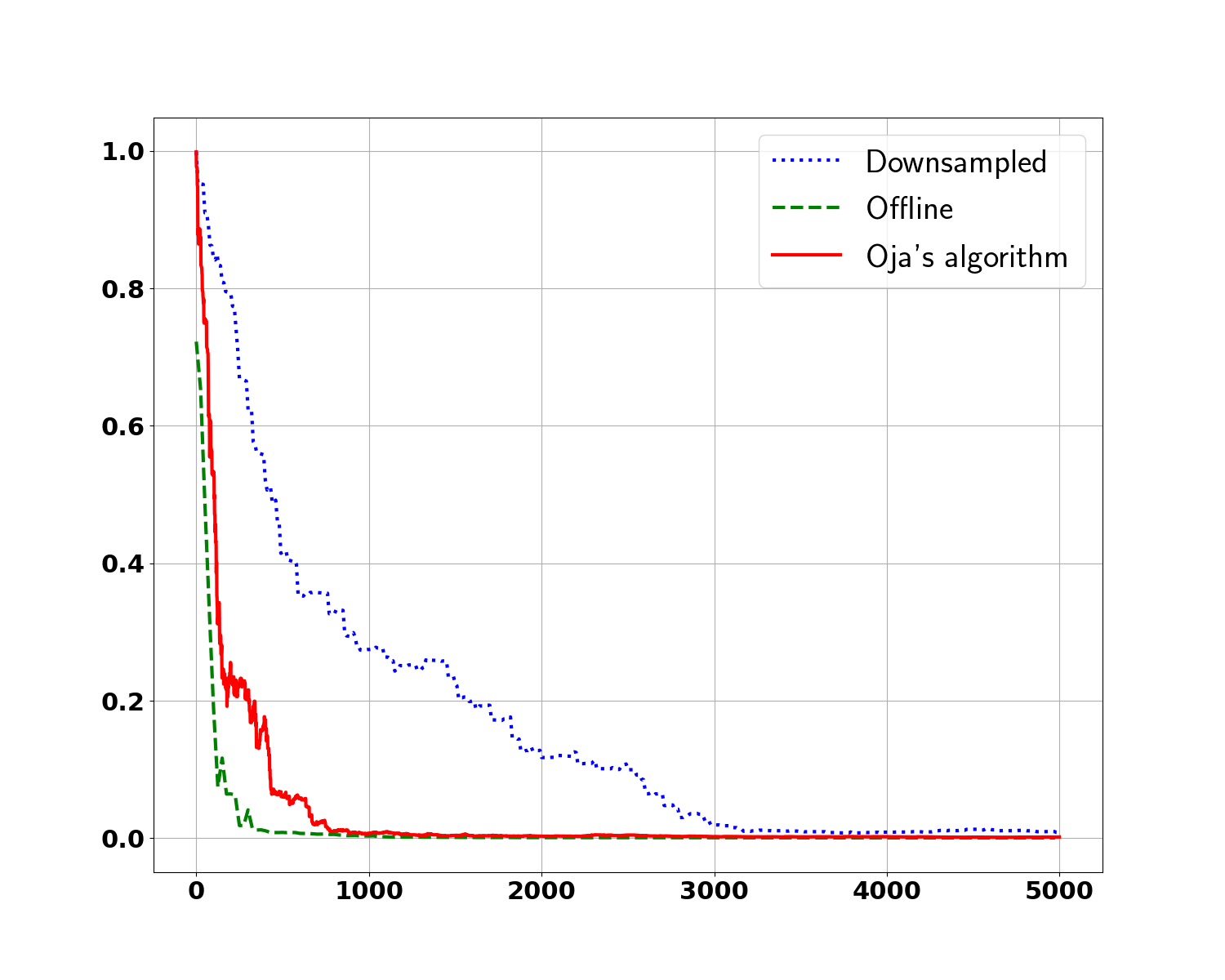}
        \caption{Comparison of Oja's algorithm with downsampled and offline variants. The X-axis represents the sample size and the Y axis represents the $\sin^{2}$ error of each algorithm's estimate of the leading eigenvector. The experimental setup is available in Section ~\ref{section:experiments}. }
         \label{fig:online_pca_comparison1}
\end{wrapfigure}
\textbf{Stochastic Optimization with Markovian Data} : 
Markovian models are often considered in Reinforcement Learning and Linear Dynamic Systems\cite{DBLP:journals/corr/abs-1806-02450, doan2020convergence, durmus2021stability, srikant2019finitetime, chen2020explicit,NIPS1996_e0040614, kushner2003applications, MOKKADEM1988309}. %a commonly-used approach for modeling data dependencies is to assume that a Markovian process produces the data and therefore there has been a lot of focus in statistics, optimization, and control theory to investigate and develop techniques for learning and modeling Markovian data. 
There have been many notable nonasymptotic bounds for stochastic gradient descent (SGD) methods for general convex  and nonconvex functions with Markovian data~\cite{duchi2012ergodic,sun2018Markov, doan2020finitetime, dorfman2022adapting, even2023stochastic, ziemann2023learning, truong2022generalization}. %Since then, there has been extensive work on SGD algorithms for both convex and non-convex problems \cite{sun2018Markov, doan2020finitetime, dorfman2022adapting, even2023stochastic, ziemann2023learning, truong2022generalization}. 
The convergence rates (sample complexities) obtained in these works apply to more general problems but do not exploit the matrix product structure inherent to Oja's algorithm. In this work, we develop novel techniques to show that a sharper analysis is possible for the PCA objective.
The paper is organized as follows. Section~\ref{section:problem_setup} contains the problem setup and preliminaries about Markov Chains. Section~\ref{section:main_results} contains Theorem~\ref{theorem:oja_convergence_rate}. We present a sketch of the main technical tools in Section~\ref{sec:maintools}, intermediate theorems needed for the main theorem in Section~\ref{sec:intermed} and conclude with simulations in Section~\ref{section:experiments}.
\section{Problem Setup and Preliminaries}
\label{section:problem_setup}
This section presents the problem setup and outlines important properties of the Markov chain that will be utilized subsequently. We assume that:
\begin{assumption}\label{ass:model}
    The Markov chain is irreducible, aperiodic, reversible, and starts in stationarity, with state distribution $\pi$\footnote{The last assumption may be eliminated by observing an initial burn-in period of $\tau_{\text{mix}}$.}.
\end{assumption}  Such a Markov chain can arise in various situations, for e.g., while performing random walks on expander graphs which are used extensively in fields such as computer networks, error-correcting codes, and pseudorandom generators. Each state $s$ of the Markov chain is associated with a distribution $D(s)$ over $d$-dimensional vectors with mean $\mu_{s} \in \mathbb{R}^{d}$ and covariance matrix $\Sigma_{s} \in \mathbb{R}^{d \times d}$. %These are the conditional mean and covariance matrix of $X$ given the state variable.
\\ \\ \noindent
For a random walk $s_{1}, s_{2}, \cdots s_{t}$ on $C$, we define the sequence of random variables $X_{1}, X_{2} \cdots X_{t}$, where conditioned on the state $s_i$, $X_{i} \sim D\bb{s_{i}}$.% is drawn from the distribution corresponding to the state $s_{i}$.
We represent the mean as $\mu := \E_{s \sim\pi}\bbb{\mu_{s}}=\sum_s\pi_s\mu_s$ and the covariance matrix as $\Sigma \in \mathbb{R}^{d \times d}$, which, for $i\in [n]$ can be expressed as:
% \rd You have $\mu$ in TV distance too\bk
\bas{
    \Sigma &:= \mathbb{E}_{s_i \sim \pi}\mathbb{E}_{D(s_i)}\bbb{\bb{X_i-\mu}\bb{X_i-\mu}^{T}} %= \mathbb{E}_{s \sim \pi}\mathbb{E}_{D(s)}\bbb{XX^{T}} - \mu\mu^{T} \\ 
   % &= \mathbb{E}_{s \sim \pi}\mathbb{E}_{D(s)}\bbb{\bb{X-\mu_{s}}\bb{X-\mu_{s}}^{T} + \mu_{s}\mu_{s}^{T}} - \mu\mu^{T} \\ 
    = \mathbb{E}_{s \sim \pi}\bbb{\Sigma_{s}}  + \mathbb{E}_{s \sim \pi}\bbb{\mu_{s}\mu_{s}^{T}} -  \mu\mu^{T}
}
In this work, we assume $\mu = 0$,  which is a common assumption in the IID setting (see \cite{jain2016streaming, allenzhu2017efficient}). While it may be possible to extend our analysis to the non-zero mean case, it is out of the scope of this paper.
%\footnote{\cite{zhou2021convergence} extends Oja's algorithm to handle non-zero mean IID samples. We believe it's possible to generalize our result to this setting as well.}. 
Therefore, $\Sigma = \mathbb{E}_{s_i \sim \pi}\mathbb{E}_{D(s_i)}\bbb{X_iX_i^{T}}$ for $i\in [n]$

Let the eigenvalues of $\Sigma$ be denoted as $\lambda_{1} > \lambda_{2} \geq \lambda_{3} \cdots \lambda_{d}$. Let $v_{1}$ denote the leading eigenvector of $\Sigma$ and $\vp$ denote the $\mathbb{R}^{d \times \bb{d-1}}$ matrix with the remaining eigenvectors as columns. %Since $\Sigma$ is a symmetric matrix, $v_{1}$ and $\vp$ lie in orthogonal subspaces of $\mathbb{R}^{d}$ and $v_{1}v_{1}^{T} + \vp\vp^{T} = I$.
% Let $v_1, v_2, \ldots, v_d$ represent the eigenvectors of $\Sigma$, and $\lambda_{1} > \lambda_{2} \ldots \lambda_{d}$ denote the corresponding eigenvalues. Our objective is to compute a unit vector $w$ that serves as an $\epsilon$-approximation to $v_1$, such that $\sin^{2}(w,v_1) = 1 - \langle w,v_1\rangle \leq \epsilon$.
%For $s \in \Omega$, let $X \sim D\bb{s}$. 
We proceed under the following standard assumptions for $i\in[n]$, (see for eg. \cite{ DBLP:journals/corr/abs-2102-03646}). %- 
%For all states $s \in \Omega$,  
\begin{assumption}
%\rd $\|\mathbb{E}_{s \sim \pi}\bb{\Sigma_{s} + \mu_{s}\mu_{s}^{T} - \Sigma}^{2}\|_{2} \;\bk \leq 
$\|\mathbb{E}_{s_i \sim \pi}\mathbb{E}_{D(s_i)}[(X_iX_i^{T}-\Sigma)^{2}]\|_{2} \; \leq \mathcal{V}$.
\label{assumption:variance_bound}
\end{assumption}

\begin{assumption}
    $\|X_iX_i^{T}-\Sigma\|_{2} \; \leq \mathcal{M}$ with probability 1. \label{assumption:norm_bound}
\end{assumption}
\noindent
Assumption \ref{assumption:norm_bound} also implies $\|\Sigma_{s} + \mu_{s}\mu_{s}^{T}-\Sigma\|_{2} \; \leq \mathcal{M}$ with probability 1. WLOG, we assume $\mathcal{M} + \lambda_{1} \geq 1$. We use $\E\bbb{.} := \mathbb{E}_{s \sim \pi}\mathbb{E}_{D(s)}\bbb{.}$ to denote the expectation over state $s\sim \pi$  and over the state-specific distributions $D\bb{.}$, unless otherwise specified.
\noindent
Define the matrix product
\ba{
    & \B_{t} := \bb{I+\eta_{t}X_tX_t^{T}}\bb{I+\eta_{t}X_{t-1}X_{t-1}^{T}}\dots\bb{I+\eta_{1}X_1X_1^{T}} \label{definition:Bt} 
  %  & \newB{i}{j} := \bb{I+\eta_{j}X_jX_j}\bb{I+\eta_{j-1}X_{j-1}X_{j-1}}\dots\bb{I+\eta_{i}X_iX_i} \label{definition:Bji}
}
%Note that $\newB{1}{t} = \B_{t}$. 
Unrolling the recursion in \ref{alg:oja}, the output of Oja's algorithm at timestep $t$ is given as $ w_{t} = \left.B_{t}w_{0}\right/\left\|\B_{t}w_{0}\right\|_{2}$.
% \ba{
%     w_{t} &:= \frac{\B_{t}w_{0}}{\left\|\B_{t}w_{0}\right\|_{2}} \label{eq:one_step_power_method}
% }
% We also define the matrix product 
% \ba{
%     \newB{k,i}{j} := \bb{I+\eta_{jk}A\bb{s_{jk}}}\bb{I+\eta_{\bb{j-1}k}A\bb{s_{\bb{j-1}k}}}\dots\bb{I+\eta_{ik}A\bb{s_{ik}}} \label{definition:Bjki}
% } which will be used for the Data-drop setting (see Corollary \ref{cor:data-drop}.
In this work, $\|.\|_{2}$ denotes the Euclidean $L_{2}$ norm for vectors and the operator norm for matrices unless otherwise specified. $I$ denotes the identity matrix.

\subsection{Markov chain mixing times}
\label{section:preliminaries}
Now we will discuss some well-known properties of an irreducible, aperiodic, and reversible Markov chain (also see~\cite{levin2017Markov}). Let $|\lambda_{2}\bb{P}|$ denote the second largest absolute eigenvalue of the Markov chain; let the state-distribution of the Markov chain at timestep $t$ with $s_1 = x$ be $P^{t}(x,.)$. For any two probability distributions $\nu_1$ and $\nu_2$, recall that the total variational distance is $TV\bb{\nu_1, \nu_2} := \|\nu_1-\nu_2\|_{TV} := \frac{1}{2}\sum_{x\in\Omega}|\nu_1(x)-\nu_2(x)|.$
% \bas{
%     TV\bb{\nu_1, \nu_2} := \|\nu_1-\nu_2\|_{TV} := \frac{1}{2}\sum_{x\in\Omega}|\nu_1(x)-\nu_2(x)|
% }
The distance from $\pi$ at the $t^{\text{th}}$ timestep is defined as $
 d_{\text{mix}}(t) := \sup_{x \in \Omega}TV(P^{t}(x,.), \pi) $.
For irreducible and aperiodic Markov chains, by Theorem 4.9 in \cite{levin2017Markov}, we have $d_{\text{mix}}(t) \leq C\exp(-ct)$ \text{ for some } $C, c > 0$.
% \bas{d_{\text{mix}}(t) \leq C\exp(-ct) \text{ for some } C, c > 0}
The mixing time of the Markov chain is defined as:
\ba{
 \tau_{\text{mix}}(\epsilon) := \inf \{t : d_{\text{mix}}(t) \leq \epsilon \}\label{eq:tmix}
 }
%\bas{ \tau_{\text{mix}}(\epsilon) := \inf \{t : d_{\text{mix}}(t) \leq \epsilon \} }
As in~\cite{levin2017Markov}, we will denote $\tau_{\text{mix}} := \tau_{\text{mix}}\bb{\frac{1}{4}} $. Then, we have $\begin{aligned}\tau_{\text{mix}}(\epsilon) \leq \left\lceil \log_{2}(1/\epsilon) \right\rceil \tau_{\text{mix}}\end{aligned}$.
% \ba{
%     \tau_{\text{mix}}(\epsilon) \leq \left\lceil \log_{2}\bb{\frac{1}{\epsilon}} \right\rceil \tau_{\text{mix}}. \label{eq:tmix_bound_1}
% }
It is worth mentioning the useful relationship between $d_{\text{mix}}\bb{.}$ and $\tau_{\text{mix}}$, given as
$\begin{aligned}
d_{\text{mix}}\bb{l\tau_{\text{mix}}} \leq 2^{-l}\qquad \forall l \in \mathbb{N}_{0}.\end{aligned}$
 These results about mixing time are valid for general irreducible and aperiodic Markov chains. A reversible Markov chain satisfies $\forall \; x,y \in \Omega$, $ \pi\bb{x}P\bb{x,y} = \pi\bb{y}P\bb{y,x}$.
% \ba{
%     \pi\bb{x}P\bb{x,y} = \pi\bb{y}P\bb{y,x}
% }
%Let $\pi_{\text{min}} := \min_{x \in \Omega}\pi\bb{x}$. %For a reversible, irreducible, and aperiodic Markov chain, using Theorems 12.4 and 12.5 from \cite{levin2017Markov}, we note the following bound on the mixing time, $\tau_{\text{mix}}(\epsilon)$, involving the eigengap of the transition matrix as -
% \ba{
%         \frac{|\lambda_{2}\bb{P}|}{1-|\lambda_{2}\bb{P}|}\log\bb{\frac{1}{2\epsilon}} \leq  \tau_{\text{mix}}(\epsilon) \leq \frac{1}{1-|\lambda_{2}\bb{P}|}\log\bb{\frac{1}{\epsilon \pi_{\text{min}}}} \label{eq:mixing_time_eigengap}
% }
For a reversible, irreducible, and aperiodic Markov chain, the gap $1-|\lambda_2(P)|$, is inversely proportional to $\tau_{\text{mix}}$~\cite{levin2017Markov}. %determines how quickly the chain mixes.
%Lemma \ref{lemma:reverse_mixing} talks about additional properties of a reversible, irreducible, and aperiodic Markov chain.
\section{Main Results}
\label{section:main_results}
In this section, we present our main result, a near-optimal convergence rate for Oja's algorithm on Markovian data. As a corollary, we also establish a rate of convergence for Oja's algorithm applied on downsampled data, where every $k^{\text{th}}$ data-point is considered. Supplement \ref{appendix:proofs_of_main_results} contains comprehensive proofs of Theorem \ref{theorem:oja_convergence_rate} and Corollary \ref{cor:data-drop} while the proof of Proposition \ref{theorem:offline_sample_complexity} can be found in Supplement Section \ref{appendix:offline_pca_with_Markovian_data}.

\begin{theorem} \label{theorem:oja_convergence_rate}
    Fix a $\delta \in \bb{0,1}$ and let the step-sizes be $\eta_{i} := \frac{\alpha}{\bb{\lambda_{1}-\lambda_{2}}\bb{\beta + i}}$ with $\eta_{0} \leq \frac{1}{e}, \alpha > 2$. Under assumptions~\ref{ass:model},~\ref{assumption:variance_bound} and~\ref{assumption:norm_bound}, for sufficiently large number of samples $n$ such that $
    \frac{n}{\log\bb{\frac{1}{\eta_{n}}}} > \frac{\beta}{\log\bb{\frac{1}{\eta_{0}}}}
$,
\bas{   
        \beta := \frac{1000\alpha^{2}\max\left\{\tau_{\text{mix}}\log\bb{\frac{1}{\eta_0}}\bb{\mathcal{M}+\lambda_{1}}^{2}, \frac{\bb{\frac{\mathcal{V}}{1 - \left|\lambda_{2}\bb{P}\right|} + \lambda_{1}^{2}}}{100}\right\}}{\bb{\lambda_{1}-\lambda_{2}}^{2}\log\bb{1+\frac{\delta}{200}}}
}
the output $w_{n}$ of Oja's algorithm (\ref{alg:oja}) satisfies 
    \bas {
        & 1 - \bb{w_{n}^{T}v_{1}}^{2} \leq  \frac{C\log\bb{\frac{1}{\delta}}}{\delta^{2}}\bbb{d\bb{\frac{2\beta}{n}}^{2\alpha} + \frac{C_{1}\mathcal{V}}{\bb{\lambda_{1}-\lambda_{2}}^{2}\bb{1-|\lambda_{2}\bb{P}|}}\frac{1}{n} + \frac{C_{2}\mathcal{M}\bb{\mathcal{M}+\lambda_{1}}^{2}}{\bb{\lambda_{1}-\lambda_{2}}^{3}}\frac{\tau_{\text{mix}}\bb{\eta_{n}^{2}}^{2}}{n^{2}}}
    }
    with probability atleast $\bb{1-\delta}$. Here $C$ is an absolute constant and 
    \bas{
        C_{1} := \frac{\alpha^{2}\bb{3 + 7|\lambda_{2}\bb{P}|}}{2\alpha - 1}, \;\; C_{2} := \frac{35\alpha^{3}}{\alpha - 1}
    }
\end{theorem}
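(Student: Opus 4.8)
The plan is to follow the ``numerator over denominator'' route used for IID Oja's algorithm and graft onto it a Markov-chain argument wherever independence of $X_t$ from the past is needed. Writing $w_n=\B_n w_0/\|\B_n w_0\|_2$ and using $\|\B_n w_0\|_2^2\ge\bb{v_1^T\B_n w_0}^2$,
\bas{
1-\bb{w_n^Tv_1}^2=\frac{\|\vp^T\B_n w_0\|_2^2}{\|\B_n w_0\|_2^2}\le\frac{\|\vp^T\B_n w_0\|_2^2}{\bb{v_1^T\B_n w_0}^2}.
}
For a uniformly random $w_0$ on the sphere, $\bb{v_1^Tw_0}^2\ge c\,\delta/d$ and $\|\vp^Tw_0\|_2\le1$ with probability $\ge1-\delta/3$, so it suffices to upper bound $\E\bbb{\|\vp^T\B_n w_0\|_2^2}$ (and apply Markov's inequality), to lower bound $\bb{v_1^T\B_n w_0}^2$ with high probability, and to union bound. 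Both reduce to controlling quadratic forms $\E\bbb{\B_t^T\Phi\,\B_t}$ for $\Phi\in\{v_1v_1^T,\vp\vp^T\}$, which I would analyze via coupled one-step recursions for $a_t:=\E\bbb{\|\vp^T\B_t w_0\|_2^2}$ and $b_t:=\E\bbb{\bb{v_1^T\B_t w_0}^2}$; the high-probability lower bound on the denominator comes from a suitably normalized version of $\bb{v_1^T\B_t w_0}^2$ together with a Freedman-type inequality (the source of the $\log(1/\delta)$).

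In the IID case, $\E\bbb{\B_t^T\Phi\B_t}=\E\bbb{\B_{t-1}^T\,\E\bbb{L_t^T\Phi L_t}\,\B_{t-1}}$ with $L_t:=I+\eta_tX_tX_t^T$, and $\E\bbb{L_t^T\Phi L_t}=(I+\eta_t\Sigma)\Phi(I+\eta_t\Sigma)+\eta_t^2\E\bbb{(X_tX_t^T-\Sigma)\Phi(X_tX_t^T-\Sigma)}$, giving recursions with drift $1+2\eta_t\lambda_1$ for $b_t$, contraction $1+2\eta_t\lambda_2$ for $a_t$, and variance rate $\eta_t^2\mathcal V$. For Markovian data this factorization fails, so for an \emph{adaptively} chosen look-back $\tau_t\asymp\tau_{\text{mix}}\ln\bb{1/\eta_t}$ I would split
\bas{
\B_t=L_t\;M_t^{(\tau_t)}\;\B_{t-\tau_t-1},\qquad M_t^{(\tau_t)}:=\prod_{j=t-\tau_t}^{t-1}\bb{I+\eta_jX_jX_j^T},
}
and condition on the junction state $s_{t-\tau_t-1}$, which by the Markov property makes $\B_{t-\tau_t-1}$ conditionally independent of $\bb{L_t,M_t^{(\tau_t)}}$. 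The look-back balances two competing errors: by \eqref{eq:geometric_mixing}, $\tau_t\asymp\tau_{\text{mix}}\ln(1/\eta_t)$ gives $\dmix(\tau_t+1)\le C\eta_t^2$, hence $\|\E\bbb{X_tX_t^T\mid s_{t-\tau_t-1}}-\Sigma\|_2\le C\,\m\,\eta_t^2$; but the block of $\tau_t$ ``ignored'' matrices obeys only $\|M_t^{(\tau_t)}-I\|_2\le C\tau_t\eta_t\mathcal M$ (Assumption~\ref{assumption:norm_bound}, using that $\eta_j$ is roughly constant across a block of length $\tau_t\ll t$). Keeping both effects tolerable at every step is where the hypothesis $n/\ln(1/\eta_n)>\beta/\ln(1/\eta_0)$ with $\beta$ as stated enters (it ensures $\tau_t\le t/2$ and $\tau_t\eta_t\,\m\le1$ throughout), and why $\beta$ carries the factor $\tau_{\text{mix}}\ln(1/\eta_0)\,\m^2/(\lambda_1-\lambda_2)^2$.

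The sharp variance constant is the crux. Discarding the middle block crudely would cost $\mathcal M^2$ in place of $\mathcal V$, so instead I would keep it and track the correlation between $X_t$ and the nearby updates $X_{t-1},\dots,X_{t-\tau_t}$ by: (i) averaging the conditional second moment over $s_{t-\tau_t-1}\sim\pi$; (ii) using reversibility, $\pi(x)P^k(x,y)=\pi(y)P^k(y,x)$, to transfer the stationary weight onto a conditional second moment of the old block; (iii) decoupling that from the state by another $\tau_{\text{mix}}$-round of mixing, which collapses the relevant curvature matrix to $\E_{s\in\pi}\E_{D(s)}\bbb{(XX^T-\Sigma)^2}\preceq\mathcal V I$ (Assumption~\ref{assumption:variance_bound}); and (iv) summing the resulting geometrically decaying chain of contributions with common ratio $|\lambda_2(P)|$ — the $\log$-free bound $\sum_{k\ge0}|\lambda_2(P)|^k=1/(1-|\lambda_2(P)|)$ being precisely what the reversible-chain spectral estimates (Lemma~\ref{lemma:reverse_mixing} and \eqref{eq:mixing_time_eigengap}) supply, and exactly where the poly-logarithmic rate of \cite{chen2018dimensionality} is improved. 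The upshot is an effective variance rate $\eta_t^2\mathcal V/(1-|\lambda_2(P)|)$, together with further error terms (from imperfect mixing and the ignored blocks) that multiply $b_{t-1}$ and are of order $\tau_t\eta_t^2\,\m$; against the contraction factor these telescope into the third term, $\tfrac{C_2\mathcal M\m^2}{(\lambda_1-\lambda_2)^3}\cdot\tfrac{\tau_{\text{mix}}\bb{\eta_n^2}^2}{n^2}$.

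It then remains to solve the recursions with $\eta_t=\alpha/\bb{(\lambda_1-\lambda_2)(\beta+t)}$ and assemble. The homogeneous part of $a_n$ contracts relative to $b_n$ by $\prod_{t\le n}\tfrac{1+2\eta_t\lambda_2}{1+2\eta_t\lambda_1}\approx\bb{\beta/n}^{2\alpha}$, and divided by the denominator bound $\bb{v_1^T\B_n w_0}^2\ge c\,\delta\prod_{t\le n}\bb{1+2\eta_t\lambda_1}\bb{v_1^Tw_0}^2$ it becomes $\tfrac{C}{\delta^2}d\bb{2\beta/n}^{2\alpha}$, the $d$ entering through $\|\vp^Tw_0\|_2^2/\bb{v_1^Tw_0}^2$. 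The variance contribution is driven by $b_{t-1}\asymp\prod_{s<t}\bb{1+2\eta_s\lambda_1}\bb{v_1^Tw_0}^2$, so its $\bb{v_1^Tw_0}^2$ and the accumulated product both cancel against the denominator (so no $d$ appears there), leaving $\sum_{t\le n}\eta_t^2(t/n)^{2\alpha}\asymp\tfrac{\alpha^2}{(2\alpha-1)(\lambda_1-\lambda_2)^2}\cdot\tfrac1n$, namely the second term with constant $C_1$; the same cancellation applies to the error term of step (iii), producing the third term without a factor $d$. The second branch of $\beta$, proportional to $\bb{\tfrac{\mathcal V}{1-|\lambda_2(P)|}+\lambda_1^2}/\bb{(\lambda_1-\lambda_2)^2\ln(1+\delta/200)}$, keeps the per-step variance and error rates below the gap throughout, so the normalized process used for the denominator concentrates with the stated $\delta$-dependence, and a union bound over the three events yields the theorem. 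The main obstacle is step (iv) together with the look-back balancing: extracting the sharp $1/(1-|\lambda_2(P)|)$ factor at the level of \emph{matrix} products, while forcing every competing error into the lower-order $\tau_{\text{mix}}\bb{\eta_n^2}^2/n^2$ term rather than into the $\mathcal V/n$ term.
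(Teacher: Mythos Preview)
Your overall architecture matches the paper's: the numerator/denominator split via the random-initialization lemma of \cite{jain2016streaming}, Markov's inequality for the numerator, an adaptive look-back $k_t=\tau_{\text{mix}}(\eta_t^2)$, the two-level approximation of the intermediate block (identity plus its linear part, Lemma~\ref{lemma:etakproduct}), and summing the resulting cross terms over the look-back window to a geometric series in $|\lambda_2(P)|$. Two places where your description diverges from what the paper actually does are worth flagging.

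First, the denominator. The paper does \emph{not} use a Freedman-type martingale argument; under Markovian data $(v_1^T\B_t w_0)^2$ has no obvious (super)martingale normalization, and adapting Freedman to dependent increments would be its own project. Instead the paper applies Chebyshev to $v_1^T\B_n\B_n^Tv_1$, which requires an upper bound on the \emph{second moment} $\E\bbb{(v_1^T\B_n\B_n^Tv_1)^2}$ (Theorem~\ref{theorem:v1squareupperbound}) in addition to the first-moment lower bound (Theorem~\ref{theorem:v1lowerbound}). The $\log(1/\delta)$ in the final rate comes from the random-initialization lemma, not from concentration of the denominator; the second $1/\delta$ comes from Markov on the numerator.

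Second, your step (iii), ``decoupling by another $\tau_{\text{mix}}$-round of mixing,'' is not how the paper extracts $|\lambda_2(P)|^{j-i}\mathcal V$ in Lemma~\ref{lemma:bound_covariance}. A TV-distance argument at that point would pair $d_{\text{mix}}(\cdot)$ with the \emph{uniform} bound $\mathcal M^2$, not $\mathcal V$, and lose the sharp constant. The paper's device is a matrix Cauchy--Schwarz inequality (Lemma~\ref{lemma:matrix_cauchy_schwarz}): with $Q_{yx}=\sqrt{\pi(y)}\,(P^{j-i}(y,x)-\pi(x))/\sqrt{\pi(x)}$ and $M_x=\sqrt{\pi(x)}\,(\Sigma_x+\mu_x\mu_x^T-\Sigma)S^{1/2}$ one gets $\bigl\|\sum_{x,y}Q_{yx}M_xM_y^T\bigr\|_2\le\|Q\|_2\,\bigl\|\sum_x M_xM_x^T\bigr\|_2$; reversibility makes $Q=\Pi^{1/2}(P^{j-i}-\mathbbm{1}\mathbbm{1}^T\Pi)\Pi^{-1/2}$ self-adjoint with $\|Q\|_2=|\lambda_2(P)|^{j-i}$, while $\bigl\|\sum_x M_xM_x^T\bigr\|_2\le\mathcal V\|S\|_2$ is exactly Assumption~\ref{assumption:variance_bound}. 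This single step delivers the geometric decay \emph{and} the $\mathcal V$ constant simultaneously; your (ii)--(iv) gesture at the conclusion, but the references you cite (Lemma~\ref{lemma:reverse_mixing} and \eqref{eq:mixing_time_eigengap}) are not the tools that supply it.
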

Next, we compare the rate of convergence proposed in Theorem \ref{theorem:oja_convergence_rate} with the offline algorithm having access to the entire dataset $\left\{X_i\right\}_{i=1}^{n}$ using a recent result from \cite{neeman2023concentration}. Here, the authors extend the Matrix Bernstein inequality \cite{vershynin2010introduction, tropp2012user}, to Markovian random matrices. Their setup is much like ours except that the matrix at any state is fixed, i.e., there is no data distribution $D(s)$ as in our setup. However, it is easy to extend their result to our setting by observing that conditioned on the state sequence, the matrices $X_iX_i^{T}, i \in [n]$ are independent under our model, and we can push in the expectation over the state-specific distributions, $D(s)$, whenever required. Therefore, we have the following result - %However, as we show in the appendix [\rd theorem blah \bk ], this can be easily extended to our setting.

%sextended state the performance of the offline algorithm that calculates the eigenvectors of empirical covariance matrix $\hat{\Sigma}$.
\begin{proposition}[Theorem~2.2 of~\cite{neeman2023concentration}+Wedin's theorem] \label{theorem:offline_sample_complexity}
Fix $\delta \in \bb{0,1}$. Consider an irreducible and aperiodic Markov chain. Under assumptions~\ref{assumption:variance_bound} and~\ref{assumption:norm_bound}, with probability $1-\delta$, the leading eigenvector $\hat{v}$ of 
 $ \sum_{i=1}^{n}X_iX_i^{T}/n$ satisfies
\ba{
    1 - \bb{\hat{v}^{T}v_{1}}^{2} \leq C_{1}'\frac{\mathcal{V}\log\bb{\frac{d^{2-\frac{\pi}{4}}}{\delta}}}{\bb{\lambda_{1}-\lambda_{2}}^{2}}\bb{\frac{1 + \left|\lambda_{2}\bb{P}\right|}{1 - \left|\lambda_{2}\bb{P}\right|}}.\frac{1}{n} + C_{2}'\bb{\frac{\mathcal{M}\log\bb{\frac{d^{2-\frac{\pi}{4}}}{\delta}}}{\bb{\lambda_{1}-\lambda_{2}}\bb{1-|\lambda_{2}\bb{P}|}}}^{2}.\frac{1}{n^{2}}
}
for absolute constants $C_{1}'$ and $C_{2}'$.
\end{proposition}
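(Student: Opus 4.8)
The plan is to combine a high‑probability bound on the operator‑norm deviation $\|\hat{\Sigma}-\Sigma\|_2$ with a deterministic eigenvector perturbation inequality. For the concentration step I would write $\hat{\Sigma}-\Sigma = \frac{1}{n}\sum_{i=1}^{n}\bb{X_iX_i^{T}-\Sigma}$ and split each summand using the state decomposition derived above,
\bas{
X_iX_i^{T}-\Sigma \;=\; \bb{X_iX_i^{T}-\bb{\Sigma_{s_i}+\mu_{s_i}\mu_{s_i}^{T}}} \;+\; \bb{\bb{\Sigma_{s_i}+\mu_{s_i}\mu_{s_i}^{T}}-\Sigma}.
}
Conditioned on the state trajectory $s_1,\dots,s_n$, the first terms form an independent, mean‑zero matrix sequence bounded in norm by $2\mathcal{M}$ with variance proxy at most $\mathcal{V}$ (Assumptions \ref{assumption:variance_bound} and \ref{assumption:norm_bound}), so an ordinary matrix Bernstein inequality controls their average; the second terms are a function of the Markov chain alone and are exactly of the form treated by Theorem~2.2 of \cite{neeman2023concentration}, again with variance and norm parameters bounded by $\mathcal{V}$ and $\mathcal{M}$ through the two assumptions. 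A union bound over the two events, together with $\tau_{\text{mix}}$ / eigengap conversions, gives, with probability $1-\delta$,
\bas{
\|\hat{\Sigma}-\Sigma\|_2 \;\leq\; c\sqrt{\frac{\mathcal{V}\log\bb{d^{2-\frac{\pi}{4}}/\delta}}{n}\cdot\frac{1+\left|\lambda_{2}\bb{P}\right|}{1-\left|\lambda_{2}\bb{P}\right|}} \;+\; c\,\frac{\mathcal{M}\log\bb{d^{2-\frac{\pi}{4}}/\delta}}{\bb{1-\left|\lambda_{2}\bb{P}\right|}\,n},
}
for an absolute constant $c$, the dimensional factor $d^{2-\pi/4}$ being inherited directly from the Markovian matrix Bernstein bound of \cite{neeman2023concentration}.

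Next I would invoke Wedin's theorem (equivalently the Davis–Kahan $\sin\Theta$ theorem) for the pair $\Sigma,\hat{\Sigma}$. Since $\lambda_1>\lambda_2$, the eigenvector $v_1$ is separated from the rest of the spectrum of $\Sigma$ by the gap $\lambda_1-\lambda_2$; provided $n$ is large enough that the right‑hand side above is at most $\tfrac12(\lambda_1-\lambda_2)$, Wedin yields $\sqrt{1-\bb{\hat{v}^{T}v_1}^{2}} = \sin\angle\bb{\hat{v},v_1} \le \frac{2\|\hat{\Sigma}-\Sigma\|_2}{\lambda_1-\lambda_2}$. Squaring this, applying $\bb{a+b}^{2}\le 2a^{2}+2b^{2}$ to the two‑term bound, dividing by $\bb{\lambda_1-\lambda_2}^{2}$, and relabelling constants produces exactly the stated inequality with $C_1'\asymp c^{2}$ and $C_2'\asymp c^{2}$.

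I expect the only genuine subtlety to be the reduction from our model, in which each state $s$ carries a distribution $D(s)$, to the fixed‑matrix Markovian setting of \cite{neeman2023concentration}: one must check that conditioning on the state sequence really does render the within‑state fluctuations independent and mean‑zero, and that after pushing the expectation through $D(\cdot)$ the effective variance is still governed by $\mathcal{V}$ — which is precisely why Assumption \ref{assumption:variance_bound} is phrased as a two‑sided chain of inequalities, so that both $\mathbb{E}_{s\in\pi}\bb{\Sigma_{s}+\mu_{s}\mu_{s}^{T}-\Sigma}^{2}$ and the full fluctuation are dominated by $\mathcal{V}$. Everything else — the union bound over the two concentration events, verifying the smallness condition needed for Wedin, and tracking absolute constants — is routine bookkeeping.
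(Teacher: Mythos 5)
Your overall architecture (concentration of $\|\hat{\Sigma}-\Sigma\|_{2}$ followed by Wedin/Davis--Kahan) matches the paper's, but your concentration step takes a genuinely different route and it has a gap. You split $X_iX_i^{T}-\Sigma$ into the within-state fluctuation $Y_i := X_iX_i^{T}-\bb{\Sigma_{s_i}+\mu_{s_i}\mu_{s_i}^{T}}$ and the state function $\bb{\Sigma_{s_i}+\mu_{s_i}\mu_{s_i}^{T}}-\Sigma$, and you assert that, conditioned on the state trajectory, the $Y_i$ have ``variance proxy at most $\mathcal{V}$.'' That is not what Assumption \ref{assumption:variance_bound} gives you. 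The conditional matrix Bernstein (or Freedman) bound for the $Y_i$ requires control of $\left\|\sum_{i}\E_{D\bb{s_i}}\bbb{Y_i^{2}}\right\|_{2}$ for the \emph{realized} state sequence, whereas $\mathcal{V}$ only bounds the $\pi$-averaged quantity $\left\|\E_{s\in\pi}\E_{D(s)}\bbb{\bb{XX^{T}-\Sigma}^{2}}\right\|_{2}$; the per-state conditional variances can individually be as large as order $\mathcal{M}^{2}$, and the empirical state distribution need not equal $\pi$. Patching this requires yet another concentration argument (e.g.\ applying the Markovian Bernstein bound to the matrices $\E_{D(s)}\bbb{Y^{2}}$), and the resulting deviation, of order $\mathcal{M}^{2}\sqrt{\log\bb{d/\delta}/n}$ inside the variance, produces an intermediate term of order $\mathcal{M}^{2}\log^{3/2}\bb{d/\delta}/n^{3/2}$ in the final $\sin^{2}$ bound, which is not dominated by the two terms claimed in the proposition (take $\mathcal{V}$ small to see this). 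So the decomposition route, as written, does not deliver the stated inequality without extra assumptions or a more careful bootstrapped argument.

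The paper avoids this entirely by \emph{not} decomposing: it re-derives Theorem~2.2 of \cite{neeman2023concentration} inside the richer model. Starting from their Eq.~(5), it observes that, conditioned on the state sequence, the matrices $X_iX_i^{T}$ are independent, so the expectation over the state-specific distributions $D(s)$ can be pushed inside each matrix exponential, defining the modified multiplication operator $\bb{E_{j}^{\theta}\mathbf{h}}\bb{x}=\E_{D\bb{x}}\bbb{\exp\bb{\theta H_{j}\bb{x}}}\mathbf{h}\bb{x}$ and re-verifying Proposition~5.3 and Lemma~6.7 of \cite{neeman2023concentration} with the combined expectation $\E_{\pi}\E_{D(x)}$; the variance parameter entering the MGF bound is then exactly $\left\|\E_{\pi}\E_{D(x)}\bbb{H\bb{x}^{2}}\right\|_{2}\leq\mathcal{V}$, i.e.\ the stationary-averaged variance, with no need to control the empirical state distribution. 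This yields the tail bound of Proposition \ref{proposition:tail_bound_Markov_sum} directly for $\frac{1}{n}\sum_{j}X_jX_j^{T}-\Sigma$, and Wedin's theorem finishes the proof. Your Wedin step with the ``$n$ large enough'' proviso is only minor bookkeeping (the bound is trivial otherwise, for suitable constants); the variance-proxy claim is the substantive issue to fix.
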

% \begin{remark} (\textit{Comparison with offline algorithm}) As a baseline, we state the performance of the offline algorithm that calculates the eigenvectors of empirical covariance matrix $\hat{\Sigma}$. Under the assumptions stated in Section \ref{section:problem_setup}, the Matrix Bernstein inequality \cite{vershynin2010introduction, tropp2012user}, generalized to Markovian random matrices in \cite{neeman2023concentration}, combined with Wedin's theorem \cite{wedin1972perturbation} implies that with probability $1-\delta$, the leading eigenvector $\hat{v}$ of $\hat{\Sigma}$ satisfies
% \ba{
%     1 - \bb{\hat{v}^{T}v_{1}}^{2} \leq C_{1}'\frac{\mathcal{V}\log\bb{\frac{d^{2-\frac{\pi}{4}}}{\delta}}}{\bb{\lambda_{1}-\lambda_{2}}^{2}}\bb{\frac{1 + \left|\lambda_{2}\bb{P}\right|}{1 - \left|\lambda_{2}\bb{P}\right|}}.\frac{1}{n} + C_{2}'\bb{\frac{\mathcal{M}\log\bb{\frac{d^{2-\frac{\pi}{4}}}{\delta}}}{\bb{\lambda_{1}-\lambda_{2}}\bb{1-|\lambda_{2}\bb{P}|}}}^{2}.\frac{1}{n^{2}} \label{theorem:offline_sample_complexity}
% }
% for absolute constants $C_{1}'$ and $C_{2}'$. We defer the proof of this statement to Supplement \ref{appendix:offline_pca_with_Markovian_data}. 
Observe that Theorem \ref{theorem:oja_convergence_rate} matches the leading term $\frac{\mathcal{V}}{\bb{\lambda_{1}-\lambda_{2}}^{2}\bb{1-|\lambda_{2}\bb{P}|}}$ in Eq~\ref{theorem:offline_sample_complexity} except the $\log(d)$ term. We believe, much like the IID case (also see the remark in~\cite{jain2016streaming}), this logarithmic term in~\cite{neeman2023concentration}'s result is removable for large $n$ and a constant probability of success.
%\end{remark}
\begin{remark}
    (\textit{Comparison with IID algorithm}) Fix a $\delta \in \bb{0,1}$. If the data-points $\left\{X_i\right\}_{i=1}^{n}$ are sampled IID from the stationary distribution $\pi$, then under assumptions~\ref{assumption:variance_bound} and~\ref{assumption:norm_bound}, using Theorem 4.1 from \cite{jain2016streaming}, we have that the output $w_{n}$ of Oja's algorithm \ref{alg:oja} satisfies - 
    \ba{
        1 - \bb{w_n^{T}v_{1}}^{2} \leq \frac{C\log\bb{\frac{1}{\delta}}}{\delta^{2}}\bbb{d\bb{\frac{\beta'}{n}}^{2\alpha} + \frac{\alpha^{'2}\mathcal{V}}{\bb{2\alpha' - 1}\bb{\lambda_{1}-\lambda_{2}}^{2}}\frac{1}{n}} \label{theorem:iid_convergence_rate}
    }
\end{remark}
The leading term of Theorem \ref{theorem:oja_convergence_rate} is worse by a factor of $\frac{1}{1-|\lambda_{2}\bb{P}|}$. Further, it has an additive lower order term $O\bb{\frac{\log^{2}\bb{n}}{n^{2}}}$ due to the covariance between data-points in the Markovian case.
\begin{corollary}
\label{cor:data-drop} (Downsampled Oja's algorithm)
Fix a $\delta \in \bb{0,1}$. If Oja's algorithm is applied on the downsampled data-stream with every $k^{\text{th}}$ data-point, where $k := \tau_{\text{mix}}\bb{\eta_{n}^{2}}$ then under the conditions of Theorem \ref{theorem:oja_convergence_rate} with appropriately modified $\alpha$ and $\beta$, the output $w_{n}$ satisfies 
    \bas {
        & 1 - \bb{w_{n}^{T}v_{1}}^{2} \leq  \\ 
        & \;\;\;\;\;\;\;\;\; \frac{C\log\bb{\frac{1}{\delta}}}{\delta^{2}}\bbb{d\bb{\frac{2\beta\tau_{\text{mix}}\log\bb{n}}{n}}^{2\alpha} + \frac{C_{1}\mathcal{V}\tau_{\text{mix}}}{\bb{\lambda_{1}-\lambda_{2}}^{2}}\frac{\log\bb{n}}{n} + \frac{C_{2}\mathcal{M}\bb{\mathcal{M}+\lambda_{1}}^{2}}{\bb{\lambda_{1}-\lambda_{2}}^{3}}\frac{\log^{2}\bb{n}\tau_{\text{mix}}\bb{\eta_{n}^{2}}^{2}}{n^{2}}}
    }
    with probability atleast $\bb{1-\delta}$. Here $C$ is an absolute constant and $C_{1} := \frac{30\alpha^{2}}{2\alpha - 1}, \;\; C_{2} := \frac{35\alpha^{3}}{\alpha - 1}$.
\end{corollary}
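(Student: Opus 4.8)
### Proof Proposal for Corollary~\ref{cor:data-drop}

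\textbf{Overall strategy.} The plan is to reduce the downsampled problem to an instance of Theorem~\ref{theorem:oja_convergence_rate} run on a different Markov chain, namely the $k$-step chain with transition matrix $P^{k}$, and then track how the relevant parameters transform. Concretely, if we keep only every $k$-th data point with $k := \tau_{\text{mix}}(\eta_n^2)$, the retained states $s_k, s_{2k}, s_{3k}, \dots$ form a Markov chain whose transition kernel is $P^k$. This chain is still irreducible, aperiodic, and reversible (a power of a reversible kernel is reversible with the same stationary distribution $\pi$), so Theorem~\ref{theorem:oja_convergence_rate} applies verbatim to the downsampled stream of length $n/k$. The covariance $\Sigma$, its eigengap $\lambda_1-\lambda_2$, the variance proxy $\mathcal{V}$, and the norm bound $\mathcal{M}$ are all unchanged, since the marginal of each retained $X_{jk}$ is still the stationary mixture. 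The only things that change are (i) the effective sample size, which becomes $m := n/k$, and (ii) the spectral gap of the chain, $1 - |\lambda_2(P^k)| = 1 - |\lambda_2(P)|^k$.

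\textbf{Key steps in order.} First I would verify the reduction formally: condition on the full state sequence, observe that conditionally the $X_{jk}X_{jk}^T$ are independent, and that the subsequence $(s_{jk})_j$ is Markov with kernel $P^k$; this lets us invoke Theorem~\ref{theorem:oja_convergence_rate} with $P \leftarrow P^k$, $n \leftarrow m = n/k$, and a rescaled step-size sequence $\eta_j' := \alpha'/((\lambda_1-\lambda_2)(\beta' + j))$. Second, I would bound the spectral-gap factor: since $k = \tau_{\text{mix}}(\eta_n^2) \geq \tau_{\text{mix}} \cdot \log_2(1/\eta_n^2)/\text{(something)}$, and more usefully $|\lambda_2(P)|^k \le d_{\text{mix}}$-type decay, we get $|\lambda_2(P)|^{k} \leq |\lambda_2(P)|^{\tau_{\text{mix}}} \le 1/2$ by \eqref{eq:geometric_mixing}-style reasoning, hence $\frac{1}{1-|\lambda_2(P^k)|} = \frac{1}{1-|\lambda_2(P)|^k} \leq 2$. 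This is exactly the mechanism that converts the $\frac{1}{1-|\lambda_2(P)|}$ factor in Theorem~\ref{theorem:oja_convergence_rate}'s leading term into an $O(1)$ constant in the corollary — the price being that $\mathcal{V}$ now multiplies $\frac{\tau_{\text{mix}}\ln n}{n}$ rather than $\frac{1}{(1-|\lambda_2(P)|)n}$, because the sample size dropped from $n$ to $n/k$ with $k = \Theta(\tau_{\text{mix}}\ln n)$ (using $\eta_n^2 = \Theta(1/n^2)$ and $\tau_{\text{mix}}(\eta_n^2) \le \lceil\log_2(1/\eta_n^2)\rceil\tau_{\text{mix}} = O(\tau_{\text{mix}}\ln n)$ via \eqref{eq:tmix_bound_1}). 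Third, I would substitute $m = n/k$ into each of the three terms of Theorem~\ref{theorem:oja_convergence_rate}: the bias term $d(2\beta/m)^{2\alpha}$ becomes $d(2\beta k/n)^{2\alpha} = d(2\beta\tau_{\text{mix}}\ln n / n)^{2\alpha}$ up to constants absorbed into $\beta$; the variance term becomes $\frac{C_1 \mathcal{V}}{(\lambda_1-\lambda_2)^2}\cdot\frac{k}{n}$ with the gap factor now $O(1)$, giving $\frac{C_1'\mathcal{V}\tau_{\text{mix}}\ln n}{(\lambda_1-\lambda_2)^2 n}$; and the higher-order term $\frac{\tau_{\text{mix}}(\eta_n^2)^2}{n^2}$-type term needs care because the learning rate and index have been rescaled — I would re-derive it keeping $\eta_n' = \eta_{n/k}$ and checking that $\frac{\tau_{\text{mix}}(\eta_m'^2)^2}{m^2}$ reorganizes into $\frac{\ln^2 n \,\tau_{\text{mix}}(\eta_n^2)^2}{n^2}$ after multiplying through by $k^2$ and using $\eta_m' \asymp \eta_n$.

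\textbf{Main obstacle.} The delicate point is bookkeeping the step-size reparametrization: Theorem~\ref{theorem:oja_convergence_rate} fixes $\eta_i = \alpha/((\lambda_1-\lambda_2)(\beta+i))$ and its constant $\beta$ has a prescribed dependence on $\tau_{\text{mix}}$, $\mathcal{V}$, $|\lambda_2(P)|$, etc.; when we apply it to the $P^k$-chain we must choose $\alpha', \beta'$ consistent with the downsampled chain's parameters \emph{and} ensure the resulting $\eta'_j$ for the downsampled index $j$ corresponds to a sensible learning-rate schedule on the original time axis. One must check the admissibility condition $\frac{m}{\ln(1/\eta'_m)} > \frac{\beta'}{\ln(1/\eta'_0)}$ survives the substitution $m = n/k$, and that "appropriately modified $\alpha$ and $\beta$" in the statement indeed just means $\alpha' = \alpha$ and $\beta' = \Theta(\beta)$ with the $\frac{\mathcal{V}}{1-|\lambda_2(P)|}$ inside $\beta$ replaced by $\mathcal{V}$ (since the new gap factor is $O(1)$) — but also possibly an extra $\tau_{\text{mix}}$ or $\ln$ factor absorbed. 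I expect the rest — bounding $1-|\lambda_2(P)|^k$, plugging $m = n/k$, and tidying constants into $C_1, C_2$ — to be routine given Theorem~\ref{theorem:oja_convergence_rate} and the mixing-time inequalities \eqref{eq:tmix_bound_1}--\eqref{eq:geometric_mixing}.
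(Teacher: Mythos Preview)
Your proposal is correct and follows essentially the same route as the paper: view the downsampled stream as a Markov chain with kernel $P^{k}$, apply Theorem~\ref{theorem:oja_convergence_rate} to this chain with $m=n/k$ samples, bound $|\lambda_{2}(P)|^{k}$ by an absolute constant (the paper uses the monotone function $x\mapsto x^{1/(1-x)}<1/e$ rather than your $\tau_{\text{mix}}$-based estimate, but the effect is identical), conclude that the new mixing time is $\Theta(1)$ so the $1/(1-|\lambda_{2}(P^{k})|)$ factor is $O(1)$, and finally substitute $k=\Theta(\tau_{\text{mix}}\ln n)$ into each of the three terms. The bookkeeping concern you flag about $\alpha',\beta'$ is exactly what the paper means by ``appropriately modified $\alpha$ and $\beta$'' and is handled as you anticipate.
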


% Consider the following variant of the classical Oja's algorithm which sub-sampled data-points to reduce dependence.
% \begin{algorithm}
% \caption{Oja's Algorithm with downsampling} \label{alg:oja_dd}
% \begin{algorithmic}
% \STATE \textbf{Input}
% \bindent 
% \STATE $n \in \mathbb{N}$, $A(Z_1)$, $A(Z_2)$ $\ldots$ $A(Z_n)$ $\in \mathbb{R}^{d \times d}$, 
% \STATE the learning rate $\eta_{i}, i \in [n]$ and the drop-number $k \leq n$
% \eindent
% \STATE Choose $w_0$ uniformly at random from the unit sphere
% \FOR{t in range[1, $\lfloor\frac{n}{k}\rfloor$]}
% \STATE $w_{t} \leftarrow w_{t-1}(I + \eta_{t}A(Z_{tk}))$
% \STATE $w_{t} \leftarrow \frac{w_{t}}{\|w_{t}\|_{2}}$ 
% \ENDFOR
% \end{algorithmic}
% \end{algorithm}

\begin{remark} Data downsampling to reduce dependence amongst samples has been suggested in recent work \cite{bresler2020squares, ma2022data, chen2018dimensionality}. %, and is shown to achieve better performance than SGD over the original dataset for certain convex functions. 
In Corollary~\ref{cor:data-drop}, we establish that the rate obtained is sub-optimal compared to Theorem~\ref{theorem:oja_convergence_rate} by a $\log\bb{n}$ factor. We prove this by a simple yet elegant observation: the downsampled data stream can be considered to be drawn from a Markov chain with transition kernel $P^{k}\bb{.,.}$ since each data-point is $k$ steps away from the previous one. For sufficiently large $k$, this implies that the mixing time of this chain is $\Theta\bb{1}$. These new parameters are used to select the modified values of $\alpha, \beta$ according to Lemma \ref{lemma:learning_rate_schedule} in the Supplement.
\end{remark}

The proof of Theorem~\ref{theorem:oja_convergence_rate} follows the same general recipe as in~\cite{jain2016streaming} for obtaining a bound on the $\sin^2$ error. However, the original proof techniques heavily rely on the IID setting. We carry out a refined analysis for each step under the Markovian data model by a careful control of error terms arising out of dependence.
%the key is to do a refined analysis for each step under the Markovian data model since the original proof technique heavily relies on the IID setting.
The first step involves obtaining a high-probability bound on the $\sin^{2}$ error, by noting that Oja's algorithm on $n$ data-points can be viewed as a single iteration of the power method on $B_{n}$. Therefore, fixing a $\delta \in \bb{0,1}$ using Lemma 3.1 from \cite{jain2016streaming}, we have with probability at least $\bb{1-\delta}$,
    \ba{
        \sin^{2}\bb{w_{n},v_{1}} \leq \frac{C\log\bb{\frac{1}{\delta}}}{\delta}\frac{\Tr\left(V_{\perp}^{T}\B_{n}\B_{n}^{T}V_{\perp}\right)}{v_{1}^{T}\B_{n}\B_{n}^{T}v_{1}}, \label{eq:sin2errorbound}
    }
where $C$ is an absolute constant. The numerator is bounded by first bounding its expectation (see Theorem~\ref{theorem:Vperpupperbound}) and then using Markov's inequality. To bound the denominator, similar to~\cite{jain2016streaming}, we will use Chebyshev's inequality. Theorem~\ref{theorem:v1lowerbound} provides a lower bound for the expectation $\E\bbb{v_{1}^{T}\B_{n}\B_{n}^{T}v_{1}}$. Chebyshev's inequality also requires upper-bounding the variance of $\E\bbb{v_{1}^{T}\B_{n}\B_{n}^{T}v_{1}}$,
 which requires us to bound $\E\bbb{\bb{v_{1}^{T}\B_{n}\B_{n}^{T}v_{1}}^2}$
(see Theorem~\ref{theorem:v1squareupperbound}). 

\section{Main Technical Tools}\label{sec:maintools}
In this section, we provide a sketch of the main arguments used in our proof. 

\textbf{Warm-up with downsampled Oja's algorithm: }
We start with the simple downsampled Oja's algorithm to build intuition. Here, one applies Oja's update rule (Eq~\ref{alg:oja}) to every $k^{th}$ data-point, for a suitably chosen $k$. For $k=\lceil L\tau_{\text{mix}}\log n\rceil$, the total variation distance between any consecutive data-points in the downsampled data stream is $O(n^{-L})$. As we show in Corollary~\ref{cor:data-drop}, the error of this algorithm is similar to the error of Oja's algorithm applied to $n/k$ data-points in the IID setting, i.e., $O(\mathcal{V}\tau_{\text{mix}}\log n/n)$.
%\subsection{Oja's algorithm on the entire dataset}

We will take $\E\bbb{v_1^TB_nB_n^Tv_1}$ as an example. Let us introduce some notation. 
\ba{
    \newB{i}{j} := \bb{I+\eta_{j}X_jX_j^T}\bb{I+\eta_{j-1}X_{j-1}X_{j-1}^T}\dots\bb{I+\eta_{i}X_iX_i^T} \label{definition:Bji}
} 
We peel this quantity one matrix at a time from the inside. Note that for a reversible Markov chain, standard results imply (see Lemma~\ref{lemma:reverse_mixing}) that the mixing conditions apply to the conditional distribution of a state given another state $k$ steps in the ``future'' (see Supplement section \ref{appendix:useful_results} for a proof). Recall $d_{\text{mix}}(k)$ from Section~\ref{section:preliminaries}.
\begin{lemma}
\label{lemma:reverse_mixing}
    Under Assumption~\ref{ass:model},
    $\begin{aligned}
        \frac{1}{2}\sup_{t \in \Omega}\sum_{s}\left|\mathbb{P}\bb{Z_{t}=s | Z_{t+k} = t} - \pi\bb{s}\right| = d_{\text{mix}}\bb{k}
    \end{aligned}$.
\end{lemma}
\noindent
It will be helpful to explain our analysis by comparing it with the IID setting. For this reason, we will use $\E_{\text{IID}}[.]$ to denote the expectation under the IID data model.
\hspace{-4em}
\ba{\alpha_{n,1} &:=\E\bbb{v_1^TB_nB_n^Tv_1}=\E\bbb{v_1^TB_{n,2}\bb{I+\eta_1\Sigma+\eta_1(X_1X_1^T-\Sigma)}\bb{I+\eta_1\Sigma+\eta_1(X_1X_1^T-\Sigma)}^TB_{n,2}^Tv_1}\notag\\
&=\E\bbb{v_1^TB_{n,2}\bb{I+\eta_1\Sigma}^2B_{n,2}^Tv_1}+2\eta_1T_1+\eta_1^2 T_2,
%&=\underbrace{\E\bbb{v_1^TB_{n,2}\bb{I+\eta_1\Sigma}^2B_{n,2}^Tv_1}}_{\leq (1+\eta_1\lambda_1)^2\alpha_{n,2}}+2\eta_1\underbrace{\E\bbb{v_1^TB_{n,2}\bb{I+\eta_1\Sigma}\bb{X_1X_1^T-\Sigma}B_{n,2}^Tv_1}}_{T_1}\notag\\
%&+\eta_1^2\underbrace{\E\bbb{v_1^TB_{n,2}\bb{X_1X_1^T-\Sigma}^2B_{n,2}^Tv_1}}_{T_2}
%\alpha_{n,1}&\leq 
%&\leq (1+2\eta_1\lambda_1)\alpha_{n,2}+
\label{eq:v1bound}} 
where the first term is smaller than $(1+\eta_1\lambda_1)^2\alpha_{n,2}$. We define $T_1$ and $T_2$ as follows.
$T_1:=\E\bbb{v_1^TB_{n,2}\bb{I+\eta_1\Sigma}\bb{X_1X_1^T-\Sigma}B_{n,2}^Tv_1}$, and $T_2:=\E\bbb{v_1^TB_{n,2}\bb{X_1X_1^T-\Sigma}^2B_{n,2}^Tv_1}$.

For the IID setting, the \textit{second term is zero}, and the third term can be bounded as follows:
\bas{
\E_{\text{IID}}\bbb{v_1^TB_{n,2}\bb{X_1X_1^T-\Sigma}^2B_{n,2}^Tv_1}=\E_{\text{IID}}\bbb{v_1^TB_{n,2}\E\bbb{\bb{X_1X_1^T-\Sigma}^2}B_{n,2}^Tv_1}\leq \mathcal{V}\E_{\text{IID}}\bbb{v_1^TB_{n,2}B_{n,2}^Tv_1}
}
Let us denote the IID version of $\alpha_{n,i}$ by $\alpha^{\text{IID}}_{n,i}=\E_{\text{IID}}[v_1^TB_{n,i}B_{n,i}^Tv_1]$. The  final recursion for the IID case becomes:
$\begin{aligned}
\alpha^{\text{IID}}_{n,1}\leq (1+2\eta_1\lambda_1+\eta_1^2\bb{\lambda_1^2+\mathcal{V}})\alpha^{\text{IID}}_{n,1}
\end{aligned}.$
So, for our Markovian data model, the hope is that the cross term $T_1$ (which has a multiplicative factor of $\eta_1$) is $O(\eta_1)$ and $T_2$ is $O(\eta_1^2)$. We will start with the $T_1$ term, which is zero in the IID setting.
% \begin{figure}[H]
%     \centering
%     \includegraphics[width=0.6\textwidth]{cross2.png}
%     \caption{If we could replace the intermediate products (white matrices) by $I$, the conditional expectation of the noise matrix $X_1X_1^T-\Sigma$ conditioned on the grey matrices would be nearly zero.}
%     \label{fig:approx1}
% \end{figure}

%\textbf{Approximation - take one}

%\label{subsection:first_take}
We hope to reduce the product $B_{n,2}(X_1X_1^T-\Sigma)$ into a product of nearly independent matrices. One hope is that if instead of $B_{n,2}$, we had $B_{n,2+k}$ for some suitably large integer $k$, then using (reverse) mixing properties of the Markov chain, we could argue using Lemma~\ref{lemma:reverse_mixing} that $\E[X_1X_1^T-\Sigma|s_{1+k},\dots, s_n]$ is very close to zero.% (see Figure~\ref{fig:approx1}). 
The following lemma formally bounds the deviation of the length-$k$ matrix product from identity.
\begin{lemma}
\label{lemma:etakproduct}
% Let
% \bas{
%     \Gamma_{m+k_{m}-1}^{m} := \prod_{t=m+k_{m}-1}^{m}\left(I + \eta_{t}X_tX_t^{T}\right), \;\;\;\;\; \mathcal{L}_{m+k_{m}-1}^{m} := \sum_{t=m+k_{m}-1}^{m}\eta_{t}X_tX_t^{T}
% }
Let Assumption~\ref{assumption:norm_bound} hold. If $\forall i \in [n], \eta_{i}k_{i}\bb{\mathcal{M}+\lambda_{1}} \leq \epsilon, \epsilon \in \bb{0,1}$ and $\eta_{i}$ forms a non-increasing sequence then $\forall \; m \leq n - k_n$, 
\ba {
    & \;\; \left\| \newB{m}{m+k_{m}-1} - I \right\|_{2} \leq \bb{1+\epsilon}k_{m}\eta_{m}\bb{\mathcal{M}+\lambda_{1}} \text{ and } \label{eq:approx1}\\
    & \;\; \left\| \newB{m}{m+k_{m}-1} - I - \sum_{t=m}^{m+k_{m}-1}\eta_{t}X_tX_t^{T} \right\|_{2} \leq k_{m}^{2}\eta_{m}^{2}\bb{\mathcal{M}+\lambda_{1}}^{2}\label{eq:approx2}
}
\end{lemma}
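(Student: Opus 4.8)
The plan is to prove both inequalities as \emph{deterministic, pathwise} estimates obtained from the multilinear expansion of the matrix product; no probabilistic structure, and in particular no reversibility or mixing, is needed. First I would record the almost-sure operator-norm bound $\|X_tX_t^T\|_2 \le \mathcal{M}+\lambda_1$: by Assumption~\ref{assumption:norm_bound}, $\|X_tX_t^T-\Sigma\|_2 \le \mathcal{M}$, and $\|\Sigma\|_2 = \lambda_1$, so the triangle inequality gives it. Since $\{\eta_i\}$ is non-increasing and every index appearing in $\newB{m}{m+k_{m}-1}$ is $\ge m$, we have $\eta_t \le \eta_m$ there, so $\|\eta_t X_tX_t^T\|_2 \le \theta := \eta_m(\mathcal{M}+\lambda_1)$; and by the hypothesis instantiated at $i=m$, $x := k_m\theta = k_m\eta_m(\mathcal{M}+\lambda_1) \le \epsilon < 1$.

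Next I would expand $\newB{m}{m+k_{m}-1} = \prod_{t=m}^{m+k_m-1}(I+\eta_tX_tX_t^T)$ as $\sum_{S}\prod_{t\in S}\eta_tX_tX_t^T$, the sum ranging over subsets $S\subseteq\{m,\dots,m+k_m-1\}$ with each product taken in decreasing index order (matching the convention of~\eqref{definition:Bji}). The empty set contributes $I$; the singletons contribute $\sum_{t=m}^{m+k_m-1}\eta_tX_tX_t^T$; and for each $j\ge 2$ there are $\binom{k_m}{j}\le k_m^j/j!$ subsets of size $j$, each product having norm at most $\theta^j$ by submultiplicativity of the operator norm. Hence the ``error beyond the linear part'' obeys $\sum_{j\ge 2}\binom{k_m}{j}\theta^j \le \sum_{j\ge 2}\frac{(k_m\theta)^j}{j!} = \sum_{j\ge 2}\frac{x^j}{j!} \le x^2\sum_{j\ge 2}\frac{1}{j!} = (e-2)x^2 < x^2$, where the penultimate step uses $x^{j-2}\le 1$ since $x\le 1$.

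This gives~\eqref{eq:approx2} immediately: $\bigl\|\newB{m}{m+k_{m}-1} - I - \sum_{t=m}^{m+k_m-1}\eta_tX_tX_t^T\bigr\|_2 \le x^2 = k_m^2\eta_m^2(\mathcal{M}+\lambda_1)^2$. For~\eqref{eq:approx1}, adding back the linear part via the triangle inequality gives $\|\newB{m}{m+k_{m}-1}-I\|_2 \le \sum_{t=m}^{m+k_m-1}\|\eta_tX_tX_t^T\|_2 + x^2 \le k_m\theta + x^2 = x + x^2$, and since $x\le\epsilon$ this is at most $x + \epsilon x = (1+\epsilon)x = (1+\epsilon)k_m\eta_m(\mathcal{M}+\lambda_1)$, as claimed.

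The argument is routine; the only things needing care are the bookkeeping of the expansion (the count $\binom{k_m}{j}$ and the submultiplicative norm bound on each $j$-fold product) and the two elementary scalar facts $\sum_{j\ge 2}x^j/j!\le (e-2)x^2$ for $x\in[0,1]$ and $x^2\le\epsilon x$ for $x\le\epsilon$. There is no genuine obstacle. It is worth noting that the hypothesis is used only at the single index $i=m$; the ``$\forall i$'' form is what makes the estimate usable at every level of the later recursion, where this lemma supplies exactly the deterministic control needed to replace a length-$k_m$ intermediate product by $I$ (plus its linear term) inside the conditional-expectation computations.
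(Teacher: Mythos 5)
Your proposal is correct and follows essentially the same route as the paper: expand the product combinatorially, bound each order-$j$ term by $\binom{k_m}{j}\eta_m^j(\mathcal{M}+\lambda_1)^j \le (k_m\eta_m(\mathcal{M}+\lambda_1))^j/j!$, and control the tail with elementary scalar estimates (the paper uses $e^x \le 1+x+x^2$ where you use $\sum_{j\ge 2}x^j/j!\le x^2$ and a triangle inequality for the first bound, which is an immaterial difference). No gap.
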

Lemma \ref{lemma:etakproduct} bounds the norm of the matrix product $\newB{t}{t+k_{t}-1}$ at two levels. The first result provides a coarse bound, approximating linear and higher-order terms. The second result provides a finer bound, preserving the linear term and approximating quadratic and higher-order terms. The proofs involve a straightforward combinatorial expansion of $\newB{t}{t+k_{t}-1}$ and are deferred to the Supplement section~\ref{appendix:useful_results}. %Both of these results play an important role at different stages. 

Approximating $\prod_{i=2}^{k+1}(I+\eta_iX_iX_i^T)$ requires $\eta_1 k$ to be small. Since this is a recursive argument, we would need $\eta_i k$ to be small for $i=1,\dots n$, which is satisfied by the strong condition $\eta_1 k$ is small. To obtain a tight analysis, we choose $k$ adaptively. We set $k_i=\tau_{\text{mix}}(\eta_i^2)$ (see definition in Eq~\ref{eq:tmix}). 

%However, unfortunately, this approximation gives us the \textit{sub-optimal rate} of downsampled Oja's algorithm.  In hindsight, this is not unexpected since this analysis completely removes all the in-between matrices. The question is how bad is the approximation in Lemma~\ref{lemma:etakproduct} Eq~\ref{eq:approx1}. This investigation brings us to the second take. 
As we will show in detail in the Supplement, Lemma~\ref{lemma:etakproduct} Eq~\ref{eq:approx2} along with the adaptive choice of $k_i$ gives us a sharp error bound. Using it, we can bound $T_1$ (see Eq~\ref{eq:v1bound}) as:
\ba{
T_1%:=\E\bbb{v_1^TB_{n,2}\bb{I+\eta_1\Sigma}\bb{X_1X_1^T-\Sigma}B_{n,2}^Tv_1}\label{eq:v1cross}\\
%&\leq \E\bbb{v_1^TB_{n,k+2}\bb{I+\sum_{j=2}^{k+1}\eta_i X_jX_j^T}\bb{I+\eta_1\Sigma}\bb{X_1X_1^T-\Sigma}\bb{I+\sum_{j=2}^{k+1}\eta_j X_jX_j^T}B_{n,k+2}^Tv_1}+O(\eta_1^2k_1^2)\alpha_{n,k+2}\notag\\&=\sum_{j=2}^{k+1}\E\bbb{v_1^TB_{n,k+2} \bb{\eta_j X_jX_j^T}\bb{I+\eta_1\Sigma}\bb{X_1X_1-\Sigma}B_{n,k+2}^Tv_1}+O(\eta_1^2k_1^2)\alpha_{n,k+2}\notag\\
%&=\sum_{i=2}^{k+1}\E\bbb{v_1^TB_{n,k+2} \bb{\eta_j X_jX_j^T}\bb{I+\eta_1\Sigma}\bb{X_1X_1-\Sigma}B_{n,k+2}^Tv_1}+O(\eta_1^2k_1^2)\alpha_{n,k+2}\notag\\
\leq\sum_{j=2}^{k+1}\eta_j\E\bbb{v_1^TB_{n,k+2} \underbrace{\E\bbb{\bb{ X_jX_j^T}\bb{I+\eta_1\Sigma}\bb{X_1X_1-\Sigma}|X_{k+2},\dots,X_n}}_{T_{1,j}}B_{n,k+2}^Tv_1}+O(\eta_1^2k_1^2)\alpha_{n,k+2}\notag
}
Naively bounding the $T_{1,j}$ term by $O(1)$ leads to the same rate as downsampled Oja's algorithm. %Indeed, naively it can be bounded as $O(1)$. 
%But we will use a delicate analysis of this to show that it is much smaller. %In the following lemma, we will prove such a bound for $T_{21}^T$ where 
% \begin{figure}[H]
%     \centering
%     \includegraphics[width=0.8\textwidth]{}
%     \caption{Now the intermediate matrices are replaced by $I+\sum_{i=2}^{k+1}\eta_i X_iX_i^T$.}
%     \label{fig:approx2}
% \end{figure}

%\textbf{Approximation: take two:}

%Recall that, approximating the matrix product in Eq~\ref{eq:matrixprod} by the $0^{th}$ term, i.e. the identity incurs a $O(\eta_1 k_1)$ error.
%However, if we approximate it by the identity plus the linear term, the error would be $O(\eta_1k_1)^2$. The question is, by including the linear term, can we get a sharper bound?

In the following lemma, we will establish that, indeed, $T_{1,j}$ has a much smaller norm. The novelty of our bound is not just in using the mixing properties of the Markov chain but also in teasing out the variance parameter $\mathcal{V}$. We will state the lemma, in a slightly more general form as -
\begin{lemma}
\label{lemma:bound_covariance}
Under Assumptions~\ref{ass:model},~\ref{assumption:variance_bound} and~\ref{assumption:norm_bound}, for $i < j \leq i+k_{i}$,
    \bas{\left\|\E\bbb{\bb{X_iX_i^{T}-\Sigma}SX_jX_j^{T}|s_{i+k_{i}},\dots s_{n}}\right\|_{2} \leq \bb{\left|\lambda_{2}\bb{P}\right|^{j-i}\mathcal{V} + 8\eta_{i}^{2}\mathcal{M}\bb{\mathcal{M}+\lambda_{1}}}\left\|S\right\|_{2}
    }
where $k_{i}$ is as defined in Lemma~\ref{lemma:learning_rate_schedule} and $S$ is a constant symmetric positive semi-definite matrix.
\end{lemma}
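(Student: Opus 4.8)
The plan is to expand the $k$-step transition over the interval $(i, i+k_i)$ and exploit reversibility via Lemma~\ref{lemma:reverse_mixing}, while carefully isolating the variance parameter $\mathcal{V}$ from the mixing error. Write $Y_i := X_iX_i^{T} - \Sigma$ and $Z_j := X_jX_j^{T}$. The quantity to bound is $\E\bbb{Y_i S Z_j \mid s_{i+k_i}, \dots, s_n}$. First I would condition further on the full state sequence $s_i, \dots, s_n$: since, given the states, the data vectors $X_\ell \sim D(s_\ell)$ are independent, and since $i < j$, we get $\E\bbb{Y_i S Z_j \mid s_i,\dots,s_n} = \E_{D(s_i)}[Y_i]\, S\, \E_{D(s_j)}[Z_j]$. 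Write $A(s) := \E_{D(s)}[XX^T] - \Sigma = \Sigma_s + \mu_s\mu_s^T - \Sigma$ and $B(s) := \E_{D(s)}[XX^T] = A(s) + \Sigma$; by Assumption~\ref{assumption:variance_bound} (and the remark after Assumption~\ref{assumption:norm_bound}), $\|A(s)\|_2 \le \mathcal{M}$, and more usefully the averaged second moment $\|\E_{s\in\pi}[A(s)^2]\|_2 \le \mathcal{V}$. Then
\bas{
\E\bbb{Y_i S Z_j \mid s_{i+k_i},\dots,s_n} = \E\bbb{A(s_i)\, S\, B(s_j) \mid s_{i+k_i},\dots,s_n} = \E\bbb{A(s_i)\, S\, A(s_j) \mid s_{i+k_i},\dots,s_n} + \E\bbb{A(s_i)\mid s_{i+k_i},\dots,s_n}\, S\, \Sigma,
}
where the Markov property was used to drop the conditioning on the future beyond $s_j$ inside the second piece (since $A(s_i), A(s_j)$ depend only on states up to time $j \le i+k_i$).

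Next I would handle the two pieces separately. For the $A(s_i)\,S\,\Sigma$ term: conditioning only on $s_{i+k_i}$ and using reversibility (Lemma~\ref{lemma:reverse_mixing}), the conditional law of $s_i$ given $s_{i+k_i}$ is within total variation $d_{\text{mix}}(k_i)$ of $\pi$, and $\E_{s\sim\pi}[A(s)] = 0$ by the $\mu=0$ assumption together with $\E_{s\sim\pi}[\Sigma_s + \mu_s\mu_s^T] = \Sigma$. Hence $\|\E\bbb{A(s_i)\mid s_{i+k_i}}\|_2 \le 2 d_{\text{mix}}(k_i)\,\mathcal{M}$, and since $k_i = \dmix(\eta_i^2)$-type choice forces $d_{\text{mix}}(k_i) \le \eta_i^2$, this contributes at most $2\eta_i^2 \mathcal{M} \|\Sigma\|_2 \|S\|_2 \le 2\eta_i^2\mathcal{M}(\mathcal{M}+\lambda_1)\|S\|_2$. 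For the $A(s_i)\,S\,A(s_j)$ term I would condition on $s_j$ first: $\E\bbb{A(s_i)\,S\,A(s_j)\mid s_{i+k_i},\dots} = \E\bbb{\,\E[A(s_i)\mid s_j]\,S\,A(s_j)\mid s_{i+k_i},\dots}$. Now $\E[A(s_i)\mid s_j]$ is a $(j-i)$-step backward conditional expectation; by reversibility this is governed by $P^{j-i}$ acting on the vector-valued function $A(\cdot)$. The key point is that $A$ has mean zero under $\pi$, so it lies in the orthogonal complement of the constants, and the reversible operator $P^{j-i}$ restricted to that subspace has norm $|\lambda_2(P)|^{j-i}$ (in the $\pi$-weighted inner product). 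This is exactly where the factor $|\lambda_2(P)|^{j-i}\mathcal{V}$ should come out: applying Cauchy–Schwarz in $L^2(\pi)$ and the spectral bound, $\|\E_{s_j\sim\pi}\big[(\E[A(s_i)\mid s_j])\, S\, A(s_j)\big]\|_2$ is controlled by $|\lambda_2(P)|^{j-i}\,\|\E_{s\sim\pi}[A(s)^2]\|_2^{1/2}\cdot\|\E_{s\sim\pi}[A(s)^2]\|_2^{1/2}\|S\|_2 \le |\lambda_2(P)|^{j-i}\mathcal{V}\|S\|_2$, plus a lower-order correction of size $O(\eta_i^2\mathcal{M}(\mathcal{M}+\lambda_1))$ coming from replacing "conditioned on $s_{i+k_i},\dots$" by "$\pi$-averaged" via $d_{\text{mix}}(k_i) \le \eta_i^2$.

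The main obstacle I anticipate is making the spectral step rigorous for matrix-valued functions: $P^{j-i}$ acts on scalar functions on $\Omega$, so I need to treat each entry (or each quadratic form $u^T A(\cdot) u$) of $A$ as a scalar function with $\pi$-mean zero, apply the scalar spectral inequality $\|P^m f - \pi(f)\|_{L^2(\pi)} \le |\lambda_2(P)|^m \|f\|_{L^2(\pi)}$ entrywise, and then re-assemble the bound into an operator-norm statement on $\E[A(s_i)\mid s_j]\,S\,A(s_j)$ without losing dimension-dependent factors — this is why the argument should be phrased via quadratic forms $u^T(\cdot)u$ and a Cauchy–Schwarz / spectral-calculus estimate $\sum_{s_j}\pi(s_j) (u^T\E[A(s_i)\mid s_j]u)(u^TA(s_j)u)$, bounded using $\|P^{j-i}g\|_{L^2(\pi)}\le|\lambda_2(P)|^{j-i}\|g\|_{L^2(\pi)}$ with $g(s)=u^TA(s)u$, and then $\E_\pi[(u^TA(s)u)^2] \le \|u\|_2^2\,\E_\pi[u^T A(s)^2 u] \le \mathcal{V}\|u\|_2^4$ via Cauchy–Schwarz in the vector $Au$. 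The bookkeeping of the three error sources — (i) the target $|\lambda_2(P)|^{j-i}\mathcal{V}$, (ii) the $d_{\text{mix}}(k_i)\le\eta_i^2$ slack from conditioning on the future, (iii) the $\|A(s)\|_2\le\mathcal{M}$ crude bounds on the correction terms — into the clean form $|\lambda_2(P)|^{j-i}\mathcal{V} + 8\eta_i^2\mathcal{M}(\mathcal{M}+\lambda_1)$ is routine once the spectral step is set up, since $j - i \le k_i$ and all the $\eta_i^2$ corrections can be absorbed into the stated constant $8$.
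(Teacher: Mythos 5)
Your overall skeleton coincides with the paper's proof: split $\E\bbb{\bb{X_iX_i^T-\Sigma}SX_jX_j^T\mid s_{i+k_i},\dots,s_n}$ into a centered$\times\Sigma$ cross term, handled by reverse mixing (Lemma~\ref{lemma:reverse_mixing}) up to $O(\eta_i^2\mathcal{M}\lambda_1)$, and a centered$\times$centered term whose $\pi$-averaged part carries $\left|\lambda_2\bb{P}\right|^{j-i}\mathcal{V}$ through the spectral bound for the reversible kernel on mean-zero functions; the paper implements that key step with the whitened kernel $Q=\Pi^{\frac{1}{2}}\bb{P^{t}-\mathbbm{1}\mathbbm{1}^T\Pi}\Pi^{-\frac{1}{2}}$ together with the matrix Cauchy--Schwarz Lemma~\ref{lemma:matrix_cauchy_schwarz}. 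However, the concrete mechanism you propose for exactly this step is incorrect as written. What must be controlled is the operator norm of $\sum_y\pi(y)\,\E\bbb{A(s_i)\mid s_j=y}\,S\,A(y)$, i.e.\ bilinear pairings of the form $\langle A(x)v,\,S A(y)u\rangle$, which couple all matrix entries. Your displayed reduction instead bounds $\sum_y\pi(y)\bb{u^T\E\bbb{A(s_i)\mid s_j=y}u}\bb{u^TA(y)u}$, a product of two scalar quadratic forms; this neither equals nor dominates the quadratic form of the matrix product (for instance, if $A(y)u\perp u$ for all $y$ the scalar forms vanish while the matrix product need not), so the scalar-function spectral inequality applied to $g(s)=u^TA(s)u$ cannot deliver the stated bound. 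The repair is to apply the spectral inequality componentwise to the vector-valued function $f_u(s):=S^{\frac12}A(s)u$ and use Cauchy--Schwarz in $L^2(\pi;\mathbb{R}^d)$, noting $\E_\pi\|f_u\|_2^2=\E_\pi\bbb{u^TA(s)SA(s)u}\le\|S\|_2\mathcal{V}\|u\|_2^2$; that is precisely the content of the paper's Lemma~\ref{lemma:matrix_cauchy_schwarz}, so once fixed your argument collapses onto the paper's.

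A second, smaller gap is in the error bookkeeping: you attribute the ``replace the conditional law by $\pi$'' slack to $d_{\text{mix}}(k_i)\le\eta_i^2$, but $s_j$ is only $i+k_i-j$ steps from the conditioned state $s_{i+k_i}$, and this gap can be as small as $0$ (e.g.\ $j=i+k_i$), so that slack is not small by itself. What rescues the bound — and what the paper's estimate of its term $T_{21}$ does explicitly — is that this factor always appears multiplied by $d_{\text{mix}}(j-i)$ (coming from $\E\bbb{A(s_i)\mid s_j}$ being nearly $\pi$-centered), and the product of the two mixing factors is controlled via $\lfloor a\rfloor+\lfloor b\rfloor\ge\lfloor a+b\rfloor-1$ to yield $8\mathcal{M}^2 d_{\text{mix}}(k_i)\le 8\eta_i^2\mathcal{M}^2$. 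Your sketch needs this product structure spelled out to reach the stated constant $8\eta_i^2\mathcal{M}\bb{\mathcal{M}+\lambda_1}$.
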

Lemma~\ref{lemma:bound_covariance} bounds the norm of the covariance between matrices $\bb{X_iX_i^{T}-\Sigma}S$ and $X_jX_j^{T}$. In particular, this implies that the norm of $T_{1,j}$ decays as $\left|\lambda_{2}\bb{P}\right|^{j-1}$. The proof uses a spectral argument that replaces a coarse approximation by a sum of $k_i$ $O(1)$ terms to sum of $k$ \textit{exponentially decaying} terms, thereby removing the dependence on $k_i$, which can be as large as $\log(n)$. The proof is deferred to the Supplement section~\ref{appendix:proofs_of_bounding_convergence}.
The details can be found in Supplement section~\ref{appendix:proofs_of_bounding_convergence}.

Let $\left\{c_1,c_2,c_3,c_4\right\}$ be positive constants for ease of notation. Coming back to Eq~\ref{eq:v1bound}, we can bound $T_1$ as follows:
$\begin{aligned}
T_1\leq \alpha_{n,k+2}\bb{\eta_1 \frac{c_1|\lambda_2(P)|\mathcal{V}}{1-|\lambda_2(P)|}+c_2\eta_1^2k_1^2}
\end{aligned}$. A similar argument can be applied to bound $T_2$ as:
$T_2\leq \alpha_{n,k+2}\bb{\mathcal{V}+c_3\eta_1k_1^2}$.
Putting everything together in \ref{eq:v1bound}, we have 
\bas{
    \alpha_{n,1} &\leq \underbrace{\bb{\left(1+\eta_{1}\lambda_{1}\right)^{2} + \mathcal{V}}\alpha_{n,2}}_{\text{Recursion for IID setting}} \;\; + \;\;  \underbrace{\bb{\frac{c_1|\lambda_{2}\bb{P}|}{1 - \left|\lambda_{2}\bb{P}\right|}}\mathcal{V} \eta_{1}^{2}\alpha_{n,k+2}}_{\text{Error due to Markovian dependence}} \;\; + \;\; \underbrace{c_4\eta_{1}^{3}k_1^{2}\alpha_{n,k+2}}_{\text{Error due to approximation of matrix product}} %\label{eq:v1upperbound_recursion_sketch}
}
Recursing on this inequality gives us our bound on $\E\bbb{v_1^TB_nB_n^Tv_1}$ (Theorem \ref{theorem:v1upperbound}). We are now ready to present all our accompanying theorems.
\section{Intermediate Theorems for Convergence Analysis}\label{sec:intermed}
In this section, we present our accompanying theorems which are used to obtain the main result in Theorem \ref{theorem:oja_convergence_rate}. But before doing so, we will need to establish some notation. Let $k_{i} := \tau_{\text{mix}}\bb{\eta_{i}^{2}}$, and the step-sizes be set as $\eta_{i} := \frac{\alpha}{\bb{\lambda_{1}-\lambda_{2}}\bb{\beta + i}}$ with $\alpha,\beta$ as defined in Theorem \ref{theorem:oja_convergence_rate}. Let $\epsilon := \frac{1}{100}$. As shown in Lemma \ref{lemma:learning_rate_schedule} in Supplement Section \ref{appendix:useful_results} our choice of step-sizes satisfy, $ \forall i \in [n]$,
%\begin{definition}[Step-size properties]

\begin{enumerate}[label=\textbf{C.\arabic*}]
    \item[\textbf{C.1}]$\eta_{i}k_{i}\bb{\mathcal{M}+\lambda_{1}} \leq \epsilon$
    \qquad\quad \ \ \textbf{C.2  }\ \ (Slow decay) $\eta_{i} \leq \eta_{i-k_{i}} \leq \bb{1+2\epsilon}\eta_{i} \leq 2\eta_{i}$
\end{enumerate}
%\end{definition}
Further, we define  scalar variables -
\begin{align}
    r &:= 2\bb{1+\epsilon}k_{n}\eta_{n}\left(\mathcal{M}+\lambda_{1}\right), \qquad \zeta_{k,t} := 40k_{t+1}\bb{\mathcal{M}+\lambda_{1}}^{2}\notag\\
    % \xi_{k,t} &:= 6\mathcal{M}\bbb{1 + 3k_{t+1}^{2}\bb{\mathcal{M}+\lambda_{1}}^{2}},\qquad\zeta_{k,t} := 40k_{t+1}\bb{\mathcal{M}+\lambda_{1}}^{2}\notag\\
    \psi_{k,t} &:= 6\mathcal{M}\bbb{1 + 3k_{t+1}^{2}\bb{\mathcal{M}+\lambda_{1}}^{2}}, \qquad     \mathcal{V}' := \frac{1 + \bb{3 + 4\epsilon}|\lambda_{2}\bb{P}|}{1 - \left|\lambda_{2}\bb{P}\right|}\mathcal{V}
\end{align}
and recall the definitions of $B_{t}$ and $\newB{i}{j}$ in Eqs~\ref{definition:Bt} and~\ref{definition:Bji}, respectively. %Then, under assumptions \ref{assumption:variance_bound} and \ref{assumption:norm_bound}, we state our results below and draw parallels with the results in the IID setting proved in \cite{jain2016streaming} (Lemmas 5.1,5.2,5.3 and 5.4).
We are now ready to present the theoretical results needed to prove our main result. For simplicity of notation, we present versions of the results by using $\eta_{i} := \frac{\alpha}{\bb{\lambda_{1}-\lambda_{2}}\bb{\beta + i}}$ with $\alpha,\beta$ as defined in Theorem \ref{theorem:oja_convergence_rate}. However, these theorems are in fact valid under more general step-size schedules. We state and prove the more general versions in the Supplement Section~\ref{appendix:proofs_of_bounding_convergence}.
\begin{theorem}
\label{theorem:v1upperbound}
Under Assumptions~\ref{ass:model},~\ref{assumption:variance_bound} and~\ref{assumption:norm_bound}, for all $n>k_n$, and $\eta_i$ satisfying~\textbf{C.1} and~\textbf{C.2}, we have:
$\begin{aligned}
    & \E\bbb{v_1^TB_nB_n^Tv_1} \leq \bb{1+r}^{2}\exp\left(\sum_{t=1}^{n-k_{n}}\bb{2\eta_{t}\lambda_{1} + \eta_{t}^{2}\bb{\mathcal{V}'+\lambda_1^2}+\eta_t^3\psi_{k,t}}\right).
\end{aligned}$ 
% \begin{align*}
%     & \mathbb{E}\left[v_{1}^{T}\newB{1}{n}\newB{1}{n}^{T}v_{1}\right] \leq \bb{1+r}^{2}\exp\left(\sum_{t=1}^{n-k_{n}}2\eta_{t}\lambda_{1} + \sum_{t=1}^{n-k_{n}}\eta_{t}^{2}\overline{\mathcal{V}_{k,t}}\right)
% \end{align*}
% where $C_{k,t} := \eta_{t}\mathcal{M}\bb{3 + 8k_{t+1}^{2}\bb{\mathcal{M}+\lambda_{1}}^{2}}$
% $C_{k,t} := 30\eta_{t}k_{t+1}^{2}\mathcal{M}\bb{\mathcal{M}+\lambda_{1}}^{2}$ 
\end{theorem}
The three primary differences with the IID case are a) the $\bb{1+r}^{2}$ term, which arises since the recursion sketched in Section~\ref{sec:maintools} leaves out the last $k_n$ terms which are bounded by $\bb{1+r}^{2}$; (b) the new factor of $\frac{1}{1-|\lambda_{2}\bb{P}|}$ with $\mathcal{V}$ due to the Markovian dependence between terms; and c) the extra lower order term $\eta_t^3\psi_{k,t}$ arising from the use of Lemmas~\ref{lemma:etakproduct} and~\ref{lemma:bound_covariance}. %This is a result of our approximation of the matrix product while conditioning and using mixing.
% In the above theorem, the $1+r$ appears because the recursion is true upto $i\leq n-k_n$, and we use a simple bound for the last $k_n$ terms. It seems like the right hand side has a mix of $\eta_i$ and $\eta_{i-k_i}$ terms. As it turns out, for our specific choice of $\alpha,\beta$, we can show that the step-sizes follow a ``slow-decay" property such that $\forall i, \; \eta_{i} \leq \eta_{i-k_{i}} \leq 2\eta_{i}$. 
%We later show in Lemma \ref{lemma:learning_rate_schedule} in Supplement \ref{appendix:proofs_of_main_results} that $\eta_{i} := \frac{\alpha}{\bb{\lambda_{1}-\lambda_{2}}\bb{\beta + i}}$ satisfies these conditions for appropriately chosen $\alpha,\beta$.
\begin{theorem}
\label{theorem:Vperpupperbound}
Let $\init := \min\left\{t : t \in [n], t-k_{t} \geq 0 \right\}$. Under Assumptions~\ref{ass:model},~\ref{assumption:variance_bound} and~\ref{assumption:norm_bound}, for all $n>\init$, and $\eta_i$ satisfying~\textbf{C.1} and~\textbf{C.2},
\begin{align*}
\mathbb{E}\left[\Tr\left(V_{\perp}^{T}\B_{n}\B_{n}^{T}V_{\perp}\right)\right] &\leq \bb{1+5\epsilon}\exp\left(\sum_{t=\init+1}^n 2\eta_t\lambda_2+\eta_{t-k_{t}}^2\bb{\mathcal{V}'+\lambda_1^2}+\eta_{t-k_{t}}^3\psi_{k,t}\right)\\
&\qquad\times\left(d + \sum_{t=\init+1}^{n}\bb{\mathcal{V}'+\eta_t\psi_{k,t}}C_{k,t}'\eta_{t-k_{t}}^{2}\exp\left(\sum_{i=\init+1}^{t}2\eta_i\left( \lambda_1-\lambda_{2}\right)\right)\right)
\end{align*}
where $C_{k,t}' := \bb{1+\frac{\delta}{200}}\exp\bb{2\lambda_{1}\sum_{i=1}^{\init}\eta_{j}}$.
% \bas {
%     C_{k,t}' &:= \exp\bb{2\lambda_{1}\sum_{j=1}^{\init}\bb{\eta_{j}-\eta_{t-\init+j}}+\sum_{j=1}^{t-\init}\eta_{j}^{2}\bb{\overline{\mathcal{V}_{k,j}}-\overline{\mathcal{V}_{k,j+u}}}}
% }
\end{theorem}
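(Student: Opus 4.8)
# Proof Proposal for Theorem~\ref{theorem:Vperpupperbound}

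\textbf{Overall strategy.} The plan is to mirror the recursive ``peel one matrix at a time from the inside'' argument already carried out for $\E[v_1^T B_n B_n^T v_1]$ in the proof sketch of Theorem~\ref{theorem:v1upperbound}, but now applied to the trace quantity $\gamma_{n,m} := \E[\Tr(V_\perp^T B_{n,m} B_{n,m}^T V_\perp)]$ with $m$ ranging down toward the base index $u$. The key structural difference from the $v_1$ case is that $V_\perp^T \Sigma = \Lambda_\perp V_\perp^T$ where $\Lambda_\perp$ has spectral norm $\lambda_2$ rather than $\lambda_1$, so the ``clean'' part of each peeled factor contributes a multiplicative $(1+\eta_t\lambda_2)^2$ instead of $(1+\eta_t\lambda_1)^2$, yielding the $\exp(\sum 2\eta_t\lambda_2)$ growth factor. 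The extra additive sum inside the parentheses (the $d + \sum_t (\dots)$ part) is the hallmark of a ``perpendicular component''-type recursion: it arises because the noise terms, rather than simply inflating the current iterate, inject energy that must then be tracked as it grows at rate $2\eta_i(\lambda_1-\lambda_2)$ through the remaining iterations — exactly as in Lemma~5.2 of~\cite{jain2016streaming}, but with the Markovian corrections $\mathcal{V}'$ (in place of $\mathcal{V}$) and the lower-order $\eta_t\psi_{k,t}$ term.

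\textbf{Step-by-step plan.} First I would set up the one-step expansion: write $B_{n,m} = B_{n,m+1}(I + \eta_m \Sigma + \eta_m(X_mX_m^T-\Sigma))$ and expand $\Tr(V_\perp^T B_{n,m} B_{n,m}^T V_\perp)$ into the ``square of clean part'' term, a cross term linear in $(X_mX_m^T-\Sigma)$, and a quadratic-in-noise term — the analogue of Eq~\ref{eq:v1bound}. Second, for the clean term I would use $\|(I+\eta_m\Sigma)V_\perp\|_2 \le 1+\eta_m\lambda_2$ together with the fact that $V_\perp V_\perp^T + v_1 v_1^T = I$ to bound it by $(1+\eta_m\lambda_2)^2 \gamma_{n,m+1}$ plus a small leakage into the $v_1$ direction controlled via Theorem~\ref{theorem:v1upperbound}; this leakage, propagated forward, is what produces the $C_{k,t}'$ prefactor involving $\exp(2\lambda_1\sum_{i=1}^u \eta_j)$ and the $(1+\delta/200)$ slack. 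Third — the heart of the argument — I would handle the cross and quadratic noise terms by conditioning $k_m$ steps into the future (using $B_{n,m} \approx B_{n,m+k_m}\prod_{i=m}^{m+k_m-1}(I+\eta_iX_iX_i^T)$ via Lemma~\ref{lemma:etakproduct}, Eqs~\ref{eq:approx1}–\ref{eq:approx2}), then applying Lemma~\ref{lemma:bound_covariance} to show the cross term decays geometrically in the separation and sums to something of order $\eta_m^2 \mathcal{V}'$, while the quadratic term contributes $\eta_m^2(\mathcal{V} + \text{lower order})$. Collecting, I would obtain the recursion
\[
\gamma_{n,m} \le (1+\eta_m\lambda_2)^2 \gamma_{n,m+1} + \eta_{m-k_m}^2(\mathcal{V}' + \eta_m\psi_{k,m})\cdot(\text{something} \le C_{k,m}' \cdot \E[v_1^TB_{n,m+\cdots}\cdots]) + \dots
\]
and fourth, I would unroll this recursion from $m = u+1$ to $n$, using the slow-decay property $\eta_{t-k_t} \le (1+2\epsilon)\eta_t$ to align indices, bounding the accumulated clean-growth factor by $\exp(\sum 2\eta_t\lambda_2 + \eta_{t-k_t}^2(\mathcal{V}'+\lambda_1^2) + \eta_{t-k_t}^3\psi_{k,t})$, and recognizing the injected-noise sum as $\sum_{t=u+1}^n (\mathcal{V}'+\eta_t\psi_{k,t})C_{k,t}'\eta_{t-k_t}^2 \exp(\sum_{i=u+1}^t 2\eta_i(\lambda_1-\lambda_2))$. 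The base case at index $u$ contributes the $d$ (since $\Tr(V_\perp^T V_\perp) = d-1 \le d$, modulo the burn-in factor $\exp(2\lambda_1\sum_{i=1}^u\eta_j)$ absorbed into $C_{k,t}'$), and the overall $(1+5\epsilon)$ collects all the multiplicative $(1+O(\epsilon))$ slacks from the $k_m$-step approximations.

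\textbf{Main obstacle.} The hardest part will be controlling the cross term $\E[\Tr(V_\perp^T B_{n,m+k_m}(\cdots)(X_mX_m^T-\Sigma)(\cdots)B_{n,m+k_m}^T V_\perp)]$ cleanly enough that the geometric series $\sum_{j} |\lambda_2(P)|^{j-m}$ from Lemma~\ref{lemma:bound_covariance} collapses to the factor $\frac{1+(3+4\epsilon)|\lambda_2(P)|}{1-|\lambda_2(P)|}$ hidden inside $\mathcal{V}'$, \emph{without} losing a stray factor of $\gamma$ versus $\alpha$ (i.e. keeping the perpendicular-trace recursion self-contained except for the explicitly-tracked $v_1$-leakage). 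Concretely, after conditioning one must show the matrix $S$ appearing in Lemma~\ref{lemma:bound_covariance} — which here is $(I+\eta_m\Sigma)$ sandwiched appropriately — stays PSD with norm $\le 1+\eta_m\lambda_1$, and that the ``$B_{n,m+k_m}(\cdots X_i X_i^T \cdots)$'' factors from the linear term of Lemma~\ref{lemma:etakproduct}'s expansion recombine into $\gamma_{n,\cdot}$-type quantities rather than uncontrolled operator norms; balancing the $O((\eta_m k_m)^2)$ error from dropping the higher-order terms in Eq~\ref{eq:matrixprod} against the $O(\eta_m^2)$ target size (which works precisely because $k_m = \tau_{\text{mix}}(\eta_m^2)$ makes the total-variation error a genuinely smaller-order term) is the delicate bookkeeping that the Supplement presumably spells out in full.
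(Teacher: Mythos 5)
You have the right Markovian toolkit (Lemma~\ref{lemma:etakproduct}, Lemma~\ref{lemma:bound_covariance}, the adaptive choice $k_t=\tau_{\text{mix}}(\eta_t^2)$, the geometric sum collapsing into $\mathcal{V}'$), but your core decomposition peels from the wrong end, and that is a genuine gap. You write $B_{n,m}=B_{n,m+1}(I+\eta_m A_m)$ and condition on the future, exactly as in the proof of Theorem~\ref{theorem:v1upperbound}. For the perpendicular trace this does not yield the $\lambda_2$ rate: in $\E\bbb{\Tr\bb{V_{\perp}^{T}B_{n,m+1}(I+\eta_m\Sigma)^2B_{n,m+1}^{T}V_{\perp}}}$ the clean factor $(I+\eta_m\Sigma)$ is separated from $V_{\perp}$ by $B_{n,m+1}^{T}$, so the inequality $\|(I+\eta_m\Sigma)V_{\perp}\|_2\le 1+\eta_m\lambda_2$ you invoke never applies, and the naive bound is $(1+\eta_m\lambda_1)^2\gamma_{n,m+1}$. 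Your proposed repair (insert $v_1v_1^T+V_{\perp}V_{\perp}^T=I$ and call the $v_1$ part a leakage ``controlled via Theorem~\ref{theorem:v1upperbound}'') does not work as stated: the leakage is $(1+\eta_m\lambda_1)^2\,\E\|V_{\perp}^{T}B_{n,m+1}v_1\|_2^2$, a mixed quantity (of $v_1^{T}B^{T}Bv_1$ type) that Theorem~\ref{theorem:v1upperbound}, which bounds $\E[v_1^{T}BB^{T}v_1]$, does not control, and it would require its own recursion. The paper sidesteps all of this by recursing in the opposite direction: it peels the \emph{latest} factor, $B_t=(I+\eta_tA_t)B_{t-1}$, so that after a cyclic shift the clean part is $(I+\eta_t\Sigma)\vp\vp^{T}(I+\eta_t\Sigma)=\vp(I+\eta_t\Lambda_{\perp})^2\vp^{T}$ and the $(1+\eta_t\lambda_2)^2$ factor is exact, and it conditions on the \emph{past} state $s_{t-k_t}$ via $B_{t-1}=(I+R)B_{t-k_t}$ and Lemmas~\ref{lemma:bound_Pt} and~\ref{lemma:bound_Qt}; the main text explicitly notes that this theorem conditions on the past while Theorem~\ref{theorem:v1upperbound} conditions on the future.

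This mismatch also breaks your bookkeeping of $\init$, $d$ and $C_{k,t}'$. In the paper's recursion $\beta_t\le\bb{1+2\eta_t\lambda_2+\eta_{t-k_t}^{2}\overline{\mathcal{V}_{k,t}}}\beta_{t-1}+\eta_{t-k_t}^{2}\bb{\mathcal{V}'+\xi_{k,t}}\alpha_{t-1}$ there is no clean-term leakage at all: the factor $\exp\bb{\sum_{i}2\eta_i(\lambda_1-\lambda_2)}$ and the constant $C_{k,t}'$ come from substituting the Theorem~\ref{theorem:v1upperbound}-type bound for $\alpha_{t-1}$ inside the injected-variance term, not from propagating clean-term leakage, and the $d$ comes from the base case $\beta_{\init}=\E\bbb{\Tr\bb{V_{\perp}^{T}B_{\init}B_{\init}^{T}V_{\perp}}}\le(1+2r'+r'^2)d$ at the \emph{start} of the stream, where $\init$ is the first index from which one can look back $k_{\init}$ steps. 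With your inner peeling the natural base case sits at the other end (the last $k_n$ factors $\approx I$), so the $d$ and $\init$ of the statement cannot be recovered where you place them. The Markovian ingredients of your plan (cross term via Lemma~\ref{lemma:bound_covariance} summing to $\mathcal{V}'$, quadratic term giving $\mathcal{V}$ plus lower order, $O((\eta k)^2)$ errors pushed into $\eta^3\psi_{k,t}$) do match the paper's Lemmas~\ref{lemma:bound_Pt} and~\ref{lemma:bound_Qt}, but only once the recursion is set up from the correct side.
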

Here, the difference is mainly in the new variable $\init$, arising since the recursion stops at $u$, not $1$. $\bb{1+5\epsilon}$ represents the approximation of the first $\init$ terms.
%The difference from  Theorem \ref{theorem:v1upperbound} because arises because Theorem \ref{theorem:Vperpupperbound} uses conditioning on the past, whereas the former uses conditioning on the future. The factor $\bb{1+5\epsilon}$ represents the approximation of the first $\init$ terms. The other differences regarding $\mathcal{V}'$ and $\psi_{k,t}$ remain the same as in the case of Theorem \ref{theorem:v1upperbound}.
% \rd It might be nice to plug in the values in the $1-t$ bound in terms of $\delta/m$\bk
\begin{theorem}
\label{theorem:v1lowerbound}
Under Assumptions~\ref{ass:model},~\ref{assumption:variance_bound} and~\ref{assumption:norm_bound}, for all $n>k_n$, $\eta_i$ satisfying~\textbf{C.1} and~\textbf{C.2}, and $s := 2r + \frac{\delta}{1000}$, we have:
$\begin{aligned}
& \mathbb{E}\left[v_{1}^{T}\B_{n}\B_{n}^{T}v_{1}\right]  \geq \bb{1-s}\exp\bb{\sum_{t=1}^{n-k_{n}}2\eta_{t}\lambda_{1} - \sum_{t=1}^{n-k_{n}}4\eta_{t}^{2}\lambda_{1}^{2}}.
\end{aligned}$
\end{theorem}
This differs from its IID counterpart by a multiplicative factor of $\bb{1-s}$ for the same reason as before, which also makes the sums go up to $\bb{n-k_n}$ instead of $n$. Note that for sufficiently large $n$ (Lemma \ref{lemma:additional_assumptions}), $r = O\bb{\frac{\log\bb{n}}{n}}$ is very small and $\delta \in \bb{0,1}$. Therefore, $\bb{1-s} \approx 1$ as large $n$. 
\begin{theorem}
\label{theorem:v1squareupperbound}
Under Assumptions~\ref{ass:model},~\ref{assumption:variance_bound} and~\ref{assumption:norm_bound}, for all $n>k_n$, and $\eta_i$ satisfying~\textbf{C.1} and~\textbf{C.2}, we have:
$\begin{aligned}
 \mathbb{E}\left[\left(v_{1}^{T}\B_{n}\B_{n}^{T}v_{1}\right)^{2}\right] \leq \bb{1+r}^{4}\exp\left(\sum_{t=1}^{n-k_{n}}4\eta_{t}\lambda_{1} + \sum_{t=1}^{n-k_{n}}\eta_{t}^{2}  \zeta_{k,t} \right)
\end{aligned}$.
% and $ \forall t, \; G_{k,t} :=  2\mathcal{M}^{2} + 16 k_{t+1}\mathcal{M}\bb{\mathcal{M}+\lambda_{1}} $.
\end{theorem}
The differences are similar to the last theorems involving $v_1$. Surprisingly, for this, the coarse approximation suffices, leading to an absence of the $\mathcal{V}$ term in the bound.
%Finally, this bound again exhibits the same patterns as the previous theorems involving $v_1$. An interesting point of difference with the IID counterpart is the absence of $\mathcal{V}$ in the bound. This is due to the use of the coarse approximation discussed in Section \ref{subsection:first_take}, which suffices in this case for our main result in Theorem \ref{theorem:oja_convergence_rate}. 
Having established these results, the final step is to substitute them into Eq~\ref{eq:sin2errorbound} and follow the proof recipe described earlier. This requires significant calculations and is deferred to the Supplement Section \ref{appendix:proofs_of_main_results}. 
\section{Experimental Validation}
\vspace{-.5em}
\label{section:experiments}
In this section, we present some simple experiments to validate our theoretical results. For more detailed experiments, see the Supplement. We design a Markov chain with $|\Omega|=10$ states, where the transition matrix entries $P_{ij}$ equal $\rho/(|\Omega|-1)$ for $i\neq j$ and $1-\rho$ for $i=j$. % each state has a probability of $\bb{1-\rho}$ to remain in the same state and $\frac{\rho}{|\Omega|-1}$ probability of transitioning to any of the remaining states, for $\rho \in \bb{0,1}$. 
Smaller values of $\rho$ lead to larger mixing times. It can be verified that the stationary distribution $\pi = \mathcal{U}\bb{\Omega}$ is uniform over the state-space and $|\lambda_{2}\bb{P}| \approx \bb{1-\rho}$. We set $\rho = 0.2$ for Figures~\ref{fig:online_pca_comparison1} and \ref{fig:online_pca_comparison3}, and vary it in Figure \ref{fig:online_pca_lambda_variation}. Each point in the plot is averaged over 20 random runs over different Markov chains, datasets, and initialization.

%We start at the stationary distribution and draw $Z_i \in \mathbb{R}^{d}$, $Z_{ij} \stackrel{IID}\sim \bernoulli\bb{\mu_{s_i}}$, where  $\mu_{s_i}$ are set at the beginning of each run uniformly at random from $\mathcal{U}\bb{0,0.05}$ and $d = 1000$. $Z_i$ is then normalized to have zero-mean and unit variance under $\pi$ for each component. Finally, we generate the samples $X_i$ as $X_i := T\bb{s_i}^{\frac{1}{2}}Z_i$, where the matrices $T_{s} \in \mathbb{R}^{d \times d}$ are defined as $T_{s}\bb{i,j} := \exp\bb{-|i-j|c_s}\sigma_{i}\sigma_{j}$ with $c_s := 1 + \frac{9\bb{s-1}}{|\Omega|-1}, \sigma_{i} := 5i^{-\beta}$. This construction ensures that $\mu = \E_{s_{i} \sim \pi}\E_{D\bb{s_i}}\bbb{X_i} = 0$ and $\Sigma = \E_{s_{i} \sim \pi}\E_{D\bb{s_i}}\bbb{X_iX_i^{T}} = \sum_{s}\pi_{s}T_s$. With $\beta = 1.0$, we empirically have $\lambda_{1}-\lambda_{2} \approx 20$. We set $\beta = 1.0$ for Figures~\ref{fig:online_pca_comparison1} and \ref{fig:online_pca_lambda_variation}, and vary it in Figure~\ref{fig:online_pca_comparison3}.

Each state $s \in \Omega$ is associated with $D(s) :=$ Bernoulli($p_s$) distribution. We set $d=1000$ and select $p_{s}\sim \mathcal{U}\bb{0,0.05}$ at the start of each random run. The covariance matrix,  $\Sigma_{s}$, for each state is set as $\Sigma_{s}\bb{i,j} = \exp\bb{-|i-j|c_s}\sigma_{i}\sigma_{j}$ where $c_s := 1 + 9\bb{\frac{s-1}{|\Omega|-1}}, \sigma_{i} := 5i^{-\beta}$. 
We start with the stationary distribution $\pi$, and for each state $s_i$, we draw IID samples $Z_i \sim D\bb{s_i}$. We standardize $Z_i$ such that all components have zero mean and unit variance under the state distribution, $D\bb{s_i}$. We then generate the sample data-point for PCA as $X_i = \Sigma_{i}^{\frac{1}{2}}Z_i$. By construction, %$\E[X_i]=0^{d}$ %\E_{D\bb{s_i}}\bbb{X_i} = 0^{d}$ 
 $\E_{{D\bb{s_i}}}\bbb{X_iX_i^{T}} = \Sigma_{i}$ and $\E[X_i]=0^{d}$. The step sizes for Oja's algorithm are set as $\eta_{i} = \frac{\alpha}{\bb{\beta + i}\bb{\lambda_{1}-\lambda_{2}}}$ for $\alpha = 5, \beta = \frac{5}{1-|\lambda_{2}\bb{P}|}$. For the downsampled variant, every $10^{th}$ data-point is considered, and $\beta$ is accordingly divided by 10. For the offline algorithm, we recompute the leading eigenvector of the sample covariance matrix of data-points seen so far. 
% The covariance matrix of the  $\Sigma = \sum_{i \in \Omega}\pi\bb{i}\Sigma_{i} = \frac{1}{|\Omega|}\sum_{i \in \Omega}\Sigma_{i}$ for this construction, . 
% During the random-walk, for each state $s_i$, we draw IID samples $Z_i \in \mathbb{R}^{d}$  from a Bernoulli($p$) distribution (Figures ~\ref{fig:online_pca_comparison1} and \ref{fig:online_pca_lambda_variation}). Experiments with Uniform distribution yielded identical results. We do $p \sim \mathcal{U}\bb{0,0.05}$ at the beginning. We normalize $Z_i$ such that all components have zero mean and unit variance. We then generate the sample data-point for PCA as $X_i = \Sigma_{i}^{\frac{1}{2}}Z_i$. By construction, $\E_{D\bb{s_i}}\bbb{X_i} = 0^{d}$ and $\E_{{D\bb{s_i}}}\bbb{X_iX_i^{T}} = \Sigma_{i}$. 

Figure~\ref{fig:online_pca_comparison1} compares the performance of different algorithms for the Bernoulli distribution.  Here, we are checking if the results obtained in Theorem \ref{theorem:oja_convergence_rate}, Proposition \ref{theorem:offline_sample_complexity}, and Corollary \ref{cor:data-drop} are reflected in the experiments.
\begin{figure}[t]
    \centering
    % \begin{subfigure}[b]{0.48\textwidth}
    %     \centering
    %     \includegraphics[width=\textwidth]{images/experiments/algorithm_comparison_only3_bernoulli.png}
    %     \caption{Bernoulli state distributions}
    %      \label{fig:online_pca_comparison1}
    % \end{subfigure}
    % \begin{subfigure}[b]{0.48\textwidth}
    %     \centering
    %     \includegraphics[width=\textwidth]{}
    %     \caption{Uniform state distributions}
    %      \label{fig:online_pca_comparison2}
    % \end{subfigure}
    \begin{subfigure}[b]{0.45\textwidth}
        \centering
        \includegraphics[width=\textwidth]{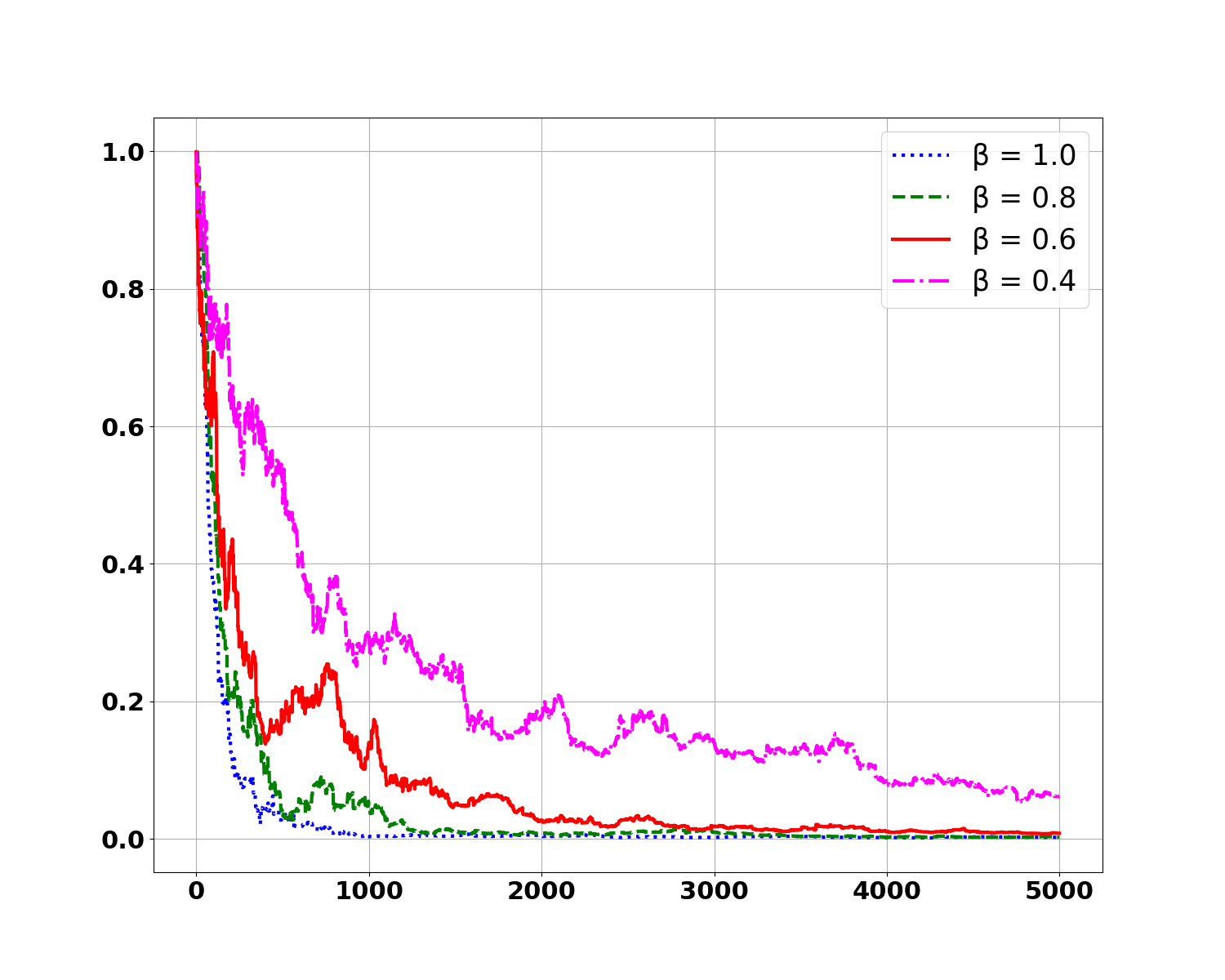}
        \caption{Variation of $\sin^2$ error with $\beta$}
         \label{fig:online_pca_comparison3}
    \end{subfigure}
    \begin{subfigure}[b]{0.45\textwidth}
        \centering
        \includegraphics[width=\textwidth]{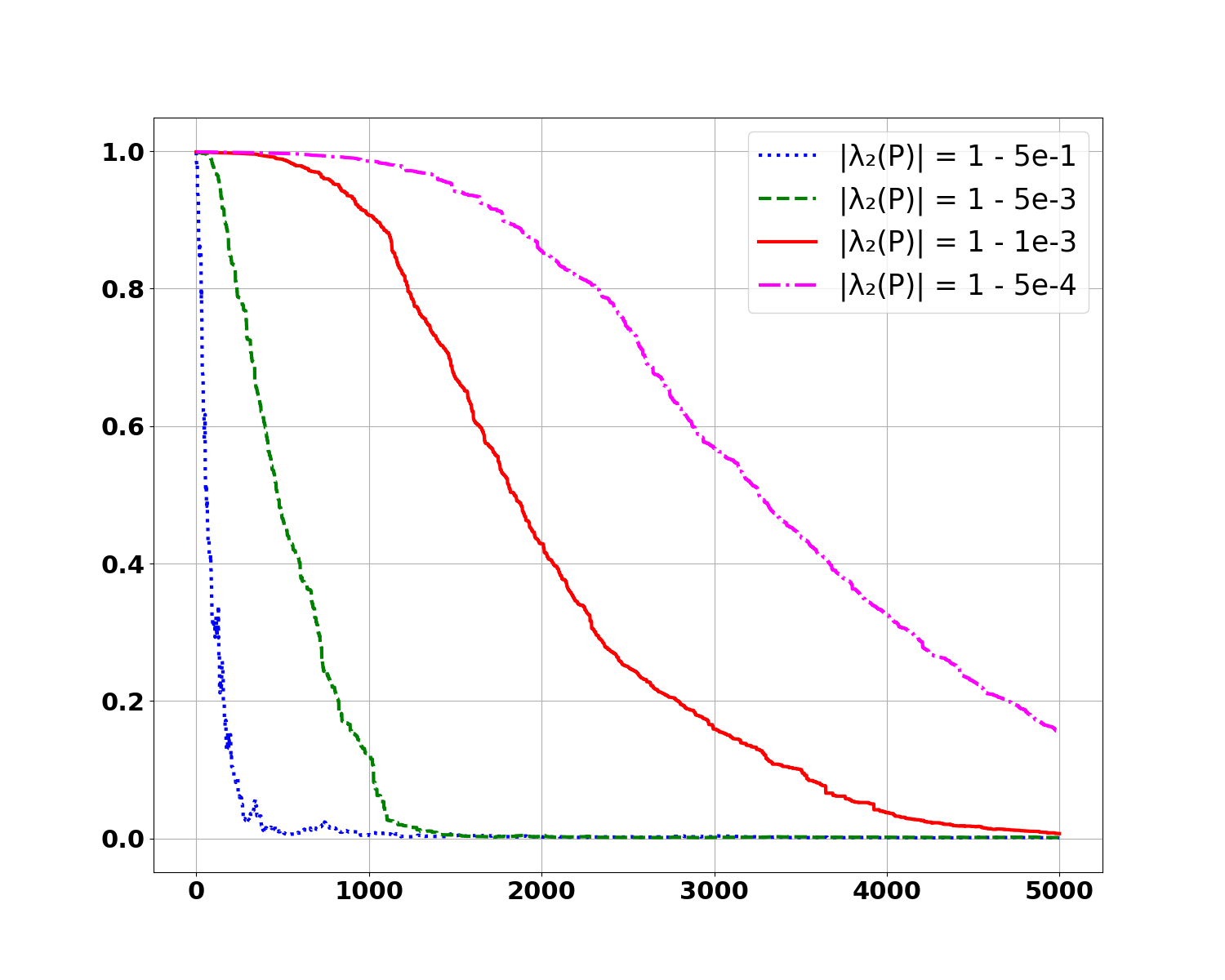}
        \caption{Variation of $\sin^2$ error with $|\lambda_{2}\bb{P}|$}
    \label{fig:online_pca_lambda_variation}
    \end{subfigure}
    \vspace{5pt}
    \caption{X axis represents the sample size, and Y axis represents the $\sin^{2}$ error.}
    \label{fig:experimental_validation}
\end{figure}
 The experimental results demonstrate that Oja's algorithm performs significantly better than the downsampled version, consistent with the theoretical results. It also shows that Oja's algorithm performs similarly to the offline algorithm, which is also confirmed by our theoretical results and that of~\cite{neeman2023concentration}. %Figure~\ref{fig:online_pca_comparison1} shows that data downsampling can in fact be much worse. 
 Figure~\ref{fig:online_pca_comparison3} compares the performance of Oja's algorithm for different covariance matrices. Smaller values of $\beta$ decrease the eigengap $\lambda_{1}-\lambda_{2}$, and hence lead to a slower convergence. Figure \ref{fig:online_pca_lambda_variation} confirms that smaller values of $\rho$ (larger values of $|\lambda_{2}\bb{P}|$) also worsen the rate, which matches with our theoretical results. 
% \textbf{Variation with $|\lambda_{2}\bb{P}|$}
% \\ \\

% \textbf{Dependence on $\mathcal{M}$ and $\mathcal{V}$}

% \bi   
%     \p $|\Omega| = 10, d = 10$
%     \p $\epsilon = 0.2, P_{ij} = \begin{cases}
%                     \frac{\epsilon}{|\Omega|} \text{ if } i \neq j \\ 
%                     1-\epsilon \text{ if } i = j
%                  \end{cases}, |\lambda_{2}\bb{P}| \approx 0.8$
%     \p $ \forall i \in \Omega, \mu_{i} = \mathbf{0}^{d}, \Sigma_{i} = \diag\bb{\gamma\exp\bb{-i}\mathbf{e}_{i}}, \gamma \sim \mathcal{U}\bb{10,10^{2}}$
%     \p $\pi = \mathcal{U}\bb{\Omega}$, $\Sigma = \frac{1}{|\Omega|}\sum_{i \in \Omega}\Sigma_{i}$, $\lambda_{1}-\lambda_{2} \approx 1.5$
%     \p Step-sizes $\eta_{i} := \frac{\alpha}{\bb{\beta + i}\bb{\lambda_{1}-\lambda_{2}}}$
%     \bi 
%         \p Data-Drop : $\alpha = 5, \beta = 50$
%         \p Vanilla : $\alpha = 5, \beta = 500$
%         % \p IID : $\alpha = 5, \beta = 100$
%     \ib
%     \p Drop number for datadrop : $k = 10$
%     \p Sine-squared error averaged over 50 runs
% \ib
\vspace{-1em}
\section{Conclusion}
\vspace{-1em}
We have considered the problem of streaming PCA for Markovian data, which has implications in various settings like decentralized optimization, reinforcement learning, etc. The analysis of streaming algorithms in such settings has seen a renewed surge of interest in recent years. However, the dependence between data-points makes it difficult to obtain sharp bounds. 
We provide, to our knowledge, the first sharp bound for obtaining the first principal component from a Markovian data stream that breaks the logarithmic barrier present in the analysis done for downsampled data. We believe that the theoretical tools that we have developed in this paper would enable one to obtain sharp bounds for other dependent data settings, learning top $k$ principal components, and online inference algorithms with updates involving products of matrices. 

\section{Acknowledgements}
We gratefully acknowledge NSF grants 2217069 and DMS 2109155. We are also grateful to Rachel Ward and Bobby Shi for valuable discussions.

\bibliographystyle{plain}
\bibliography{refs}

\newpage

\appendix

\renewcommand{\thelemma}{S.\arabic{lemma}}
\renewcommand{\theproposition}{S.\arabic{proposition}}
\renewcommand{\thesection}{S.\arabic{section}}
\renewcommand{\thefigure}{S.\arabic{figure}}
\renewcommand{\theequation}{S.\arabic{equation}}

\setcounter{lemma}{0}
\setcounter{section}{0}
\setcounter{proposition}{0}
\setcounter{figure}{0}

\section*{Supplement}

%\begin{abstract}
    The Supplement is organized as follows - 

\begin{itemize}
    \item Section~\ref{appendix:notation} introduces notation that will be useful for concise representation.
    \item Section~\ref{appendix:offline_pca_with_Markovian_data} provides the proof of Proposition \ref{theorem:offline_sample_complexity}.
    \item Section~\ref{appendix:useful_results} contains useful intermediate results which are used in subsequent proofs of our main results.
    \item Section~\ref{appendix:proofs_of_bounding_convergence} proves bounds on $v_{1}B_{n}B_{n}^{T}v_{1}^{T}$ and $\vp B_{n}B_{n}^{T}\vp^{T}$ (Theorems~\ref{theorem:v1upperbound},~\ref{theorem:Vperpupperbound}, \ref{theorem:v1lowerbound} and~\ref{theorem:v1squareupperbound}).
    \item Section~\ref{appendix:proofs_of_main_results} puts everything together and provides proofs of our main result - Theorem~\ref{theorem:oja_convergence_rate}, along with Corollary~\ref{cor:data-drop}.
    \item Section~\ref{appendix:additional_experiments} provides additional experiments to further support our claims.
\end{itemize}
%\end{abstract}

\section{Notation and assumptions}
\label{appendix:notation}
For conciseness, we define the stochastic function $A: \Omega \rightarrow \mathbb{R}^{d \times d}$ which maps each state variable of the Markov chain to a ($d \times d$) positive semi-definite symmetric matrix as 
\bas{
    A\bb{s_{t}} := X_{t}X_{t}^{T}
}
Where $X_{t} \sim D\bb{s_{t}}$ is drawn from the distribution corresponding to the state at timestep $s_{t}$.
All the theoretical results are derived under Assumptions~\ref{ass:model},~\ref{assumption:variance_bound} and~\ref{assumption:norm_bound}.
\section{Offline PCA with Markovian Data}
\label{appendix:offline_pca_with_Markovian_data}

In this section, we prove Proposition \ref{theorem:offline_sample_complexity}. 
% $\forall j \in [n]$, we define the functions $F_{j}\bb{s_{j}}$ as 
% \bas{
%     F_{j}\bb{s_{j}} := \mathbb{E}_{D\bb{s_{j}}}\bbb{A\bb{s_{j}}} - \Sigma = \Sigma_{s_{j}} - \Sigma
% }
% Then by assumptions \ref{assumption:variance_bound} and \ref{assumption:norm_bound}, we have $\forall j \in [n]$ - 
% \begin{enumerate}
%     \item $\mathbb{E}_{x \sim \pi}\bbb{F_{j}\bb{x}} = \mathbb{E}_{x \sim \pi}\bbb{\Sigma_{x}-\Sigma} = 0$ 
%     \item $\left\|\mathbb{E}_{x \sim \pi}\bbb{F_{j}\bb{x}^{2}}\right\|_{2} = \left\|\mathbb{E}_{x \sim \pi}\bb{\Sigma_{x}-\Sigma}^{2}\right\|_{2} \leq \mathcal{V} $
%     \item $\left\|F_{j}\bb{x}\right\|_{2} = \left\|\Sigma_{x}-\Sigma\right\|_{2} \leq \mathcal{M}$
% \end{enumerate}
We note that \cite{neeman2023concentration} considers $F_{j}\bb{s_{j}}$ to be random only with respect to the states. 
% This generalizes to our model by noting that conditioned on the state sequence, the matrices $A\bb{s_{i}}, i \in [n]$ are independent under our model and therefore we can push in the expectation over the state-specific distributions inside.  from our model, which assumed a distribution $D\bb{s_{j}}$ over states as well.
Therefore, we first show that their results generalize to our setting as well, using $F_{j}\bb{s_{j}} := A\bb{s_{j}}-\Sigma$. From Eq $\bb{5}$ in \cite{neeman2023concentration}, we have 
\bas{
    \left\|\prod_{j=1}^{n}\exp\bb{\frac{\theta}{2}\bb{A\bb{s_{j}}-\Sigma}}\right\|_{F}^{2} &= \Tr\bb{\prod_{j=1}^{n}\exp\bb{\frac{\theta}{2}\bb{A\bb{s_{j}}-\Sigma}}\prod_{j=n}^{1}\exp\bb{\frac{\theta}{2}\bb{A\bb{s_{j}}-\Sigma}}} \\ 
    &= \vect\bb{I_{d}}^{T}\bb{\prod_{j=1}^{n}\exp\bb{\theta H\bb{s_{j}}}}\vect\bb{I_{d}}
}
where $H\bb{s_{j}} := \frac{1}{2}\bbb{\bb{A\bb{s_{j}}-\Sigma}\otimes I_{d} +  I_{d}\otimes\bb{A\bb{s_{j}}-\Sigma}}$. Noting that conditioned on the state sequence, the matrices $A\bb{s_{i}}, i \in [n]$ are independent under our model, we can push in the expectation over the state-specific distributions inside. Let $\E_{\pi}$ denote the expectation over the stationary state-sequence of the Markov chain, and $\E_{D}$ denote the distribution over states. Therefore,
\bas{
    \E_{\pi}\E_{D}\bbb{\left\|\prod_{j=1}^{n}\exp\bb{\frac{\theta}{2}\bb{A\bb{s_{j}}-\Sigma}}\right\|_{F}^{2}} &= \E_{\pi}\bbb{\vect\bb{I_{d}}^{T}\bb{\prod_{j=1}^{n}\E_{D\bb{s_j}}\bbb{\exp\bb{\theta H\bb{s_{j}}}}}\vect\bb{I_{d}}}
}
Defining the multiplication operator $\bb{E_{j}^{\theta}\mathbf{h}}\bb{x} = \E_{D\bb{x}}\bbb{\exp\bb{\theta H_{j}\bb{x}}}\mathbf{h}\bb{x}$ for any vector-valued function $\mathbf{h}$, we note that Eq $\bb{8}$ from \cite{neeman2023concentration} holds for our case as well.
\\ 
\\ 
Next, we adapt Proposition 5.3 from \cite{neeman2023concentration} for our setting. Specifically, we have the following lemma -
\begin{lemma}
    \label{lemma:prop53}
    Consider the operator $H\bb{x} := \frac{1}{2}\bbb{\bb{A\bb{x}-\Sigma}\otimes I_{d} +  I_{d}\otimes\bb{A\bb{x}-\Sigma}}$. Then, under assumptions \ref{assumption:norm_bound} and \ref{assumption:variance_bound} and the definition of $\Sigma$, we have,
    \begin{enumerate}
        \item $\E_{\pi}\E_{D\bb{x}}\bbb{H\bb{x}} = 0$
        \item $H\bb{x} \preceq \mathcal{M}I$
        \item $\left\|\E_{\pi}\E_{D\bb{x}}\bbb{H\bb{x}^{2}}\right\|_{2} \leq \mathcal{V}$
    \end{enumerate}
\end{lemma}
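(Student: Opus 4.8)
The three claims are about the operator $H(x) = \tfrac12\bigl[(A(x)-\Sigma)\otimes I_d + I_d\otimes(A(x)-\Sigma)\bigr]$ acting on $\mathbb{R}^{d^2}$, where $A(x) = X_x X_x^T$ with $X_x \sim D(x)$. The plan is to verify each item by exploiting the linearity of the Kronecker-sum construction and the basic identities already set up for $A$ and $\Sigma$. For item 1, I would push the expectation through the Kronecker sum: $\E_\pi\E_{D(x)}[H(x)] = \tfrac12\bigl[(\E_\pi\E_{D(x)}[A(x)] - \Sigma)\otimes I_d + I_d\otimes(\E_\pi\E_{D(x)}[A(x)] - \Sigma)\bigr]$, and then invoke the fact (established in Section~\ref{section:problem_setup}, using $\mu=0$) that $\E_\pi\E_{D(x)}[A(x)] = \E_{s\in\pi}\E_{D(s)}[X X^T] = \Sigma$, so each summand vanishes.

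For item 2, I would use that for a symmetric matrix $S$, the Kronecker sum $S\otimes I_d + I_d \otimes S$ has eigenvalues $\mu_i + \mu_j$ over all pairs of eigenvalues $(\mu_i,\mu_j)$ of $S$; hence $\tfrac12(S\otimes I_d + I_d\otimes S) \preceq \lambda_{\max}(S)\, I$. Applying this with $S = A(x) - \Sigma$ and using Assumption~\ref{assumption:norm_bound}, which gives $\|A(x)-\Sigma\|_2 \le \mathcal{M}$ almost surely (hence $\lambda_{\max}(A(x)-\Sigma) \le \mathcal{M}$), yields $H(x) \preceq \mathcal{M} I$ pointwise, and therefore after taking any expectation.

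For item 3, I would expand $H(x)^2$. Writing $S_x := A(x)-\Sigma$, we have $H(x)^2 = \tfrac14\bigl[S_x^2\otimes I_d + 2\,S_x\otimes S_x + I_d\otimes S_x^2\bigr]$, using $(S_x\otimes I_d)(I_d\otimes S_x) = S_x\otimes S_x$ and commutativity of the two factors. Taking expectations and using the triangle inequality for the operator norm together with $\|M\otimes N\|_2 = \|M\|_2\|N\|_2$, I would bound $\|\E[H(x)^2]\|_2 \le \tfrac14\bigl(2\|\E[S_x^2]\|_2 + 2\|\E[S_x\otimes S_x]\|_2\bigr)$. The first term is $\le \mathcal{V}/2$ by Assumption~\ref{assumption:variance_bound}. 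For the cross term, $\E[S_x\otimes S_x]$ is the covariance-type operator; using the vectorization identity $\mathrm{vec}(ABC) = (C^T\otimes A)\mathrm{vec}(B)$ one sees $\E[S_x\otimes S_x]$ relates to $\E[\mathrm{vec}(S_x)\mathrm{vec}(S_x)^T]$ up to a reshuffling, whose operator norm is controlled by $\|\E[S_x^2]\|_2$ as well; more cleanly, since each $S_x \succeq -\Sigma \succeq$ bounded and $\E[S_x] = \E_\pi[\Sigma_s + \mu_s\mu_s^T - \Sigma]$, one can bound $\|\E[S_x\otimes S_x]\|_2$ by the first quantity in Assumption~\ref{assumption:variance_bound} (note the assumption is stated as a chain of two inequalities precisely so that $\|\E_\pi(\Sigma_s+\mu_s\mu_s^T-\Sigma)^2\|_2 \le \mathcal{V}$ is also available). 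Combining, $\|\E[H(x)^2]\|_2 \le \mathcal{V}$.

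\textbf{Main obstacle.} Items 1 and 2 are essentially immediate. The delicate point is item 3: one must argue that the cross term $\|\E[S_x\otimes S_x]\|_2$ does not blow up the constant beyond $\mathcal{V}$. The cleanest route is to follow \cite{neeman2023concentration}'s Proposition~5.3 verbatim at the level of the state-only randomness, observing that $\E_{D(x)}[H(x)^2]$ is itself an operator satisfying the same structural bounds (it is of the form $\tfrac14[T_x\otimes I + 2\,R_x + I\otimes T_x]$ for appropriate $T_x = \E_{D(x)}[S_x^2] + \ldots$), and then the double expectation $\E_\pi\E_D$ reduces to exactly the situation handled in their proof with the matrix $\E_{D(x)}[S_x^2 + \ldots]$ in place of their deterministic $F_j(s_j)^2$. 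The tensor-power trick there shows the Kronecker cross term contributes the same spectral quantity, so the $\mathcal{V}$ bound is preserved; I would cite their argument rather than re-derive the operator-norm bookkeeping.
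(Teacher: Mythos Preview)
Your overall plan matches the paper exactly: the paper's proof is a one-line deferral to Proposition~5.3 of \cite{neeman2023concentration}, noting only that the single expectation there is replaced by the double expectation $\E_\pi\E_{D(x)}$ and that Assumptions~\ref{assumption:variance_bound} and~\ref{assumption:norm_bound} supply the needed inputs. Your treatments of items 1 and 2 are correct and more explicit than what the paper writes.

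There is, however, a genuine gap in your attempted direct argument for item~3 before you fall back to the citation. The claim that $\E[S_x\otimes S_x]$ is related to $\E[\vect(S_x)\vect(S_x)^T]$ ``up to a reshuffling'' whose operator norm is controlled by $\|\E[S_x^2]\|_2$ is not right: the realignment map sending $A\otimes B$ to $\vect(A)\vect(B)^T$ is a Frobenius isometry but does \emph{not} preserve operator norm, and in any case $\|\E[\vect(S_x)\vect(S_x)^T]\|_2=\sup_{\|U\|_F=1}\E[\langle U,S_x\rangle_F^2]$ is a different spectral quantity from $\|\E[S_x^2]\|_2$. The appeal to the first inequality in Assumption~\ref{assumption:variance_bound} also does not go through, since $\E_{D(x)}[S_x\otimes S_x]\neq \E_{D(x)}[S_x]\otimes\E_{D(x)}[S_x]$. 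The clean way to handle the cross term (and what underlies the cited proposition) is to avoid the triangle inequality altogether: since $S_x$ is symmetric, $H(x)$ and $\tfrac12(S_x^2\otimes I+I\otimes S_x^2)$ are simultaneously diagonalizable with eigenvalues $\tfrac12(\mu_i+\mu_j)$ and $\tfrac12(\mu_i^2+\mu_j^2)$ respectively, and $(\mu_i+\mu_j)^2/4\le(\mu_i^2+\mu_j^2)/2$ gives the pointwise operator inequality $H(x)^2\preceq\tfrac12(S_x^2\otimes I+I\otimes S_x^2)$; taking expectations and reading off the norm yields $\|\E[H(x)^2]\|_2\le\|\E[S_x^2]\|_2\le\mathcal V$ directly.
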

\begin{proof}
    The proof follows by using the same arguments as Proposition 5.3 from \cite{neeman2023concentration} and using the expectation $\E_{\pi}\E_{D\bb{x}}$ over both the state sequence and the distribution over states, along with assumptions \ref{assumption:norm_bound} and \ref{assumption:variance_bound}.
\end{proof}
\noindent
Finally, to prove Bernstein's inequality, we prove that Lemma 6.7 from \cite{neeman2023concentration} holds for our case. To note this, we start with equation (57) in their work. We have, using Lemma \ref{lemma:prop53}, 
\bas{
    \left|\left\langle v_{2}, \E_{\pi}\E_{D\bb{x}}\bbb{\exp\bb{\theta H\bb{x}}} v_{1} \right\rangle\right| &= \left|\left\langle v_{2}, \E_{\pi}\E_{D\bb{x}}\bbb{\exp\bb{\theta H\bb{x}}} v_{1} \right\rangle\right| \\ 
    &= \left|\left\langle v_{2}, \bb{I + \E_{\pi}\E_{D\bb{x}}\bbb{ H\bb{x}} + \sum_{k=2}^{\infty}\frac{\theta^{k}}{k!}\E_{\pi}\E_{D\bb{x}}\bbb{H\bb{x}^{k}}} v_{1} \right\rangle\right| \\ 
    &= \left|\left\langle v_{2}, v_{1} \right\rangle + \left\langle 
v_2\bb{\sum_{k=2}^{\infty}\frac{\theta^{k}}{k!}\E_{\pi}\E_{D\bb{x}}\bbb{H\bb{x}^{k}}} v_{1} \right\rangle\right| \\ 
&\leq \left|\left\langle v_{2}, v_{1} \right\rangle\right| \bb{1 + \mathcal{V}\bb{\sum_{k=2}^{\infty}\frac{\theta^{k}}{k!}\mathcal{M}^{k-2}}}
}
Therefore, Eq $(60)$ from \cite{neeman2023concentration} follows. The other bounds in the proof of Lemma 6.7 from \cite{neeman2023concentration} follow similarly. Therefore, we have the following version of Theorem 2.2 from \cite{neeman2023concentration} -
\begin{proposition}
    \label{proposition:tail_bound_Markov_sum}
    Under assumptions \ref{assumption:variance_bound} and \ref{assumption:norm_bound}, we have
    \bas{
        P\bb{\left\|\frac{1}{n}\sum_{j=1}^{n}A\bb{s_{j}} - \Sigma\right\|_{2} \geq t} \leq d^{2-\frac{\pi}{4}}\exp\bb{\frac{t^{2}/\frac{32}{\pi^{2}}}{\frac{1+|\lambda_{2}\bb{P}|}{1-|\lambda_{2}\bb{P}|}n\mathcal{V} + \frac{8/\pi}{1-|\lambda_{2}\bb{P}|}\mathcal{M}t}
        }
    }
\end{proposition}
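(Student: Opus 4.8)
The plan is to follow the proof of Theorem~2.2 in \cite{neeman2023concentration} step by step, using the reductions already carried out in this section to absorb the extra randomness coming from the state-specific distributions $D(s)$. The argument has three stages: (i) a purely matrix-analytic reduction of the operator-norm tail to an exponential-moment bound on a squared Frobenius norm; (ii) rewriting that moment as a quadratic form in a product of operators on the state space, after pushing $\E_{D}$ inside; and (iii) the spectral estimate that controls this operator product via the gap $1-|\lambda_{2}(P)|$.

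For stage (i), I would invoke the reduction of \cite{neeman2023concentration}: for $\theta$ in a suitable range, $P\bb{\left\|\tfrac1n\sum_{j}A(s_{j})-\Sigma\right\|_{2}\geq t}$ is controlled, up to the dimensional prefactor $d^{2-\pi/4}$ obtained there and a union over the two tail directions, by $e^{-\theta n t}\,\E_{\pi}\E_{D}\bbb{\left\|\prod_{j=1}^{n}\exp\bb{\tfrac{\theta}{2}(A(s_{j})-\Sigma)}\right\|_{F}^{2}}$. This reduction is matrix-analytic and does not see the Markov structure, so it carries over without change. For stage (ii), as recorded above, $\left\|\prod_{j=1}^{n}\exp\bb{\tfrac{\theta}{2}(A(s_{j})-\Sigma)}\right\|_{F}^{2}=\vect(I_{d})^{T}\bb{\prod_{j=1}^{n}\exp\bb{\theta H(s_{j})}}\vect(I_{d})$; since, conditioned on the state sequence, the $A(s_{j})$ are independent across $j$, the expectation $\E_{D}$ factorizes and passes through the product, turning each $\exp\bb{\theta H(s_{j})}$ into the multiplication operator $E_{j}^{\theta}$, and the outer $\E_{\pi}$ over the stationary chain turns the whole expression into a quadratic form in $\prod_{j}P E_{j}^{\theta}$, matching Eq.~(8) of \cite{neeman2023concentration}.

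Stage (iii) is the heart. Here I would apply the perturbation argument of \cite{neeman2023concentration} to bound $\left\|\prod_{j}P E_{j}^{\theta}\right\|$, whose only inputs are the three properties supplied by Lemma~\ref{lemma:prop53}: $\E_{\pi}\E_{D}\bbb{H(x)}=0$, $H(x)\preceq\mathcal{M}I$, and $\left\|\E_{\pi}\E_{D}\bbb{H(x)^{2}}\right\|_{2}\leq\mathcal{V}$. With these, the off-diagonal-growth estimate of their Lemma~6.7 --- the steps (57)--(60) that we re-derived above with $E_{j}^{\theta}$ in place of their operator --- yields a bound of the form
\bas{
\E_{\pi}\E_{D}\bbb{\left\|\prod_{j=1}^{n}\exp\bb{\tfrac{\theta}{2}(A(s_{j})-\Sigma)}\right\|_{F}^{2}} \leq d\,\exp\bb{c\,n\theta^{2}\mathcal{V}\,\frac{1+|\lambda_{2}(P)|}{1-|\lambda_{2}(P)|}},\qquad 0<\theta<\frac{c'}{\mathcal{M}},
}
with $c,c'$ absolute constants; the exact numerical constants $\tfrac{32}{\pi^{2}}$ and $\tfrac{8}{\pi}$ are whatever the bookkeeping in \cite{neeman2023concentration} produces. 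Substituting into stage (i), optimizing over $\theta$ --- the restriction $\theta=O(1/\mathcal{M})$ forced by the radius of convergence of the exponential series is exactly what generates the $\mathcal{M}t$ term in the denominator of the stated bound --- and collecting the $d^{2-\pi/4}$ prefactor finishes the proof.

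The main obstacle is stage (iii): showing that the product $\prod_{j}P E_{j}^{\theta}$ of non-self-adjoint operators does not grow faster than a $|\lambda_{2}(P)|$-geometric rate, uniformly over the time-varying perturbations $E_{j}^{\theta}$. In our setting this is not a genuinely new difficulty, since after pushing $\E_{D}$ inside, each $E_{j}^{\theta}$ is a \emph{deterministic} positive multiplication operator obeying exactly the mean, upper-bound, and averaged second-moment estimates of Lemma~\ref{lemma:prop53}; hence the perturbation analysis of \cite{neeman2023concentration} applies verbatim. Everything else --- the exponential Markov reduction, the Frobenius-norm identity, the $\vect$ rewriting, the two-sided union bound, and the optimization over $\theta$ --- is routine constant-chasing.
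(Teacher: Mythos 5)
Your proposal follows essentially the same route as the paper: reduce to the exponential Frobenius-moment bound of \cite{neeman2023concentration}, push the expectation over the state-specific distributions $D(s)$ through the product via conditional independence to recover their Eq.~(8) with the multiplication operators $E_{j}^{\theta}$, verify the three inputs of Lemma~\ref{lemma:prop53}, and check that the series-expansion step (their Eqs.~(57)--(60) in Lemma~6.7) goes through, after which their Theorem~2.2 yields the stated tail bound. This matches the paper's argument, so the proposal is correct.
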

The proof of Proposition \ref{theorem:offline_sample_complexity} now follows by converting the tail bound into a high probability bound and using Wedin's theorem \cite{wedin1972perturbation}. See proof of Theorem 1.1 in \cite{jain2016streaming} for details.
% Further, our assumptions are stronger, and allow for $\left\|A\bb{x}-\Sigma\right\|_{2} \leq \mathcal{M}$, $\left\|\mathbb{E}_{x \sim \pi}\mathbb{E}_{D\bb{x}}\bbb{\bb{A\bb{x}-\Sigma}^{2}}\right\|_{2} \leq \mathcal{V}$.
% Then, using Theorem 2.2 of \cite{neeman2023concentration}, we have 
% \bas{
%     \mathbb{P}\bb{\left\|\sum_{j=1}^{n}\right\|_{2}}
% }

\section{Useful Results}
\label{appendix:useful_results}

This section presents some useful lemmas and their proofs that are subsequently used in our proofs.

\begin{lemma}
    (Reverse mixing) Consider a reversible, irreducible, and aperiodic Markov chain started from the stationary distribution. Then, 
    \bas{
        \frac{1}{2}\sup_{t \in \Omega}\sum_{s}\left|\mathbb{P}\bb{Z_{t}=s | Z_{t+k} = t} - \pi\bb{s}\right| = d_{\text{mix}}\bb{k}
    }
\end{lemma}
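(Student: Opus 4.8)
\textbf{Proof proposal for Lemma~\ref{lemma:reverse_mixing} (Reverse mixing).}

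The plan is to compute the conditional distribution $\mathbb{P}(Z_t = s \mid Z_{t+k} = t)$ explicitly using Bayes' rule together with the reversibility of the chain, and show that it equals $P^k(t, s)$ — the $k$-step forward transition probability from state $t$. Once that identity is in hand, the claimed equality is just the definition of $d_{\text{mix}}(k)$ combined with the supremum over the conditioning state. First I would write, for states $s, t \in \Omega$,
\[
\mathbb{P}(Z_t = s \mid Z_{t+k} = t) = \frac{\mathbb{P}(Z_t = s)\,\mathbb{P}(Z_{t+k} = t \mid Z_t = s)}{\mathbb{P}(Z_{t+k} = t)} = \frac{\pi(s)\,P^k(s, t)}{\pi(t)},
\]
where I used that the chain starts in stationarity so $\mathbb{P}(Z_t = s) = \pi(s)$ and $\mathbb{P}(Z_{t+k} = t) = \pi(t)$, and the Markov property for the forward transition over $k$ steps.

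Next I would invoke reversibility. Detailed balance $\pi(x) P(x,y) = \pi(y) P(y,x)$ extends to the $k$-step kernel: $\pi(x) P^k(x,y) = \pi(y) P^k(y,x)$ for all $x, y$ and all $k \geq 0$ (this follows by a straightforward induction on $k$, summing detailed balance over intermediate states, or by noting $\Pi P^k = (P^T)^k \Pi$ where $\Pi = \diag(\pi)$). Applying this with $x = s$, $y = t$ gives $\pi(s) P^k(s,t) = \pi(t) P^k(t,s)$, hence
\[
\mathbb{P}(Z_t = s \mid Z_{t+k} = t) = \frac{\pi(t) P^k(t, s)}{\pi(t)} = P^k(t, s).
\]
Therefore $\sum_s \left| \mathbb{P}(Z_t = s \mid Z_{t+k} = t) - \pi(s) \right| = \sum_s |P^k(t,s) - \pi(s)| = 2\, TV(P^k(t, \cdot), \pi)$, and taking $\frac{1}{2}\sup_{t \in \Omega}$ over the conditioning state recovers exactly $\sup_{t} TV(P^k(t,\cdot), \pi) = d_{\text{mix}}(k)$ by the definition in Eq.~\eqref{eq:TV_dist_bound}.

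There is no real obstacle here — the only point requiring a line of care is the extension of detailed balance from $P$ to $P^k$, which I would either state as a standard fact (e.g. citing Chapter 1 of~\cite{levin2017Markov}) or dispatch with a one-line induction: assuming $\pi(x)P^{k-1}(x,z) = \pi(z)P^{k-1}(z,x)$, one has $\pi(x)P^k(x,y) = \sum_z \pi(x)P^{k-1}(x,z)P(z,y) = \sum_z \pi(z)P^{k-1}(z,x)P(z,y)$, and using $\pi(z)P(z,y) = \pi(y)P(y,z)$ this equals $\sum_z \pi(y)P(y,z)P^{k-1}(z,x) = \pi(y)P^k(y,x)$. Everything else is bookkeeping with the definition of total variation distance and the stationary start.
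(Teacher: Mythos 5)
Your proof is correct, and it reaches the conclusion by a slightly more direct route than the paper. The paper first constructs the time-reversed process $Y_i := Z_{n-i+1}$, verifies via the reverse Markov property and detailed balance that it is a Markov chain with the \emph{same} one-step kernel $P$, and then identifies $\mathbb{P}(Z_t = s \mid Z_{t+k} = \cdot)$ with a forward $k$-step transition of that reversed chain, so that the claimed equality follows from the forward mixing definition. You instead compute the backward conditional in one shot: Bayes' rule plus stationarity gives $\pi(s)P^k(s,t)/\pi(t)$, and $k$-step detailed balance (which you correctly justify by induction, or equivalently from $\Pi P^k = (P^k)^T\Pi$ — the same similarity-transform fact the paper uses elsewhere in the proof of Lemma bounding the covariance) turns this into exactly $P^k(t,s)$, after which the equality with $d_{\text{mix}}(k)$ is just the definition of the total variation distance and the supremum. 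Since the lemma conditions only on $Z_{t+k}$ and not on the entire future, your argument does not need the reverse Markov property at all, which makes it marginally leaner; the paper's construction of the reversed chain is the same idea packaged so that it can also serve the trajectory-level conditioning used in later lemmas. Both arguments establish the stated equality, not merely an inequality, so nothing is lost either way.
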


\begin{proof}
Let the transition probabilities of the Markov chain be represented as $P(x|y):= P(Z_{t+1}=x|Z_{t}=y)$. Consider the time-reversed chain $Y_{i} := Z_{n-i+1}$ for $i = 1, 2, \dots n$. Then, 
    \bas{
        & \mathbb{P}\bb{Y_{l}=s_{l} | Y_{l-1}=s_{l-1}, Y_{l-2}=s_{l-2} \dots Y_{1}=s_{1}} \\
        & \;\;\;\; = \mathbb{P}\bb{Z_{n-l+1}=s_{l} | Z_{n-l+2}=s_{l-1}, Z_{n-l+3}=s_{l-2}, \dots Z_{n}=s_{1}} \\
        & \;\;\;\; = \mathbb{P}\bb{Z_{n-l+1}=s_{l} | Z_{n-l+2}=s_{l-1}} \;\; \text{ using Lemma \ref{lemma:reverse_Markov}} \\
        & \;\;\;\; = \frac{\mathbb{P}\bb{Z_{n-l+1}=s_{l}, Z_{n-l+2}=s_{l-1}}}{\mathbb{P}\bb{Z_{n-l+2}=s_{l-1}}} \\
        & \;\;\;\; = \frac{\pi\bb{s_{l}}P(s_{l-1}|s_{l})}{\pi\bb{s_{l-1}}} \\
        & \;\;\;\; = P(s_{l}|s_{l-1}) \;\; \text{ using reversibility }
    }
    This proves that $Y_{n}$ is an irreducible Markov chain with the same transition probabilities as the original Markov chain. The irreducibility of $Y_{n}$ follows from the original Markov chain being irreducible. Therefore, 
    \ba{
        \mathbb{P}\bb{Z_{t} = s_{1} | Z_{t+k} = s_{2}} = \mathbb{P}\bb{Y_{n+1-t} = s_{1} | Y_{n+1-t-k} = s_{2}} \label{eq:reversible_Markov_chain_k_step_transition}
    }
    Then,
    \bas{
    \frac{1}{2}\sup_{t \in \Omega}\sum_{s}\left|\mathbb{P}\bb{Z_{t}=s | Z_{t+k} = t} - \pi\bb{s}\right| = \frac{1}{2}\sup_{t \in \Omega}\sum_{s}\left|\mathbb{P}\bb{Y_{n+1-t} = s | Y_{n+1-t-k} = t} - \pi\bb{s}\right| = d_{\text{mix}}\bb{k}
    }
    where the last inequality follows from the forward mixing properties of the Markov chain.
\end{proof}

\begin{lemma}
\label{lemma:inner_product_monotonicity}
Let $C_{j,i}=\prod_{t=j}^{i}(I+Z_t)$ for $i \leq j \leq n$, where $Z_t \in \mathbb{R}^{d \times d}$ are symmetric PSD matrices. Let $U \in \mathbb{R}^{d \times d'}$. Then,
\bas{\Tr\bb{U^TC_{j,i+1}C_{j,i+1}^TU}
\leq \Tr\bb{U^TC_{j,i}C_{j,i}^TU}
}
% where $\tr{A,B} = \Tr\bb{A^{T}B}$.
\end{lemma}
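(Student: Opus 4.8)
The plan is to exploit the index convention, which (following the convention used for $\newB{i}{j}$ in Eq~\ref{definition:Bji}) reads
\[
C_{j,i}=\prod_{t=j}^{i}(I+Z_t)=(I+Z_j)(I+Z_{j-1})\cdots(I+Z_i),
\]
the index running downward from $j$ to $i$. Under this reading, $C_{j,i}$ contains exactly one extra factor compared to $C_{j,i+1}$, namely the rightmost factor $(I+Z_i)$, so that
\[
C_{j,i}=C_{j,i+1}\,(I+Z_i).
\]
Since each $Z_t$ is symmetric PSD, this additional rightmost factor can only increase the quantity $\Tr\bb{U^TCC^TU}$, which is precisely the asserted monotonicity. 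The strategy is to reduce the claim to a single positive-semidefinite trace estimate.

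Writing $M:=C_{j,i+1}$ and $P:=I+Z_i$, and using that $P$ is symmetric so $P^T=P$, I would first expand
\[
C_{j,i}C_{j,i}^T=MP\,P^TM^T=MP^2M^T,
\]
so that the gap between the two sides of the claim becomes
\[
\Tr\bb{U^TC_{j,i}C_{j,i}^TU}-\Tr\bb{U^TC_{j,i+1}C_{j,i+1}^TU}=\Tr\bb{U^TM\bb{P^2-I}M^TU}.
\]
It therefore suffices to show this final trace is nonnegative, which is where the PSD structure of the $Z_t$ enters.

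The key fact is $P^2-I\succeq 0$: because $Z_i\succeq 0$ we have $P=I+Z_i\succeq I$, and hence $P^2-I=2Z_i+Z_i^2\succeq 0$. Setting $W:=M^TU$ (so that $U^TM=W^T$), the gap equals $\Tr\bb{W^T\bb{P^2-I}W}=\sum_k w_k^T\bb{P^2-I}w_k$, summed over the columns $w_k$ of $W$; each summand is nonnegative since $P^2-I$ is positive semidefinite. This establishes the inequality in the stated direction.

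There is essentially no analytic difficulty here; the one place that demands care is the index bookkeeping, i.e.\ correctly recognizing that it is $C_{j,i}$ (not $C_{j,i+1}$) that carries the extra factor $(I+Z_i)$ and that this factor sits on the \emph{right}. This placement is exactly what lets the surrounding $U$'s and the transpose combine cleanly into $M\bb{P^2-I}M^T$; had the extra factor appeared on the left, the matrix $U$ would obstruct this reduction. Once the factorization $C_{j,i}=C_{j,i+1}(I+Z_i)$ is pinned down, the remainder is an elementary positive-semidefinite trace estimate.
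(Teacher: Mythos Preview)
Your proposal is correct and takes essentially the same approach as the paper: both factor $C_{j,i}=C_{j,i+1}(I+Z_i)$, expand the difference as $\Tr\bb{U^TC_{j,i+1}(2Z_i+Z_i^2)C_{j,i+1}^TU}$, and conclude nonnegativity from the PSD-ness of $2Z_i+Z_i^2$. Your write-up is slightly more explicit about why $P^2-I\succeq 0$ and about the index convention, but the argument is identical in substance.
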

\begin{proof}
\bas{
\Tr\bb{U^TC_{j,i}C_{j,i}^TU} &= \Tr\bb{U^TC_{j,i+1} (I+2Z_i+Z_i^2)C_{j,i+1}^TU}\\
&=\Tr\bb{U^{T}C_{j,i+1}C_{j,i+1}^TU}+\Tr\bb{U^{T}C_{j,i+1}(2Z_i+Z_i^2)C_{j,i+1}^TU}
% &=\Tr\bb{U^{T}C_{j,i+1} C_{j,i+1}^TU}+\Tr\bb{2Z_i+Z_i^2,C_{j,i+1}^Tuu^TC_{j,i+1}}
}
Since $Z_i$ and $Z_i^2$ are both PSD, the second term on the RHS is always positive. This yields the proof.
\end{proof}

\begin{lemma}
\label{lemma:inner_product_monotonicity_identity}
Let $B_t=\prod_{i=t}^1(I+Z_i)$, where $Z_i \in \mathbb{R}^{d \times d}$ are symmetric PSD matrices.
\bas{\Tr\bb{B_{n-1} B_{n-1}^T}
\leq \Tr\bb{B_{n} B_{n}^T}
}
% where $\tr{A,B} = \Tr\bb{A^{T}B}$.
\end{lemma}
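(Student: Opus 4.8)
The plan is to peel the outermost factor $(I+Z_n)$ off of $B_n$ and exploit the cyclic invariance of the trace. Since $B_n = (I+Z_n)B_{n-1}$ and $Z_n$ is symmetric, we have $B_nB_n^T = (I+Z_n)B_{n-1}B_{n-1}^T(I+Z_n)$. Applying $\Tr(AMA) = \Tr(A^2M)$ (a cyclic permutation) with $A = I+Z_n$ and $M = B_{n-1}B_{n-1}^T$ gives $\Tr(B_nB_n^T) = \Tr\bb{(I+Z_n)^2 B_{n-1}B_{n-1}^T}$. Expanding $(I+Z_n)^2 = I + 2Z_n + Z_n^2$ and using linearity of the trace yields
\bas{
\Tr(B_nB_n^T) = \Tr(B_{n-1}B_{n-1}^T) + \Tr\bb{(2Z_n+Z_n^2)B_{n-1}B_{n-1}^T}.
}

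It then remains to check that the second term on the right is nonnegative. The matrix $2Z_n + Z_n^2$ is symmetric PSD because $Z_n \succeq 0$ implies both $Z_n \succeq 0$ and $Z_n^2 \succeq 0$; the matrix $B_{n-1}B_{n-1}^T$ is symmetric PSD as a Gram matrix. For any two symmetric PSD matrices $P,Q$ one has $\Tr(PQ) = \Tr\bb{P^{1/2}QP^{1/2}} \geq 0$, since $P^{1/2}QP^{1/2} \succeq 0$. Hence $\Tr\bb{(2Z_n+Z_n^2)B_{n-1}B_{n-1}^T} \geq 0$, and the claimed inequality $\Tr(B_{n-1}B_{n-1}^T) \leq \Tr(B_nB_n^T)$ follows.

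I would remark that this is essentially the $U = I$ instance of Lemma~\ref{lemma:inner_product_monotonicity}, the only difference being that there the extra factor $(I+Z_i)$ sits on the inside of the product whereas here the extra factor $(I+Z_n)$ sits on the outside; one could alternatively deduce it from that lemma via $\Tr(B_nB_n^T) = \Tr(B_n^TB_n)$ together with the relabeling $B_n^T = B_{n-1}^T(I+Z_n)$. Either way, there is no real obstacle: the entire content is the cyclic trace manipulation plus the elementary fact that the trace of a product of two PSD matrices is nonnegative.
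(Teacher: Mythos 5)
Your proof is correct and follows essentially the same route as the paper's: peel off the outer factor $(I+Z_n)$, expand $\Tr\bb{(I+Z_n)B_{n-1}B_{n-1}^T(I+Z_n)}$, and observe that the extra terms involving $Z_n$ and $Z_n^2$ are nonnegative by positive semi-definiteness. The only cosmetic difference is that the paper writes these terms as $\Tr\bb{B_{n-1}^TZ_nB_{n-1}}$ and $\Tr\bb{B_{n-1}^TZ_n^2B_{n-1}}$ directly, while you invoke the general fact that the trace of a product of two PSD matrices is nonnegative.
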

\begin{proof}
\bas{
\Tr\bb{B_{n} B_{n}^T}&=\Tr\bb{(I+Z_n)B_{n-1} B_{n-1}^T(I+Z_n)}\\
&=\Tr\bb{B_{n-1} B_{n-1}^T}+\Tr\bb{Z_nB_{n-1}B_{n-1}^T}+\Tr\bb{B_{n-1}B_{n-1}^TZ_n}+\Tr\bb{Z_nB_{n-1}B_{n-1}^TZ_n}\\
&=\Tr\bb{B_{n-1} B_{n-1}^T}+2\Tr\bb{ B_{n-1}^TZ_nB_{n-1}}+\Tr\bb{ B_{n-1}^TZ_n^2B_{n-1}}
}
Since $Z_n$ and $Z_n^2$ are both PSD, the last two terms on the RHS are always positive. This yields the proof.
\end{proof}

% \begin{lemma}
% For a reversible Markov chain,
% \bas{
% \sum_s |P(Z_{1}=s|Z_{k+1}=s_{k+1})-\pi(s)|\leq d_{\text{mix}}\bb{k}
% }
% \end{lemma}

% \begin{proof}
% By mixing, we have:
% \ba{
% \sup_{t}\sum_s |P(Z_{k+1}=s|Z_{1}=t)-\pi(s)|\leq d_{\text{mix}}\bb{k} \label{eq:mixing}
% }
% \bas{
% &P(Z_{k+1}=s|Z_{1}=t)=\sum_{s_2,\dots,s_k}P(Z_{k+1}=s,Z_k=s_{k},\dots,Z_2=s_2|Z_{1}=s_{1})\\
% &=\sum_{s_2,\dots,s_k}\frac{P(Z_{k+1}=s,Z_k=s_{k},\dots,Z_2=s_2,Z_{1}=s_{1})}{P(Z_1=s_{1})}\\
% &=\sum_{s_2,\dots,s_k}\frac{P(Z_{k+1}=t,Z_k=s_{2},\dots,Z_2=s_k,Z_{1}=s)}{\pi(s_{1})}\\
% &=\frac{P(Z_1=s|Z_{k+1}=t)\pi(t)}{\pi(t)}=P(Z_1=s|Z_{k+1}=t)
% }
% Combining this with Eq~\ref{eq:mixing}, we have
% \bas{
% \sup_t\sum_s |P(Z_{1}=s|Z_{k+1}=t)-\pi(s)| \leq d_{\text{mix}}\bb{k}
% }
% \end{proof}

\begin{lemma}
    \label{lemma:trace_ineq}
    Consider matrices $X \in \mathbb{R}^{d \times d'}$ and $A \in \mathbb{R}^{d \times d}$. Then, 
        \bas {
            \left|\Tr\bb{X^{T}AX}\right| \leq \|A\|_{2}\Tr\bb{X^{T}X}
        }
\end{lemma}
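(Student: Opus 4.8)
The plan is to reduce the trace to a sum of scalar quadratic forms over the columns of $X$ and control each one with the definition of the operator norm. Write $X = [\,x_1 \mid x_2 \mid \cdots \mid x_{d'}\,]$ where $x_i \in \mathbb{R}^d$ is the $i$-th column of $X$. Then the $(i,i)$ entry of $X^T A X$ is exactly $x_i^T A x_i$, so $\Tr\bb{X^T A X} = \sum_{i=1}^{d'} x_i^T A x_i$, and likewise $\Tr\bb{X^T X} = \sum_{i=1}^{d'} \|x_i\|_2^2$ is the squared Frobenius norm of $X$.

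Next I would apply the triangle inequality to $\sum_i x_i^T A x_i$ and then bound each summand by Cauchy--Schwarz together with the definition $\|A\|_2 = \sup_{\|v\|_2 = 1}\|A v\|_2$: for every $i$, $\left|x_i^T A x_i\right| \le \|x_i\|_2\,\|A x_i\|_2 \le \|A\|_2\,\|x_i\|_2^2$. Summing over $i$ gives $\left|\Tr\bb{X^T A X}\right| \le \|A\|_2 \sum_{i=1}^{d'} \|x_i\|_2^2 = \|A\|_2\,\Tr\bb{X^T X}$, which is the claim.

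There is essentially no obstacle here; this is a routine fact. The one point worth care is that the argument must not implicitly assume $A$ is symmetric, since the statement is made for general $A \in \mathbb{R}^{d\times d}$ — the column-wise Cauchy--Schwarz bound above handles this cleanly, as it only uses $|v^T A w| \le \|v\|_2\|A\|_2\|w\|_2$. (An alternative, if one prefers, is to use cyclicity of the trace to write $\Tr\bb{X^T A X} = \Tr\bb{A X X^T} = \Tr\bb{M^{1/2} A M^{1/2}}$ with $M := X X^T \succeq 0$, and then apply the same column-wise estimate to $M^{1/2}$; but this introduces an unnecessary matrix square root, so I would present the direct argument.)
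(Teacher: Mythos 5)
Your proof is correct, but it follows a different route from the paper's. The paper cycles the trace to get $\Tr\bb{AXX^{T}}$ and then invokes Von Neumann's trace inequality, $\left|\Tr\bb{AB}\right| \leq \sum_{i}\sigma_{i}\bb{A}\sigma_{i}\bb{B}$, bounding $\sigma_{i}\bb{A}\leq \|A\|_{2}$ and using that $XX^{T}$ is positive semi-definite so that $\sum_{i}\sigma_{i}\bb{XX^{T}} = \Tr\bb{XX^{T}} = \Tr\bb{X^{T}X}$. You instead decompose $\Tr\bb{X^{T}AX}=\sum_{i}x_{i}^{T}Ax_{i}$ over the columns of $X$ and bound each quadratic form via Cauchy--Schwarz and the definition of the operator norm, $\left|x_{i}^{T}Ax_{i}\right|\leq \|x_{i}\|_{2}\|Ax_{i}\|_{2}\leq \|A\|_{2}\|x_{i}\|_{2}^{2}$. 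Your argument is more elementary — it needs nothing beyond Cauchy--Schwarz — and, as you note, it makes no symmetry assumption on $A$; the paper's argument is equally valid for general $A$ but rests on the heavier Von Neumann inequality and on the observation (implicit in the paper) that the singular values of the PSD matrix $XX^{T}$ coincide with its eigenvalues, so that their sum equals the trace. Both yield exactly the stated bound; yours trades a named matrix inequality for a two-line direct computation.
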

\begin{proof}
    For a matrix $Z \in \mathbb{R}^{d \times d}$, let the singular values be denoted as : 
    \bas {
        \sigma_{max}\bb{Z} = \sigma_{1}\bb{Z} \geq \sigma_{2}\bb{Z} \dots \geq \sigma_{d}\bb{Z}
    }
    Using Von-Neumann's trace inequality, we have
    \bas{
         \left|\Tr\bb{X^{T}AX}\right| &= \left|\Tr\bb{AXX^{T}}\right| \\
                                      &\leq \sum_{i=1}^{d}\sigma_{i}\bb{A}\sigma_{i}\bb{XX^{T}} \\
                                      &\leq \sigma_{max}\bb{A}\sum_{i=1}^{d}\sigma_{i}\bb{XX^{T}} \\
                                      &= \|A\|_{2}\Tr\bb{XX^{T}} \\
                                      &= \|A\|_{2}\Tr\bb{X^{T}X}
    }
\end{proof}

\begin{lemma}
    \label{lemma:reverse_Markov}
    Given the Markov property in a Markov chain, the reverse Markov property holds, i.e
    \bas{
        P\bb{Z_{t}=s | Z_{t+1} = w, Z_{t+2} = s_{t+2} \dots Z_{n} = s_{n}} = P\bb{Z_{t} = s | Z_{t+1} = w}
    }
\end{lemma}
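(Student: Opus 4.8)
The plan is to reduce the identity to the elementary fact that, in a Markov chain, the past and the future are conditionally independent given the present. Concretely, I would first restrict attention to the event $\{Z_{t+1}=w, Z_{t+2}=s_{t+2},\dots,Z_n=s_n\}$ having positive probability (otherwise the conditional probability is undefined, so there is nothing to prove), and expand the left-hand side via the definition of conditional probability:
\bas{
P\bb{Z_t=s \mid Z_{t+1}=w,\dots,Z_n=s_n} = \frac{P\bb{Z_t=s, Z_{t+1}=w, Z_{t+2}=s_{t+2},\dots,Z_n=s_n}}{P\bb{Z_{t+1}=w, Z_{t+2}=s_{t+2},\dots,Z_n=s_n}}.
}

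Next I would factor both numerator and denominator using the chain rule together with the (forward) Markov property. Writing $s_{t+1}:=w$, the numerator equals
\bas{
P\bb{Z_t=s}\,P\bb{Z_{t+1}=w\mid Z_t=s}\,\prod_{i=t+1}^{n-1} P\bb{Z_{i+1}=s_{i+1}\mid Z_i=s_i},
}
since each step $P(Z_{i+1}=s_{i+1}\mid Z_i=s_i,\dots,Z_t=s)=P(Z_{i+1}=s_{i+1}\mid Z_i=s_i)$ by the Markov property. The denominator factors in exactly the same way but without the first two factors:
\bas{
P\bb{Z_{t+1}=w}\,\prod_{i=t+1}^{n-1} P\bb{Z_{i+1}=s_{i+1}\mid Z_i=s_i}.
}
The product over $i=t+1,\dots,n-1$ is common to both and cancels (here the positive-probability assumption guarantees each such factor appearing is nonzero, so the cancellation is legitimate), leaving
\bas{
\frac{P\bb{Z_t=s}\,P\bb{Z_{t+1}=w\mid Z_t=s}}{P\bb{Z_{t+1}=w}} = \frac{P\bb{Z_t=s, Z_{t+1}=w}}{P\bb{Z_{t+1}=w}} = P\bb{Z_t=s\mid Z_{t+1}=w},
}
which is the claim.

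There is no substantive obstacle here: the statement is a standard consequence of the Markov property. The only point requiring mild care is bookkeeping — ensuring the telescoping product of transition probabilities genuinely cancels, which is why one conditions on a positive-probability event throughout. Alternatively, one could phrase the entire argument in one line by invoking the well-known equivalent characterization that $(Z_s)_{s\le t}$ and $(Z_s)_{s\ge t+1}$ are conditionally independent given $Z_{t+1}$ (itself immediate from the factorization above), from which the identity is read off directly.
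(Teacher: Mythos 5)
Your proposal is correct and follows essentially the same route as the paper: expand the conditional probability by definition, use the forward Markov property to factor the future trajectory off the joint probability, and cancel the common factor to leave $P(Z_t=s,Z_{t+1}=w)/P(Z_{t+1}=w)$. The only cosmetic difference is that you telescope the chain rule into one-step transition probabilities, whereas the paper keeps the future block $P(Z_{t+2},\dots,Z_n \mid \cdot)$ as a single conditional factor and applies the Markov property once.
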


\begin{proof}
    \bas{
    & P\bb{Z_{t}=s | Z_{t+1} = w, Z_{t+2} = s_{t+2} \dots Z_{n} = s_{n}} \\ 
    & \;\;\;\;\;\; = \frac{ P\bb{Z_{t}=s, Z_{t+1} = w, Z_{t+2} = s_{t+2} \dots Z_{n} = s_{n}}}{ P\bb{Z_{t+1} = t, Z_{t+2} = s_{t+2} \dots Z_{n} = s_{n}}} \\
    & \;\;\;\;\;\; = \frac{ P\bb{Z_{t}=s, Z_{t+1}=w}P\bb{Z_{t+2} = s_{t+2} \dots Z_{n} = s_{n}|Z_{t}=s, Z_{t+1}=w} } { P\bb{Z_{t+1} = w}P\bb{Z_{t+2} = s_{t+2} \dots Z_{n} = s_{n}|Z_{t+1} = w} } \\
    & \;\;\;\;\;\; = \frac{ P\bb{Z_{t}=s, Z_{t+1}=w}P\bb{Z_{t+2} = s_{t+2} \dots Z_{n} = s_{n}|Z_{t+1}=w} } { P\bb{Z_{t+1} = w}P\bb{Z_{t+2} = s_{t+2} \dots Z_{n} = s_{n}|Z_{t+1} = w} } \\
    & \;\;\;\;\;\; = \frac{ P\bb{Z_{t}=s, Z_{t+1}=w} } { P\bb{Z_{t+1} = w} } \\
    & \;\;\;\;\;\; = P\bb{Z_{t} = s | Z_{t+1} = w}
    }
\end{proof}
\subsection{Proof of Lemma \ref{lemma:etakproduct}}
Now we are ready to provide a proof of Lemma~\ref{lemma:etakproduct}.
\begin{proof}[Proof of Lemma \ref{lemma:etakproduct}]
Without loss of generality, we prove the statement for $m=1$. For convenience of notation, we denote $k := k_{1}$. Note that, 
\begin{align*}
    \newB{1}{k} = \sum_{r=0}^{k}\sum_{\left(i_{1}, i_{2} \dots i_{r}\right) \in G_{r}}\prod_{j=1}^{r}\eta_{i_{j}}A(s_{i_{j}}), \; G_{r} = \left\{\left(i_{1}, \dots, i_{r}\right) \in \left\{1, \dots, N\right\}^{r} : i_{1} < \dots < i_{r} \right\}
\end{align*}
with the convention that $\prod_{\phi} = I$. Therefore, since $\eta_{i}$ forms a non-increasing sequence and $|G_{r}| = {\binom{k}{r}}$, we have,
\begin{align}
    \left\|\newB{1}{k} - I\right\|_{2} &= \left\|\sum_{r=1}^{k}\sum_{\left(i_{1}, i_{2} \dots i_{r}\right) \in G_{r}}\prod_{j=1}^{r}\eta_{i_{j}}A(s_{i_{j}})\right\|_{2} \notag \\
    &\leq  \sum_{r=1}^{k}\sum_{\left(i_{1}, i_{2} \dots i_{r}\right) \in G_{r}}\left\|\prod_{j=1}^{r}\eta_{i_{j}}A(s_{i_{j}})\right\|_{2} \notag \\
    &\leq \sum_{r=1}^{k}{\binom{k}{r}}\left(\prod_{i=1}^{r}\eta_{i}\right)\left(\mathcal{M}+\lambda_{1}\right)^{r} \notag \\ 
    &\leq \sum_{r=1}^{k}\frac{k^{r}}{r!}\left(\prod_{i=1}^{r}\eta_{i}\right)\left(\mathcal{M}+\lambda_{1}\right)^{r} \notag \\
% \end{align}
% \begin{align}
    & \leq \sum_{r=1}^{k}\frac{k^{r}}{r!}\eta_{1}^{r}\left(\mathcal{M}+\lambda_{1}\right)^{r} \notag \\ 
    & \leq \exp\bb{k\eta_{1}\bb{\mathcal{M}+\lambda_{1}}} - 1 \notag \\
    & \leq k\eta_{1}\bb{\mathcal{M}+\lambda_{1}}\bb{1+ k\eta_{1}\bb{\mathcal{M}+\lambda_{1}}} \text{ using } \ref{eq:exp_inequality} \notag \\ 
    & \leq \bb{1+\epsilon}k\eta_{1}\bb{\mathcal{M}+\lambda_{1}} \label{eq:first_order_approx}
\end{align}
where we have used the assumptions that $\|A(s)\|_{2} \leq \|A(s)-\Sigma\| + \|\Sigma\|_{2} = \left(\mathcal{M}+\lambda_{1}\right)$,  $k\eta_{1}\bb{\mathcal{M}+\lambda_{1}} < 1$ and the useful result that 
\ba{
e^{x} \leq 1 + x + x^{2}, x \in [0,1.79] \label{eq:exp_inequality}
} This completes the proof for $(a)$.
\\
\\
For part $\bb{b}$, we have
\begin{align}
    \left\|\newB{1}{k} - I - \sum_{t=1}^{k}\eta_{t}A\bb{s_t}\right\|_{2} &= \left\|\sum_{r=2}^{k}\sum_{\left(i_{1}, i_{2} \dots i_{r}\right) \in G_{r}}\prod_{j=1}^{r}\eta_{i_{j}}A(s_{i_{j}})\right\|_{2} \notag \\
    &\leq  \sum_{{ r=2}}^{k}\sum_{\left(i_{1}, i_{2} \dots i_{r}\right) \in G_{r}}\left\|\prod_{j=2}^{r}\eta_{i_{j}}A(s_{i_{j}})\right\|_{2} \notag \\
    &\leq \sum_{{ r=2}}^{k}{\binom{k}{r}}\left(\prod_{i=2}^{r}\eta_{i}\right)\left(\mathcal{M}+\lambda_{1}\right)^{r} \notag \\
    &\leq \sum_{r=2}^{k}\frac{k^{r}}{r!}\left(\prod_{i=2}^{r}\eta_{i}\right)\left(\mathcal{M}+\lambda_{1}\right)^{r} \notag \\
    &\leq \sum_{r=2}^{k}\frac{k^{r}}{r!}\eta_{1}^{r}\left(\mathcal{M}+\lambda_{1}\right)^{r} \notag \\ 
    & \leq \exp\bb{k\eta_{1}\bb{\mathcal{M}+\lambda_{1}}} - 1 - k\eta_{1}\bb{\mathcal{M}+\lambda_{1}} \notag \\
    & \leq k^{2}\eta_{1}^{2}\bb{\mathcal{M}+\lambda_{1}}^{2} \text{ using } \ref{eq:exp_inequality} \text{ along with } k\eta_{1}\bb{\mathcal{M}+\lambda_{1}} < 1 \label{eq:second_order_approx}
\end{align}
which completes the proof.
\end{proof}
\subsection{Proof of Lemma~\ref{lemma:bound_covariance}}
Before proving Lemma~\ref{lemma:bound_covariance}, we will need the following lemma.
\begin{lemma}
\label{lemma:matrix_cauchy_schwarz}
For arbitrary matrices $M_{i} \in \mathbb{R}^{d \times d}, i \in \bbb{n}$ and $Q \in \mathbb{R}^{n \times n}$, we have
\bas{
    \left\|\sum_{x,y \in \bbb{n}}Q\bb{x,y}M_{x}M_{y}^{T}\right\|_{2} &\leq \left\|Q\right\|_{2}\left\|\sum_{x \in \bbb{n}}M_{x}M_{x}^{T}\right\|_{2}
}
where $\left\|.\right\|_{2}$ denotes the spectral norm.
\end{lemma}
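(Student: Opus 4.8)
\textbf{Proof proposal for Lemma~\ref{lemma:matrix_cauchy_schwarz}.}

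The plan is to recognize the left-hand side as a single Gram-type product and factor it through the matrix $Q$. First I would stack the matrices $M_1,\dots,M_n$ into one big block matrix $\mathbf{M} := \begin{bmatrix} M_1 & M_2 & \cdots & M_n \end{bmatrix} \in \mathbb{R}^{d\times nd}$ (equivalently, view $\mathbf{M}$ as a block-row whose $x$-th block is $M_x$). Then the double sum $\sum_{x,y} Q(x,y) M_x M_y^T$ is exactly $\mathbf{M}\,(Q\otimes I_d)\,\mathbf{M}^T$, where $Q\otimes I_d$ is the Kronecker product that places the scalar $Q(x,y)$ in front of the $(x,y)$ block $I_d$. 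Similarly, $\sum_x M_x M_x^T = \mathbf{M}\mathbf{M}^T$.

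Once this identification is made, the bound is a routine operator-norm estimate: for any matrix $A$ and any symmetric (or general) $R$ of compatible dimension, $\|A R A^T\|_2 \le \|R\|_2 \|A A^T\|_2$ — indeed $\|ARA^T\|_2 \le \|A\|_2\|R\|_2\|A^T\|_2 = \|R\|_2\|A\|_2^2 = \|R\|_2\|AA^T\|_2$, using $\|A\|_2^2 = \|AA^T\|_2$. Applying this with $A = \mathbf{M}$ and $R = Q\otimes I_d$ gives
\bas{
\left\|\sum_{x,y\in[n]} Q(x,y) M_x M_y^T\right\|_2 = \left\|\mathbf{M}(Q\otimes I_d)\mathbf{M}^T\right\|_2 \le \|Q\otimes I_d\|_2 \,\|\mathbf{M}\mathbf{M}^T\|_2 = \|Q\|_2 \left\|\sum_{x\in[n]} M_x M_x^T\right\|_2,
}
where the final step uses the standard fact $\|Q\otimes I_d\|_2 = \|Q\|_2\|I_d\|_2 = \|Q\|_2$.

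The only mild subtlety — and the step I would be most careful about — is verifying the block-matrix identity $\sum_{x,y} Q(x,y) M_x M_y^T = \mathbf{M}(Q\otimes I_d)\mathbf{M}^T$ by a direct index computation, and being precise about the ordering convention in the Kronecker product so that the $(x,y)$ block of $Q\otimes I_d$ is $Q(x,y) I_d$ (rather than the transposed convention). Everything else is standard linear algebra. An alternative, entirely elementary route that avoids Kronecker products altogether is to test against unit vectors: for any unit vectors $u,v\in\mathbb{R}^d$, write $u^T\big(\sum_{x,y}Q(x,y)M_xM_y^T\big)v = a^T Q b$ where $a_x := M_x^T u$ and $b_y := M_y^T v$ are vectors in $\mathbb{R}^{?}$... more cleanly, set $a_x := \langle u, M_x \cdot\rangle$; then $|a^TQb| \le \|Q\|_2 \|a\|\|b\|$ by Cauchy–Schwarz plus the definition of operator norm, and $\|a\|^2 = \sum_x \|M_x^T u\|^2 = u^T(\sum_x M_x M_x^T)u \le \|\sum_x M_x M_x^T\|_2$, with the analogous bound for $\|b\|$; taking the supremum over $u,v$ finishes the proof. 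I would likely present this second argument since it is self-contained and sidesteps any Kronecker-ordering pitfalls.
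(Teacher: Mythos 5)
Your block-matrix argument is exactly the paper's proof: it stacks $M_1,\dots,M_n$ into $X\in\mathbb{R}^{d\times nd}$, writes the double sum as $X(Q\otimes I_d)X^T$, and uses submultiplicativity together with $\|Q\otimes I_d\|_2=\|Q\|_2$ and $\|X\|_2^2=\bigl\|\sum_x M_xM_x^T\bigr\|_2$, so the proposal is correct and essentially identical. Your alternative bilinear-form route also goes through once you make the Cauchy--Schwarz step precise (e.g.\ sum over coordinates, $u^T\bigl(\sum_{x,y}Q(x,y)M_xM_y^T\bigr)v=\sum_i \alpha_i^TQ\beta_i$ with $(\alpha_i)_x=(M_x^Tu)_i$), but it is not needed beyond the first argument.
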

\begin{proof}
    Define matrix $X \in \mathbb{R}^{d \times nd}$ as 
        $X := \begin{bmatrix} 
                 M_{1} & M_{2} & \dots & M_{n}
              \end{bmatrix}$. We note that 
    \bas{
        \left\|X\right\|_{2} &= \sqrt{\lambda_{max}\bb{XX^{T}}} \\
                             &= \sqrt{\lambda_{max}\bb{\sum_{x\in\bbb{n}}M_{x}M_{x}^{T}}} \\
                             &= \sqrt{\left\|\sum_{x\in\bbb{n}}M_{x}M_{x}^{T}\right\|_{2}} \text{ since } \sum_{x\in\bbb{n}}M_{x}M_{x}^{T} \text{ is a symmetric matrix}
    }
    Then, we have, 
    \bas {
        \sum_{x,y \in \bbb{n}}Q\bb{x,y}M_{x}M_{y}^{T} &= X\bb{Q\otimes I_{d\times d}}X^{T}, \text{ where } \otimes \text{denotes the kronecker product} \\
        &\leq \left\|X\right\|^{2}_{2}\left\|Q\otimes I_{d\times d}\right\|_{2} \text{ using submultiplicativity of the spectral norm}  \\
        &= \left\|X\right\|^{2}_{2}\left\|Q\right\|_{2} \text{ since } \left\|A \otimes B\right\|_{2} = \|A\|_{2}\|B\|_{2}
    }
    which completes our proof.
\end{proof}
\begin{proof}[\textbf{Proof of Lemma \ref{lemma:bound_covariance}}]
    We denote $k_{i} := k$ for convenience of notation. By using reversibility (see \ref{eq:reversible_Markov_chain_k_step_transition}), we know that the time-reversed process is also a Markov chain with the same transition probabilities. Then, for $i < j \leq i+k$ and any $m$, 
    \ba{\label{eq:Markov-shift}
    P(s_i=s,s_j=t|s_{i+k}=u) &= P(s_i=s|s_j=t)P(s_j=t|s_{i+k}=u) \notag\\ 
                           &\stackrel{(i)}{=} P^{j-i}(t,s)P^{i+k-j}(u,t) \notag\\ 
                           &= P(s_m=s|s_{m-j+i}=t)P(s_{m-j+i}=t|s_{m-k}=u) \notag\\ 
                           &= P(s_{m}=s, s_{m-j+i}=t | s_{m-k}=u)
    }
    Step (i) uses reversibility.
    Therefore, 
    \bas{
        \E\bbb{\bb{A\bb{s_{i}}-\Sigma}SA\bb{s_{j}}|s_{i+k},\dots s_{n}} &= 
        \sum_{s,t}\bb{\Sigma_s + \mu_s\mu_s^{T} -\Sigma}S\bb{\Sigma_t+\mu_t\mu_t^{T}}P(s_i=s,s_j=t|s_{i+k},\dots s_{n})\\
        \text{ using Lemma~\ref{lemma:reverse_Markov} } 
        &=\sum_{s,t}\bb{\Sigma_s + \mu_s\mu_s^{T} -\Sigma}S\bb{\Sigma_t + \mu_t\mu_t^{T}}P(s_i=s,s_j=t|s_{i+k})\\
        \text{ using Eq~\ref{eq:Markov-shift} }
        &= \sum_{s,t}\bb{\Sigma_s + \mu_s\mu_s^{T} -\Sigma}S\bb{\Sigma_t + \mu_t\mu_t^{T}}P(s_{m}=s, s_{m-j+i}=t | s_{m-k}=u)\\
        &=\E\bbb{\bb{A\bb{s_{m}}-\Sigma}SA\bb{s_{m-j+i}}|s_{m-k}} \\ 
        &= \E\bbb{\bb{A\bb{s_{j}}-\Sigma}SA\bb{s_{i}}|s_{j-k}} \text{ setting } m := j
    }
    Therefore, without loss of generality, we proceed with the second form. 
    \bas{
        & \left\|\E\bbb{\bb{A\bb{s_{j}}-\Sigma}SA\bb{s_{i}}|s_{j-k}=x_{0}}\right\|_{2} \\
        % &= \left\|\E\bbb{\bb{A\bb{s_{j}}-\Sigma}S\Sigma|s_{j-k}=x_{0}} + \E\bbb{\bb{A\bb{s_{j}}-\Sigma}S\bb{A\bb{s_{i}}-\Sigma}|s_{j-k}=x_{0}} \right\|_{2} \\
        &\leq \underbrace{\left\|\E\bbb{\bb{A\bb{s_{j}}-\Sigma}S\Sigma|s_{j-k}=x_{0}}\right\|_{2}}_{T_1} + \underbrace{\left\|\E\bbb{\bb{A\bb{s_{j}}-\Sigma}S\bb{A\bb{s_{i}}-\Sigma}|s_{j-k}=x_{0}}\right\|_{2}}_{T_2}
    }
    \ba{
        T_1&:= \left\|\E\bbb{\bb{A\bb{s_{j}}-\Sigma}S\Sigma|s_{j-k}=x_{0}}\right\|_{2} \notag \\
        & = \left\|\E\bbb{\E_{D\bb{s_{j}}}\bbb{\bb{A\bb{s_{j}}-\Sigma}}|s_{j-k}=x_{0}}S\Sigma\right\|_{2} \notag\\
        & = \left\|\E\bbb{\bb{\Sigma_{s_{j}} + \mu_{s_{j}}\mu_{s_{j}}^{T} -\Sigma}|s_{j-k}=x_{0}}S\Sigma\right\|_{2} \notag\\
        &=  \left\|\sum_{s \in \Omega}P^{k}(s_{j-k},s)\bb{\Sigma_{s} + \mu_{s}\mu_{s}^{T} -\Sigma}S\Sigma\right\|_{2} \notag \\
        &\leq \left\|\sum_{s \in \Omega}\left(P^{k}(s_{j-k},s)-\pi\left(s\right)\right)\bb{\Sigma_{s} + \mu_{s}\mu_{s}^{T} -\Sigma} + \underbrace{\mathbb{E}_{\pi}\left[\left(\Sigma_{s} + \mu_{s}\mu_{s}^{T}-\Sigma\right)\right]}_{=0}\right\|_{2}\left\|S\right\|_{2}\left\|\Sigma\right\|_{2} \notag \\
        &= \lambda_{1}\left\|S\right\|_{2}\bb{\left\|\sum_{s \in \Omega}\left(P^{k}(s_{j-k},s)-\pi\left(s\right)\right)\left(\Sigma_{s} + \mu_{s}\mu_{s}^{T}-\Sigma\right)\right\|_{2}} \notag \\
        &\leq \lambda_{1}\left\|S\right\|_{2}\mathcal{M}\sum_{s \in \Omega}\bigg|P^{k}(s_{j-k},s)-\pi\left(s\right)\bigg| \notag \\
        &\leq 2\lambda_{1}\left\|S\right\|_{2}\mathcal{M}d_{\text{mix}}\bb{k_{i+1}} \notag \\
        &\leq 2\eta_{i}^{2}\mathcal{M}\lambda_{1}\left\|S\right\|_{2} \label{eq:cov_T1_bound}
    }
    \ba{
    T_2 &= \left\|\E\bbb{\bb{A\bb{s_{j}}-\Sigma}S\bb{A\bb{s_{i}}-\Sigma}|s_{j-k}=x_{0}}\right\|_{2} \notag \\
    &= \left\|\sum_{x,y \in \Omega}\mathbb{P}\bb{s_{j}=x,s_{i}=y|s_{j-k}=x_{0}}\E_{D\bb{x}}\bbb{A\bb{x}-\Sigma}S\E_{D\bb{y}}\bbb{A\bb{y}-\Sigma}\right\|_{2} \text{using independence of } \notag \\
    & \;\;\;\;\; D\bb{x} \text{ and } D\bb{y} \text{ conditioned on } x \text{ and } y \notag \\
    &= \left\|\sum_{x,y \in \Omega}\mathbb{P}\bb{s_{j}=x,s_{i}=y|s_{j-k}=x_{0}}\underbrace{\bb{\Sigma_{x} +\mu_{x}\mu_{x}^{T} -\Sigma}S^{\frac{1}{2}}}_{W_{x}}\underbrace{S^{\frac{1}{2}}\bb{\Sigma_{y} + \mu_{y}\mu_{y}^{T} -\Sigma}}_{W_{y}^{T}}\right\|_{2} \notag \\
    &= \left\|\sum_{x,y \in \Omega}\mathbb{P}\bb{s_{j}=x|s_{i}=y}\mathbb{P}\bb{s_{i}=y|s_{j-k}=x_{0}}W_{x}W_{y}^{T}\right\|_{2} \;\; \text{using the Markov property} \notag \\
    &= \left\|\sum_{x,y \in \Omega}P^{j-i}\bb{y,x}P^{i-j+k}\bb{x_{0},y}W_{x}W_{y}^{T}\right\|_{2} \notag \\
    &= \left\|\sum_{x,y \in \Omega}\bb{P^{j-i}\bb{y,x}-\pi\bb{x}}P^{i-j+k}\bb{x_{0},y}W_{x}W_{y}^{T} + \sum_{x,y \in \Omega}\pi\bb{x}P^{i-j+k}\bb{x_{0},y}W_{x}W_{y}^{T}\right\|_{2} \notag \\
    &= \left\|\sum_{x,y \in \Omega}\bb{P^{j-i}\bb{y,x}-\pi\bb{x}}P^{i-j+k}\bb{x_{0},y}W_{x}W_{y}^{T} + \underbrace{\sum_{x \in \Omega}\pi\bb{x}W_{x}}_{=0}\sum_{y \in \Omega}P^{i-j+k}\bb{x_{0},y}W_{y}^{T}\right\|_{2} \notag }
    \ba{
    &= \left\|\sum_{x,y \in \Omega}\bb{P^{j-i}\bb{y,x}-\pi\bb{x}}P^{i-j+k}\bb{x_{0},y}W_{x}W_{y}^{T}\right\|_{2} \notag \\ 
    &\leq \underbrace{\left\|\sum_{x,y \in \Omega}\bb{P^{j-i}\bb{y,x}-\pi\bb{x}}\bb{P^{j-i+k}\bb{x_{0},y}-\pi\bb{y}}W_{x}W_{y}^{T}\right\|_{2}}_{T_{21}} + \underbrace{\left\|\sum_{x,y \in \Omega}\bb{P^{j-i}\bb{y,x}-\pi\bb{x}}\pi\bb{y}W_{x}W_{y}^{T}\right\|_{2}}_{T_{22}} \label{eq:cov_T2_bound}
    }
For $T_{21}$, we have, 
\ba{
    T_{21} &\leq \sum_{x,y \in \Omega}\left|P^{j-i}\bb{y,x}-\pi\bb{x}\right|\left|P^{i-j+k}\bb{x_{0},y}-\pi\bb{y}\right|\left\|W_{x}W_{y}^{T}\right\|_{2} \notag \\
            &\leq \left\|S\right\|_{2}\mathcal{M}^{2}\sum_{y \in \Omega}\left|P^{i-j+k}\bb{x_{0},y}-\pi\bb{y}\right|\sum_{x \in \Omega}\left|P^{j-i}\bb{y,x}-\pi\bb{x}\right| \notag \\
            &\leq 2\left\|S\right\|_{2}\mathcal{M}^{2}d_{\text{mix}}\bb{j-i}\sum_{y \in \Omega}\left|P^{i-j+k}\bb{x_{0},y}-\pi\bb{y}\right| \notag \\
            &\leq 4\left\|S\right\|_{2}\mathcal{M}^{2}d_{\text{mix}}\bb{j-i}d_{\text{mix}}\bb{i-j+k} \notag \\
            &\leq 4\left\|S\right\|_{2}\mathcal{M}^{2}2^{-\left\lfloor\frac{j-i}{\tau_{\text{mix}}}\right\rfloor}2^{-\left\lfloor\frac{i-j+k}{\tau_{\text{mix}}}\right\rfloor} \notag \\
            &\leq 8\left\|S\right\|_{2}\mathcal{M}^{2}2^{-\left\lfloor\frac{j-i+i-j+k}{\tau_{\text{mix}}}\right\rfloor} \text{ since } \forall a,b \;\; \lfloor a \rfloor + \lfloor b \rfloor \geq \lfloor a+b \rfloor - 1 \notag \\ 
            &\leq 8\left\|S\right\|_{2}\mathcal{M}^{2}2^{-\left\lfloor\frac{k}{\tau_{\text{mix}}}\right\rfloor} \leq  8\left\|S\right\|_{2}\mathcal{M}^{2}d_{\text{mix}}\bb{k} \leq 8\eta_{i}^{2}\mathcal{M}^{2}\left\|S\right\|_{2} \label{eq:cov_T21_bound} 
}
For $T_{22}$, we have,
\ba{
    T_{22} &= \left\|\sum_{x,y \in \Omega}\bb{P^{j-i}\bb{y,x}-\pi\bb{x}}\pi\bb{y}W_{x}W_{y}^{T}\right\|_{2} \notag \\
      &= \left\|\sum_{x,y \in \Omega}\frac{\bb{P^{j-i}\bb{y,x}-\pi\bb{x}}}{\sqrt{\pi\bb{x}}}\sqrt{\pi\bb{y}}\bb{\sqrt{\pi\bb{x}}W_{x}}\bb{\sqrt{\pi\bb{y}}W_{y}^{T}}\right\|_{2} \notag \\
      &= \left\|\sum_{x,y \in \Omega}\frac{\bb{P^{j-i}\bb{y,x}-\pi\bb{x}}}{\sqrt{\pi\bb{x}}}\sqrt{\pi\bb{y}}\bb{\sqrt{\pi\bb{x}}\bb{\Sigma_{x}+\mu_{x}\mu_{x}^{T}-\Sigma}S^{\frac{1}{2}}}\bb{\sqrt{\pi\bb{y}}S^{\frac{1}{2}}\bb{\Sigma_{y}+\mu_{y}\mu_{y}^{T}-\Sigma}}\right\|_{2} \notag \\
      &\stackrel{(i)} {\leq}\left\|Q\right\|_{2}\left\|\sum_{x \in \Omega}\pi\bb{x}\bb{\Sigma_{x}+\mu_{x}\mu_{x}^{T}-\Sigma}S\bb{\Sigma_{x}+\mu_{x}\mu_{x}^{T}-\Sigma}\right\|_{2} \notag  \\
      &= \left\|Q\right\|_{2}\left\|\mathbb{E}_{\pi}\bbb{\bb{\Sigma_{x}+\mu_{x}\mu_{x}^{T}-\Sigma}S\bb{\Sigma_{x}+\mu_{x}\mu_{x}^{T}-\Sigma}}\right\|_{2} \notag  \\
      &\leq \left\|Q\right\|_{2}\left\|S\right\|_{2}\left\|\mathbb{E}_{\pi}\bbb{\bb{\Sigma_{x}+\mu_{x}\mu_{x}^{T}-\Sigma}^{2}}\right\|_{2} \notag  \\
      &\leq \mathcal{V}\left\|Q\right\|_{2}\left\|S\right\|_{2} \label{eq:cov_T22_bound}
}
Step $(i)$ uses Lemma~\ref{lemma:matrix_cauchy_schwarz} with  $Q(y,x) := \frac{\bb{P^{i-j}\bb{y,x}-\pi\bb{x}}}{\sqrt{\pi\bb{x}}}\sqrt{\pi\bb{y}}$ and $M_x=\sqrt{\pi\bb{x}}\bb{\Sigma_{x}+\mu_{x}\mu_{x}^{T}-\Sigma}S^{\frac{1}{2}}$. Let's now bound $\left\|Q\right\|_{2}$. Let $\Pi := \diag\bb{\pi} \in \mathbb{R}^{\Omega \times \Omega}$ and $t := j-i$. Then, 
we have
\bas{
    Q &= \Pi^{\frac{1}{2}}\bb{P^{t} - \mathbbm{1}\mathbbm{1}^{T}\Pi}\Pi^{-\frac{1}{2}} \\
      &= \Pi^{\frac{1}{2}}P^{t}\Pi^{-\frac{1}{2}} - \Pi^{\frac{1}{2}} \mathbbm{1}\mathbbm{1}^{T}\Pi^{\frac{1}{2}}
}
Now, since we have a reversible Markov chain, $\Pi P = P^{T}\Pi$. Therefore, 
\bas {
    \Pi^{\frac{1}{2}}P\Pi^{-\frac{1}{2}} &= \Pi^{\frac{1}{2}}\Pi^{-1}P^{T}\Pi\Pi^{-\frac{1}{2}} \\
    &= \Pi^{-\frac{1}{2}}P^{T}\Pi^{\frac{1}{2}}
}
Therefore, $P$ is similar to the self-adjoint matrix $\Pi^{\frac{1}{2}}P\Pi^{-\frac{1}{2}}$ and their eigenvalues are real and the same. Further note that $\Pi^{\frac{1}{2}}\mathbbm{1}$ is the leading eigenvector of $\Pi^{\frac{1}{2}}P\Pi^{-\frac{1}{2}}$ with eigenvalue 1 since 
\bas {
    \Pi^{\frac{1}{2}}P\Pi^{-\frac{1}{2}}\Pi^{\frac{1}{2}}\mathbbm{1} &= \Pi^{\frac{1}{2}}P\mathbbm{1} \\
    &= \Pi^{\frac{1}{2}}\mathbbm{1} \text{ since } P \text{ is a stochastic matrix}
}
Now, 
\bas{
    \|Q\|_{2} &= \left\|\Pi^{\frac{1}{2}}P^{t}\Pi^{-\frac{1}{2}} - \Pi^{\frac{1}{2}} \mathbbm{1}\mathbbm{1}^{T}\Pi^{\frac{1}{2}}\right\|_{2} \\
    &= \left\|\bb{\Pi^{\frac{1}{2}}P\Pi^{-\frac{1}{2}}}^{t} - \Pi^{\frac{1}{2}} \mathbbm{1}\mathbbm{1}^{T}\Pi^{\frac{1}{2}}\right\|_{2} \\
    &\leq \left|\lambda_{2}\bb{\Pi^{\frac{1}{2}}P\Pi^{-\frac{1}{2}}}\right|^{t} \\
    &= \left|\lambda_{2}\bb{P}\right|^{t}
}
where $|\lambda_{2}\bb{.}|$ denotes the second-largest eigenvalue in magnitude. Therefore, using \ref{eq:cov_T1_bound}, \ref{eq:cov_T21_bound} and \ref{eq:cov_T22_bound}, we have
\bas{
    \E\bbb{\bb{A\bb{s_{i}}-\Sigma}SA\bb{s_{j}}|s_{i+k},...s_{n}} &\leq \bb{\left|\lambda_{2}\bb{P}\right|^{j-i}\mathcal{V} + 8\eta_{i}^{2}\mathcal{M}^{2} + 2\eta_{i}^{2}\mathcal{M}\lambda_{1}}\left\|S\right\|_{2} \\
    &\leq 
    \bb{\left|\lambda_{2}\bb{P}\right|^{j-i}\mathcal{V} + 8\eta_{i}^{2}\mathcal{M}\bb{\mathcal{M}+\lambda_{1}}}\left\|S\right\|_{2}
}
Hence proved.
\end{proof}
    
% \rd PURNA CHECKED\bk
\begin{lemma}
\label{lemma:trace_linear_bound}
    Let $\forall i \in [n], \eta_{i}k_{i}\bb{\mathcal{M}+\lambda_{1}} \leq \epsilon, \epsilon \in \bb{0,1}$ and $\eta_{i}$ forms a non-increasing sequence. Set $k_{i} := \tau_{\text{mix}}\bb{\gamma\eta_{i}^{2}}, \gamma \in (0,1]$. Then for constant matrix $U \in \mathbb{R}^{d \times d'}$, and constant positive semi-definite matrix $G \in \mathbb{R}^{d \times d}$, $i \leq j \leq n, \; j-i \geq k_{i}$, we have 
    \begin{align*}
        & \left|\mathbb{E}\left[\Tr\bb{U^{T}\newB{i+1}{j}G\left(A_{i}-\Sigma\right)\newB{i+1}{j}^{T}U}\right]\right| \\ 
        & \;\;\;\;\;\;\;\;\;\;  \leq \eta_{i+1}\|G\|_{2}\bb{\frac{2\mathcal{V}\left|\lambda_{2}\bb{P}\right|}{1 - \left|\lambda_{2}\bb{P}\right|} + \eta_{i+1}\mathcal{M}\bb{2\gamma\bb{1+8\epsilon} + \bb{2+\bb{1+\epsilon}^{2}}k_{i+1}^{2}\bb{\mathcal{M}+\lambda_{1}}^{2}}} \\ 
        & \;\;\;\;\;\;\;\;\;\;\;\;\;\;\;\;\;\;\;\; \times \mathbb{E}\left[\Tr\bb{U^{T}\newB{i+k_{i+1}}{j}\newB{i+k_{i+1}}{j}^{T}U} \right]
    \end{align*}
    where $\newB{i}{j}$ is defined in \ref{definition:Bji}.
\end{lemma}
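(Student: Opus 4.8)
Write $k:=k_{i+1}$, $\eta:=\eta_{i+1}$, and factor $\newB{i+1}{j}=CD$ where $C:=\newB{i+k}{j}$ collects the matrices at times $i+k,\dots,j$ (all at distance $\ge k$ from time $i$) and $D:=\newB{i+1}{i+k-1}$ is the ``close'' block; the split is legitimate since $j$ is far enough from $i$. The target equals $\E\bbb{\Tr\bb{U^{T}C\,M\,C^{T}U}}$ with $M:=D\,G(A_i-\Sigma)\,D^{T}$. First I would condition on the $\sigma$-field $\mathcal{H}$ generated by $(s_{i+k},\dots,s_n)$ and $(X_{i+k},\dots,X_j)$. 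By the Markov property the states $s_{<i+k}$ are conditionally independent of $s_{>i+k}$ given $s_{i+k}$, and conditioned on the whole state sequence the $X_t$'s are independent; hence the conditional law of $M$ (a function of $X_i,\dots,X_{i+k-1}$) given $\mathcal{H}$ depends only on $s_{i+k}$. Since $C,U$ are $\mathcal{H}$-measurable and $C^{T}UU^{T}C\succeq 0$, $\E\bbb{\Tr\bb{U^{T}CMC^{T}U}\mid\mathcal{H}}=\Tr\bb{C^{T}UU^{T}C\;\E\bbb{M\mid s_{i+k}}}$, and Lemma~\ref{lemma:trace_ineq} (on the PSD matrix $C^{T}UU^{T}C$) bounds this in absolute value by $\bigl\|\E\bbb{M\mid s_{i+k}}\bigr\|_{2}\Tr\bb{U^{T}CC^{T}U}$. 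Taking expectations and using that the bound on $\bigl\|\E\bbb{M\mid s_{i+k}=x_0}\bigr\|_{2}$ below is \emph{uniform} in $x_0$, the claim reduces to bounding $\sup_{x_0}\bigl\|\E\bbb{M\mid s_{i+k}=x_0}\bigr\|_{2}$ by the bracketed quantity of the statement, since $\E\bbb{\Tr\bb{U^{T}CC^{T}U}}=\E\bbb{\Tr\bb{U^{T}\newB{i+k}{j}\newB{i+k}{j}^{T}U}}$.

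\textbf{Linearization of the close block.} By Lemma~\ref{lemma:etakproduct} write $D=I+L+E$, $L:=\sum_{t=i+1}^{i+k-1}\eta_t X_tX_t^{T}$, $\|E\|_2\le k^{2}\eta^{2}\m^{2}$, and $\|L\|_2\le k\eta\m\le\epsilon$ (Assumption~\ref{assumption:norm_bound}). Expanding $M=(I+L+E)G(A_i-\Sigma)(I+L+E)^{T}$ gives: (a) $G(A_i-\Sigma)$; (b) the cross terms $LG(A_i-\Sigma)$ and $G(A_i-\Sigma)L^{T}$; (c) $LG(A_i-\Sigma)L^{T}$ and the three $E$-terms. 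For (a), $\bigl\|\E\bbb{G(A_i-\Sigma)\mid s_{i+k}=x_0}\bigr\|_2=\bigl\|G\,\E\bbb{A_i-\Sigma\mid s_{i+k}=x_0}\bigr\|_2\le\|G\|_2\M\sum_{s}\bigl|P(s_i=s\mid s_{i+k}=x_0)-\pi(s)\bigr|\le 2\|G\|_2\M\,\dmix(k)\le 2\gamma\eta^{2}\M\|G\|_2$, using $\sum_s\pi(s)(\Sigma_s+\mu_s\mu_s^{T}-\Sigma)=0$, Lemma~\ref{lemma:reverse_mixing} (to read the reverse deviation as $\dmix(k)$), and $k=\tau_{\text{mix}}(\gamma\eta^{2})$. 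For (b), $\bigl\|\E\bbb{LG(A_i-\Sigma)\mid s_{i+k},\dots,s_n}\bigr\|_2\le\sum_{t=i+1}^{i+k-1}\eta_t\bigl\|\E\bbb{X_tX_t^{T}G(A_i-\Sigma)\mid s_{i+k},\dots,s_n}\bigr\|_2$; Lemma~\ref{lemma:bound_covariance} (in transposed form, with its argument re-run at conditioning distance $k_{i+1}$, valid since $t-i<k_{i+1}$ and $\dmix(k_{i+1})\le\gamma\eta^{2}$) bounds the $t$-th summand by $\bigl(\left|\lambda_{2}\bb{P}\right|^{t-i}\mathcal{V}+8\gamma\eta^{2}\M\m\bigr)\|G\|_2$; summing with $\eta_t\le\eta$, the geometric series $\sum_{m\ge1}\left|\lambda_{2}\bb{P}\right|^{m}=\tfrac{\left|\lambda_{2}\bb{P}\right|}{1-\left|\lambda_{2}\bb{P}\right|}$, and $k\eta\m\le\epsilon$ gives $\le\eta\|G\|_2\bigl(\tfrac{\mathcal{V}\left|\lambda_{2}\bb{P}\right|}{1-\left|\lambda_{2}\bb{P}\right|}+8\gamma\epsilon\eta\M\bigr)$ for each cross term. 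For (c), deterministic estimates via $\|L\|_2\le\epsilon$, $\|E\|_2\le k^{2}\eta^{2}\m^{2}$, $\|I+L\|_2\le 1+\epsilon$ and $\|A_i-\Sigma\|_2\le\M$ bound this block's norm by $\bigl(1+2(1+\epsilon)+\epsilon^{2}\bigr)k^{2}\eta^{2}\m^{2}\|G\|_2\M$.

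\textbf{Assembly.} Adding the bound for (a), twice the bound for (b), and the bound for (c), and factoring out $\eta_{i+1}\|G\|_2$: the $\mathcal{V}$-contributions (all from the two cross terms) sum to $\tfrac{2\mathcal{V}\left|\lambda_{2}\bb{P}\right|}{1-\left|\lambda_{2}\bb{P}\right|}$; the remaining contributions each carry a further factor $\eta_{i+1}\M$, the ``pure mixing'' ones summing to $2\gamma+16\gamma\epsilon=2\gamma(1+8\epsilon)$ and the quadratic-remainder ones to $\bigl(1+2(1+\epsilon)+\epsilon^{2}\bigr)k_{i+1}^{2}\m^{2}=\bigl(2+(1+\epsilon)^{2}\bigr)k_{i+1}^{2}\m^{2}$. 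This is exactly the bracketed quantity of the statement, which combined with the reduction of the first paragraph proves the lemma.

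\textbf{Main obstacle.} The delicate point is the reduction in the first paragraph: establishing that conditioning on the ``far'' states and data collapses the remaining randomness in $M$ to a function of the single state $s_{i+k_{i+1}}$ (this is where reversibility, via Lemma~\ref{lemma:reverse_mixing}, and the conditional-independence structure $X_t\mid s_t$ enter), and that the resulting operator-norm bound is uniform in that state so it can be pulled out of the expectation against $\Tr(U^{T}CC^{T}U)$. Everything after that is bookkeeping; the one genuinely load-bearing estimate is Lemma~\ref{lemma:bound_covariance} in part (b), which upgrades the naively-$O(1)$ cross term $LG(A_i-\Sigma)$ to size $O\bigl(\tfrac{\mathcal{V}\left|\lambda_{2}\bb{P}\right|}{1-\left|\lambda_{2}\bb{P}\right|}\bigr)$ — the ``approximation: take two'' refinement — and landing the exact constants $2\gamma(1+8\epsilon)$ and $2+(1+\epsilon)^{2}$ hinges on carrying the factor $\gamma$ through $\dmix(k_{i+1})\le\gamma\eta_{i+1}^{2}$.
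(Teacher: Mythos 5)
Your proof is correct and follows essentially the same route as the paper's: factor out the far block $\newB{i+k_{i+1}}{j}$, collapse the conditioning to $s_{i+k_{i+1}}$ via the reverse Markov/mixing argument, bound the linear cross terms with Lemma~\ref{lemma:bound_covariance}, and control the quadratic remainders deterministically via Lemma~\ref{lemma:etakproduct}. The only differences are cosmetic—you condition once up front and expand the close block as $I+L+E$, whereas the paper splits into four trace terms with $R$ and then $R_0$ and $R-R_0$—and your constants $2\gamma\bb{1+8\epsilon}$ and $2+\bb{1+\epsilon}^{2}$ match the paper's exactly.
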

\begin{proof}
For the convenience of notation, we denote $k_{i+1} := k$. Let $\newB{i+1}{j} = \newB{i+k}{j}\left(I + R\right)$, then 
\begin{align}
    & \mathbb{E}\left[\Tr\bb{U^{T}\newB{i+1}{j}G\left(A_{i}-\Sigma\right)\newB{i+1}{j}^{T}U}\right] = \notag \\
    & \;\;\;\;\; \mathbb{E}\left[\underbrace{\Tr\bb{U^{T}\newB{i+k}{j}G\left(A_{i}-\Sigma\right)\newB{i+k}{j}^{T}U}}_{T_{1}}\right] + \mathbb{E}\left[\underbrace{\Tr\bb{U^{T}\newB{i+k}{j}G\left(A_{i}-\Sigma\right)R^{T}\newB{i+k}{j}^{T}U}}_{T_{2}}\right] + \notag \\
    & \;\;\;\;\; \mathbb{E}\left[\underbrace{\Tr\bb{U^{T}\newB{i+k}{j}RG\left(A_{i}-\Sigma\right)\newB{i+k}{j}^{T}U}}_{T_{3}}\right] + \mathbb{E}\left[\underbrace{\Tr\bb{U^{T}\newB{i+k}{j}RG\left(A_{i}-\Sigma\right)R^{T}\newB{i+k}{j}^{T}U}}_{T_{4}}\right] \label{eq:decompose_error}
\end{align}
We will now bound each of the terms $\mathbb{E}\left[T_{1}\right], \mathbb{E}\left[T_{2}\right], \mathbb{E}\left[T_{3}\right]$ and $\mathbb{E}\left[T_{4}\right]$. 
% Noting that 
% \bas{\newB{i+k}{j} := \left(I + \eta_{j}A(s_{j})\right)\left(I + \eta_{j-1}A(s_{j-1})\right)\dots\left(I + \eta_{i+k}A(s_{i+k})\right),} we have,
\begin{align*}
    \mathbb{E}\left[T_{1}\right] &= \mathbb{E}\left[\Tr\bb{U^{T}\newB{i+k}{j}G\left(A_{i}-\Sigma\right)\newB{i+k}{j}^{T}U}\right] \\
    &= \mathbb{E}\left[\mathbb{E}\left[\Tr\bb{U^{T}\newB{i+k}{j}G\left(A_{i}-\Sigma\right)\newB{i+k}{j}^{T}U} \bigg| s_{i+k},  \dots s_{j-1}, s_{j} \right]\right] \\
    &= \mathbb{E}\left[\Tr\bb{U^{T}\newB{i+k}{j}G\; \mathbb{E}\left[\left(A_{i}-\Sigma\right)\bigg| s_{i+k},  \dots s_{j-1}, s_{j} \right]\newB{i+k}{j}^{T}U} \right] \\
    &= \mathbb{E}\left[\Tr\bb{U^{T}\newB{i+k}{j}G\; \mathbb{E}\left[\left(A_{i}-\Sigma\right)\bigg| s_{i+k} \right]\newB{i+k}{j}^{T}U} \right] \text{ using Lemma \ref{lemma:reverse_Markov} }
\end{align*}
Now, using Lemma \ref{lemma:reverse_mixing}, we have, 
\begin{align}
    \left\|\mathbb{E}\left[\left(A_{i}-\Sigma\right)\bigg| s_{i+k} \right]\right\|_{2} &=  \left\|\sum_{s \in \Omega}P^{k}(s_{i+k},s)\bb{A_{i}-\Sigma}\right\|_{2} \notag \\
    &= \left\|\sum_{s \in \Omega}\left(P^{k}(s_{i+k},s)-\pi\left(s\right)\right)\left(A_{i}-\Sigma\right) + \underbrace{\mathbb{E}_{\pi}\left[\left(A_{i}-\Sigma\right)\right]}_{=0}\right\|_{2} \notag \\
    &= \left\|\sum_{s \in \Omega}\left(P^{k}(s_{i+k},s)-\pi\left(s\right)\right)\left(A_{i}-\Sigma\right)\right\|_{2} \notag \\
    &\leq \mathcal{M}\sum_{s \in \Omega}\bigg|P^{k}(s_{i+k},s)-\pi\left(s\right)\bigg| \notag \\
    &\leq 2\mathcal{M}d_{\text{mix}}\bb{k_{i+1}} \notag \\
    &\leq 2\gamma\eta_{i+1}^{2}\mathcal{M} \label{eq:mixing_time_bound}
\end{align}
where we have used Lemma \ref{lemma:trace_ineq}. Therefore, 
\begin{align}
    \left|\mathbb{E}\left[T_{1}\right]\right| \leq \gamma\eta_{i+1}^{2}\mathcal{M}\|G\|_{2} \mathbb{E}\left[\Tr\bb{U^{T}\newB{i+k}{j}\newB{i+k}{j}^{T}U}\right] \label{eq:boundT1}
\end{align}
We will now bound $\mathbb{E}\left[T_2\right]$. Let $R_{0} := \sum_{\ell=i+1}^{i+k-1}\eta_{\ell}A_{\ell}$. Using Lemma \ref{lemma:etakproduct} we have 
\bas{\|R-R_{0}\|_{2} \; \leq \eta_{i+1}^2k_{i+1}^2\bb{\mathcal{M}+\lambda_{1}}^{2}} Then, 
\bas{
\E[T_2] &= \E\bbb{\Tr\bb{U^{T}\newB{i+k}{j}G\left(A_{i}-\Sigma\right)R^{T}\newB{i+k}{j}^{T}U}} \\
&= \E\bbb{\Tr\bb{U^{T}\newB{i+k}{j}G\left(A_{i}-\Sigma\right)R_{0}^{T}\newB{i+k}{j}^{T}U}} + \E\bbb{\Tr\bb{U^{T}\newB{i+k}{j}G\left(A_{i}-\Sigma\right)\bb{R - R_{0}}^{T}\newB{i+k}{j}^{T}U}} \\
&= \E\bbb{\Tr\bb{U^{T}\newB{i+k}{j}G\E[\left(A_{i}-\Sigma\right)R_{0}^{T}|s_{i+k},  \dots s_{j-1}, s_{j}]\newB{i+k}{j}^{T}U}} + \\
& \;\;\;\;\;\;\;\;\;\;\;\;\;\;\;\;\;\;\;\;\;\;\;\;\;\;\;\;\;\;\;\;\;\;\;\;\;\;\;\;\;\;\;\;\;\;\;\; \E\bbb{\Tr\bb{U^{T}\newB{i+k}{j}G\left(A_{i}-\Sigma\right)\bb{R - R_{0}}^{T}\newB{i+k}{j}^{T}U}}
}
Using Lemma \ref{lemma:bound_covariance} with $S := I$ we have, %\rd there should be a norm on the LHS?\bk
\ba{
    \|\E\bbb{\left(A_{i}-\Sigma\right)R_{0}^{T}|s_{i+k},\dots,s_j}\|_{2} &\leq \sum_{\ell=i+1}^{i+k-1}\eta_{\ell}\bb{\left|\lambda_{2}\bb{P}\right|^{\ell-i}\mathcal{V} + 8\gamma\eta_{i+1}^{2}\mathcal{M}\bb{\mathcal{M}+\lambda_{1}}} \notag \\
    &\leq \eta_{i+1}\mathcal{V}\frac{\left|\lambda_{2}\bb{P}\right|}{1 - \left|\lambda_{2}\bb{P}\right|} + 8\gamma\eta_{i+1}^{3}k_{i+1}\mathcal{M}\bb{\mathcal{M}+\lambda_{1}} \label{eq:bound_linear_term}
}
Therefore, 
\ba{
    & |\E\bbb{T_{2}}| \notag \\
    &\leq \|G\|_{2}\bb{\eta_{i+1}\frac{\mathcal{V}\left|\lambda_{2}\bb{P}\right|}{1 - \left|\lambda_{2}\bb{P}\right|} + 8\gamma\eta_{i+1}^{3}k_{i+1}\mathcal{M}\bb{\mathcal{M}+\lambda_{1}} + \eta_{i+1}^2k_{i+1}^2\mathcal{M}\bb{\mathcal{M}+\lambda_{1}}^{2}}\mathbb{E}\left[ \Tr\bb{U^{T}\newB{i+k}{j}\newB{i+k}{j}^{T}U}\right] \notag \\
    &= \eta_{i+1}\|G\|_{2}\bb{\frac{\mathcal{V}\left|\lambda_{2}\bb{P}\right|}{1 - \left|\lambda_{2}\bb{P}\right|} + 8\gamma\eta_{i+1}^{2}k_{i+1}\mathcal{M}\bb{\mathcal{M}+\lambda_{1}} + \eta_{i+1}k_{i+1}^2\mathcal{M}\bb{\mathcal{M}+\lambda_{1}}^{2}}\mathbb{E}\left[\Tr\bb{U^{T}\newB{i+k}{j}\newB{i+k}{j}^{T}U}\right] \label{eq:boundT2}
}
Similarly using Lemma \ref{lemma:bound_covariance} with $S := G$, 
\begin{align}
    \left|\mathbb{E}\left[T_{3}\right]\right| &\leq \eta_{i+1}\|G\|_{2}\bb{\frac{\mathcal{V}\left|\lambda_{2}\bb{P}\right|}{1 - \left|\lambda_{2}\bb{P}\right|} + 8\gamma\eta_{i+1}^{2}k_{i+1}\mathcal{M}\bb{\mathcal{M}+\lambda_{1}} + \eta_{i+1}k_{i+1}^2\mathcal{M}\bb{\mathcal{M}+\lambda_{1}}^{2}}\mathbb{E}\left[\Tr\bb{U^{T}\newB{i+k}{j}\newB{i+k}{j}^{T}U}\right] \label{eq:boundT3}
\end{align}
Finally, 
\ba{
    \left|\mathbb{E}\left[T_{4}\right]\right| &\leq  \mathcal{M}\|G\|_{2}\|R\|_{2}^{2}\mathbb{E}\left[\Tr\bb{U^{T}\newB{i+k}{j}\newB{i+k}{j}^{T}U}\right] \notag \\
    &\leq \bb{1+\epsilon}^{2}\eta_{i+1}^{2}k_{i+1}^{2}\mathcal{M}\bb{\mathcal{M}+\lambda_{1}}^{2}\|G\|_{2}\mathbb{E}\left[\Tr\bb{U^{T}\newB{i+k}{j}\newB{i+k}{j}^{T}U}\right] \text{ using Lemma } \ref{lemma:etakproduct} \label{eq:boundT4}
}
Therefore, using Eqs~\ref{eq:boundT1}, \ref{eq:boundT2}, \ref{eq:boundT3}, \ref{eq:boundT4} along with \ref{eq:decompose_error}, we have
\bas {
    & \left|\mathbb{E}\left[\Tr\bb{U^{T}\newB{i+1}{j}G\bb{A_i-\Sigma}\newB{i+1}{j}^{T}U}\right]\right| \\ 
    & \leq \eta_{i+1}\|G\|_{2}\bb{\frac{2\mathcal{V}\left|\lambda_{2}\bb{P}\right|}{1 - \left|\lambda_{2}\bb{P}\right|} + \eta_{i+1}\mathcal{M}\bb{2\gamma +  16\gamma\eta_{i+1} k_{i+1}\bb{\mathcal{M}+\lambda_{1}} + \bb{2+\bb{1+\epsilon}^{2}}k_{i+1}^{2}\bb{\mathcal{M}+\lambda_{1}}^{2}}}\\ 
    & \;\;\;\;\;\;\;\;\;\;\;\;\;\;\;\;\;\;\;\;\;\;\;\;\;\;\;\;\;\;\;\;\;\;\;\;\;\;\;\;\;\;\;\;\;\;\;\;\;\;\;\;\;\;\;\;\;\;\;\;\;\;\;\;\;\;\;\;\;\;\;\;\;\;\;\;\;\;\;\;\;\;\;\;\;\;\;\;\;\;\;\;\;\;\;\;\;\;\;\;\;\;\;\;\;\;\;\;\;\;\;\;\times \mathbb{E}\left[\Tr\bb{U^{T}\newB{i+k}{j}\newB{i+k}{j}^{T}U} \right] \\ 
    & \leq \eta_{i+1}\|G\|_{2}\bb{\frac{2\mathcal{V}\left|\lambda_{2}\bb{P}\right|}{1 - \left|\lambda_{2}\bb{P}\right|} + \eta_{i+1}\mathcal{M}\bb{2\gamma\bb{1+8\epsilon} + \bb{2+\bb{1+\epsilon}^{2}}k_{i+1}^{2}\bb{\mathcal{M}+\lambda_{1}}^{2}}}\\ 
    & \;\;\;\;\;\;\;\;\;\;\;\;\;\;\;\;\;\;\;\;\;\;\;\;\;\;\;\;\;\;\;\;\;\;\;\;\;\;\;\;\;\;\;\;\;\;\;\;\;\;\;\;\;\;\;\;\;\;\;\;\;\;\;\;\;\;\;\;\;\;\;\;\;\;\;\;\;\;\;\;\;\;\;\;\;\;\;\;\;\;\;\;\;\;\;\;\;\;\;\;\;\;\;\;\;\;\;\;\;\;\;\;\times \mathbb{E}\left[\Tr\bb{U^{T}\newB{i+k}{j}\newB{i+k}{j}^{T}U} \right]
}
where in the last line we used $\eta_{i+1} k_{i+1}\bb{\mathcal{M}+\lambda_{1}} \leq \epsilon$.
Hence proved.
\end{proof}

\begin{lemma}
\label{lemma:trace_square_bound}
    Let $\forall i \in [n], \eta_{i}k_{i}\bb{\mathcal{M}+\lambda_{1}} \leq \epsilon, \epsilon \in \bb{0,1}$ and $\eta_{i}$ forms a non-increasing sequence. Set $k_{i} := \tau_{\text{mix}}\bb{\gamma\eta_{i}^{2}}, \gamma \in (0,1]$. Then for constant matrices $U \in \mathbb{R}^{d \times d'}$, $G \in \mathbb{R}^{d \times d}$, $i \leq j \leq n, \; j-i \geq k_{i}$, we have 
    \begin{align*}
        & \left|\mathbb{E}\left[\Tr\bb{U^{T}\newB{i+1}{j}G\left(A_{i}-\Sigma\right)^{2}\newB{i+1}{j}^{T}U}\right]\right| \\ 
        & \;\;\;\;\;\;\;\;\;\;\;\; \leq \bb{\mathcal{V} + \eta_{i+1}\mathcal{M}^{2}\bb{2\gamma\eta_{i+1} + \bb{1+\epsilon}\bb{2+\epsilon\bb{1+\epsilon}}k_{i+1}\bb{\mathcal{M}+\lambda_{1}}}}\left\|G\right\|_{2}\mathbb{E}\left[\Tr\bb{U^{T}\newB{i+k_{i+1}}{j}\newB{i+k_{i+1}}{j}^{T}U} \right]
    \end{align*}
    where $\newB{i}{j}$ is defined in \ref{definition:Bji}.
\end{lemma}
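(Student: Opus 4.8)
The plan is to run the argument of Lemma~\ref{lemma:trace_linear_bound} almost verbatim, the only change being that the noise block $(A_{i}-\Sigma)^{2}$ is controlled directly through mixing and the uniform norm bound, so the covariance estimate (Lemma~\ref{lemma:bound_covariance}) is not needed here. Write $k:=k_{i+1}$ and split $\newB{i+1}{j}=\newB{i+k}{j}(I+R)$ with $R:=\newB{i+1}{i+k-1}-I$; Lemma~\ref{lemma:etakproduct}, Eq.~\ref{eq:approx1}, gives the deterministic bound $\|R\|_{2}\le(1+\epsilon)k\eta_{i+1}(\mathcal{M}+\lambda_{1})$. Expanding $(I+R)G(A_{i}-\Sigma)^{2}(I+R)^{T}$ inside the trace decomposes the quantity of interest into $\mathbb{E}[T_{1}]+\mathbb{E}[T_{2}]+\mathbb{E}[T_{3}]+\mathbb{E}[T_{4}]$, exactly as in Eq.~\ref{eq:decompose_error}, where $T_{1}$ carries no factor of $R$, $T_{2},T_{3}$ carry one, and $T_{4}$ carries two.

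For $T_{1}$, condition on $s_{i+k},\dots,s_{j}$: since $X_{i}$ depends only on $s_{i}$, which is conditionally independent of $s_{i+k+1},\dots,s_{j}$ given $s_{i+k}$ (the past/future decoupling underlying Lemma~\ref{lemma:reverse_Markov}), we get $\mathbb{E}\bbb{(A_{i}-\Sigma)^{2}\mid s_{i+k},\dots,s_{j}}=\mathbb{E}\bbb{(A_{i}-\Sigma)^{2}\mid s_{i+k}=x_{0}}=\sum_{\ell}P^{k}(x_{0},\ell)\,\mathbb{E}_{D(\ell)}\bbb{(X_{\ell}X_{\ell}^{T}-\Sigma)^{2}}$. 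Adding and subtracting $\pi(\ell)$, the $\pi$-weighted part is $\mathbb{E}\bbb{(XX^{T}-\Sigma)^{2}}$, of norm at most $\mathcal{V}$ by Assumption~\ref{assumption:variance_bound}, while the remainder has norm at most $\mathcal{M}^{2}\sum_{\ell}\left|P^{k}(x_{0},\ell)-\pi(\ell)\right|\le 2\mathcal{M}^{2}d_{\text{mix}}(k)\le 2\gamma\eta_{i+1}^{2}\mathcal{M}^{2}$, using $k=\tau_{\text{mix}}(\gamma\eta_{i+1}^{2})$. Hence $\bigl\|\mathbb{E}\bbb{(A_{i}-\Sigma)^{2}\mid s_{i+k}}\bigr\|_{2}\le\mathcal{V}+2\gamma\eta_{i+1}^{2}\mathcal{M}^{2}$, and Lemma~\ref{lemma:trace_ineq} (applied after replacing the non-symmetric middle matrix $G\,\mathbb{E}[(A_{i}-\Sigma)^{2}\mid s_{i+k}]$ by its symmetric part, which leaves the trace unchanged and does not increase the operator norm) yields $|\mathbb{E}[T_{1}]|\le\|G\|_{2}\bb{\mathcal{V}+2\gamma\eta_{i+1}^{2}\mathcal{M}^{2}}\,\mathbb{E}\bbb{\Tr\bb{U^{T}\newB{i+k}{j}\newB{i+k}{j}^{T}U}}$.

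For $T_{2},T_{3},T_{4}$ there is no conditioning to do: use the pointwise bounds $\|A_{i}-\Sigma\|_{2}\le\mathcal{M}$ (Assumption~\ref{assumption:norm_bound}) and $\|R\|_{2}\le(1+\epsilon)k\eta_{i+1}(\mathcal{M}+\lambda_{1})$, and pull the operator norm of the (symmetrized) middle matrix out of the trace by Lemma~\ref{lemma:trace_ineq}. This gives $|\mathbb{E}[T_{2}]|,|\mathbb{E}[T_{3}]|\le(1+\epsilon)k\eta_{i+1}(\mathcal{M}+\lambda_{1})\mathcal{M}^{2}\|G\|_{2}\,\mathbb{E}\bbb{\Tr\bb{U^{T}\newB{i+k}{j}\newB{i+k}{j}^{T}U}}$ and $|\mathbb{E}[T_{4}]|\le(1+\epsilon)^{2}k^{2}\eta_{i+1}^{2}(\mathcal{M}+\lambda_{1})^{2}\mathcal{M}^{2}\|G\|_{2}\,\mathbb{E}\bbb{\Tr\bb{U^{T}\newB{i+k}{j}\newB{i+k}{j}^{T}U}}$. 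Summing the four estimates, the scalar prefactor is $\mathcal{V}+\mathcal{M}^{2}\bb{2\gamma\eta_{i+1}^{2}+2(1+\epsilon)k\eta_{i+1}(\mathcal{M}+\lambda_{1})+(1+\epsilon)^{2}k^{2}\eta_{i+1}^{2}(\mathcal{M}+\lambda_{1})^{2}}$; bounding $k\eta_{i+1}(\mathcal{M}+\lambda_{1})\le\epsilon$ in the last summand turns it into $\le\epsilon(1+\epsilon)^{2}k\eta_{i+1}(\mathcal{M}+\lambda_{1})$, and pulling out a factor of $\eta_{i+1}$ gives exactly $\mathcal{V}+\eta_{i+1}\mathcal{M}^{2}\bb{2\gamma\eta_{i+1}+(1+\epsilon)\bb{2+\epsilon(1+\epsilon)}k_{i+1}(\mathcal{M}+\lambda_{1})}$, the claimed constant.

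The proof is essentially bookkeeping on top of Lemmas~\ref{lemma:etakproduct}, \ref{lemma:reverse_Markov}, and \ref{lemma:trace_ineq}, so I do not expect a genuine obstacle. The one place that needs a little care is that $G(A_{i}-\Sigma)^{2}$ and the products appearing in $T_{2}$--$T_{4}$ need not be symmetric (here $G$ is an arbitrary, not necessarily PSD, matrix), so Lemma~\ref{lemma:trace_ineq} cannot be invoked literally; the standard fix is to use $\Tr(Z^{T}MZ)=\Tr\bb{Z^{T}\tfrac{M+M^{T}}{2}Z}$ together with $\bigl\|\tfrac{M+M^{T}}{2}\bigr\|_{2}\le\|M\|_{2}$. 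A minor bookkeeping point, handled exactly as in Lemma~\ref{lemma:trace_linear_bound}, is to ensure $i+k_{i+1}\le j$ so that $\newB{i+k_{i+1}}{j}$ is well-defined.
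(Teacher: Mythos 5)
Your proposal is correct and follows essentially the same route as the paper's proof: the same factorization $\newB{i+1}{j}=\newB{i+k}{j}\bb{I+R}$ with Lemma~\ref{lemma:etakproduct}, the same four-term decomposition, the same conditioning/reverse-Markov/mixing bound $\|\E[(A_i-\Sigma)^2\mid s_{i+k}]\|_2\le \mathcal{V}+2\gamma\eta_{i+1}^2\mathcal{M}^2$ for $T_1$, the same deterministic bounds on $T_2$--$T_4$ via $\|R\|_2$ and $\|A_i-\Sigma\|_2\le\mathcal{M}$, and the identical final simplification using $k_{i+1}\eta_{i+1}(\mathcal{M}+\lambda_1)\le\epsilon$. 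Your symmetrization caveat is harmless but unnecessary: Lemma~\ref{lemma:trace_ineq} is proved via Von Neumann's trace inequality and already applies to an arbitrary, not necessarily symmetric, middle matrix.
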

\begin{proof}
For convenience of notation, we denote $k_{i+1}:= k$. Let $\newB{i+1}{j} = \newB{i+k}{j}\left(I + R\right)$, then 
\begin{align*}
    & \mathbb{E}\left[\Tr\bb{U^{T}\newB{i+1}{j}G\left(A_{i}-\Sigma\right)^{2}\newB{i+1}{j}^{T}U}\right] = \\
    & \;\;\;\;\; \mathbb{E}\left[\underbrace{\Tr\bb{U^{T}\newB{i+k}{j}G\left(A_{i}-\Sigma\right)^{2}\newB{i+k}{j}^{T}U}}_{T_{1}}\right] + \mathbb{E}\left[\underbrace{\Tr\bb{U^{T}\newB{i+k}{j}G\left(A_{i}-\Sigma\right)^{2}R^{T}\newB{i+k}{j}^{T}U}}_{T_{2}}\right] + \\
    & \;\;\;\;\; \mathbb{E}\left[\underbrace{\Tr\bb{U^{T}\newB{i+k}{j}RG\left(A_{i}-\Sigma\right)^{2}\newB{i+k}{j}^{T}U}}_{T_{3}}\right] + \mathbb{E}\left[\underbrace{\Tr\bb{U^{T}\newB{i+k}{j}RG\left(A_{i}-\Sigma\right)^{2}R^{T}\newB{i+k}{j}^{T}U}}_{T_{4}}\right]
\end{align*}
We will now bound each of the terms $\mathbb{E}\left[T_{1}\right], \mathbb{E}\left[T_{2}\right], \mathbb{E}\left[T_{3}\right]$ and $\mathbb{E}\left[T_{4}\right]$. 
\\
\\ 
Since $\left\|\E_{\pi}\bbb{\bb{A_{t}-\Sigma}^{2}}\right\|_{2} \leq \mathcal{V}$, therefore
\begin{align*}
    \mathbb{E}\left[T_{1}\right] &= \mathbb{E}\left[\Tr\bb{U^{T}\newB{i+k}{j}G\left(A_{i}-\Sigma\right)^{2}\newB{i+k}{j}^{T}U}\right] \\
    &= \mathbb{E}\left[\mathbb{E}\left[\Tr\bb{U^{T}\newB{i+k}{j}G\left(A_{i}-\Sigma\right)^{2}\newB{i+k}{j}^{T}U} \bigg| s_{i+k},  \dots s_{j-1}, s_{j} \right]\right] \\
    &= \mathbb{E}\left[\Tr\bb{U^{T}\newB{i+k}{j}G\; \mathbb{E}\left[\left(A_{i}-\Sigma\right)^{2}\bigg| s_{i+k},  \dots s_{j-1}, s_{j} \right]\newB{i+k}{j}^{T}U} \right] \\
    &= \mathbb{E}\left[\Tr\bb{U^{T}\newB{i+k}{j}G\; \mathbb{E}\left[\left(A_{i}-\Sigma\right)^{2}\bigg| s_{i+k} \right]\newB{i+k}{j}^{T}U} \right] \text{ using Lemma \ref{lemma:reverse_Markov} } \\ 
    &\stackrel{(i)}\leq  \bb{\mathcal{V} + 2d_{\text{mix}}\bb{k}\mathcal{M}^{2}}\left\|G\right\|_{2}\mathbb{E}\left[\Tr\bb{U^{T}\newB{i+k}{j}\; \newB{i+k}{j}^{T}U} \right]
\end{align*}
where in $(i)$, we used similar steps as \ref{eq:mixing_time_bound} to get 
\ba{
    \left\|\mathbb{E}\left[\left(A_{i}-\Sigma\right)^{2}\bigg| s_{i+k} \right]\right\|_{2} \leq \left\|\mathbb{E}_{\pi}\left[\left(A_{i}-\Sigma\right)^{2}\right]\right\|_{2} + 2d_{\text{mix}}\bb{k}\mathcal{M}^{2} \label{eq:mixing_time_bound_square}
}
Next, using Lemma \ref{lemma:etakproduct} we have that 
\ba{\label{eq:r}
\left\|R\right\|_{2} \leq \bb{1+\epsilon}k_{i+1}\eta_{i+1}\bb{\mathcal{M}+\lambda_{1}}. 
}
Therefore, 
\bas{
    \E\bbb{T_{2}} &= \mathbb{E}\left[\Tr\bb{U^{T}\newB{i+k}{j}G\left(A_{i}-\Sigma\right)^{2}R^{T}\newB{i+k}{j}^{T}U}\right] \\ 
                 &\leq \bb{1+\epsilon}k_{i+1}\eta_{i+1}\mathcal{M}^{2}\bb{\mathcal{M}+\lambda_{1}}\left\|G\right\|_{2}\mathbb{E}\left[\Tr\bb{U^{T}\newB{i+k}{j}\newB{i+k}{j}^{T}U}\right]
}
Similarly, 
\bas{
    \E\bbb{T_{3}} &= \mathbb{E}\left[\Tr\bb{U^{T}\newB{i+k}{j}RG\left(A_{i}-\Sigma\right)^{2}\newB{i+k}{j}^{T}U}\right] \\ 
                 &\leq \bb{1+\epsilon}k_{i+1}\eta_{i+1}\mathcal{M}^{2}\bb{\mathcal{M}+\lambda_{1}}\left\|G\right\|_{2}\mathbb{E}\left[\Tr\bb{U^{T}\newB{i+k}{j}\newB{i+k}{j}^{T}U}\right]
}
Finally, using the bound on $\|R\|_2$ from Eq~\ref{eq:r}, we have:%again using Lemma \ref{lemma:etakproduct} we have that $\left\|R\right\|_{2} \leq \bb{1+c}k_{i+1}\eta_{i+1}\bb{\mathcal{M}+\lambda_{1}}$. Therefore, 
\bas{
    \E\bbb{T_{4}} &= \mathbb{E}\left[\Tr\bb{U^{T}\newB{i+k}{j}RG\left(A_{i}-\Sigma\right)^{2}R^{T}\newB{i+k}{j}^{T}U}\right] \\  
    &\leq \bb{1+\epsilon}^{2}k_{i+1}^{2}\eta_{i+1}^{2}\mathcal{M}^{2}\bb{\mathcal{M}+\lambda_{1}}^{2}\left\|G\right\|_{2}\mathbb{E}\left[\Tr\bb{U^{T}\newB{i+k}{j}\newB{i+k}{j}^{T}U}\right] \\
    &\leq \epsilon\bb{1+\epsilon}^{2}k_{i+1}\eta_{i+1}\mathcal{M}^{2}\bb{\mathcal{M}+\lambda_{1}}\left\|G\right\|_{2}\mathbb{E}\left[\Tr\bb{U^{T}\newB{i+k}{j}\newB{i+k}{j}^{T}U}\right] \text{ using } \forall i, \; \eta_{i}k_{i}\bb{M+\lambda_{1}} \leq c
}
Therefore, 
\begin{align*}
        & \left|\mathbb{E}\left[\Tr\bb{U^{T}\newB{i+1}{j}G\left(A_{i}-\Sigma\right)^{2}\newB{i+1}{j}^{T}U}\right]\right| \\ 
        & \stackrel{(i)}\leq \bb{\mathcal{V} + \eta_{i+1}\bb{2\gamma\eta_{i+1}\mathcal{M}^{2} + \bb{1+\epsilon}\bb{2+\epsilon\bb{1+\epsilon}}k_{i+1}\mathcal{M}^{2}\bb{\mathcal{M}+\lambda_{1}}}}\left\|G\right\|_{2}\mathbb{E}\left[\Tr\bb{U^{T}\newB{i+k}{j}\newB{i+k}{j}^{T}U} \right]  \\ 
        &= \bb{\mathcal{V} + \eta_{i+1}\mathcal{M}^{2}\bb{2\gamma\eta_{i+1} + \bb{1+\epsilon}\bb{2+\epsilon\bb{1+\epsilon}}k_{i+1}\bb{\mathcal{M}+\lambda_{1}}}}\left\|G\right\|_{2}\mathbb{E}\left[\Tr\bb{U^{T}\newB{i+k}{j}\newB{i+k}{j}^{T}U} \right]
    \end{align*}
where in $\bb{i}$, we used $d_{\text{mix}}\bb{k} = d_{\text{mix}}\bb{k_{i+1}} \leq \gamma\eta_{i+1}^{2}$. Hence proved.
\end{proof}

\begin{lemma}
    \label{lemma:bound_Pt}
    Let $\forall i \in [n], \eta_{i}k_{i}\bb{\mathcal{M}+\lambda_{1}} \leq \epsilon, \epsilon \in \bb{0,1}$ and step-sizes $\eta_{i}$ forms a non-increasing sequence. Further, let the step-sizes follow a slow-decay property, i.e, $\forall i,\eta_{i} \leq \eta_{i-k_{i}} \leq 2\eta_{i}$. Set $k_{i} := \tau_{\text{mix}}\bb{\gamma\eta_{i}^{2}}, \gamma \in (0,1]$. Let $G \in \mathbb{R}^{d \times d}$ be a constant positive semi-definite matrix, and $P_{t} := \Tr\bb{\B_{t-1}\B_{t-1}^TG(A_t-\Sigma)}$, then, 
    \bas {
    & \E\bbb{P_{t}} \leq \eta_{t-k_{t}}\bb{\frac{2\mathcal{V}\left|\lambda_{2}\bb{P}\right|}{1 - \left|\lambda_{2}\bb{P}\right|} + \eta_{t-k_{t}}\mathcal{M}\bb{2\gamma\bb{1+8\epsilon} + \bb{2+\bb{1+\epsilon}^{2}}k_{t}^{2}\bb{\mathcal{M}+\lambda_{1}}^{2}}}\left\|G\right\|_{2}\mathbb{E}\left[\Tr\bb{\B_{t-k_{t}}\B_{t-k_{t}}^T}\right]
    }
    where $\B_{t}$ is defined in \ref{definition:Bt}.
\end{lemma}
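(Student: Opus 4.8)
The plan is to reproduce the proof of Lemma~\ref{lemma:trace_linear_bound}, of which $P_t=\Tr(\B_{t-1}\B_{t-1}^TG(A_t-\Sigma))$ is the temporal mirror image: the noise $A_t-\Sigma$ now sits at the \emph{largest} index rather than the smallest (and $U=I$), so one conditions ``on the past'' instead of ``on the future'', and reversibility (through Lemma~\ref{lemma:bound_covariance}) lets the argument go through unchanged. First I would peel the $k_t-1$ most recent factors off $\B_{t-1}$: write $\B_{t-1}=\newB{t-k_t+1}{t-1}\,\B_{t-k_t}$ and $\newB{t-k_t+1}{t-1}=I+R$, so that
\[
\B_{t-1}\B_{t-1}^T=\B_{t-k_t}\B_{t-k_t}^T+R\,\B_{t-k_t}\B_{t-k_t}^T+\B_{t-k_t}\B_{t-k_t}^T R^T+R\,\B_{t-k_t}\B_{t-k_t}^T R^T,
\]
and $P_t=T_1+T_2+T_3+T_4$ splits accordingly. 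The slow-decay hypothesis gives $(k_t-1)\eta_{t-k_t+1}(\mathcal{M}+\lambda_1)\le 2\epsilon<1$, so the combinatorial estimates behind Lemma~\ref{lemma:etakproduct} (which do not require the product length to equal $k_m$ exactly) yield $\|R\|_2\le(1+2\epsilon)k_t\eta_{t-k_t}(\mathcal{M}+\lambda_1)$ and, with $R_0:=\sum_{\ell=t-k_t+1}^{t-1}\eta_\ell A_\ell$, $\|R-R_0\|_2\le k_t^2\eta_{t-k_t}^2(\mathcal{M}+\lambda_1)^2$.

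For $T_1$, conditioning on $s_1,\dots,s_{t-k_t}$ fixes $\B_{t-k_t}$ and, by the Markov property, $\E[A_t-\Sigma\mid s_1,\dots,s_{t-k_t}]=\E[A_t-\Sigma\mid s_{t-k_t}]=\sum_s\big(P^{k_t}(s_{t-k_t},s)-\pi(s)\big)(\Sigma_s+\mu_s\mu_s^T-\Sigma)$, whose norm is at most $2\mathcal{M}\,d_{\text{mix}}(k_t)\le 2\gamma\mathcal{M}\eta_{t-k_t}^2$ by Assumption~\ref{assumption:norm_bound} and $k_t=\tau_{\text{mix}}(\gamma\eta_t^2)$; rotating the trace cyclically and applying Lemma~\ref{lemma:trace_ineq} gives $|\E[T_1]|\le 2\gamma\mathcal{M}\eta_{t-k_t}^2\|G\|_2\,\E[\Tr(\B_{t-k_t}\B_{t-k_t}^T)]$. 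For $T_4$, Lemma~\ref{lemma:trace_ineq}, $\|A_t-\Sigma\|_2\le\mathcal{M}$ and the bound on $\|R\|_2$ give $|\E[T_4]|\le(1+2\epsilon)^2k_t^2\eta_{t-k_t}^2(\mathcal{M}+\lambda_1)^2\mathcal{M}\|G\|_2\,\E[\Tr(\B_{t-k_t}\B_{t-k_t}^T)]$. The cross terms are the crux. Splitting $R=R_0+(R-R_0)$, the $(R-R_0)$ part of each is absorbed exactly as in $T_4$ using $\|R-R_0\|_2$; for the $R_0$ part of $T_2$ one writes $R_0\B_{t-k_t}\B_{t-k_t}^TG(A_t-\Sigma)=\sum_{\ell}\eta_\ell A_\ell\,\B_{t-k_t}\B_{t-k_t}^TG(A_t-\Sigma)$, rotates the trace, conditions on $s_1,\dots,s_{t-k_t}$, and is left to control $\|\E[(A_t-\Sigma)G\,A_\ell\mid s_{t-k_t}]\|_2$ for $t-k_t<\ell<t$. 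This is a ``reverse'' covariance of the kind bounded in Lemma~\ref{lemma:bound_covariance}: the same computation, now conditioning on $s_{t-k_t}$ so that the two $\pi$-perturbations have lengths summing to $k_t$, together with the reversibility-driven spectral estimate $\|Q\|_2\le|\lambda_2(P)|^{t-\ell}$ and the total-variation bound~\ref{eq:geometric_mixing} for the remainder, gives $\|\E[(A_t-\Sigma)G\,A_\ell\mid s_{t-k_t}]\|_2\le\big(|\lambda_2(P)|^{t-\ell}\mathcal{V}+8\gamma\eta_t^2\mathcal{M}(\mathcal{M}+\lambda_1)\big)\|G\|_2$; the $T_3$ term is identical after transposing.

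Finally I would assemble everything: Lemma~\ref{lemma:trace_ineq} turns the $R_0$ estimate into $\sum_{\ell=t-k_t+1}^{t-1}\eta_\ell\big(|\lambda_2(P)|^{t-\ell}\mathcal{V}+8\gamma\eta_t^2\mathcal{M}(\mathcal{M}+\lambda_1)\big)\|G\|_2\,\E[\Tr(\B_{t-k_t}\B_{t-k_t}^T)]$; bounding $\eta_\ell\le\eta_{t-k_t}$, summing $\sum_{m\ge1}|\lambda_2(P)|^m=\tfrac{|\lambda_2(P)|}{1-|\lambda_2(P)|}$, using $k_t\eta_t(\mathcal{M}+\lambda_1)\le\epsilon$ to convert the $8\gamma\eta_t^2\mathcal{M}(\mathcal{M}+\lambda_1)k_t$ piece into $8\gamma\epsilon\mathcal{M}\eta_{t-k_t}^2$ (and likewise for the other pieces), and collecting $T_1,\dots,T_4$ produces
\[
\eta_{t-k_t}\|G\|_2\Big(\tfrac{2\mathcal{V}|\lambda_2(P)|}{1-|\lambda_2(P)|}+\eta_{t-k_t}\mathcal{M}\big(2\gamma(1+8\epsilon)+(2+(1+\epsilon)^2)k_t^2(\mathcal{M}+\lambda_1)^2\big)\Big)\E[\Tr(\B_{t-k_t}\B_{t-k_t}^T)],
\]
after absorbing the $2\epsilon\mapsto\epsilon$ slack into the same slow-decay constant used in Lemma~\ref{lemma:trace_linear_bound}. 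The main obstacle is exactly this cross-term step: it is the only place where the Markov structure (rather than a crude norm bound) is used, and it demands both the reversibility-based spectral bound on $Q$ and careful bookkeeping of which step size ($\eta_t$ versus $\eta_{t-k_t}$) and which mixing length appear — the slow-decay property is what keeps these comparable up to constants.
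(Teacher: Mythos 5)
Your proposal follows essentially the same route as the paper's proof: the same four-term split via $\B_{t-1}=(I+R)\B_{t-k_t}$, the same conditioning on the past state $s_{t-k_t}$ with $d_{\text{mix}}(k_t)\le\gamma\eta_t^2$ for the leading term and the crude norm bound on $R$ for the quartic term, and the same linearization $R\approx R_0$ handled by the past-conditioned ("second") form of the covariance bound in Lemma~\ref{lemma:bound_covariance} (the paper routes this step through Eq.~\ref{eq:boundT2} of Lemma~\ref{lemma:trace_linear_bound}, which rests on exactly that computation). The only deviation is constant-level bookkeeping in $\|R\|_2$ (your $(1+2\epsilon)$ versus the paper's $(1+\epsilon)$), which does not affect the argument.
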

\begin{proof} Let $\B_{t} = \bb{I + R}\B_{t-k_{t}}$ with $\|R\|_{2} \leq r$. Then,
\bas{
\mathbb{E}\left[P_{t}\right] &= 
\mathbb{E}\left[\underbrace{\Tr\bb{\B_{t-k_{t}}\B_{t-k_{t}}^TG(A_t-\Sigma)}}_{P_{t,1}}\right]+\mathbb{E}\left[\underbrace{\Tr\bb{\B_{t-k_{t}}\B_{t-k_{t}}^TR^{T}G(A_t-\Sigma)}}_{P_{t,2}}\right]\\
&\;\;\;\;\;\; +\mathbb{E}\left[\underbrace{\Tr\bb{\B_{t-k_{t}}\B_{t-k_{t}}^TG(A_t-\Sigma)R}}_{P_{t,3}}\right] + \mathbb{E}\left[\underbrace{\Tr\bb{\B_{t-k_{t}}\B_{t-k_{t}}^TR^{T}G(A_t-\Sigma)R}}_{P_{t,4}}\right]
}
Let's consider each of the terms above. Using Von-Neumann's trace inequality and \ref{eq:boundT2}, we have,
\bas {
    \mathbb{E}\left[P_{t,1}\right] &= \mathbb{E}\left[\Tr\bb{\B_{t-k_{t}}\B_{t-k_{t}}^T\E\bbb{G(A_t-\Sigma)|s_{1},s_{2},\dots,s_{t-k_{t}}}}\right] \\
    &\leq \mathbb{E}\left[\Tr\bb{\B_{t-k_{t}}\B_{t-k_{t}}^TG\E\bbb{(A_t-\Sigma)|s_{t-k_{t}}}}\right] \\
    &\leq \left\|G\E\bbb{(A_t-\Sigma)|s_{t-k_{t}}}\right\|_{2}\mathbb{E}\left[\Tr\bb{\B_{t-k_{t}}\B_{t-k_{t}}^T}\right] \\
    &\leq 2\mathcal{M}d_{\text{mix}}\bb{k_{t}}\left\|G\right\|_{2}\mathbb{E}\left[\Tr\bb{\B_{t-k_{t}}\B_{t-k_{t}}^T}\right] \text{ using } \ref{eq:mixing_time_bound} \\
    &\leq 2\gamma\eta_{t}^{2}\mathcal{M}\left\|G\right\|_{2}\mathbb{E}\left[\Tr\bb{\B_{t-k_{t}}\B_{t-k_{t}}^T}\right] 
    \\
    \\
    \E\bbb{P_{t,2}} &= \E\bbb{\Tr\bb{\B_{t-k_{t}}\B_{t-k_{t}}^T,\E\bbb{R^{T}G(A_t-\Sigma)U|s_{1},s_{2},\dots,s_{t-k_{t}}}}} \\
    &\leq \left\|\E\bbb{R^{T}G(A_t-\Sigma)|s_{1},s_{2},\dots,s_{t-k_{t}}}\right\|_{2}\mathbb{E}\left[\Tr\bb{\B_{t-k_{t}}\B_{t-k_{t}}^T}\right] \\
    &= \left\|\E\bbb{R^{T}G(A_t-\Sigma)|s_{t-k_{t}}}\right\|_{2}\mathbb{E}\left[\Tr\bb{\B_{t-k_{t}}\B_{t-k_{t}}^T}\right] \\
    &\leq \eta_{t-k_{t}}\left\|G\right\|_{2}\bb{\frac{\mathcal{V}\left|\lambda_{2}\bb{P}\right|}{1 - \left|\lambda_{2}\bb{P}\right|} + 8\gamma\eta_{t-k_{t}}^{2}k_{t}\mathcal{M}\bb{\mathcal{M}+\lambda_{1}} + \eta_{t-k_{t}}k_{t}^2\mathcal{M}\bb{\mathcal{M}+\lambda_{1}}^{2}}\mathbb{E}\left[\Tr\bb{\B_{t-k_{t}}\B_{t-k_{t}}^T}\right]
    \text{ using } \ref{eq:boundT2}
}
\bas{
    \E\bbb{P_{t,3}} &\leq \eta_{t-k_{t}}\left\|G\right\|_{2}\bb{\frac{\mathcal{V}\left|\lambda_{2}\bb{P}\right|}{1 - \left|\lambda_{2}\bb{P}\right|} + 8\gamma\eta_{t-k_{t}}^{2}k_{t}\mathcal{M}\bb{\mathcal{M}+\lambda_{1}} + \eta_{t-k_{t}}k_{t}^2\mathcal{M}\bb{\mathcal{M}+\lambda_{1}}^{2}}\mathbb{E}\left[\Tr\bb{\B_{t-k_{t}}\B_{t-k_{t}}^T}\right] \\ 
    & \;\;\;\;\; \text{ using similar steps as }  \E\bbb{P_{t,2}}
    \\
    \\
    \E\bbb{P_{t,4}}  &= \mathbb{E}\left[\Tr\bb{\B_{t-k_{t}}\B_{t-k_{t}}^TR^{T}G(A_t-\Sigma)R}\right] \\
                    &\leq r^{2}\mathcal{M}\left\|G\right\|_{2}\mathbb{E}\left[\Tr\bb{\B_{t-k_{t}}\B_{t-k_{t}}^T}\right] \\
                    &\leq \bb{1+\epsilon}^{2}\eta_{t-k_{t}+1}^{2}k_{t}^{2}\mathcal{M}\bb{\mathcal{M}+\lambda_{1}}^{2}\left\|G\right\|_{2}\mathbb{E}\left[\Tr\bb{\B_{t-k_{t}}\B_{t-k_{t}}^T}\right] \text{ using Lemma } \ref{lemma:etakproduct} \\ 
                    &\leq \bb{1+\epsilon}^{2}\eta_{t-k_{t}}^{2}k_{t}^{2}\mathcal{M}\bb{\mathcal{M}+\lambda_{1}}^{2}\left\|G\right\|_{2}\mathbb{E}\left[\Tr\bb{\B_{t-k_{t}}\B_{t-k_{t}}^T}\right]
}
Therefore we have,
\bas{
    & \E\bbb{P_{t}}  \\
    & \leq \eta_{t-k_{t}}\bb{\frac{2\mathcal{V}\left|\lambda_{2}\bb{P}\right|}{1 - \left|\lambda_{2}\bb{P}\right|} + \mathcal{M}\bb{2\gamma\eta_{t} +  16\gamma\eta_{t-k_{t}}^{2}k_{t}\bb{\mathcal{M}+\lambda_{1}} + \bb{2+\bb{1+\epsilon}^{2}}\eta_{t-k_{t}}k_{t}^{2}\bb{\mathcal{M}+\lambda_{1}}^{2}}}\left\|G\right\|_{2} \\ 
    & \qquad\qquad\qquad%\;\;\;\;\;\;\;\;\;\;\;\;\;\;\;\;\;\;\;\;\;\;\;\;\;\;\;\;\;\;\;\;\;\;\;\;\;\;\;\;\;\;\;\;\;\;\;\;\;\;\;\;\;\;\;\;\;\;\;\;\;\;\;\;\;\;\;\;\;\;\;\;\;\;\;\;\;\;\;\;\;\;\;\;\;\;\;\;\;\;\;\;\;\;\;\;\;\;\;\;\;\;\;\;\;\;\;\;\;\;\;\;\;\;\;\;\;\;\;\;\;\;\;\;\;\;\;\;\;\;\;\;\;\;\;\;\;\;\;\;\;\;\;\; 
    \times \mathbb{E}\left[\Tr\bb{\B_{t-k_{t}}\B_{t-k_{t}}^T}\right] \\ 
    & \stackrel{\bb{i}}\leq \eta_{t-k_{t}}\bb{\frac{2\mathcal{V}\left|\lambda_{2}\bb{P}\right|}{1 - \left|\lambda_{2}\bb{P}\right|} + \eta_{t-k_{t}}\mathcal{M}\bb{2\gamma +  16\gamma\eta_{t} k_{t}\bb{\mathcal{M}+\lambda_{1}} + \bb{2+\bb{1+\epsilon}^{2}}k_{t}^{2}\bb{\mathcal{M}+\lambda_{1}}^{2}}}\left\|G\right\|_{2} \\ 
    & \qquad\qquad\qquad%\;\;\;\;\;\;\;\;\;\;\;\;\;\;\;\;\;\;\;\;\;\;\;\;\;\;\;\;\;\;\;\;\;\;\;\;\;\;\;\;\;\;\;\;\;\;\;\;\;\;\;\;\;\;\;\;\;\;\;\;\;\;\;\;\;\;\;\;\;\;\;\;\;\;\;\;\;\;\;\;\;\;\;\;\;\;\;\;\;\;\;\;\;\;\;\;\;\;\;\;\;\;\;\;\;\;\;\;\;\;\;\;\;\;\;\;\;\;\;\;\;\;\;\;\;\;\;\;\;\;\;\;\;\;\;\;\;\;\;\;\;\;\;\; 
    \times \mathbb{E}\left[\Tr\bb{\B_{t-k_{t}}\B_{t-k_{t}}^T}\right] \\ 
    & \stackrel{\bb{ii}}\leq \eta_{t-k_{t}}\bb{\frac{2\mathcal{V}\left|\lambda_{2}\bb{P}\right|}{1 - \left|\lambda_{2}\bb{P}\right|} + \eta_{t-k_{t}}\mathcal{M}\bb{2\gamma\bb{1+8\epsilon} + \bb{2+\bb{1+\epsilon}^{2}}k_{t}^{2}\bb{\mathcal{M}+\lambda_{1}}^{2}}}\left\|G\right\|_{2}\mathbb{E}\left[\Tr\bb{\B_{t-k_{t}}\B_{t-k_{t}}^T}\right]
}
where in $\bb{i}$ we used $2\eta_{t-k_{t}} \leq \eta_{t} \leq \eta_{t-k_{t}}$ along with $\eta_{t}k_{t}\bb{\mathcal{M}+\lambda_{1}} \leq \epsilon$ in  $\bb{ii}$. Hence proved.
\end{proof}

\begin{lemma}
    \label{lemma:bound_Qt}
    Let $\forall i \in [n], \eta_{i}k_{i}\bb{\mathcal{M}+\lambda_{1}} \leq \epsilon, \epsilon \in \bb{0,1}$ and $\eta_{i}$ forms a non-increasing sequence. Set $k_{i} := \tau_{\text{mix}}\bb{\gamma\eta_{i}^{2}}, \gamma \in (0,1]$. Let $U \in \mathbb{R}^{d \times d}$ be a constant matrix and $Q_{t} := \Tr\bb{\B_{t-1}\B_{t-1}^T(A_t-\Sigma)U(A_t-\Sigma)}$. Further, let the decay of the step-sizes be slow such that $\forall i, \; \eta_{i} \leq \eta_{i-k_{i}} \leq 2\eta_{i}$. Then
    \bas {
        & \E\bbb{Q_{t}} \leq \bb{\mathcal{V} + \eta_{t-k_{t}+1}\mathcal{M}^{2}\bb{2\gamma\eta_{t}  + 2\bb{1+\epsilon}\bb{1+\epsilon\bb{1+\epsilon}}k_{t}\bb{\mathcal{M}+\lambda_{1}}}}\left\|U\right\|_{2}\E\bbb{\Tr\bb{B_{t-k_{t}}B_{t-k_{t}}^T}}
    }
    where $B_{t}$ is defined in \ref{definition:Bt}.
\end{lemma}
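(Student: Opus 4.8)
The plan is to run the argument of Lemma~\ref{lemma:bound_Pt} essentially verbatim, with the single noise factor $G(A_t-\Sigma)$ replaced by the sandwiched factor $(A_t-\Sigma)U(A_t-\Sigma)$; thus the proof is the ``$P_t$-analogue'' of Lemma~\ref{lemma:trace_square_bound}. First split off the last block: write $B_{t-1}=(I+R)B_{t-k_t}$, where $I+R$ is a product of at most $k_t$ factors $(I+\eta_\ell A_\ell)$ with $\ell\ge t-k_t+1$. By Lemma~\ref{lemma:etakproduct} together with the slow-decay property ($\eta_{t-k_t+1}\le2\eta_t$ and $\eta_tk_t(\mathcal M+\lambda_1)\le\epsilon$, so $k_t\eta_{t-k_t+1}(\mathcal M+\lambda_1)\le2\epsilon$) we get the deterministic bound $\|R\|_2\le r:=(1+\epsilon)k_t\eta_{t-k_t+1}(\mathcal M+\lambda_1)$. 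Substituting $B_{t-1}=(I+R)B_{t-k_t}$ into $Q_t$ and using cyclicity of the trace gives $Q_t=Q_{t,1}+Q_{t,2}+Q_{t,3}+Q_{t,4}$, where $Q_{t,1}:=\Tr(B_{t-k_t}B_{t-k_t}^T(A_t-\Sigma)U(A_t-\Sigma))$, $Q_{t,2},Q_{t,3}$ carry one factor of $R$, and $Q_{t,4}$ carries two.

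For $\E[Q_{t,1}]$, condition on $\mathcal{F}_{t-k_t}:=\sigma(s_1,\dots,s_{t-k_t},X_1,\dots,X_{t-k_t})$, so that $B_{t-k_t}$ is measurable; since conditioned on the state chain the $X_i$ are independent, the forward Markov property gives $\E[(A_t-\Sigma)U(A_t-\Sigma)\mid\mathcal{F}_{t-k_t}]=\E[(A_t-\Sigma)U(A_t-\Sigma)\mid s_{t-k_t}]=\E_\pi[(A_t-\Sigma)U(A_t-\Sigma)]+\sum_s(P^{k_t}(s_{t-k_t},s)-\pi(s))\,\E_{D(s)}[(A(s)-\Sigma)U(A(s)-\Sigma)]$. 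The tail term has norm $\le2d_{\text{mix}}(k_t)\mathcal M^2\|U\|_2\le2\gamma\eta_t^2\mathcal M^2\|U\|_2$ by Assumption~\ref{assumption:norm_bound} and $d_{\text{mix}}(k_t)\le\gamma\eta_t^2$; the stationary term has norm $\le\mathcal V\|U\|_2$, which I would verify by Cauchy--Schwarz: for unit vectors $a,b$ and $u=(A_t-\Sigma)a$, $v=(A_t-\Sigma)b$, $|a^T\E_\pi[(A_t-\Sigma)U(A_t-\Sigma)]b|=|\E_\pi[u^TUv]|\le\|U\|_2\sqrt{\E_\pi\|u\|^2\,\E_\pi\|v\|^2}\le\|U\|_2\|\E_\pi[(A_t-\Sigma)^2]\|_2\le\mathcal V\|U\|_2$ by Assumption~\ref{assumption:variance_bound}. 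Hence the conditional expectation has operator norm $\le(\mathcal V+2\gamma\eta_t^2\mathcal M^2)\|U\|_2$, and Lemma~\ref{lemma:trace_ineq} (applied via $\Tr(B_{t-k_t}B_{t-k_t}^TM)=\Tr(B_{t-k_t}^TMB_{t-k_t})$) yields $\E[Q_{t,1}]\le(\mathcal V+2\gamma\eta_t^2\mathcal M^2)\|U\|_2\,\E[\Tr(B_{t-k_t}B_{t-k_t}^T)]$.

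The remaining three terms need no cancellation: since $\|R\|_2\le r$ and $\|A_t-\Sigma\|_2\le\mathcal M$ almost surely (Assumption~\ref{assumption:norm_bound}), the inner matrices of $Q_{t,2},Q_{t,3}$ have operator norm $\le r\mathcal M^2\|U\|_2$ pointwise and that of $Q_{t,4}$ is $\le r^2\mathcal M^2\|U\|_2$, so Lemma~\ref{lemma:trace_ineq} (with $\Tr(B_{t-k_t}B_{t-k_t}^T)\ge0$) gives $|\E[Q_{t,2}]|,|\E[Q_{t,3}]|\le r\mathcal M^2\|U\|_2\,\E[\Tr(B_{t-k_t}B_{t-k_t}^T)]$ and $|\E[Q_{t,4}]|\le r^2\mathcal M^2\|U\|_2\,\E[\Tr(B_{t-k_t}B_{t-k_t}^T)]$. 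Adding the four estimates, the coefficient of $\|U\|_2\,\E[\Tr(B_{t-k_t}B_{t-k_t}^T)]$ is $\mathcal V+\mathcal M^2(2\gamma\eta_t^2+2r+r^2)$; using $\eta_t\le\eta_{t-k_t+1}$ to write $\eta_t^2\le\eta_t\eta_{t-k_t+1}$, factoring out $\eta_{t-k_t+1}$, and using $k_t\eta_{t-k_t+1}(\mathcal M+\lambda_1)\le2\epsilon$ to bound $r^2\le2\epsilon(1+\epsilon)^2k_t\eta_{t-k_t+1}(\mathcal M+\lambda_1)$, this collapses to $\mathcal V+\eta_{t-k_t+1}\mathcal M^2\bigl(2\gamma\eta_t+2(1+\epsilon)(1+\epsilon(1+\epsilon))k_t(\mathcal M+\lambda_1)\bigr)$, which is exactly the claim. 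The only step that is not pure bookkeeping is the operator-norm bound $\|\E_\pi[(A-\Sigma)U(A-\Sigma)]\|_2\le\mathcal V\|U\|_2$ for a general, possibly non-PSD, $U$; the rest is just keeping the indices $\eta_t$ versus $\eta_{t-k_t+1}$ and the mixing lengths $k_t$ versus $k_{t-k_t+1}$ consistent, which is precisely what the slow-decay hypothesis is designed to handle.
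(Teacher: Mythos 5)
Your proposal follows essentially the same route as the paper's proof: the same split $B_{t-1}=(I+R)B_{t-k_t}$ into the four terms $Q_{t,1},\dots,Q_{t,4}$, the same mixing-plus-variance bound for $Q_{t,1}$ (your explicit Cauchy--Schwarz step is exactly what justifies the paper's asserted inequality $\|\E[(A_t-\Sigma)U(A_t-\Sigma)\mid s_{t-k_t}]\|_2\le\|U\|_2\|\E[(A_t-\Sigma)^2\mid s_{t-k_t}]\|_2$), the same crude $\|R\|_2\le r$ and $\|A_t-\Sigma\|_2\le\mathcal M$ bounds for the cross and quadratic terms via Lemma~\ref{lemma:etakproduct} and Lemma~\ref{lemma:trace_ineq}, and the same final bookkeeping using slow decay to absorb $r^2$. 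The only deviations are cosmetic (conditioning on the full filtration rather than just the state sequence, and bounding the $R$-terms pointwise instead of through conditional expectations), so the proof is correct and matches the paper.
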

\begin{proof} Let $\B_{t} = \bb{I + R}\B_{t-k_{t}}$ with $\|R\|_{2} \leq r$. Then,
\bas{
\mathbb{E}\left[Q_{t}\right] &= 
\mathbb{E}\left[\underbrace{\Tr\bb{\B_{t-k_{t}}\B_{t-k_{t}}^T(A_t-\Sigma)U(A_t-\Sigma)}}_{Q_{t,1}}\right]+\mathbb{E}\left[\underbrace{\Tr\bb{\B_{t-k_{t}}\B_{t-k_{t}}^TR^{T}(A_t-\Sigma)U(A_t-\Sigma)}}_{Q_{t,2}}\right]\\
&\;\;\;\;\;\; +\mathbb{E}\left[\underbrace{\Tr\bb{R\B_{t-k_{t}}\B_{t-k_{t}}^T(A_t-\Sigma)U(A_t-\Sigma)}}_{Q_{t,3}}\right] + \mathbb{E}\left[\underbrace{\Tr\bb{R\B_{t-k_{t}}\B_{t-k_{t}}^TR^{T}(A_t-\Sigma)U(A_t-\Sigma)}}_{Q_{t,4}}\right]
}
Let's consider each of the terms above. Using Von-Neumann's trace inequality and noting that $\left\|\E_{\pi}\bbb{\bb{A_{t}-\Sigma}^{2}}\right\|_{2} \leq \mathcal{V}$, we have
\bas {
    \mathbb{E}\left[Q_{t,1}\right] &= \mathbb{E}\left[\Tr\bb{\B_{t-k_{t}}\B_{t-k_{t}}^T\E\bbb{(A_t-\Sigma)U(A_t-\Sigma)|s_{1},s_{2},\dots,s_{t-k_{t}}}}\right] \\
    &= \mathbb{E}\left[\Tr\bb{\B_{t-k_{t}}\B_{t-k_{t}}^T\E\bbb{(A_t-\Sigma)U(A_t-\Sigma)|s_{t-k_{t}}}}\right] \\
    &\leq \left\|\E\bbb{(A_t-\Sigma)U(A_t-\Sigma)|s_{t-k_{t}}}\right\|_{2}\mathbb{E}\left[\Tr\bb{\B_{t-k_{t}}\B_{t-k_{t}}^T}\right] 
    \\ 
    &\leq \left\|U\right\|_{2}\left\|\E\bbb{(A_t-\Sigma)^{2}|s_{t-k_{t}}}\right\|_{2}\mathbb{E}\left[\Tr\bb{\B_{t-k_{t}}\B_{t-k_{t}}^T}\right] \text{ using } \ref{eq:mixing_time_bound_square} \\ 
    &\leq \left\|U\right\|_{2}\bb{\mathcal{V} + 2d_{\text{mix}}\bb{k_{t}}\mathcal{M}^{2}}\mathbb{E}\left[\Tr\bb{\B_{t-k_{t}}\B_{t-k_{t}}^T}\right] \\ 
    & \leq \left\|U\right\|_{2}\bb{\mathcal{V} + 2\gamma\eta_{t}^{2}\mathcal{M}^{2}}\mathbb{E}\left[\Tr\bb{\B_{t-k_{t}}\B_{t-k_{t}}^T}\right] 
    }
\bas{
    \E\bbb{Q_{t,2}} &= \E\bbb{\Tr\bb{\B_{t-k_{t}}\B_{t-k_{t}}^T\E\bbb{R^{T}(A_t-\Sigma)U(A_t-\Sigma)|s_{1},s_{2},\dots,s_{t-k_{t}}}}} \\
                    &\leq \left\|\E\bbb{R^{T}(A_t-\Sigma)U(A_t-\Sigma)|s_{t-k_{t}}}\right\|_{2}\E\bbb{\Tr\bb{\B_{t-k_{t}}\B_{t-k_{t}}^T}} \\
                    &\leq \bb{1+\epsilon}\eta_{t-k_{t}+1}k_{t}\mathcal{M}^{2}\bb{\mathcal{M}+\lambda_{1}}\left\|U\right\|_{2}\E\bbb{\Tr\bb{\B_{t-k_{t}}\B_{t-k_{t}}^T}} \;\; \text{ using Lemma } \ref{lemma:etakproduct}
    \\ \\
    \E\bbb{Q_{t,3}} &\leq 
    % \eta_{t-k_{t}+1}\bb{\mathcal{M}+\lambda_{1}}\bb{2\tau_{\text{mix}}\mathcal{M}^{2}+k_{t}\mathcal{V}}\bb{\alpha_{t-k_{t}}+\beta_{t-k_{t}}} 
    \bb{1+\epsilon}\eta_{t-k_{t}+1}k_{t}\mathcal{M}^{2}\bb{\mathcal{M}+\lambda_{1}}\left\|U\right\|_{2}\E\bbb{\Tr\bb{\B_{t-k_{t}}\B_{t-k_{t}}^T}}
    \text{ using a similar argument as } Q_{t,2} 
    \\ 
    \\
    \E\bbb{Q_{t,4}} &= \mathbb{E}\left[\Tr\bb{R\B_{t-k_{t}}\B_{t-k_{t}}^TR^{T}(A_t-\Sigma)U(A_t-\Sigma)}\right] \\
                    &= \mathbb{E}\left[\Tr\bb{\B_{t-k_{t}}\B_{t-k_{t}}^TR^{T}(A_t-\Sigma)U(A_t-\Sigma)R}\right] \\
                    &\leq r^{2}\left\|U\right\|_{2}\mathcal{M}^{2}\mathbb{E}\left[\Tr\bb{\B_{t-k_{t}}\B_{t-k_{t}}^T}\right] \\
                    & \leq \bb{1+\epsilon}^{2}\eta_{t-k_{t}+1}^{2}k_{t}^{2}\mathcal{M}^{2}\bb{\mathcal{M}+\lambda_{1}}^{2}\left\|U\right\|_{2}\E\bbb{\Tr\bb{\B_{t-k_{t}}\B_{t-k_{t}}^T}} \text{ using Lemma } \ref{lemma:etakproduct} 
}
Therefore, we have
\bas{
    & \E\bbb{Q_{t}} \\ 
    &\leq \bb{\mathcal{V} + \eta_{t-k_{t}+1}\bb{2\gamma\eta_{t} \mathcal{M}^{2} + 2\bb{1+\epsilon}k_{t}\mathcal{M}^{2}\bb{\mathcal{M}+\lambda_{1}} + \bb{1+\epsilon}^{2}\eta_{t-k_{t}+1}k_{t}^{2}\mathcal{M}^{2}\bb{\mathcal{M}+\lambda_{1}}^{2}}}\left\|U\right\|_{2}\E\bbb{\Tr\bb{\B_{t-k_{t}}\B_{t-k_{t}}^T}} \\ 
    &\stackrel{(i)}\leq \bb{\mathcal{V} + \eta_{t-k_{t}+1}\mathcal{M}^{2}\bb{2\gamma\eta_{t}  + 2\bb{1+\epsilon}k_{t}\bb{\mathcal{M}+\lambda_{1}} + 2\epsilon\bb{1+\epsilon}^{2}k_{t}\bb{\mathcal{M}+\lambda_{1}}}}\left\|U\right\|_{2}\E\bbb{\Tr\bb{\B_{t-k_{t}}\B_{t-k_{t}}^T}} \\ 
    &= \bb{\mathcal{V} + \eta_{t-k_{t}+1}\mathcal{M}^{2}\bb{2\gamma\eta_{t}  + 2\bb{1+\epsilon}\bb{1+\epsilon\bb{1+\epsilon}}k_{t}\bb{\mathcal{M}+\lambda_{1}}}}\left\|U\right\|_{2}\E\bbb{\Tr\bb{\B_{t-k_{t}}\B_{t-k_{t}}^T}} 
    % &\stackrel{(ii)}\leq \bb{\mathcal{V} + \eta_{t-k_{t}+1}\bb{\mathcal{M}^{2}k_{t}\bb{\mathcal{M}+\lambda_{1}} + \bb{2\bb{1+c}+2c\bb{1+c}^{2}}k_{t}\mathcal{M}^{2}\bb{\mathcal{M}+\lambda_{1}} }}\left\|U\right\|_{2}\E\bbb{\tr{B_{t-k_{t}}B_{t-k_{t}}^T,I}} \\ 
    % &= \bb{\mathcal{V} + \bb{1+2\bb{1+c}+2c\bb{1+c}^{2}}\eta_{t-k_{t}+1}k_{t}\mathcal{M}^{2}\bb{\mathcal{M}+\lambda_{1}}}\left\|U\right\|_{2}\E\bbb{\tr{B_{t-k_{t}}B_{t-k_{t}}^T,I}}
}
In $\bb{i}$, we used the slow-decay assumption on $\eta_{i}$ mentioned in the lemma statement along with $\eta_{i}k_{i}\bb{\mathcal{M}+\lambda_{1}} \leq \epsilon$. Hence proved.
\end{proof}

\begin{lemma} \textit{(Learning Rate Schedule)}
    \label{lemma:learning_rate_schedule}
    Fix any $\delta \in \bb{0,1}$. Set $k_{i} := \tau_{\text{mix}}\bb{\eta_{i}^{2}}$. Suppose the step sizes are set such that
    \bas{
        \eta_{i} = \frac{\alpha}{\bb{\lambda_{1}-\lambda_{2}}\bb{\beta + i}}
    }
    Define the linear function 
    \bas{
        \forall i \in [n], \; f\bb{i} := \frac{1}{\eta_{i}} =  \frac{\bb{\lambda_{1}-\lambda_{2}}\bb{\beta + i}}{\alpha},
    }
    With $\epsilon := \frac{1}{100}$ and $\xi_{k,t}, \zeta_{k,t}, \mathcal{V}',\overline{\mathcal{V}_{k,t}}$ defined in \ref{assumptions:important_theorems}, set $\alpha > 2, \; f\bb{0} \geq e, \; m := 200$ and 
    \bas{ 
        & \beta := 600\max\left\{ \frac{\tau_{\text{mix}}\log\bb{f\bb{0}}\bb{\mathcal{M}+\lambda_{1}}\alpha}{\lambda_{1}-\lambda_{2}}, \; \frac{5\tau_{\text{mix}}\log\bb{f\bb{0}}\bb{\mathcal{M}+\lambda_{1}}^{2}\alpha^{2}}{3\bb{\lambda_{1}-\lambda_{2}}^{2}\log\bb{1+\frac{\delta}{m}}}, \frac{\bb{\mathcal{V}' + 5\lambda_{1}^{2}}\alpha^{2}}{300\bb{\lambda_{1}-\lambda_{2}}^{2}\log\bb{1 + \frac{\delta}{m}}}\right\}
    }
    then  we have
    \begin{enumerate}
        \item $\eta_{i}k_{i}\bb{\mathcal{M}+\lambda_{1}} \leq \epsilon \label{result_1}$
        \item $\forall i, \; \eta_{i} \leq \eta_{i-k_{i}} \leq \bb{1+2\epsilon}\eta_{i} \leq 2\eta_{i}$ (slow-decay)
        \item $\sum\limits_{i=1}^{n}\bb{\overline{\mathcal{V}_{k,i}} + \zeta_{k,i} + 4\lambda_{1}^{2}}\eta_{i}^{2}  \leq \log\bb{1+\frac{\delta}{m}} $
        \item $\sum\limits_{i=1}^{n}\bb{\mathcal{V}'+\xi_{k,i}}\eta_{i-k_{i}}^{2}\exp\left(-\sum\limits_{j=i+1}^{n}2\eta_j\left( \lambda_1-\lambda_{2}\right)\right) \leq $
        \bas{
            & \bb{\frac{2\bb{1+10\epsilon}\alpha^{2}}{2\alpha - 1}}\frac{\mathcal{V}'}{\bb{\lambda_{1}-\lambda_{2}}^{2}}\frac{1}{n} +  \bb{\frac{24\bb{1+10\epsilon}\alpha^{3}}{\bb{\alpha - 1}}}\frac{\mathcal{M}\bb{\mathcal{M}+\lambda_{1}}^{2}}{\bb{\lambda_{1}-\lambda_{2}}^{3}}\frac{k_{n}^{2}}{n^{2}} 
        }
    \end{enumerate}
\end{lemma}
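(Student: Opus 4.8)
The plan is to dispatch parts~(1) and~(2) by direct manipulation of the explicit step sizes and to reduce parts~(3) and~(4) to estimates on the harmonic-type sums $\sum_i\eta_i$, $\sum_i\eta_i^2$ and $\sum_i\eta_i^2\ln(1/\eta_i)$ associated with $\eta_i=\alpha/\bb{(\lambda_1-\lambda_2)(\beta+i)}$. The one preliminary I would set up first is a size bound on $k_i$: since $k_i=\tau_{\text{mix}}(\eta_i^2)$, \ref{eq:tmix_bound_1} gives $k_i\le\lceil\log_2(1/\eta_i^2)\rceil\tau_{\text{mix}}\le c_0\,\tau_{\text{mix}}\ln(1/\eta_i)$ for an absolute $c_0\le4$, the ceiling costing only a constant factor because $1/\eta_i\ge1/\eta_0=f(0)\ge e$. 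For part~(1) I would then write $\eta_ik_i\m\le c_0\tau_{\text{mix}}\m\,\eta_i\ln(1/\eta_i)$ and use that $x\mapsto x\ln(1/x)$ is nondecreasing on $(0,1/e]$ with $\eta_i\le\eta_0\le1/e$, so $\eta_i\ln(1/\eta_i)\le\eta_0\ln(1/\eta_0)=\alpha\ln f(0)/\bb{(\lambda_1-\lambda_2)\beta}$; the first term of the maximum defining $\beta$ is exactly what forces this below $\epsilon$. Part~(2) is then an immediate consequence: from~(1), $k_i\le\epsilon/(\eta_i\m)=\epsilon(\lambda_1-\lambda_2)(\beta+i)/(\alpha\m)\le(\epsilon/\alpha)(\beta+i)\le(\epsilon/2)(\beta+i)$, using $\lambda_1-\lambda_2\le\lambda_1\le\m$ and $\alpha>2$, whence for $i\ge k_i$ one has $\eta_{i-k_i}/\eta_i=1/\bb{1-k_i/(\beta+i)}\le1/(1-\epsilon/2)\le1+2\epsilon\le2$, while $\eta_{i-k_i}\ge\eta_i$ is trivial; the finitely many indices with $i<k_i$ force $i<\epsilon\beta$, and with the convention $\eta_j:=\eta_0$ for $j\le0$ one gets $\eta_0=(1+i/\beta)\eta_i\le(1+2\epsilon)\eta_i$.

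For part~(3) I would split the summand $\bb{\overline{\mathcal{V}_{k,i}}+\zeta_{k,i}+4\lambda_1^2}\eta_i^2$ into a constant-coefficient part of size $O\bb{\mathcal{V}'+\lambda_1^2}\eta_i^2$ (the $\mathcal{V}'$ inside $\overline{\mathcal{V}_{k,i}}$ together with the $4\lambda_1^2$) and a part carrying a factor $k_{i+1}$ --- and whenever a $k_{i+1}^2$ occurs (in $\zeta_{k,i}$ or in the $\psi_{k,i}$-type corrections inside $\overline{\mathcal{V}_{k,i}}$) it is accompanied by an extra power of $\eta_i$. The first part sums to at most $\bb{\mathcal{V}'+5\lambda_1^2}\sum_i\eta_i^2\le\bb{\mathcal{V}'+5\lambda_1^2}\alpha^2/\bb{(\lambda_1-\lambda_2)^2\beta}$, controlled by the third term of the maximum. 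For the second part I would repeatedly use~(1) in the form $\eta_ik_{i+1}\m\le2\epsilon$ (valid since $\eta_i\le2\eta_{i+1}$) to peel off all but one factor of $k_{i+1}$, reducing every such term to size $O\bb{\tau_{\text{mix}}\m^2}\eta_i^2\ln(1/\eta_{i+1})$, and then bound $\sum_i\eta_i^2\ln(1/\eta_{i+1})=O\!\left(\frac{\alpha^2\ln f(0)}{(\lambda_1-\lambda_2)^2\beta}\right)$ via the integral comparison $\int_\beta^\infty\frac{\ln((\lambda_1-\lambda_2)x/\alpha)}{x^2}\,dx=\frac{\ln f(0)+1}{\beta}$; this is controlled by the second term of the maximum. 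The reason for insisting that every $k$-dependent term collapse to $\eta_i^2$ times at most one $\ln(1/\eta_i)$ is that $\sum_i\eta_i$ diverges logarithmically, while $\sum_i\eta_i^2\ln^2(1/\eta_i)$ would force a $\tau_{\text{mix}}^2$ term into $\beta$ that the statement does not provide.

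Part~(4) is the most delicate. First I would control the exponential weight: since $2\eta_j(\lambda_1-\lambda_2)=2\alpha/(\beta+j)$, an integral comparison gives $\sum_{j=i+1}^n2\eta_j(\lambda_1-\lambda_2)\ge2\alpha\ln\frac{\beta+n+1}{\beta+i+1}$, hence $\exp\!\bb{-\sum_{j=i+1}^n2\eta_j(\lambda_1-\lambda_2)}\le\bb{\frac{\beta+i+1}{\beta+n+1}}^{2\alpha}$. Using $\eta_{i-k_i}^2\le(1+2\epsilon)^2\eta_i^2$ from~(2), $k_{i+1}\le k_n$ (the map $i\mapsto\tau_{\text{mix}}(\eta_i^2)$ is nondecreasing), and the definitions of $\mathcal{V}'$ and $\xi_{k,i}$, the whole sum is at most $\frac{(1+2\epsilon)^2\alpha^2}{(\lambda_1-\lambda_2)^2(\beta+n+1)^{2\alpha}}\bbb{\mathcal{V}'\sum_i\frac{(\beta+i+1)^{2\alpha}}{(\beta+i)^2}+C\,\mathcal{M}\m^2k_n^2\,\frac{\alpha}{\lambda_1-\lambda_2}\sum_i\frac{(\beta+i+1)^{2\alpha}}{(\beta+i)^3}}$ for an absolute $C$. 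Because $\alpha>2$, for $p\in\{2,3\}$ one has $2\alpha-p>0$, so a second integral comparison gives $\sum_{i=1}^n\frac{(\beta+i+1)^{2\alpha}}{(\beta+i)^p}\le(1+1/\beta)^{2\alpha}\frac{(\beta+n+1)^{2\alpha-p+1}}{2\alpha-p+1}$; substituting, using $\beta+n+1\ge n$ and $(1+1/\beta)^{2\alpha}\le e^{2\alpha/\beta}$ (close to $1$ since $\beta$ is large), the $p=2$ piece becomes $\le\frac{2(1+10\epsilon)\alpha^2}{2\alpha-1}\frac{\mathcal{V}'}{(\lambda_1-\lambda_2)^2}\frac1n$ and the $p=3$ piece becomes $\le\frac{24(1+10\epsilon)\alpha^3}{\alpha-1}\frac{\mathcal{M}\m^2}{(\lambda_1-\lambda_2)^3}\frac{k_n^2}{n^2}$, which is the claim; the indices with $i<k_i$ add only an $O\bb{\beta^{2\alpha-1}/n^{2\alpha}}$ amount, of lower order.

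I expect the principal obstacle to be arithmetic bookkeeping rather than any conceptual difficulty. The schedule must be manipulated so that each $k$-dependent contribution reduces to $\eta_i^2$ times at most a single $\ln(1/\eta_i)$ --- never leaving a bare $\sum\eta_i$ and never producing $\eta_i^2\ln^2$ --- and in part~(4) the exponentially weighted sums are essentially tight, so the clean constants $\frac{2(1+10\epsilon)\alpha^2}{2\alpha-1}$ and $\frac{24(1+10\epsilon)\alpha^3}{\alpha-1}$ survive only if the integral comparisons are carried out precisely rather than by crude telescoping; handling the boundary range $i<k_i$ and the mild self-reference of $\beta$ through $\ln f(0)=\ln\bb{(\lambda_1-\lambda_2)\beta/\alpha}$ are minor additional chores.
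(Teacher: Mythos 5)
Your plan is correct and follows essentially the same route as the paper's proof: bound $k_i\lesssim\tau_{\text{mix}}\ln f(i)$ via \ref{eq:tmix_bound_1}, exploit monotonicity of $\ln x/x$ (equivalently $x\ln(1/x)$) to reduce to $f(0)$ for parts (1)–(2), split part (3) into a $(\mathcal{V}'+5\lambda_1^2)\sum\eta_i^2$ piece and a $k$-linear piece controlled by $\int\ln x/x^2\,dx$, and for part (4) use the slow-decay bound, $k_i\le k_n$, the weight $\bb{\frac{\beta+i+1}{\beta+n+1}}^{2\alpha}$, and power-sum/integral comparisons yielding the $\frac{1}{2\alpha-1}$ and $\frac{1}{\alpha-1}$ constants. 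The only differences are cosmetic bookkeeping (e.g., your $(1+1/\beta)^{2\alpha}$ factor versus the paper's $\bb{\frac{n+\beta+2}{n+\beta+1}}^{2\alpha}\le1+4\epsilon$ correction), so no substantive gap.
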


\begin{proof}

We use the following inequalities - 
\begin{align}
    & \sum_{j=i}^{t}\eta_{j}^{2} \leq \frac{\alpha^{2}}{\left(\lambda_{1}-\lambda_{2}\right)^{2}\left(\beta+i-1\right)} \;\;\;\; \left(\text{Using } \frac{1}{x+1} \leq \sum_{i=1}^{\infty}\frac{1}{(x+i)^{2}} \leq \frac{1}{x}\right) \label{ineq:eta2_upper_bound} \\
    & \sum_{j=i}^{t}\eta_{j} \geq \frac{\alpha}{\left(\lambda_{1}-\lambda_{2}\right)}\log\left( \frac{t+\beta+1}{i+\beta}\right) \label{ineq:eta_lower_bound} \\
    & \sum_{j=i}^{t}\eta_{j} \leq  \frac{\alpha}{\left(\lambda_{1}-\lambda_{2}\right)}\log\left(\frac{t+\beta}{i+ \beta - 1}\right) \label{ineq:eta_upper_bound} \\
    & \sum_{j=i}^{t}(j+\beta)^{\ell} \leq \frac{(t+\beta+1)^{\ell+1}-(i+\beta)^{\ell+1}}{\ell+1} \leq \frac{(t+\beta+1)^{\ell+1}}{\ell+1} \; \forall \; \ell > 0 \label{ineq:sum_exponentials_upper_bound}
\end{align}
For the first result, we observe that $f(x) = \frac{\log\bb{x}}{x}$ is a decreasing function of $x$ for $x \geq e$. Using properties of the mixing time (see Section \ref{section:preliminaries} in the manuscript), we have
\ba{
    k_{i} := \tau_{\text{mix}}\bb{ \eta_{i}^{2}} \leq \frac{2\tau_{\text{mix}}}{\log\bb{2}}\log\bb{\frac{1}{\eta_{i}^{2}}} = \frac{4\tau_{\text{mix}}}{\log\bb{2}}\log\bb{\frac{\bb{\beta+i}\bb{\lambda_{1}-\lambda_{2}}}{\alpha}} = \frac{4\tau_{\text{mix}}}{\log\bb{2}}\log\bb{f\bb{i}} \label{eq:ki_definition}
}
for $\eta_{i} < 1$. For $i \geq 0$  
% \bas{
%     \frac{\bb{\beta+i}\bb{\lambda_{1}-\lambda_{2}}}{\sqrt{\gamma}\alpha} \geq \frac{\beta\bb{\lambda_{1}-\lambda_{2}}}{\sqrt{\gamma}\alpha} \geq \theta \geq e
% }
\bas{
    f\bb{i} \geq f\bb{0} = \frac{\beta\bb{\lambda_{1}-\lambda_{2}}}{\alpha} \geq e
}
Therefore,
\bas {
     \eta_{i}k_{i}\bb{\mathcal{M}+\lambda_{1}} &\leq \frac{4\tau_{\text{mix}}\bb{\mathcal{M}+\lambda_{1}}}{\log\bb{2}} \frac{\alpha}{\bb{\beta + i}\bb{\lambda_{1}-\lambda_{2}}}\log\bb{\frac{\bb{\beta+i}\bb{\lambda_{1}-\lambda_{2}}}{\alpha}} \\
     &= \frac{4\tau_{\text{mix}}\bb{\mathcal{M}+\lambda_{1}}}{\log\bb{2}} \frac{\log\bb{f\bb{i}}}{f\bb{i}} \\
     &\leq \frac{4\tau_{\text{mix}}\bb{\mathcal{M}+\lambda_{1}}}{\log\bb{2}} \frac{\log\bb{f\bb{0}}}{f\bb{0}}
     % &\leq \frac{4\tau_{\text{mix}}\bb{\mathcal{M}+\lambda_{1}}}{\sqrt{\gamma}\log\bb{2}} \frac{\log\bb{\theta}}{\theta}
     % &\leq \frac{2\tau_{\text{mix}}\bb{\mathcal{M}+\lambda_{1}}}{\gamma\log\bb{2}} \frac{\gamma\alpha}{\beta \bb{\lambda_{1}-\lambda_{2}}}\log\bb{\frac{\beta\bb{\lambda_{1}-\lambda_{2}}}{\gamma\alpha}} \\
     % &\leq \frac{2\tau_{\text{mix}}\bb{\mathcal{M}+\lambda_{1}}}{\gamma\log\bb{2}} \frac{\log\bb{\xi}}{\xi} \\
     % &\leq \frac{4\tau_{\text{mix}}\max\left\{\mathcal{M}, \lambda_{1}\right\}}{\gamma\log\bb{2}} \frac{\log\bb{\xi}}{\xi}
}
From the assumptions mentioned in the Lemma statement, we have 
\ba{
    \frac{\log\bb{f\bb{0}}}{f\bb{0}} < \frac{\epsilon\log\bb{2}}{4\tau_{\text{mix}}\bb{\mathcal{M}+\lambda_{1}}} = \frac{\log\bb{2}}{400\tau_{\text{mix}}\bb{\mathcal{M}+\lambda_{1}}} \label{eq:condition1_learning_rate}
}
Therefore,
\ba{
    \forall \; i, \eta_{i}k_{i}\bb{\mathcal{M}+\lambda_{1}} \leq \epsilon \label{condition:1}
}
For the second result, we note that $\forall i \in \bbb{n}$,
\bas{
    \frac{\eta_{i-k_{i}}}{\eta_{i}} &= \frac{\beta + i}{\beta + i - k_{i}} \\
                                    &= 1 + \frac{k_{i}}{\beta + i - k_{i}} \\
                                    &= 1 + \frac{1}{\frac{\beta + i}{k_{i}} - 1}
}
Consider the fraction $\frac{\beta + i}{k_{i}}$. We can simplify it as : 
\bas {
    \frac{\beta + i}{k_{i}} &\geq \frac{\log\bb{2}}{4\tau_{\text{mix}}}\frac{\beta+i}{\log\bb{\frac{\bb{\beta+i}\bb{\lambda_{1}-\lambda_{2}}}{\alpha}}} \\
    &= \frac{\alpha\log\bb{2}}{4\tau_{\text{mix}}\bb{\lambda_{1}-\lambda_{2}}}\frac{f\bb{i}}{\log\bb{f\bb{i}}} \\
    &\geq \frac{\alpha\log\bb{2}}{4\tau_{\text{mix}}\bb{\lambda_{1}-\lambda_{2}}}\frac{f\bb{0}}{\log\bb{f\bb{0}}} \\ 
    &\geq \frac{1}{\epsilon} \text{ from } \ref{eq:condition1_learning_rate}
    % & = \frac{\sqrt{\gamma}\alpha\log\bb{2}}{4\tau_{\text{mix}}\bb{\lambda_{1}-\lambda_{2}}}\frac{\frac{\beta\bb{\lambda_{1}-\lambda_{2}}}{\sqrt{\gamma}\alpha}}{\log\bb{\frac{\beta\bb{\lambda_{1}-\lambda_{2}}}{\sqrt{\gamma}\alpha}}} \\
    % & \geq \frac{\sqrt{\gamma}\alpha\log\bb{2}}{4\tau_{\text{mix}}\bb{\lambda_{1}-\lambda_{2}}}\frac{\frac{\beta\bb{\lambda_{1}-\lambda_{2}}}{\sqrt{\gamma}\alpha}}{\log\bb{\frac{\beta\bb{\lambda_{1}-\lambda_{2}}}{\sqrt{\gamma}\alpha}}} \\
    % & \geq \frac{\sqrt{\gamma}\alpha\log\bb{2}}{4\tau_{\text{mix}}\bb{\lambda_{1}-\lambda_{2}}}\frac{\theta}{\log\bb{\theta}} \geq 2 \Leftrightarrow	\frac{\theta}{\log\bb{\theta}} \geq \frac{8\tau_{\text{mix}}\bb{\lambda_{1}-\lambda_{2}}}{\log\bb{2}\sqrt{\gamma}\alpha}           
}
where we used the fact that $\frac{x}{\log\bb{x}}$ is an increasing function for $x \geq e$. Therefore, we have that 
\bas{
    \frac{\eta_{i-k_{i}}}{\eta_{i}} &\leq 1 + \frac{1}{\frac{1}{\epsilon} - 1} \\ 
    &= \frac{1}{1-\epsilon} \\ 
    &\leq 1+2\epsilon \text{ for } \epsilon \in \bb{0,0.1}
}
% The third result follows immediately from \ref{ineq:eta_lower_bound}.
For the third result, we note that 
\bas{
    \zeta_{k,t} &:= 40k_{t+1}\bb{\mathcal{M}+\lambda_{1}}^{2}, \\ 
    \xi_{k,t} &:= 2\eta_{t}\mathcal{M}\bbb{3 + 9k_{t+1}^{2}\bb{\mathcal{M}+\lambda_{1}}^{2}} \\ 
    &\leq 24\eta_{t}\mathcal{M}\bbb{k_{t+1}^{2}\bb{\mathcal{M}+\lambda_{1}}^{2}} \text{ since } \bb{\mathcal{M}+\lambda_{1}} \geq 1 \; \text{WLOG} \\ 
    &\leq 24\epsilon\bb{1+\epsilon} k_{t+1}\bb{\mathcal{M}+\lambda_{1}}^{2} \text{since } \eta_{t} \leq \bb{1+2\epsilon}\eta_{t+1} \text{ and } \eta_{t+1}k_{t+1}\bb{\mathcal{M}+\lambda_{1}} \leq \epsilon 
} 
Therefore, 
\ba{
    \sum_{i=1}^{n}\bb{\overline{\mathcal{V}_{k,i}} + \zeta_{k,i}}\eta_{i}^{2}
    &= \bb{\mathcal{V}' + 5\lambda_{1}^{2}}\sum_{i=1}^{n}\eta_{i}^{2} + 41\bb{\mathcal{M}+\lambda_{1}}^{2}\sum_{i=1}^{n} \eta_{i}^{2}k_{i+1} \notag  \\ 
    & \stackrel{\bb{i}}\leq  \bb{\mathcal{V}' + 5\lambda_{1}^{2}}\underbrace{\sum_{i=1}^{n}\eta_{i}^{2}}_{T_{1}} + 45\bb{\mathcal{M}+\lambda_{1}}^{2}\underbrace{\sum_{i=1}^{n} \eta_{i+1}^{2}k_{i+1}}_{T_{2}} \label{condition:3}
}
where $\bb{i}$ follows from the slow decay property of $\eta_{i}$.
\\ 
\\
% Using \ref{ineq:eta2_upper_bound}, we have 
% \bas{
%     T_{1} := \sum_{i=p}^{q}\eta_{i}^{2} &= \frac{\alpha^{2}}{\bb{\lambda_{1}-\lambda_{2}}^{2}}\sum_{i=p}^{q}\frac{1}{\bb{\beta+i}^{2}} \\
%     &\leq \frac{\alpha^{2}}{\bb{\lambda_{1}-\lambda_{2}}^{2}}\frac{1}{\bb{\beta + p - 1}} \\
%     &\leq \frac{\alpha^{2}}{\beta\bb{\lambda_{1}-\lambda_{2}}^{2}}
% }
For $T_{1}$, using \ref{ineq:eta2_upper_bound} we have, 
\ba{
    T_{1} \leq \frac{\alpha^{2}}{\left(\lambda_{1}-\lambda_{2}\right)^{2}\beta} \label{eq:T1_bound}
}
For $T_{2}$, substituting the value of $k_{i}$ from \ref{eq:ki_definition}
for $\eta_{i} < 1$ we have,
\ba{
     T_{2} := \sum_{i=1}^{n}\eta_{i+1}^{2}k_{i+1} &\leq \frac{4\tau_{\text{mix}}}{\log\bb{2}}\sum_{i=1}^{n}\bb{\frac{\alpha}{\bb{\lambda_{1}-\lambda_{2}}\bb{\beta + i + 1}}}^{2}\log\bb{\frac{\bb{\lambda_{1}-\lambda_{2}}\bb{\beta + i + 1}}{\alpha}} \\ 
     &= \frac{4\tau_{\text{mix}}}{\log\bb{2}}\sum_{i=1}^{n}\frac{\log\bb{f\bb{i+1}}}{f\bb{i+1}^{2}} \label{eq:T2_bound}
}
Note that $f\bb{i}$ is a linear function of $i$ and $\forall i\; f\bb{i+1} - f\bb{i} = \frac{\lambda_{1}-\lambda_{2}}{\alpha}$. We observe that $g(x) = \frac{\log\bb{x}}{x^{2}}$ is a decreasing function of $x$ for $x \geq e^{\frac{1}{2}} \sim 1.65$. Therefore, 
\bas{
    \bb{\frac{\lambda_{1}-\lambda_{2}}{\alpha}}\sum_{i=1}^{n}\frac{\log\bb{f\bb{i+1}}}{f\bb{i+1}^{2}} \leq \int_{f\bb{1}}^{f\bb{n+1}} \frac{\log\bb{x}}{x^{2}} \,dx
}
Substituting in \ref{eq:T2_bound} we have, 
\bas {
    T_{2} &\leq \frac{4\tau_{\text{mix}}}{\log\bb{2}}\bb{\frac{\alpha}{\lambda_{1}-\lambda_{2}}}\int_{f\bb{1}}^{f\bb{n+1}} \frac{\log\bb{x}}{x^{2}} \,dx \\
    &= \frac{4\tau_{\text{mix}}}{\log\bb{2}}\bb{\frac{\alpha}{\lambda_{1}-\lambda_{2}}} \bb{-\bb{\frac{\log\bb{x}}{x} + \frac{1}{x}}\Bigg|_{f\bb{1}}^{f\bb{n}}} \\
    &\leq \frac{4\tau_{\text{mix}}}{\log\bb{2}}\bb{\frac{\alpha}{\lambda_{1}-\lambda_{2}}} \bb{\frac{\log\bb{f\bb{1}}}{f\bb{1}} + \frac{1}{f\bb{1}}} \\
    &\leq \frac{8\tau_{\text{mix}}}{\log\bb{2}}\bb{\frac{\alpha}{\lambda_{1}-\lambda_{2}}}\bb{\frac{\log\bb{f\bb{1}}}{f\bb{1}}} \\
    &\leq \frac{8\tau_{\text{mix}}}{\log\bb{2}}\bb{\frac{\alpha}{\lambda_{1}-\lambda_{2}}}\bb{\frac{\log\bb{f\bb{0}}}{f\bb{0}}} \text{ since } \frac{\log\bb{x}}{x} \text{ is a decreasing function of } x \text{ for } x \geq e
    % &\leq \frac{8\tau_{\text{mix}}}{\gamma\log\bb{2}}\bb{\frac{\sqrt{\gamma}\alpha}{\lambda_{1}-\lambda_{2}}}\frac{\log\bb{\theta}}{\theta}
}
Putting everything together in \ref{condition:3} and using the bounds on $\beta, f\bb{0}$ mentioned in the lemma statement, we have, 
\bas{
    \sum_{i=1}^{n}\bb{\overline{\mathcal{V}_{k,i}} + \zeta_{k,i}}\eta_{i}^{2} &\leq 460\bb{\mathcal{M}+\lambda_{1}}^{2}\tau_{\text{mix}}\bb{\frac{\alpha}{\lambda_{1}-\lambda_{2}}}\frac{\log\bb{f\bb{0}}}{f\bb{0}} + \frac{\alpha^{2}}{\left(\lambda_{1}-\lambda_{2}\right)^{2}\beta}\bb{\mathcal{V}' + 5\lambda_{1}^{2}} \\
    &= 460\tau_{\text{mix}}\log\bb{f\bb{0}}\frac{\alpha^{2}}{\bb{\lambda_{1}-\lambda_{2}}^{2}\beta}\bb{\mathcal{M}+\lambda_{1}}^{2} + \frac{\alpha^{2}}{\left(\lambda_{1}-\lambda_{2}\right)^{2}\beta}\bb{\mathcal{V}' + 5\lambda_{1}^{2}} \\ 
    &\leq \log\bb{1+\frac{\delta}{m}}
}
Finally, for the last result we first note that 
\bas{
    \xi_{k,t} &:= 2\eta_{t}\mathcal{M}\bbb{3 + 9k_{t+1}^{2}\bb{\mathcal{M}+\lambda_{1}}^{2}} \\ 
    &\leq 24\eta_{t}\mathcal{M}\bbb{k_{t+1}^{2}\bb{\mathcal{M}+\lambda_{1}}^{2}} \text{ since } \bb{\mathcal{M}+\lambda_{1}} \geq 1 \; \text{WLOG}
}
% \bas{
%     \rho_{k,i} &:= 1 + 4\bb{1+\epsilon}\bb{1 + \epsilon + \epsilon^{2}}\eta_{i-1}k_{i-1}\left(\mathcal{M}+\lambda_{1}\right) \\ 
%     &\leq 1 + 4\bb{1+\epsilon}\bb{1+2\epsilon}\bb{1 + \epsilon + \epsilon^{2}}\eta_{i}k_{i}\left(\mathcal{M}+\lambda_{1}\right) \\ 
%     &\leq 1 + 6\eta_{i}k_{i}\left(\mathcal{M}+\lambda_{1}\right) \text{ for } \epsilon \in \bb{0,0.1} \\ 
%     &\leq 1 + 6\epsilon \text{ since } \forall i, \; \eta_{i}k_{i}\left(\mathcal{M}+\lambda_{1}\right) \leq \epsilon 
% }
% Using \ref{ineq:eta_lower_bound}, we have
% \bas{
%     \exp\left(-\sum_{j=i+1}^{n}2\eta_j\left( \lambda_1-\lambda_{2}\right)\right) \leq \bb{\frac{i+\beta+1}{n+\beta+1}}^{2\alpha}
% }
Therefore,
\ba{
& \sum_{i=1}^{n}\bb{\mathcal{V}'+\xi_{k,i}}\eta_{i-k_{i}}^{2}\exp\left(-\sum_{j=i+1}^{n}2\eta_j\left( \lambda_1-\lambda_{2}\right)\right) \notag 
\\ & \;\;\;\;\;\;\;\;\;\;\;\;\;\;\; \leq \bb{1+2\epsilon}^{2}\sum_{i=1}^{n}\bb{\mathcal{V}'+\xi_{k,i}}\eta_{i}^{2}\exp\left(-\sum_{j=i+1}^{n}2\eta_j\left( \lambda_1-\lambda_{2}\right)\right) \notag \\
& \;\;\;\;\;\;\;\;\;\;\;\;\;\;\;\leq \bb{1+5\epsilon}\sum_{i=1}^{n}\bb{\mathcal{V}'+\xi_{k,i}}\eta_{i}^{2}\exp\left(-\sum_{j=i+1}^{n}2\eta_j\left( \lambda_1-\lambda_{2}\right)\right) \text{ since } \epsilon \in \bb{0,0.1} \notag \\
% & \;\;\;\;\;\;\;\;\;\;\;\;\;\;\; = \bb{1+15\epsilon}\sum_{i=1}^{n}\bb{\mathcal{V}'+\xi_{k,i}}\eta_{i}^{2}\exp\left(-\sum_{j=i+1}^{n}2\eta_j\left( \lambda_1-\lambda_{2}\right)\right) \notag \\ 
& \;\;\;\;\;\;\;\;\;\;\;\;\;\;\; = \bb{1+5\epsilon}\bbb{\sum_{i=1}^{n}\mathcal{V}'\eta_{i}^{2}\exp\left(-\sum_{j=i+1}^{n}2\eta_j\left( \lambda_1-\lambda_{2}\right)\right) + \sum_{i=1}^{n}\xi_{k,i}\eta_{i}^{2}\exp\left(-\sum_{j=i+1}^{n}2\eta_j\left( \lambda_1-\lambda_{2}\right)\right)}
% & \leq \bb{1+2\epsilon}^{2}\bbb{\underbrace{\sum_{i=1}^{n}\bb{\eta_{i}^{2} + 6\eta_{i}^{3}k_{i}\bb{\mathcal{M}+\lambda_{1}}}\mathcal{V}'\bb{\frac{i+\beta+1}{n+\beta+1}}^{2\alpha}}_{T_{2}} + \underbrace{\sum_{i=1}^{n}\bb{\eta_{i}^{2} + 6\eta_{i}^{3}k_{i}\bb{\mathcal{M}+\lambda_{1}}}\xi_{k,i}\bb{\frac{i+\beta+1}{n+\beta+1}}^{2\alpha}}_{T_{3}}} \label{condition:4} \notag
% & = \bb{1+2\epsilon}^{2}\sum_{i=\init+1}^{n}\bb{\mathcal{V}'+\xi_{k,i}}\eta_{i}^{2}\bb{\frac{i+\beta+1}{n+\beta+1}}^{2\alpha} + 4\bb{1+5\epsilon}\bb{1+2\epsilon}^{2}\sum_{i=\init+1}^{n}\bb{\mathcal{V}'+\xi_{k,i}}\eta_{i}^{3}k_{i}\bb{\mathcal{M}+\lambda_{1}}\bb{\frac{i+\beta+1}{n+\beta+1}}^{2\alpha} \notag \\ 
% &\stackrel{\bb{i}}\leq \bb{1+5\epsilon}\sum_{i=\init+1}^{n}\bb{\mathcal{V}'+\xi_{k,i}}\eta_{i}^{2}\bb{\frac{i+\beta+1}{n+\beta+1}}^{2\alpha} + \; 4\bb{1+15\epsilon}\sum_{i=\init+1}^{n}\bb{\mathcal{V}'+\xi_{k,i}}\eta_{i}^{3}k_{i}\bb{\mathcal{M}+\lambda_{1}}\bb{\frac{i+\beta+1}{n+\beta+1}}^{2\alpha} \notag \\ 
% &= \bb{1+5\epsilon}\sum_{i=\init+1}^{n}\bb{\mathcal{V}'+\xi_{k,i}}\eta_{i}^{2}\bb{\frac{i+\beta+1}{n+\beta+1}}^{2\alpha} + \; 4\bb{1+15\epsilon}\sum_{i=\init+1}^{n}\bb{\mathcal{V}'+\xi_{k,i}}\eta_{i}^{3}k_{i}\bb{\mathcal{M}+\lambda_{1}}\bb{\frac{i+\beta+1}{n+\beta+1}}^{2\alpha} \notag 
}
% \ba{
%     & \sum_{i=1}^{n}\bb{\frac{\mathcal{V}}{1-\left|\lambda_{2}\bb{P}\right|}+C_{k,i}}\eta_{i}^{2}\exp\left(-\sum_{j=i+1}^{n}2\eta_j\left( \lambda_1-\lambda_{2}\right)\right) \notag \\ 
%     &  = \sum_{i=1}^{n}\bb{\frac{\mathcal{V}}{1-\left|\lambda_{2}\bb{P}\right|}+c_{1}\eta_{i}k_{i+1}^{2}\mathcal{M}\bb{\mathcal{M}+\lambda_{1}}^{2}}\eta_{i}^{2}\exp\left(-\sum_{j=i+1}^{n}2\eta_j\left( \lambda_1-\lambda_{2}\right)\right) \notag \\ 
%     &  = \frac{\mathcal{V}}{1-\left|\lambda_{2}\bb{P}\right|}\sum_{i=1}^{n}\eta_{i}^{2}\exp\left(-\sum_{j=i+1}^{n}2\eta_j\left( \lambda_1-\lambda_{2}\right)\right) + c_{1}\mathcal{M}\bb{\mathcal{M}+\lambda_{1}}^{2}\sum_{i=1}^{n}\eta_{i}^{3}k_{i+1}^{2}\exp\left(-\sum_{j=i+1}^{n}2\eta_j\left( \lambda_1-\lambda_{2}\right)\right) \notag \\ 
%     &  = \frac{\mathcal{V}}{1-\left|\lambda_{2}\bb{P}\right|}\sum_{i=1}^{n}\eta_{i}^{2}\exp\left(-\sum_{j=i+1}^{n}2\eta_j\left( \lambda_1-\lambda_{2}\right)\right) + 8c_{1}\mathcal{M}\bb{\mathcal{M}+\lambda_{1}}^{2}\sum_{i=1}^{n}\eta_{i+1}^{3}k_{i+1}^{2}\exp\left(-\sum_{j=i+1}^{n}2\eta_j\left( \lambda_1-\lambda_{2}\right)\right) 
% }
% Let's start with $T_{2}$. We have, 
% \bas{
%     T_{2} &= \sum_{i=1}^{n}\bb{\eta_{i}^{2} + 6\eta_{i}^{3}k_{i}\bb{\mathcal{M}+\lambda_{1}}}\mathcal{V}'\bb{\frac{i+\beta+1}{n+\beta+1}}^{2\alpha} \\ 
%     & = \mathcal{V}'\underbrace{\sum_{i=1}^{n}\eta_{i}^{2}\bb{\frac{i+\beta+1}{n+\beta+1}}^{2\alpha}}_{T_{2,1}} + \; 6\bb{\mathcal{M}+\lambda_{1}}\mathcal{V}'\underbrace{\sum_{i=1}^{n} \eta_{i}^{3}k_{i}\bb{\frac{i+\beta+1}{n+\beta+1}}^{2\alpha}}_{T_{2,2}}
% }
Let's define 
\bas{
    & g\bb{i} := \exp\left(-\sum_{j=i+1}^{n}2\eta_j\left( \lambda_1-\lambda_{2}\right)\right), \;\; T_{3} := \sum_{i=1}^{n}\eta_{i}^{2}g\bb{i}, \;\; T_{4} := \sum_{i=1}^{n}\eta_{i}^{3}g\bb{i}, \;\; T_{5} := \sum_{i=1}^{n}\eta_{i}^{3}k_{i}^{2}g\bb{i},
}
Note that since $k_{n} \geq k_{i}$,
\bas{
    T_{5} &= \sum_{i=1}^{n}\eta_{i}^{3}k_{i}^{2}g\bb{i} \leq k_{n}^{2}\sum_{i=1}^{n}\eta_{i}^{3}g\bb{i} = k_{n}^{2}T_{4}
}
Then, 
\ba{
\sum_{i=1}^{n}\bb{\mathcal{V}'+\xi_{k,i}}\eta_{i-k_{i}}^{2}\exp\left(-\sum_{j=i+1}^{n}2\eta_j\left( \lambda_1-\lambda_{2}\right)\right) &\leq \bb{1+5\epsilon}\bbb{\mathcal{V'}T_{3} +  24\mathcal{M}\bb{\mathcal{M}+\lambda_{1}}^{2}T_{5}} \notag \\ 
&\leq \bb{1+5\epsilon}\bbb{\mathcal{V'}T_{3} + 24\mathcal{M}\bb{\mathcal{M}+\lambda_{1}}^{2}k_{n}^{2}T_{4}} \label{condition:4}
}
Using \ref{ineq:eta_lower_bound}, $g\bb{i} \leq \bb{\frac{i+\beta+1}{n+\beta+1}}^{2\alpha}$. Noting that $\bb{\frac{\beta+1}{\beta}}^{2} \leq \bb{\frac{\beta+1}{\beta}}^{3} \leq 2$, we have
\ba {
    T_{3} &:= \sum_{i=1}^{n}\eta_{i}^{2} \exp\bb{-2\sum_{j=i+1}^{n}\eta_{j}\bb{\lambda_{1}-\lambda_{2}}} \notag \\
          &=\bb{\frac{\alpha}{\lambda_{1}-\lambda_{2}}}^{2}\sum_{i=1}^{n}\frac{1}{\bb{\beta+i}^{2}}\bb{\frac{i+\beta+1}{n+\beta+1}}^{2\alpha} \notag \\
          &\leq \bb{\frac{\alpha}{\lambda_{1}-\lambda_{2}}}^{2}\bb{\frac{\beta+1}{\beta}}^{2}\sum_{i=1}^{n}\frac{1}{\bb{\beta+i+1}^{2}}\bb{\frac{i+\beta+1}{n+\beta+1}}^{2\alpha} \notag \\
          &= \bb{\frac{\alpha}{\lambda_{1}-\lambda_{2}}}^{2}\bb{\frac{\beta+1}{\beta}}^{2}\sum_{i=1}^{n}\frac{1}{\bb{\beta+i+1}^{2}}\bb{\frac{i+\beta+1}{n+\beta+1}}^{2\alpha} \notag \\
          &\leq 2\bb{\frac{\alpha}{\lambda_{1}-\lambda_{2}}}^{2}\frac{1}{\bb{n+\beta+1}^{2\alpha}}\sum_{i=1}^{n}\bb{i+\beta+1}^{2\alpha-2} \notag \\
          &\leq \frac{2}{2\alpha - 1}\bb{\frac{\alpha}{\lambda_{1}-\lambda_{2}}}^{2}\frac{1}{\bb{n+\beta+2}}\bb{\frac{n+\beta+2}{n+\beta+1}}^{2\alpha} \text{ using } \ref{ineq:sum_exponentials_upper_bound} \notag \\ 
          &= \frac{2}{2\alpha - 1}\bb{\frac{\alpha}{\lambda_{1}-\lambda_{2}}}^{2}\frac{1}{\bb{n+\beta+2}}\bb{1 + \frac{1}{n+\beta + 1}}^{2\alpha}\label{eq:T_3_bound} 
}
and similarly,
\ba {
    T_{4} &:= \sum_{i=1}^{n}\eta_{i}^{3} \exp\bb{-2\sum_{j=i+1}^{n}\eta_{j}\bb{\lambda_{1}-\lambda_{2}}} \notag \\
          &=\bb{\frac{\alpha}{\lambda_{1}-\lambda_{2}}}^{3}\sum_{i=1}^{n}\frac{1}{\bb{\beta+i}^{3}}\bb{\frac{i+\beta+1}{n+\beta+1}}^{2\alpha} \notag \\
          &\leq \bb{\frac{\alpha}{\lambda_{1}-\lambda_{2}}}^{3}\bb{\frac{\beta+1}{\beta}}^{3}\sum_{i=1}^{n}\frac{1}{\bb{\beta+i+1}^{3}}\bb{\frac{i+\beta+1}{n+\beta+1}}^{2\alpha} \notag \\
          &= \bb{\frac{\alpha}{\lambda_{1}-\lambda_{2}}}^{3}\bb{\frac{\beta+1}{\beta}}^{3}\sum_{i=1}^{n}\frac{1}{\bb{\beta+i+1}^{2}}\bb{\frac{i+\beta+1}{n+\beta+1}}^{2\alpha} \notag \\
          &\leq 2\bb{\frac{\alpha}{\lambda_{1}-\lambda_{2}}}^{3}\frac{1}{\bb{n+\beta+1}^{2\alpha}}\sum_{i=1}^{n}\bb{i+\beta+1}^{2\alpha-3} \notag
}
\ba{
          &\leq \frac{1}{\alpha - 1}\bb{\frac{\alpha}{\lambda_{1}-\lambda_{2}}}^{3}\frac{1}{\bb{n+\beta+2}^{2}}\bb{\frac{n+\beta+2}{n+\beta+1}}^{2\alpha} \text{ using } \ref{ineq:sum_exponentials_upper_bound} \notag \\ 
          &= \frac{1}{\alpha - 1}\bb{\frac{\alpha}{\lambda_{1}-\lambda_{2}}}^{3}\frac{1}{\bb{n+\beta+2}^{2}}\bb{1 + \frac{1}{n+\beta+1}}^{2\alpha}
          \label{eq:T_4_bound}
}
Using \ref{condition:1}, we have 
\ba{
    \frac{\alpha}{n+\beta+1} = \eta_{n}\bb{\lambda_{1}-\lambda_{2}} \leq \eta_{n}\lambda_{1} \leq \eta_{n}k_{n}\lambda_{1} \leq \epsilon \leq 0.1 \label{alpha_n_bound}
}
Therefore, using \cite{kozma}
\ba{
    \bb{1 + \frac{1}{n+\beta+1}}^{2\alpha} \stackrel{(i)}\leq \frac{1}{1 - \frac{2\alpha}{n+\beta+1}} \stackrel{(ii)}\leq 1 + \frac{4\alpha}{n+\beta+1} \leq 1 + 4\epsilon \label{n_power_alpha_bound}
}
where $(i)$ follows since $\frac{2\alpha}{n+\beta+1} < 1$ by \ref{alpha_n_bound} and $(ii)$ follows since $\frac{1}{1-x} \leq 1+2x \text{ for } x \in [0,\frac{1}{2}]$.

Using \ref{n_power_alpha_bound} with \ref{eq:T_3_bound}, we have
\ba{
    T_{3} &\leq \frac{2}{2\alpha - 1}\bb{\frac{\alpha}{\lambda_{1}-\lambda_{2}}}^{2}\frac{1}{\bb{n+\beta+2}}\bb{1 + \frac{4\alpha}{n+\beta+1}} \notag \\ 
    &\leq \frac{2\bb{1+4\epsilon}}{2\alpha - 1}\bb{\frac{\alpha}{\lambda_{1}-\lambda_{2}}}^{2}\frac{1}{\bb{n+\beta+2}} \label{eq:T_3_bound_modified}
}
Using \ref{n_power_alpha_bound} with \ref{eq:T_4_bound}, we have
\ba{
    T_{4} &\leq \frac{1+4\epsilon}{\alpha - 1}\bb{\frac{\alpha}{\lambda_{1}-\lambda_{2}}}^{3}\frac{1}{\bb{n+\beta+2}^{2}} \label{eq:T_4_bound_modified}
}
Let 
\bas{
    C_{1} := \frac{2\bb{1+10\epsilon}\alpha^{2}}{2\alpha - 1}, C_{2} := \frac{24\bb{1+10\epsilon}\alpha^{3}}{\bb{\alpha - 1}},
}
Putting together \ref{eq:T_3_bound_modified}, \ref{eq:T_4_bound_modified} in \ref{condition:4} and using the definition of $k_{i}$ in \ref{eq:ki_definition} we have 

\bas{
    \bb{1+5\epsilon}\mathcal{V}'T_{3} &\leq \frac{2\bb{1+5\epsilon}\bb{1+4\epsilon}}{2\alpha - 1}\bb{\frac{\alpha}{\lambda_{1}-\lambda_{2}}}^{2}\frac{\mathcal{V}'}{\bb{n+\beta+2}} \\ 
    &\leq \frac{2\bb{1+10\epsilon}\alpha^{2}}{2\alpha - 1}\frac{\mathcal{V}'}{\bb{\lambda_{1}-\lambda_{2}}^{2}}\frac{1}{n} \text{ since } \epsilon \leq 0.05
}
and similarly,
\bas{
  24\bb{1+5\epsilon}\mathcal{M}\bb{\mathcal{M}+\lambda_{1}}^{2}k_{n}^{2}T_{4} &\leq \frac{24\bb{1+5\epsilon}\bb{1+4\epsilon}\alpha^{3}}{\alpha - 1}\frac{\mathcal{M}\bb{\mathcal{M}+\lambda_{1}}^{2}}{\bb{\lambda_{1}-\lambda_{2}}^{3}}\frac{k_{n}^{2}}{n^{2}} 
    % & \leq \frac{800\bb{1+10\epsilon}\tau_{\text{mix}}^{2}\alpha^{3}}{\bb{\alpha - 1}}\frac{\mathcal{M}\bb{\mathcal{M}+\lambda_{1}}^{2}}{\bb{\lambda_{1}-\lambda_{2}}^{3}}\frac{\log\bb{\frac{\bb{\beta+n}\bb{\lambda_{1}-\lambda_{2}}}{\alpha}}}{n^{2}}
}
Therefore from \ref{condition:4}, we have
\bas{
    & \sum_{i=1}^{n}\bb{\mathcal{V}'+\xi_{k,i}}\eta_{i-k_{i}}^{2}\exp\left(-\sum_{j=i+1}^{n}2\eta_j\left( \lambda_1-\lambda_{2}\right)\right) \leq C_{1}\frac{\mathcal{V}'}{\bb{\lambda_{1}-\lambda_{2}}^{2}}\frac{1}{n} +  C_{2}\frac{\mathcal{M}\bb{\mathcal{M}+\lambda_{1}}^{2}}{\bb{\lambda_{1}-\lambda_{2}}^{3}}\frac{k_{n}^{2}}{n^{2}}
    % &\rd \leq \mathcal{C}_{1}\frac{\mathcal{V}'}{\bb{\lambda_{1}-\lambda_{2}}^{2}}\frac{1}{n}+\max\bbb{\frac{\mathcal{C}_{2}\mathcal{V}'}{1-|\lambda_2(P)|} + \frac{\mathcal{M}(\mathcal{M}+\lambda_1+1)^2}{\lambda_1-\lambda_2}}\log^{2}\bb{\frac{\bb{\beta+n}\bb{\lambda_{1}-\lambda_{2}}}{\sqrt{\gamma}\alpha}}\frac{1}{n^{2}(\lambda_1-\lambda_2)^2}
}
Hence proved.
\end{proof}

\section{Proofs : Convergence Analysis of Oja's Algorithm for Markovian Data}
\label{appendix:proofs_of_bounding_convergence}

\setcounter{theorem}{1}

In this section, we present proofs of Theorems \ref{theorem:v1upperbound}, \ref{theorem:Vperpupperbound}, \ref{theorem:v1lowerbound} and \ref{theorem:v1squareupperbound}. We state versions of these theorems that are valid under more general conditions on the step sizes. Specifically, for the following, we only require a sequence of non-increasing step-sizes which satisfy, for $\epsilon := \frac{1}{100}, \forall i \in [n]$ -  
% \begin{enumerate}
%     \item $\eta_{i}k_{i}\bb{\mathcal{M}+\lambda_{1}} \leq \epsilon$
%     \item $\eta_{i} \leq \eta_{i-k_{i}} \leq \bb{1+2\epsilon}\eta_{i} \leq 2\eta_{i}$ (slow-decay)
% \end{enumerate}
\begin{enumerate}[label=\textbf{C.\arabic*}]
    \item[\textbf{C.1}]$\eta_{i}k_{i}\bb{\mathcal{M}+\lambda_{1}} \leq \epsilon$
    \qquad\quad \ \ \textbf{C.2  }\ \ (Slow decay) $\eta_{i} \leq \eta_{i-k_{i}} \leq \bb{1+2\epsilon}\eta_{i} \leq 2\eta_{i}$
\end{enumerate}
The version of these theorems stated in the main manuscript are obtained by plugging in the step-sizes as $\eta_{i}:= \frac{\alpha}{\bb{\lambda_{1}-\lambda_{2}}\bb{\beta + i}}$ for the values of $\alpha, \beta$ provided in Lemma \ref{lemma:learning_rate_schedule}. Before starting with the proofs, we define the following scalar variables - 
\begin{align}  \label{assumptions:important_theorems}
    r &:= 2\bb{1+\epsilon}k_{n}\eta_{n}\left(\mathcal{M}+\lambda_{1}\right), \qquad \zeta_{k,t} := 40k_{t+1}\bb{\mathcal{M}+\lambda_{1}}^{2}\notag\\
    % \xi_{k,t} &:= 6\mathcal{M}\bbb{1 + 3k_{t+1}^{2}\bb{\mathcal{M}+\lambda_{1}}^{2}},\qquad\zeta_{k,t} := 40k_{t+1}\bb{\mathcal{M}+\lambda_{1}}^{2}\notag\\
    \psi_{k,t} &:= 6\mathcal{M}\bbb{1 + 3k_{t+1}^{2}\bb{\mathcal{M}+\lambda_{1}}^{2}}, \qquad \xi_{k,t} := \eta_{t-k_{t}}\psi_{k,t} \notag\\
    \mathcal{V}' &:= \frac{1 + \bb{3 + 4\epsilon}|\lambda_{2}\bb{P}|}{1 - \left|\lambda_{2}\bb{P}\right|}\mathcal{V},\ \  \qquad\qquad\overline{\mathcal{V}_{k,t}} := \mathcal{V}' + \lambda_{1}^{2} + \xi_{k,t}
\end{align}

The basic idea behind these proofs is illustrated in Figure \ref{fig:approx1}, where we are trying to approximate the matrix product by conditioning back in time just the right amount, to balance the tradeoff between the advantage of the mixing decay and the norm of the product of matrices.

\begin{figure}[H]
    \centering
    \includegraphics[width=0.6\textwidth]{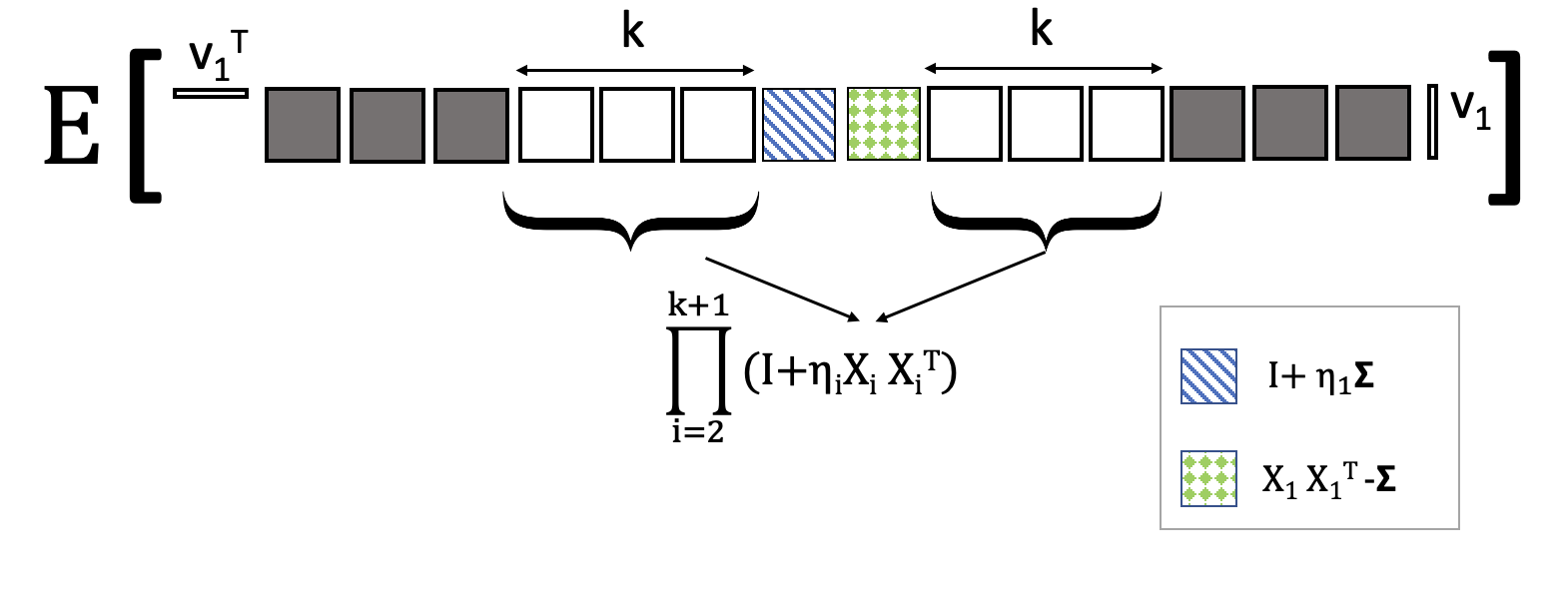}
    \caption{If we could replace the intermediate products (white matrices) by $I$, the conditional expectation of the noise matrix $X_1X_1^T-\Sigma$ conditioned on the grey matrices would be nearly zero.}
    \label{fig:approx1}
\end{figure}

\begin{theorem} (General Version)
\label{theorem::appendix:v1upperbound}
Under Assumptions~\ref{ass:model},~\ref{assumption:variance_bound} and~\ref{assumption:norm_bound}, for all $n>k_n$, and any decaying step-size schedule $\eta_i$ satisfying~\textbf{C.1} and~\textbf{C.2},  we have:
\begin{align*}
    & \mathbb{E}\left[v_{1}^{T}\newB{1}{n}\newB{1}{n}^{T}v_{1}\right] \leq \bb{1+r}^{2}\exp\left(\sum_{t=1}^{n-k_{n}}\bb{2\eta_{t}\lambda_{1} + \eta_{t}^{2}\bb{\mathcal{V}'+\lambda_1^2 + \xi_{k,t}}}\right)
\end{align*}
% \begin{align*}
%     & \mathbb{E}\left[v_{1}^{T}\newB{1}{n}\newB{1}{n}^{T}v_{1}\right] \leq \bb{1+r}^{2}\exp\left(\sum_{t=1}^{n-k_{n}}2\eta_{t}\lambda_{1} + \sum_{t=1}^{n-k_{n}}\eta_{t}^{2}\overline{\mathcal{V}_{k,t}}\right)
% \end{align*}
% where $C_{k,t} := \eta_{t}\mathcal{M}\bb{3 + 8k_{t+1}^{2}\bb{\mathcal{M}+\lambda_{1}}^{2}}$
% $C_{k,t} := 30\eta_{t}k_{t+1}^{2}\mathcal{M}\bb{\mathcal{M}+\lambda_{1}}^{2}$ 
where $\newB{i}{j}$ is defined in \ref{definition:Bji}.
\end{theorem}

\begin{proof}
Define $\alpha_{n,t} := \mathbb{E}\left[\Tr\bb{v_{1}^{T}\newB{t}{n}\newB{t}{n}^{T}v_{1}}\right] = \mathbb{E}\left[v_{1}^{T}\newB{t}{n}\newB{t}{n}^{T}v_{1}\right], i \leq t \leq n $. Then, we have

\ba{
v_1^T\newB{t}{n}\newB{t}{n}^{T} v_1
&= v_1^T\newB{t+1}{n}(I+\eta_{t}\Sigma)^2\newB{t+1}{n}^{T} v_1\;+\;2\eta_{t} \underbrace{\bb{v_1^T\newB{t+1}{n}(I+\eta_{t}\Sigma)(A_{t}-\Sigma)\newB{t+1}{n}^{T} v_1}}_{P_{n,t}} \notag \\
&+\;\eta_{t}^2\underbrace{\bb{v_1^T\newB{t+1}{n}(A_t-\Sigma)^2\newB{t+1}{n}^{T} v_1}}_{Q_{n,t}} \label{eq:v1} \\
% &\leq (1+\eta_{t}\lambda_1)^2 v_1^T\newB{t+1}{j}\newB{t+1}{j}^{T} v_1+2\eta_{t}P_{j,t}+\eta_{t}^2\mathcal{M}^2 v_1^T\newB{t+1}{j}\newB{t+1}{j}^{T}v_1\notag\\
&\leq v_1^T\newB{t+1}{j}\newB{t+1}{j}^{T} v_1((1+\eta_{t}\lambda_1)^2) + \eta_{t}^2Q_{n,t} + 2\eta_{t}P_{n,t}\notag
}
Using Lemma \ref{lemma:trace_linear_bound} with $U = v_{1}, G = \bb{I + \eta_{t}\Sigma}, \gamma = 1$ and noting that $\E_{\pi}\bbb{A_{t}-\Sigma} = 0$, along with observing that $\alpha_{n,t+k_{t+1}} \leq \alpha_{n,t+k_{t}}$ from Lemma \ref{lemma:inner_product_monotonicity}, we have
\begin{align*}
    \left|\mathbb{E}\left[P_{n,t}\right]\right| &\leq \eta_{t+1}\bb{1+\eta_{t}\lambda_{1}}\bb{\frac{2\mathcal{V}\left|\lambda_{2}\bb{P}\right|}{1 - \left|\lambda_{2}\bb{P}\right|} + \eta_{t+1}\mathcal{M}\bb{2+16\epsilon + \bb{2+\bb{1+\epsilon}^{2}}k_{t+1}^{2}\bb{\mathcal{M}+\lambda_{1}}^{2}}}\alpha_{n,t+k_{{t}}}
\end{align*}
We note that $\forall i, \; k_{i} \geq 1$, therefore, using the assumption in \ref{assumptions:important_theorems}, $1+\eta_{t}\lambda_{1} \leq 1+\eta_{t}k_{t}\bb{\mathcal{M}+\lambda_{1}} \leq 1+\epsilon$. 
\\ 
\\ 
Next, using Lemma \ref{lemma:trace_square_bound} with $U = v_{1}, G = I,\gamma = 1$ and noting that $\left\|\E_{\pi}\bbb{\bb{A_{t}-\Sigma}^{2}}\right\|_{2} \leq \mathcal{V}$ along with observing that $\alpha_{n,t+k_{t+1}} \leq \alpha_{n,t+k_{t}}$ using Lemma \ref{lemma:inner_product_monotonicity}, we have
\bas{
    \left|\mathbb{E}\left[Q_{n,t}\right]\right| &\leq \bb{\mathcal{V} + \eta_{t+1}\mathcal{M}^{2}\bb{2\eta_{t+1} + \bb{1+\epsilon}\bb{2+\epsilon\bb{1+\epsilon}}k_{t+1}\bb{\mathcal{M}+\lambda_{1}}}}\alpha_{n,t+k_{{t}}} \\ 
    &\leq \bb{\mathcal{V} + 2\epsilon\eta_{t+1}\mathcal{M} + \eta_{t+1}\mathcal{M}^{2}\bb{ \bb{1+\epsilon}\bb{2+\epsilon\bb{1+\epsilon}}k_{t+1}\bb{\mathcal{M}+\lambda_{1}}}}\alpha_{n,t+k_{{t}}}
}
where in the last line, we used $\eta_{t+1}\mathcal{M} \leq \eta_{t+1}\bb{\mathcal{M}+\lambda_{1}} \leq \eta_{t+1}k_{t+1}\bb{\mathcal{M}+\lambda_{1}} \leq \epsilon$.
\\ 
\\ 
Then from \ref{eq:v1} for $n-k_{t} \geq t \geq 1$, 
\ba{
    \alpha_{n,t} &\leq \left(1+\eta_{t}\lambda_{1}\right)^{2}\alpha_{n,t+1} +  \bb{\frac{1 + \bb{3 + 4\epsilon}|\lambda_{2}\bb{P}|}{1 - \left|\lambda_{2}\bb{P}\right|}}\mathcal{V}\eta_{t}^{2}\alpha_{n,t+k_{t}} + C_{k,t}\eta_{t}^{3}\alpha_{n,t+k_{t}} \label{eq:bound_C_kt}
}
where $C_{k,t}$ is defined as 
\bas{
     C_{k,t} &:= \mathcal{M}\bbb{4\bb{1+\epsilon}\bb{1+8\epsilon}+2\epsilon + k_{t+1}\bb{\mathcal{M}+\lambda_{1}}\bb{\bb{1+\epsilon}\bb{2+\epsilon\bb{1+\epsilon}}\mathcal{M} + 2\bb{2+\bb{1+\epsilon}^{2}}k_{t+1}\bb{\mathcal{M}+\lambda_{1}}}} \\ 
     &\stackrel{\bb{i}}\leq \mathcal{M}\bbb{4\bb{1+\epsilon}\bb{1+8\epsilon}+2\epsilon + \bb{\bb{1+\epsilon}\bb{2+\epsilon\bb{1+\epsilon}} + 2\bb{2+\bb{1+\epsilon}^{2}}}k_{t+1}^{2}\bb{\mathcal{M}+\lambda_{1}}^{2}} \\ 
     &= \mathcal{M}\bbb{4 + 38\epsilon + 32\epsilon^{2} + \bb{6 + 2\epsilon + \bb{1+\epsilon}^{2}\bb{1+2\epsilon}}k_{t+1}^{2}\bb{\mathcal{M}+\lambda_{1}}^{2}}
}
where in $\bb{i}$ we used $\mathcal{M} \leq k_{t+1}\bb{\mathcal{M}+\lambda_{1}}$.
\\ 
\\ 
Then recalling the definition of $\xi_{k,t}$ in \ref{assumptions:important_theorems}, and noting that $\alpha_{n,t+k_{t}} \leq \alpha_{n,t+1}$ using Lemma \ref{lemma:inner_product_monotonicity} we have from \ref{eq:bound_C_kt},
\begin{align*}
    \alpha_{n,t} &\leq \left(1+\eta_{t}\lambda_{1}\right)^{2}\alpha_{n,t+1} +  \bb{\bb{\frac{1 + \bb{3 + 4\epsilon}|\lambda_{2}\bb{P}|}{1 - \left|\lambda_{2}\bb{P}\right|}}\mathcal{V} + \xi_{k,t}}\eta_{t}^{2}\alpha_{n,t+k_{t}} \\ 
    % &\leq \left(1+\eta_{t}\lambda_{1}\right)^{2}\alpha_{n,t+1} + \bb{\frac{1 + 4\left|\lambda_{2}\bb{P}\right|}{1 - \left|\lambda_{2}\bb{P}\right|}}\mathcal{V}\eta_{t}^{2}\alpha_{n,t+k_{t}} + 30\eta_{t}^{3}k_{t+1}^{2}\mathcal{M}\bb{\mathcal{M}+\lambda_{1}}^{2}\alpha_{n,t+k_{t}} \\ 
    % &= \left(1+\eta_{t}\lambda_{1}\right)^{2}\alpha_{n,t+1} +  5\bb{\frac{\mathcal{V}}{1 - \left|\lambda_{2}\bb{P}\right|} + 6\eta_{t}k_{t+1}^{2}\mathcal{M}\bb{\mathcal{M}+\lambda_{1}}^{2}}\eta_{t}^{2}\alpha_{n,t+k_{t}} \\
    &= \bb{1 + 2\eta_{t}\lambda_{1} + \eta_{t}^{2}\bb{\bb{\frac{1 + \bb{3 + 4\epsilon}|\lambda_{2}\bb{P}|}{1 - \left|\lambda_{2}\bb{P}\right|}}\mathcal{V} + \lambda_{1}^{2} + \xi_{k,t}}}\alpha_{n,t+1}
\end{align*}
Therefore using this recursion, we have, 
\begin{align*}
    \alpha_{n,1} &\leq \alpha_{n,n-k_{n}+1}\exp\left(2\lambda_{1}\sum_{t=1}^{n-k_{n}}\eta_{t} + \sum_{t=1}^{n-k_{n}}\eta_{t}^{2}\bb{\bb{\frac{1 + \bb{3 + 4\epsilon}|\lambda_{2}\bb{P}|}{1 - \left|\lambda_{2}\bb{P}\right|}}\mathcal{V} + \lambda_{1}^{2} + \xi_{k,t}}\right)
\end{align*}
Let $\newB{n-k_{n}+1}{n} = I + R'$, where $\|R'\|\leq r$ a.s.
\begin{align*}
    \alpha_{n,n-k_{n}+1} & = \mathbb{E}\left[v_{1}^{T}\newB{n-k_{n}+1}{n}\newB{n-k_{n}+1}{n}^{T}v_1\right] \\
    &= \mathbb{E}\left[v_{1}^{T}v_{1}\right] + \mathbb{E}\left[v_{1}^{T}(R' + R^{\prime T})v_{1}\right] + \mathbb{E}\left[v_{1}^{T}R'R^{\prime T}v_{1}\right] \\
    & \leq 1+2r+r^{ 2} % 1 + 2\|R'\|_{2} + \|R'\|_{2}^{2}
\end{align*}
Using Lemma \ref{lemma:etakproduct} we have 
\bas{
        r   & \leq \bb{1+\epsilon}k_{n}\eta_{n-k_{n}+1}\left(\mathcal{M}+\lambda_{1}\right) \\
        & \leq \bb{1+\epsilon}k_{n}\eta_{n-k_{n}}\left(\mathcal{M}+\lambda_{1}\right) \\
        & \leq 2\bb{1+\epsilon}k_{n}\eta_{n}\left(\mathcal{M}+\lambda_{1}\right) \;\; \text{since } \eta_{n-k_{n}} \leq 2\eta_{n}
}
Therefore, 
\begin{align}
    \alpha_{n,1} &\leq \bb{1+2r+r^{2}}\exp\left(2\lambda_{1}\sum_{t=1}^{n-k_{n}}\eta_{t} + \sum_{t=1}^{n-k_{n}}\eta_{t}^{2}\bb{\bb{\frac{1 + \bb{3 + 4\epsilon}|\lambda_{2}\bb{P}|}{1 - \left|\lambda_{2}\bb{P}\right|}}\mathcal{V} + \lambda_{1}^{2} + \xi_{k,t}}\right) \notag
     % &\leq \bb{1+3r'}\exp\left(2\lambda_{1}\sum_{t=1}^{n-k_{n}}\eta_{t} + \sum_{t=1}^{n-k_{n}}\eta_{t}^{2}\bb{2\gamma\bb{1+c}\mathcal{M} + \bb{\frac{1 + \bb{3 + 4c}|\lambda_{2}\bb{P}|}{1 - \left|\lambda_{2}\bb{P}\right|}}\mathcal{V} + \lambda_{1}^{2} + C_{k,t}}\right) \text{ since } r' < 1 \notag \\
     % &\leq (1 + 12k_{n}\eta_{n}\left(\mathcal{M}+\lambda_{1}\right))\exp\left(2\lambda_{1}\sum_{t=1}^{n-k_{n}}\eta_{t} + \sum_{t=1}^{n-k_{n}}\eta_{t}^{2}\bb{2\gamma\bb{1+c}\mathcal{M} + \bb{\frac{1 + \bb{3 + 4c}|\lambda_{2}\bb{P}|}{1 - \left|\lambda_{2}\bb{P}\right|}}\mathcal{V} + \lambda_{1}^{2} + C_{k,t}}\right) \notag
\end{align}
Hence proved.
\end{proof}
% \subsection{Proof of Theorem \ref{theorem:Vperpupperbound}}

\begin{theorem} (General Version)
\label{theorem:appendix:Vperpupperbound}
Let $\init := \min\left\{t : t \in [n], t-k_{t} \geq 0 \right\}$. Under Assumptions~\ref{ass:model},~\ref{assumption:variance_bound} and~\ref{assumption:norm_bound}, for all $n>\init$, and any decaying step-size $\eta_i$ satisfying~\textbf{C.1} and~\textbf{C.2},  we have,
\begin{align*}
\mathbb{E}\left[\Tr\left(V_{\perp}^{T}\B_{n}\B_{n}^{T}V_{\perp}\right)\right] &\leq \bb{1+5\epsilon}\exp\left(\sum_{i=\init+1}^n 2\eta_i\lambda_2+\bb{\mathcal{V}'+\lambda_1^2+\xi_{k,i}}\eta_{i-k_{i}}^2\right)\\
&\qquad\times\left(d + \sum_{i=\init+1}^{n}\bb{\mathcal{V}'+\xi_{k,i}}C_{k,i}'\eta_{i-k_{i}}^{2}\exp\left(\sum_{j=\init+1}^{i}2\eta_j\left( \lambda_1-\lambda_{2}\right)\right)\right)
\end{align*}
where $C_{k,t}' := \exp\bb{2\lambda_{1}\sum_{j=1}^{\init}\bb{\eta_{j}-\eta_{t-\init+j}}+\sum_{j=1}^{t-\init}\eta_{j}^{2}\bb{\overline{\mathcal{V}_{k,j}}-\overline{\mathcal{V}_{k,j+u}}}}$
% \bas {
%     C_{k,t}' &:= \exp\bb{2\lambda_{1}\sum_{j=1}^{\init}\bb{\eta_{j}-\eta_{t-\init+j}}+\sum_{j=1}^{t-\init}\eta_{j}^{2}\bb{\overline{\mathcal{V}_{k,j}}-\overline{\mathcal{V}_{k,j+u}}}}
% }
and $B_{t}$ is defined in \ref{definition:Bt}.
\end{theorem}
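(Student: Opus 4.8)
The plan is to run the same peel-one-matrix-and-condition recipe as in the proof of Theorem~\ref{theorem::appendix:v1upperbound}, except that the product $\B_n=\newB{1}{n}$ is stripped \emph{from the outside} (largest time index first) and we condition on the \emph{past}, so that Lemmas~\ref{lemma:bound_Pt} and~\ref{lemma:bound_Qt} apply verbatim. Write $\beta_t:=\E\bbb{\Tr\bb{\vp^{T}\B_t\B_t^{T}\vp}}$ and $\gamma_t:=\E\bbb{\Tr\bb{\B_t\B_t^{T}}}$, and set $\Lambda_{\perp}:=\diag\bb{\lambda_2,\dots,\lambda_d}$, so that $\vp^{T}\Sigma=\Lambda_{\perp}\vp^{T}$ and $\vp\vp^{T}=I-v_1v_1^{T}$ commutes with $\Sigma$. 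For $t>\init$, writing $I+\eta_t A_t=\bb{I+\eta_t\Sigma}+\eta_t\bb{A_t-\Sigma}$ in the outermost factor of $\B_t=\bb{I+\eta_t A_t}\B_{t-1}$, using $\|I+\eta_t\Lambda_{\perp}\|_2=1+\eta_t\lambda_2$ on the main term, and using cyclicity together with the symmetry of $\B_{t-1}\B_{t-1}^{T}$, $A_t-\Sigma$ and $G_t$ (so that $\Tr\bb{\B_{t-1}\B_{t-1}^{T}\bb{A_t-\Sigma}G_t}=\Tr\bb{\B_{t-1}\B_{t-1}^{T}G_t\bb{A_t-\Sigma}}$) on the cross term, one gets
\bas{
\Tr\bb{\vp^{T}\B_t\B_t^{T}\vp} &\leq\bb{1+\eta_t\lambda_2}^{2}\Tr\bb{\vp^{T}\B_{t-1}\B_{t-1}^{T}\vp} \\
&\quad+2\eta_t\Tr\bb{\B_{t-1}\B_{t-1}^{T}G_t\bb{A_t-\Sigma}}+\eta_t^{2}\Tr\bb{\B_{t-1}\B_{t-1}^{T}\bb{A_t-\Sigma}U\bb{A_t-\Sigma}},
}
with $G_t:=\vp\bb{I+\eta_t\Lambda_{\perp}}\vp^{T}$ (symmetric PSD, $\|G_t\|_2\leq1+\epsilon$) and $U:=\vp\vp^{T}$ ($\|U\|_2=1$). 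Taking expectations, the last two terms are exactly $2\eta_t\,\E\bbb{P_t}$ and $\eta_t^{2}\,\E\bbb{Q_t}$ in the notation of Lemmas~\ref{lemma:bound_Pt} and~\ref{lemma:bound_Qt} (with $\gamma=1$), whose slow-decay hypotheses hold by the standing assumptions of this section.

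Applying those two lemmas bounds $\E\bbb{P_t}$ and $\E\bbb{Q_t}$ by $\E\bbb{\Tr\bb{\B_{t-k_t}\B_{t-k_t}^{T}}}$ times explicit coefficients, and Lemma~\ref{lemma:inner_product_monotonicity_identity} replaces $\E\bbb{\Tr\bb{\B_{t-k_t}\B_{t-k_t}^{T}}}$ by $\gamma_{t-1}$. Collecting the leading $\frac{2|\lambda_2(P)|}{1-|\lambda_2(P)|}\mathcal{V}$ piece of $\E\bbb{P_t}$ (which carries the prefactor $2\eta_t$) together with the $\mathcal{V}$ piece of $\E\bbb{Q_t}$ (which carries $\eta_t^{2}$), and using the slow-decay inequality $\eta_t\leq\eta_{t-k_t}\leq\bb{1+2\epsilon}\eta_t$ to pass to $\eta_{t-k_t}^{2}$, these merge into the single coefficient $\mathcal{V}'$ of~\ref{assumptions:important_theorems}; the residual $\mathcal{M}$- and $k_t^{2}\bb{\mathcal{M}+\lambda_1}^{2}$-terms are of lower order and absorb into $\xi_{k,t}$, while the $\eta_t^{2}\lambda_2^{2}$ coming from $\bb{1+\eta_t\lambda_2}^{2}$ is bounded by $\lambda_1^{2}\eta_{t-k_t}^{2}$. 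This yields, for $t>\init$, the scalar recursion
\bas{
\beta_t\leq\bb{1+2\eta_t\lambda_2+\lambda_1^{2}\eta_{t-k_t}^{2}}\beta_{t-1}+\bb{\mathcal{V}'+\xi_{k,t}}\eta_{t-k_t}^{2}\,\gamma_{t-1}.
}

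To close it one needs the companion bound on $\gamma_t$, obtained by running the identical outer peeling with $\vp$ replaced by $I$: the contraction factor is now $\bb{1+\eta_t\lambda_1}^{2}$ (legitimate since $\eta_t\lambda_1\leq\eta_tk_t\bb{\mathcal{M}+\lambda_1}\leq\epsilon$), and the same use of Lemmas~\ref{lemma:bound_Pt}--\ref{lemma:bound_Qt} gives $\gamma_t\leq\exp\bb{2\eta_t\lambda_1+\overline{\mathcal{V}_{k,t}}\,\eta_{t-k_t}^{2}}\gamma_{t-1}$ for $t>\init$, with the short leftover block controlled by $\gamma_{\init}\leq d\prod_{j=1}^{\init}\bb{1+\eta_j\bb{\mathcal{M}+\lambda_1}}^{2}\leq d\,e^{O(\epsilon)}$ (using $\eta_jk_j\bb{\mathcal{M}+\lambda_1}\leq\epsilon$ and that $\init$ lies within an additive constant of $k_{\init}$, so $\sum_{j\leq\init}\eta_j\bb{\mathcal{M}+\lambda_1}=O(\epsilon)$). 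Unrolling $\gamma_t$ down to $\init$ and then solving the $\beta_t$ recursion by telescoping (discrete variation of parameters),
\bas{
\beta_n &\leq\bbb{\prod_{i=\init+1}^{n}\bb{1+2\eta_i\lambda_2+\lambda_1^{2}\eta_{i-k_i}^{2}}}\beta_{\init} \\
&\quad+\sum_{i=\init+1}^{n}\bbb{\prod_{j=i+1}^{n}\bb{1+2\eta_j\lambda_2+\lambda_1^{2}\eta_{j-k_j}^{2}}}\bb{\mathcal{V}'+\xi_{k,i}}\eta_{i-k_i}^{2}\,\gamma_{i-1},
}
one substitutes $\beta_{\init}\leq\gamma_{\init}$ and the bound on $\gamma_{i-1}$. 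Inside the $i$-th summand the tail product (bounded by $\exp\bb{\sum_{j=i+1}^{n}\bb{2\eta_j\lambda_2+\overline{\mathcal{V}_{k,j}}\eta_{j-k_j}^{2}}}$ after the padding $\overline{\mathcal{V}_{k,j}}\geq\lambda_1^{2}$) combines with $\gamma_{i-1}$'s exponent in $\lambda_1$ to produce exactly the overall factor $\exp\bb{\sum_{j=\init+1}^{n}\bb{2\eta_j\lambda_2+\overline{\mathcal{V}_{k,j}}\eta_{j-k_j}^{2}}}$ and the inner factor $\exp\bb{\sum_{j=\init+1}^{i}2\eta_j\bb{\lambda_1-\lambda_2}}$ of the theorem; the leftover mismatch between the step-size window $\{\eta_1,\dots,\eta_{\init}\}$ and its time-shifts (incurred when $\gamma_{i-1}$ and $\gamma_{\init}$ are lined up against this reference exponent) is precisely the factor $C_{k,i}'$, the first summand contributes the ``$d$'' via $\beta_{\init}\leq\gamma_{\init}\leq d\,e^{O(\epsilon)}$, and all accumulated $e^{O(\epsilon)}$ and slow-decay prefactors fold into the global $\bb{1+5\epsilon}$.

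The main obstacle is precisely this last round of bookkeeping: verifying that every lower-order remainder emitted by Lemmas~\ref{lemma:bound_Pt}--\ref{lemma:bound_Qt}, the short leftover block $\B_{\init}$, and the step-size window shifts all consolidate into exactly the compact objects $\mathcal{V}',\ \xi_{k,t},\ \overline{\mathcal{V}_{k,t}},\ C_{k,t}'$ and the single $\bb{1+5\epsilon}$ factor stated in the theorem. A lesser but necessary point is checking, via symmetry and cyclicity of the trace, that the cross and quadratic terms coming out of the $\vp$-peeling are \emph{literally} of the forms $\Tr\bb{\B\B^{T}G\bb{A_t-\Sigma}}$ and $\Tr\bb{\B\B^{T}\bb{A_t-\Sigma}U\bb{A_t-\Sigma}}$ demanded by Lemmas~\ref{lemma:bound_Pt} and~\ref{lemma:bound_Qt}; the genuinely substantive content — outer peeling, forward mixing via Lemma~\ref{lemma:reverse_mixing}, reversibility, and the matrix-product approximation of Lemma~\ref{lemma:etakproduct} — is already encapsulated in those cited lemmas.
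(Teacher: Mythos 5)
Your skeleton (outer peeling of $\B_t=(I+\eta_tA_t)\B_{t-1}$, conditioning on the past, invoking Lemmas~\ref{lemma:bound_Pt} and~\ref{lemma:bound_Qt} with $G_t=\vp(I+\eta_t\Lambda_\perp)\vp^T$ and $U=\vp\vp^T$, then telescoping) is the same as the paper's, but the way you close the recursion has a genuine gap: you keep the forcing term as $(\mathcal{V}'+\xi_{k,t})\eta_{t-k_t}^2\,\gamma_{t-1}$ with $\gamma_{t-1}=\E\bbb{\Tr\bb{\B_{t-1}\B_{t-1}^T}}$ the \emph{full} trace, and then substitute $\gamma_{i-1}\leq \gamma_{\init}\exp(\cdots)\leq d\,e^{O(\epsilon)}\exp(\cdots)$. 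Since $\Tr\bb{\B_{t}\B_{t}^T}\geq d$ always (the factors are $I$ plus PSD), any bound on $\gamma_{i-1}$ carries a factor of $d$, so your unrolled estimate has the form $(1+5\epsilon)\exp(\cdots)\bb{d+d\sum_{i}(\cdots)}$, i.e.\ a factor $d$ multiplying \emph{every} summand. The theorem being proved has $d$ only in the isolated additive term, and this distinction is essential downstream (otherwise the leading error in Theorem~\ref{theorem:oja_convergence_rate} would degrade to $d\mathcal{V}/n$). The paper avoids this by splitting $\E\bbb{\Tr\bb{\B_{t-k_t}\B_{t-k_t}^T}}=\alpha_{t-k_t}+\beta_{t-k_t}$, folding the $\beta$-part into the homogeneous coefficient (which is exactly why the contraction factor is padded to $1+2\eta_t\lambda_2+\overline{\mathcal{V}_{k,t}}\eta_{t-k_t}^2$), and bounding the remaining $\alpha_{t-1}$ forcing by the dimension-free Theorem~\ref{theorem::appendix:v1upperbound} (the $v_1$ upper bound), whose prefactor is $r_{k,t}\leq\chi_\epsilon=O(1)$ rather than $O(d)$. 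Your route cannot be repaired without this step (or some other $d$-free control of the forcing term), because the full trace is intrinsically $\geq d$.

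A secondary inaccuracy: you assert the window-shift mismatch is ``precisely'' $C_{k,i}'$. In the paper, $C_{k,i}'$ arises from comparing the exponent $2\lambda_1\sum_{j=1}^{i-k_i}\eta_j+\sum_{j=1}^{i-k_i}\overline{\mathcal{V}_{k,j}}\eta_j^2$ coming from Theorem~\ref{theorem::appendix:v1upperbound} (indices starting at $1$, unshifted step sizes) against the reference exponent $\sum_{j=\init+1}^{i}\bb{2\eta_j\lambda_2+\overline{\mathcal{V}_{k,j}}\eta_{j-k_j}^2}$; with your $\gamma$-recursion the exponents are already indexed over $\{\init+1,\dots,i-1\}$ with shifted $\eta_{j-k_j}$, so the bookkeeping would produce a different (in fact unnecessary) correction, not the stated $C_{k,i}'$. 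This is cosmetic compared with the factor-of-$d$ issue, but it means the claimed identification of constants would not come out as stated either.
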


\begin{proof}
For $ t \leq n$, let
\bas{
    & \alpha_{t} := \alpha_{t,1} = \mathbb{E}\left[v_{1}^{T}\B_{t}\B_{t}^{T}v_{1}\right] = \mathbb{E}\left[\Tr\left(v_{1}^{T}\B_{t}\B_{t}^{T}v_{1}\right)\right], \; \text{as defined in Theorem } \ref{theorem:v1upperbound} \\
    & \beta_{t} := \mathbb{E}\left[\Tr\left(V_{\perp}^{T}\B_{t}\B_{t}^{T}V_{\perp}\right)\right]
}
Note that $\alpha_{t} + \beta_{t} = \Tr\bb{\B_t\B_t^T}$ by definition. Then,
\bas{
\Tr\bb{\B_t\B_t^T\vp\vp^T} &= \Tr\bb{\B_{t-1}\B_{t-1}^T(I+\eta_t\Sigma)\vp\vp^T(I+\eta_t\Sigma)} + \eta_t\Tr\bb{\B_{t-1}^T(I+\eta_t\Sigma)\vp\vp^T(A_t-\Sigma)\B_{t-1}}\\
&\;\;\;\;\;\;\;\; + \eta_t\Tr\bb{\B_{t-1}^T(A_t-\Sigma)\vp\vp^T(I+\eta_t\Sigma)\B_{t-1}} +\eta_t^2\Tr\bb{\B_{t-1}\B_{t-1}^T(A_t-\Sigma)\vp\vp^T(A_t-\Sigma)}\\
% &= \Tr\bb{B_{t-1}B_{t-1}^T,(I+\eta_t\Sigma)\vp\vp^T(I+\eta_t\Sigma)} + 2\eta_t\Tr\bb{B_{t-1}B_{t-1}^T(I+\eta_t\Sigma)\vp\vp^T(A_t-\Sigma)}\\
% &\;\;\;\;\;\;\;\; + \eta_t^2\Tr\bb{B_{t-1}B_{t-1}^T(A_t-\Sigma)\vp\vp^T(A_t-\Sigma)}\\
% &\leq (1+\eta_t\lambda_2)^2\Tr\bb{B_{t-1}B_{t-1}^T\vp\vp^T} + 2\eta_t\Tr\bb{B_{t-1}^T(I+\eta_t\Sigma)\vp\vp^T(A_t-\Sigma)B_{t-1}}\\
% &\;\;\;\;\;\;\;\; + \eta_t^2\Tr\bb{B_{t-1}B_{t-1}^T(A_t-\Sigma)\vp\vp^T(A_t-\Sigma)} \\
&\leq (1+\eta_t\lambda_2)^2\Tr\bb{\B_{t-1}\B_{t-1}^T\vp\vp^T} + 2\eta_t\underbrace{\Tr\bb{\B_{t-1}\B_{t-1}^T(I+\eta_t\Sigma)\vp\vp^T(A_t-\Sigma)}}_{P_{t}}\\
&\;\;\;\;\;\;\;\; + \eta_t^2\underbrace{\Tr\bb{\B_{t-1}\B_{t-1}^T(A_t-\Sigma)\vp\vp^T(A_t-\Sigma)}}_{Q_{t}}
}
Let $\B_{t-1} = \left(I + R\right)\B_{t-k_{t}}$ with $\|R\|_{2} \leq r$. Using Lemma \ref{lemma:bound_Pt} with $G = \bb{I+\eta_t\Sigma}\vp\vp^{T}=\vp(I+\eta_t\Lambda_{\perp})\vp^T, \gamma = 1$, where $\Lambda_\perp$ is a $d-1\times d-1 $ diagonal matrix of eigenvalues $\lambda_2,\dots,\lambda_d$ of $\Sigma$, and noting that $\left\|\vp\vp^T\right\|_{2} = 1$, 
\bas{
    \E\bbb{P_{t}} &\leq \bb{1+\eta_{t}\lambda_{1}}\eta_{t-k_{t}}\bb{\frac{2\mathcal{V}\left|\lambda_{2}\bb{P}\right|}{1 - \left|\lambda_{2}\bb{P}\right|} + \eta_{t-k_{t}}\mathcal{M}\bb{2\bb{1+8\epsilon} + \bb{2+\bb{1+\epsilon}^{2}}k_{t}^{2}\bb{\mathcal{M}+\lambda_{1}}^{2}}}\bb{\alpha_{t-k_{t}}+\beta_{t-k_{t}}} \\ 
    &\leq \bb{1+\epsilon}\eta_{t-k_{t}}\bb{\frac{2\mathcal{V}\left|\lambda_{2}\bb{P}\right|}{1 - \left|\lambda_{2}\bb{P}\right|} + \eta_{t-k_{t}}\mathcal{M}\bb{2\bb{1+8\epsilon} + \bb{2+\bb{1+\epsilon}^{2}}k_{t}^{2}\bb{\mathcal{M}+\lambda_{1}}^{2}}}\bb{\alpha_{t-k_{t}}+\beta_{t-k_{t}}} 
}
where in the last line, we used $\eta_{t}\lambda_{1} \leq \eta_{t}k_{t}\bb{\mathcal{M} + \lambda_{1}} \leq \epsilon$.
\\
\\
Using Lemma \ref{lemma:bound_Qt} with $U = \vp\vp^{T}, \gamma = 1$, 
\bas{
    \E\bbb{Q_{t}} &\leq \bb{\mathcal{V} + \eta_{t-k_{t}+1}\mathcal{M}^{2}\bb{2\eta_{t}  + 2\bb{1+\epsilon}\bb{1+\epsilon\bb{1+\epsilon}}k_{t}\bb{\mathcal{M}+\lambda_{1}}}}\bb{\alpha_{t-k_{t}}+\beta_{t-k_{t}}} \\ 
    &\stackrel{(i)}\leq \bb{\mathcal{V} + 2\epsilon\eta_{t}\mathcal{M} + 2\eta_{t-k_{t}+1}\mathcal{M}^{2}\bb{\bb{1+\epsilon}\bb{1+\epsilon\bb{1+\epsilon}}k_{t}\bb{\mathcal{M}+\lambda_{1}}}}\bb{\alpha_{t-k_{t}}+\beta_{t-k_{t}}} \\ 
    &\stackrel{(ii)}\leq \bb{\mathcal{V} + 2\epsilon\eta_{t}\mathcal{M} + 2\eta_{t-k_{t}+1}\mathcal{M}\bb{\bb{1+\epsilon}\bb{1+\epsilon\bb{1+\epsilon}}k_{t}^{2}\bb{\mathcal{M}+\lambda_{1}}^{2}}}\bb{\alpha_{t-k_{t}}+\beta_{t-k_{t}}}
}
where in $(i)$ we used $\forall i,\; \eta_{i}\mathcal{M} \leq \eta_{i}k_{i}\bb{\mathcal{M}+\lambda_{1}} \leq \epsilon$ and in $(ii)$ we used $\mathcal{M} \leq k_{t}\bb{\mathcal{M}+\lambda_{1}}$. Putting everything together, we have, 
\bas {
    & \E\bbb{\Tr\bb{\B_t\B_t^T\vp\vp^T}} \\
    & \;\; \leq \bb{1 + \eta_{t}\lambda_{2}}^{2}\beta_{t-1} \\ & \;\;\;\;\;\;\;\; + 2\bb{1+\epsilon}\eta_{t}\eta_{t-k_{t}}\bb{\frac{2\mathcal{V}\left|\lambda_{2}\bb{P}\right|}{1 - \left|\lambda_{2}\bb{P}\right|} + \eta_{t-k_{t}}\mathcal{M}\bb{2\bb{1+8\epsilon} + \bb{2+\bb{1+\epsilon}^{2}}k_{t}^{2}\bb{\mathcal{M}+\lambda_{1}}^{2}}}\bb{\alpha_{t-k_{t}}+\beta_{t-k_{t}}} \\
    & \;\;\;\;\;\;\;\;\; + \eta_{t}^{2}\bb{\mathcal{V} + 2\epsilon\eta_{t}\mathcal{M} + 2\eta_{t-k_{t}+1}\mathcal{M}\bb{\bb{1+\epsilon}\bb{1+\epsilon\bb{1+\epsilon}}k_{t}^{2}\bb{\mathcal{M}+\lambda_{1}}^{2}}}\bb{\alpha_{t-k_{t}}+\beta_{t-k_{t}}} \\
    & \;\; \leq \bb{1 + \eta_{t}\lambda_{2}}^{2}\beta_{t-1} \\ & \;\;\;\;\;\;\;\; + 2\bb{1+\epsilon}\eta_{t-k_{t}}^{2}\bb{\frac{2\mathcal{V}\left|\lambda_{2}\bb{P}\right|}{1 - \left|\lambda_{2}\bb{P}\right|} + \eta_{t-k_{t}}\mathcal{M}\bb{2\bb{1+8\epsilon} + \bb{2+\bb{1+\epsilon}^{2}}k_{t}^{2}\bb{\mathcal{M}+\lambda_{1}}^{2}}}\bb{\alpha_{t-k_{t}}+\beta_{t-k_{t}}} \\
    & \;\;\;\;\;\;\;\;\; + \eta_{t-k_{t}}^{2}\bb{\mathcal{V} + 2\epsilon\eta_{t}\mathcal{M} + 2\eta_{t-k_{t}+1}\mathcal{M}\bb{\bb{1+\epsilon}\bb{1+\epsilon\bb{1+\epsilon}}k_{t}^{2}\bb{\mathcal{M}+\lambda_{1}}^{2}}}\bb{\alpha_{t-k_{t}}+\beta_{t-k_{t}}} \\
    & \;\; \leq \bb{1 + \eta_{t}\lambda_{2}}^{2}\beta_{t-1} + \eta_{t-k_{t}}^{2}\bb{\bb{\frac{1 + \bb{3 + 4\epsilon}|\lambda_{2}\bb{P}|}{1 - \left|\lambda_{2}\bb{P}\right|}}\mathcal{V} + \xi_{k,t}}\bb{\alpha_{t-k_{t}}+\beta_{t-k_{t}}}
}
where $\xi_{k,t}$ is as defined in \ref{assumptions:important_theorems}.
% \bas{
%     \xi_{k,t} &:= \eta_{t-k_{t}}\mathcal{M}\bbb{\gamma\bb{2\bb{1+c}\bb{1+8c} + c} + \bb{2\bb{1+c}\bb{2+\bb{1+c}^{2}} + 2\bb{1+c}\bb{1+c\bb{1+c}}}k_{t}^{2}\bb{\mathcal{M}+\lambda_{1}}^{2}} \\ 
%     &= \eta_{t-k_{t}}\mathcal{M}\bbb{\gamma\bb{2 + 19c + 16c^{2}} + 2\bb{1+c}\bb{4 + 3c + 2c^{2}}k_{t}^{2}\bb{\mathcal{M}+\lambda_{1}}^{2}}
% }
Therefore using Lemma \ref{lemma:inner_product_monotonicity_identity},
\ba{
    \E\bbb{\Tr\bb{\B_t\B_t^T\vp\vp^T}} &\leq \bb{1 + 2\eta_{t}\lambda_{2} + \eta_{t-k_{t}}^{2}\bb{\bb{\frac{1 + \bb{3 + 4\epsilon}|\lambda_{2}\bb{P}|}{1 - \left|\lambda_{2}\bb{P}\right|}}\mathcal{V} + \lambda_{2}^{2} + \xi_{k,t}}}\beta_{t-1} \notag \\ 
    & + \eta_{t-k_{t}}^{2}\bb{\bb{\frac{1 + \bb{3 + 4\epsilon}|\lambda_{2}\bb{P}|}{1 - \left|\lambda_{2}\bb{P}\right|}}\mathcal{V} + \xi_{k,t}}\alpha_{t-1} \label{eq:beta_recursion_1}
}
% \bas{
%     \\ 
%     \\
%     & \stackrel{(i)}\leq \bb{1 + \eta_{t}\lambda_{2}}^{2}\beta_{t-1} +  4\eta_{t-k_{t}}^{2}\bb{\frac{\mathcal{V}}{1 - \left|\lambda_{2}\bb{P}\right|} + 6\eta_{t-k_{t}}k_{t}^{2}\mathcal{M} \bb{\mathcal{M}+\lambda_{1}}^{2}}\bb{\alpha_{t-k_{t}}+\beta_{t-k_{t}}}   \\ 
%     &\leq \bb{1 + \eta_{t}\lambda_{2}}^{2}\beta_{t-1} +  4\eta_{t-k_{t}}^{2}\bb{\frac{\mathcal{V}}{1 - \left|\lambda_{2}\bb{P}\right|} + 6\eta_{t-k_{t}}k_{t}^{2}\mathcal{M} \bb{\mathcal{M}+\lambda_{1}}^{2}}\bb{\alpha_{t-1}+\beta_{t-1}} \\ 
%     &   \;\;\;\;\;\;\;\;\;\; \text{ using Lemma }  \ref{lemma:inner_product_monotonicity_identity} \\ 
%     &= \bb{1 + 2\eta_{t}\lambda_{2} + \eta_{t}^{2}\lambda_{2}^{2}}\beta_{t-1} + 4\eta_{t-k_{t}}^{2}\bb{\frac{\mathcal{V}}{1 - \left|\lambda_{2}\bb{P}\right|} + 6\eta_{t-k_{t}}k_{t}^{2}\mathcal{M} \bb{\mathcal{M}+\lambda_{1}}^{2}}\bb{\alpha_{t-1}+\beta_{t-1}} \\ 
%     &= \bb{1 + 2\eta_{t}\lambda_{2} + 4\eta_{t-k_{t}}^{2}\bb{\frac{\mathcal{V}}{1 - \left|\lambda_{2}\bb{P}\right|} + \lambda_{2}^{2} + 6\eta_{t-k_{t}}k_{t}^{2}\mathcal{M} \bb{\mathcal{M}+\lambda_{1}}^{2}}}\beta_{t-1} \\ 
%     & \;\;\;\;\;\;\;\; + 4\eta_{t-k_{t}}^{2}\bb{\frac{\mathcal{V}}{1 - \left|\lambda_{2}\bb{P}\right|} + 6\eta_{t-k_{t}}k_{t}^{2}\mathcal{M} \bb{\mathcal{M}+\lambda_{1}}^{2}}\alpha_{t-1}
% }
Let $\chi_{\epsilon} := 1 + 4\epsilon\bb{1+\epsilon}\bb{1 + \epsilon + \epsilon^{2}} \leq 1.05$. From Theorem \ref{theorem:v1upperbound} denoting
\ba{
    r_{k,t} := 1 + 4\bb{1+\epsilon}\eta_{t-1}k_{t-1}\left(\mathcal{M}+\lambda_{1}\right) + 4\bb{1+c}^{2}\eta_{t-1}^{2}k_{t-1}^{2}\left(\mathcal{M}+\lambda_{1}\right)^{2} \leq 1 + 4\epsilon\bb{1+\epsilon}\bb{1 + \epsilon + \epsilon^{2}} = \chi_{\epsilon}, \label{r_kt_bound}
}we have, 
\bas {
    \alpha_{t-1} \leq r_{k,t}\exp\left(2\lambda_{1}\sum_{i=1}^{t-k_{t}-1}\eta_{t} + \sum_{i=1}^{t-k_{t}-1}\eta_{i}^{2}\bb{\bb{\frac{1 + \bb{3 + 4\epsilon}|\lambda_{2}\bb{P}|}{1 - \left|\lambda_{2}\bb{P}\right|}}\mathcal{V} + \lambda_{1}^{2} + \xi_{k,i}}\right)
}
% where $C_{k,i} := \eta_{i}\mathcal{M}\bbb{\bb{2 + 11c + 8c^{2}}\gamma + \bb{6 + 2c + \bb{1+c}^{2}\bb{1+2c}}k_{i+1}^{2}\bb{\mathcal{M}+\lambda_{1}}^{2}}$. We observe that $\forall i, \; C_{k,i} \leq \xi_{k,i}$.
Now, we note the definition of $\overline{\mathcal{V}_{k,t}}$ and $\mathcal{V}'$ as mentioned in \ref{assumptions:important_theorems} - 
\bas{
    \overline{\mathcal{V}_{k,t}} &:= \bb{\frac{1 + \bb{3 + 4\epsilon}|\lambda_{2}\bb{P}|}{1 - \left|\lambda_{2}\bb{P}\right|}}\mathcal{V} + \lambda_{1}^{2} + \xi_{k,t} \\ 
    &= \mathcal{V}' + \lambda_{1}^{2} + \xi_{k,t}
}
Therefore using \ref{eq:beta_recursion_1}, 
\bas {
    \beta_{t} &\leq \bb{1 + 2\eta_{t}\lambda_{2} + \eta_{t-k_{t}}^{2}\overline{\mathcal{V}_{k,t}}}\beta_{t-1} + \eta_{t-k_{t}}^{2}r_{k,t}\bb{\mathcal{V}' + \xi_{k,t}}\exp\left(2\lambda_{1}\sum_{i=1}^{t-k_{t}-1}\eta_{i} + \sum_{i=1}^{t-k_{t}-1}\eta_{i}^{2}\overline{\mathcal{V}_{k,i}}\right)
}
Recursing on the above inequality for $\init < t \leq n$ where $\init = \min\left\{i : i \in [n], i-k_{i} \geq 0 \right\}$, we have, 
\bas{
\beta_n &\leq \beta_{\init}\exp\left(2\sum_{i=\init+1}^n\eta_i\lambda_2 + \sum_{i=\init+1}^{n}\overline{\mathcal{V}_{k,i}}\eta_{i-k_{i}}^2\right) \\
& \;\;\;\;\;\; + \sum_{i=\init+1}^{n}r_{k,i}\bb{\mathcal{V}'+\xi_{k,i}}\eta_{i-k_{i}}^2\exp\bb{\sum_{j=i+1}^{n}\left(2\eta_j\lambda_2+\overline{\mathcal{V}_{k,j}}\eta_{j-k_{j}}^2\right)}\exp\bb{\sum_{j=1}^{i-k_{i}}2\eta_j \lambda_1+\overline{\mathcal{V}_{k,j}}\eta_j^2} \\
&\leq \exp\left(\sum_{i=\init+1}^n 2\eta_i\lambda_2 + \overline{\mathcal{V}_{k,i}}\eta_{i-k_{i}}^2\right)   \\
& \;\;\;\;\;\; \times \left(\beta_{\init} + \sum_{i=\init+1}^{n}r_{k,i}\bb{\mathcal{V}'+\xi_{k,i}}\eta_{i-k_{i}}^{2}\exp\left(\sum_{j=1}^{i-k_{i}}(2\eta_j \lambda_1+\overline{\mathcal{V}_{k,j}}\eta_j^2) - \sum_{j=\init+1}^{i}\bb{2\eta_j \lambda_{2}+\overline{\mathcal{V}_{k,j}}\eta_{j-k_{j}}^2}\right)\right)
}
Now, since $k_{i}, k_{j} \geq k_{\init} = \init$, therefore, we have
\bas{
\beta_n &\leq \exp\left(\sum_{i=\init+1}^n 2\eta_i\lambda_2+\overline{\mathcal{V}_{k,i}}\eta_{i-k_{i}}^2\right) \times  \\
& \;\;\;\;\;\;\; \left(\beta_{\init} + \sum_{i=\init+1}^{n}r_{k,i}\bb{\mathcal{V}'+\xi_{k,i}}\eta_{i-k_{i}}^{2}\exp\left(\sum_{j=1}^{i-\init}(2\eta_j \lambda_1+\overline{\mathcal{V}_{k,j}}\eta_j^2) - \sum_{j=\init+1}^{i}\bb{2\eta_j \lambda_{2}+\overline{\mathcal{V}_{k,j}}\eta_{j-\init}^2}\right)\right)
}
Recall that $C_{k,i}' := \exp\bb{2\lambda_{1}\sum_{j=1}^{\init}\bb{\eta_{j}-\eta_{i-\init+j}}+\sum_{j=1}^{i-\init}\eta_{j}^{2}\bb{\overline{\mathcal{V}_{k,j}}-\overline{\mathcal{V}_{k,j+\init}}}}$ as defined in \ref{assumptions:important_theorems}. Therefore, 
\bas{
\beta_n &\leq \exp\left(\sum_{i=\init+1}^n 2\eta_i\lambda_2+\overline{\mathcal{V}_{k,i}}\eta_{i-k_{i}}^2\right) \times \\ 
& \;\;\;\;\;\;\;\;\;\;\;\;\;\;\;\; \left(\beta_{\init} + \sum_{i=\init+1}^{n}r_{k,i}\bb{\mathcal{V}'+\xi_{k,i}}C_{k,i}'\eta_{i-k_{i}}^{2}\exp\left(\sum_{j=\init+1}^{i}2\eta_j\left( \lambda_1-\lambda_{2}\right)\right)\right)
}

Let $\B_{\init} = I + R'$ with $\|R'\| \leq r'$ a.s. Using Lemma \ref{lemma:etakproduct} we have 
\bas{
r'  & \leq \bb{1+\epsilon}k_{u}\eta_{1}\left(\mathcal{M}+\lambda_{1}\right) \\
    & \leq \bb{1+\epsilon}k_{u}\eta_{0}\left(\mathcal{M}+\lambda_{1}\right) \\
    & \leq 2\bb{1+\epsilon}k_{u}\eta_{u}\left(\mathcal{M}+\lambda_{1}\right) \;\; \text{since } \eta_{0} = \eta_{u-k_{u}} \leq 2\eta_{u} \\
    &< 2\epsilon\bb{1+\epsilon} 
}
Therefore, 
\begin{align*}
    \beta_{\init} & = \mathbb{E}\left[\Tr\left(V_{\perp}^{T}\B_{\init}\B_{\init}^{T}V_{\perp}\right)\right] \\
    &= \mathbb{E}\left[\Tr\left(V_{\perp}^{T}V_{\perp}\right)\right] + \mathbb{E}\left[\Tr\left(V_{\perp}^{T}(R' + R^{\prime T})V_{\perp}\right)\right] + \mathbb{E}\left[\Tr\left(V_{\perp}^{T}R'R^{\prime T}V_{\perp}\right)\right] \\
    & \leq d\left(1 + 2r' + r^{\prime 2}\right) \\ 
    &\leq d\bb{1 + 4\epsilon\bb{1+\epsilon} + 4\epsilon^{2}\bb{1+\epsilon}^{2}} \\ 
    &= d\bb{1 + 4\epsilon\bb{1+\epsilon}\bb{1 + \epsilon + \epsilon^{2}}} \\ 
    &= \chi_{\epsilon}d
\end{align*}
The proof follows by noting that $r_{k,t} \leq \chi_{\epsilon}$ as shown in \ref{r_kt_bound}.
\end{proof}

\begin{theorem} (General Version)
\label{theorem:appendix:v1lowerbound}
Under Assumptions~\ref{ass:model},~\ref{assumption:variance_bound} and~\ref{assumption:norm_bound}, for all $n>k_n$, any decaying step-size $\eta_i$ satisfying~\textbf{C.1} and~\textbf{C.2},  we have:
\begin{align*}
& \mathbb{E}\left[v_{1}^{T}\newB{1}{n}\newB{1}{n}^{T}v_{1}\right]  \geq \bb{1-t}\exp\bb{\sum_{i=1}^{n-k_{n}}2\eta_{i}\lambda_{1} - \sum_{i=1}^{n-k_{n}}4\eta_{i}^{2}\lambda_{1}^{2}}
\end{align*}
where $t := 2r + s, s := 3\bb{1+r}^{2}\exp\bb{2\lambda_{1}^{2}\sum_{i=1}^{n}\eta_{i}^{2}}\sum_{t=1}^{n-k_{n}} W_{k,t}\eta_{t}^2\exp\left(\sum_{i=t+1}^{n-k_{n}}\eta_{i}^{2}\right),  W_{k,t} := \mathcal{V}' + \xi_{k,t}$
and $\newB{i}{j}$ has been defined in \ref{definition:Bji}.
\end{theorem}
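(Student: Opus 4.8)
The plan is to convert the quadratic quantity $\alpha_{n,1}:=\E\bbb{v_1^T\newB{1}{n}\newB{1}{n}^Tv_1}$ into a linear one. Since $v_1^T\newB{1}{n}\newB{1}{n}^Tv_1=\|\newB{1}{n}^Tv_1\|_2^2\ge\bb{v_1^T\newB{1}{n}^Tv_1}^2$ by Cauchy--Schwarz and $v_1^T\newB{1}{n}^Tv_1=v_1^T\newB{1}{n}v_1$ is scalar, Jensen's inequality gives $\alpha_{n,1}\ge L_1^2$, where $L_t:=\E\bbb{v_1^T\newB{t}{n}v_1}$. So it suffices to prove $L_1\ge\bb{1-r-\tfrac s3}\prod_{t=1}^{n-k_n}\bb{1+\eta_t\lambda_1}$; the right side is nonnegative for the sample sizes in the hypothesis because Lemma~\ref{lemma:learning_rate_schedule} and the condition on $n$ force $r,s$ small. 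Granting it, $\alpha_{n,1}\ge L_1^2\ge\bb{1-r-\tfrac s3}^2\prod_{t=1}^{n-k_n}\bb{1+\eta_t\lambda_1}^2\ge\bb{1-2r-s}\prod_{t=1}^{n-k_n}\bb{1+\eta_t\lambda_1}^2$, and since $\ln(1+x)\ge x-2x^2$ for $x\ge0$ we get $\prod_{t=1}^{n-k_n}\bb{1+\eta_t\lambda_1}^2\ge\exp\bb{\sum_{t=1}^{n-k_n}\bb{2\eta_t\lambda_1-4\eta_t^2\lambda_1^2}}$, which is exactly the claim with $1-t=1-2r-s$ (the $\tfrac13$ gives the slack for $\bb{1-r-\tfrac s3}^2\ge1-2r-s$).

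For the linear bound, peel the innermost factor of $\newB{t}{n}=\newB{t+1}{n}\bb{I+\eta_tX_tX_t^T}$; using $\Sigma v_1=\lambda_1v_1$ so that $\bb{I+\eta_tX_tX_t^T}v_1=(1+\eta_t\lambda_1)v_1+\eta_t(X_tX_t^T-\Sigma)v_1$, and taking expectations,
\[L_t=(1+\eta_t\lambda_1)L_{t+1}+\eta_t\,\E\bbb{v_1^T\newB{t+1}{n}\bb{X_tX_t^T-\Sigma}v_1}.\]
The heart of the proof is the estimate $\bigl|\E\bbb{v_1^T\newB{t+1}{n}\bb{X_tX_t^T-\Sigma}v_1}\bigr|\le\eta_tW_{k,t}\,\E\bbb{\|\newB{t+k_{t+1}}{n}^Tv_1\|_2}$, the ``linear-in-$\B$'' analogue of Lemma~\ref{lemma:bound_Pt}, proved in the same spirit: write $\newB{t+1}{n}=\newB{t+k_{t+1}}{n}\newB{t+1}{t+k_{t+1}-1}$, replace $\newB{t+1}{t+k_{t+1}-1}$ by $I+\sum_{\ell=t+1}^{t+k_{t+1}-1}\eta_\ell X_\ell X_\ell^T$ with an $O\bb{k_{t+1}^2\eta_{t+1}^2(\mathcal M+\lambda_1)^2}$ remainder (Lemma~\ref{lemma:etakproduct}), then condition on $s_{t+k_{t+1}},\dots,s_n$ and use the reverse Markov property with Lemma~\ref{lemma:bound_covariance} applied to $\E\bbb{X_\ell X_\ell^T\bb{X_tX_t^T-\Sigma}\mid s_{t+k_{t+1}}}$ and $\E\bbb{X_tX_t^T-\Sigma\mid s_{t+k_{t+1}}}$. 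Because $k_{t+1}=\tau_{\text{mix}}(\eta_{t+1}^2)$, the ``drop to $I$'' piece is $O(\eta_{t+1}^2\mathcal M)$, while the linear window contributes $\sum_\ell\eta_\ell\bb{|\lambda_2(P)|^{\ell-t}\mathcal V+O(\eta_{t+1}^2\mathcal M(\mathcal M+\lambda_1))}=O\!\bb{\tfrac{|\lambda_2(P)|\mathcal V}{1-|\lambda_2(P)|}\eta_{t+1}}+O(\eta_{t+1}^3\cdots)$; after the slow-decay bounds of Lemma~\ref{lemma:learning_rate_schedule} these collapse into $\eta_tW_{k,t}$ with $W_{k,t}=\mathcal V'+\xi_{k,t}$ ($\mathcal V'$ dominating $\tfrac{|\lambda_2(P)|\mathcal V}{1-|\lambda_2(P)|}$, and $\xi_{k,t}$ absorbing the remaining $O(\eta_{t+1}\mathcal M(1+k_{t+1}^2(\mathcal M+\lambda_1)^2))$ terms). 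Writing $\alpha_{n,t}:=\E\bbb{v_1^T\newB{t}{n}\newB{t}{n}^Tv_1}$, we have $\E\bbb{\|\newB{t+k_{t+1}}{n}^Tv_1\|_2}\le\sqrt{\alpha_{n,t+k_{t+1}}}\le\sqrt{\alpha_{n,t+1}}$ (Jensen and Lemma~\ref{lemma:inner_product_monotonicity}), so $|L_t-(1+\eta_t\lambda_1)L_{t+1}|\le\eta_t^2W_{k,t}\sqrt{\alpha_{n,t+1}}=:\mathrm{err}_t$.

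To finish, note the recursion is valid for $t=1,\dots,n-k_n$ (we need the state $s_{t+k_{t+1}}$), and at the base $\newB{n-k_n+1}{n}$ is a product of at most $k_n$ factors, so Lemma~\ref{lemma:etakproduct} gives $\|\newB{n-k_n+1}{n}-I\|_2\le r$ and hence $L_{n-k_n+1}\ge1-r$. Telescoping through the positive factors $(1+\eta_t\lambda_1)$,
\[L_1\ge\prod_{t=1}^{n-k_n}(1+\eta_t\lambda_1)\Bigl(1-r-\sum_{t=1}^{n-k_n}\frac{\mathrm{err}_t}{\prod_{i=t}^{n-k_n}(1+\eta_i\lambda_1)}\Bigr),\]
and bounding $\sqrt{\alpha_{n,t+1}}$ with Theorem~\ref{theorem::appendix:v1upperbound} together with $\prod_{i=t}^{n-k_n}(1+\eta_i\lambda_1)\ge\exp\bb{\sum_{i=t}^{n-k_n}(\eta_i\lambda_1-\eta_i^2\lambda_1^2)}$ shows each summand is at most $(1+r)\exp\bb{2\lambda_1^2\sum_i\eta_i^2}W_{k,t}\eta_t^2\exp\bb{\sum_{i=t+1}^{n-k_n}\eta_i^2}$ after collecting the residual variance-type factors; their sum is $\le s/3$ by the definition of $s$, which completes the bound on $L_1$. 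I expect the cross-term estimate of the second paragraph to be the main obstacle: it is precisely where reversibility and the adaptive look-back $k_{t+1}$ are indispensable, replacing the a priori $\Theta(\eta_t)$ error that a one-step conditioning ($d_{\text{mix}}(1)=\Theta(1)$) would incur by an $O(\eta_t^2)$ error carrying the sharp constant $\mathcal V/(1-|\lambda_2(P)|)$; without this the telescoped error would overwhelm the $(1-t)$ prefactor. The Jensen reduction $\alpha_{n,1}\ge L_1^2$ is the device that makes lower-bounding a matrix-product quadratic form tractable at all.
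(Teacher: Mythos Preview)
Your approach is correct and genuinely different from the paper's. You linearize first via Cauchy--Schwarz and Jensen to reduce to the scalar sequence $L_t=\E\bbb{v_1^T\newB{t}{n}v_1}$, then run a one-sided recursion on $L_t$. The paper instead stays quadratic throughout: it introduces $\delta_{n,t}:=\E\bbb{v_1^T\newB{t+1}{n}\,\mathcal S_t\,\newB{t+1}{n}^Tv_1}$ with the deterministic middle $\mathcal S_t:=\prod_{i=1}^t(I+\eta_i\Sigma)^2$, so that $\delta_{n,0}=\alpha_{n,1}$ exactly, and peels off one $(I+\eta_t\Sigma)^2$ factor at each step of the recursion. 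Both routes invoke the same machinery for the cross term---Lemmas~\ref{lemma:etakproduct} and~\ref{lemma:bound_covariance} via the future-conditioning Lemma~\ref{lemma:trace_linear_bound} (your pointer to Lemma~\ref{lemma:bound_Pt} should really be to Lemma~\ref{lemma:trace_linear_bound}, the reverse-conditioning analogue)---and both feed in the upper bound of Theorem~\ref{theorem::appendix:v1upperbound} to control the error, the paper through $\alpha_{n,t+1}$ and you through $\sqrt{\alpha_{n,t+1}}$. Your linearization is conceptually lighter and actually yields a factor $(1+r)$ where the paper gets $(1+r)^2$; the paper's device avoids the Jensen step and hence the side condition $1-r-s/3\ge 0$ needed for your squaring (though when that fails the theorem is vacuous anyway, so this is cosmetic). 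One bookkeeping caution: your claim ``their sum is $\le s/3$'' requires $\exp\bb{\tfrac12\sum_{i>t}\overline{\mathcal V_{k,i}}\eta_i^2}$ to be dominated by the $\exp\bb{\sum_{i>t}\eta_i^2}$ appearing in the stated $s$, which is not automatic; the paper's own derivation in fact carries $\exp\bb{\sum_{i>t}\overline{\mathcal V_{k,i}}\eta_i^2}$ in that slot, so the discrepancy is a typo in the statement rather than a flaw in your argument.
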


\begin{proof}
We will start will expanding the quantity of interest using Eq~\ref{eq:v1}.
\ba{
    \alpha_{n,t} = \mathbb{E}\left[v_{1}^{T}\newB{t}{n}\newB{t}{n}^{T}v_{1}\right] \geq \mathbb{E}\bbb{v_{1}^{T}B_{n,t+1}\left(I + \eta_{t}\Sigma\right)^{2}B_{n,t+1}^{T}v_{1} + 2\eta_{t}P_{n,t}} \label{eq:alpha_lb_1}
}
where $P_{n,t}$ has been defined in Theorem \ref{theorem::appendix:v1upperbound}. Let's define 
\bas{
    & \mathcal{S}_{t} := \prod_{i=t}^{1}\left(I + \eta_{i}\Sigma\right)\prod_{i=1}^{t}\left(I + \eta_{i}\Sigma\right), \; \mathcal{S}_{0} = I \; \text{ and } \\
    & \delta_{n,t} := \mathbb{E}\left[v_1^T\newB{t+1}{n}\mathcal{S}_{t}\newB{t+1}{n}^{T} v_1\right]
}
Note that  $\delta_{n,0} = \alpha_{n,1}$.  First we bound $\delta_{n,n-k_{n}}$. Let $\newB{n-k_{n}}{n}=I+R'$. By Lemma~\ref{lemma:etakproduct} along with the slow-decay assumption on the step-sizes, we know that $\|R'\|_2\leq r := 2\bb{1+\epsilon}\eta_{n}k_{n}\bb{\mathcal{M}+\lambda_{1}}$ a.s. Then,
\bas{
& \delta_{n,n-k_{n}} - \prod_{i=1}^{n-k_{n}}\left(1+\eta_{i}\lambda_{1}\right)^{2} \geq -2\left|E[v_1^TR'\mathcal{S}_{n-k_{n}}v_1]\right| \geq -2r\prod_{i=1}^{n-k_{n}}(1+\eta_i\lambda_1)^2
}
Therefore,
\ba{
    \delta_{n,n-k_{n}} &\geq \prod_{i=1}^{n-k_{n}}\left(1+\eta_{i}\lambda_{1}\right)^{2}(1-2r) \notag \\ 
                       &= (1-2r)\left\|\mathcal{S}_{n-k_{n}}\right\|_{2} \label{eq:delta_n_minus_kn_bound}
}
Now using \ref{eq:alpha_lb_1}, we have 
\begin{align*}
    \delta_{n,t-1} &\geq \delta_{n,t} + 2\eta_{t}\mathbb{E}\left[\underbrace{v_{1}^{T}\newB{t+1}{n}\left(I + \eta_{t}\Sigma\right)\mathcal{S}_{t-1}\left(A_{t}-\Sigma\right)\newB{t+1}{n}^{T}v_{1}}_{U_{t}}\right]
    % &\geq  \delta_{n,t+1} - 2C_{k,t}''\eta_{t}^2\alpha_{n,t+1}\prod_{i=1}^{t}(1+\eta_{i}\lambda_{1})^{2} \qquad\text{for $t\leq n-k-1$}
\end{align*}
First, observe that $\mathcal{S}_{t-1}=U\Lambda U^T$, where $U$ denotes a matrix of eigenvectors of $\Sigma$, and $\Lambda$ is a PSD diagonal matrix. Since $I+\eta_{t}\Sigma=U\Lambda'U^T$ for some other PSD diagonal matrix $\Lambda'$, the product will also be PSD.
\\ \\
By using Lemma \ref{lemma:trace_linear_bound} with $U = v_{1}, G = \bb{I + \eta_{t}\Sigma}\mathcal{S}_{t-1}, \gamma = 1$ and noting that $\E_{\pi}\bbb{A_{t}-\Sigma} = 0$, we have
\begin{align*}
    \left|\mathbb{E}\left[U_{t}\right]\right| 
    % &\leq \eta_{t+1}\mathcal{M}\bb{1+\eta_{t}\lambda_{1}}\bb{ \gamma + 4\tau_{\text{mix}}\bb{\mathcal{M}+\lambda_{1}} + 6\eta_{t+1}k_{t+1}^{2}\bb{\mathcal{M}+\lambda_{1}}^{2} }\left\|\mathcal{S}_{t}\right\|_{2}\alpha_{n,t+k_{t}} \\
    % &\leq \eta_{t+1}\underbrace{\bb{2\mathcal{M}\bb{ \gamma + 4\tau_{\text{mix}}\bb{\mathcal{M}+\lambda_{1}} + 6\eta_{t+1}k_{t+1}^{2}\bb{\mathcal{M}+\lambda_{1}}^{2} }}}_{C_{k,t}^{''}}\left\|\mathcal{S}_{t}\right\|_{2}\alpha_{n,t+k_{t}}
    &\leq \bb{1+\eta_{t}\lambda_{1}}\eta_{t+1}\left\|\mathcal{S}_{t-1}\right\|_{2}\left(\frac{2\mathcal{V}\left|\lambda_{2}\bb{P}\right|}{1 - \left|\lambda_{2}\bb{P}\right|} \right.\\
    &\left.+ \eta_{t+1}\mathcal{M}\bb{2\bb{1+8\epsilon} + \bb{2+\bb{1+\epsilon}^{2}}k_{t+1}^{2}\bb{\mathcal{M}+\lambda_{1}}^{2}}\right)\alpha_{n,t+k_{t+1}} \\ 
    &\leq \bb{1+\epsilon}\eta_{t+1}\left\|\mathcal{S}_{t-1}\right\|_{2}W_{k,t}\alpha_{n,t+1}
\end{align*}
where $W_{k,t} = \mathcal{V}' + \xi_{k,t}$. Therefore, 
\bas{
    \delta_{n,t-1} &\geq  \delta_{n,t} - 2\bb{1+\epsilon}W_{k,t}\eta_{t}^2\alpha_{n,t+1}\left\|\mathcal{S}_{t-1}\right\|_{2} \text{ for $t\leq n-k_{n}$}
}
Let 
\bas{
    \mathcal{V}' := \bb{\frac{1 + \bb{3 + 4\epsilon}|\lambda_{2}\bb{P}|}{1 - \left|\lambda_{2}\bb{P}\right|}}\mathcal{V}
}
as defined in \ref{assumptions:important_theorems}. Unwinding the recursion for $t\leq n-k_{n}$, we have,
% \textbf{Note: we don't know yet what $C_k$ is, right?}
\bas{
\delta_{n,0} &\geq \delta_{n,n-k_{n}} - 2\bb{1+\epsilon}\sum_{t=1}^{n-k_{n}} W_{k,t}\eta_{t}^2\alpha_{n,t+1}\left\|\mathcal{S}_{t-1}\right\|_{2} \\
&\geq (1-2r)\left\|\mathcal{S}_{n-k_{n}}\right\|_{2} \\
&- 2\bb{1+\epsilon}\bb{1 + r}^{2}\sum_{t=1}^{n-k_{n}} W_{k,t}\eta_{t}^2\exp\left(2\lambda_{1}\sum_{i=t+1}^{n-k_{n}}\eta_{i} + \sum_{i=t+1}^{n-k_{n}}\eta_{i}^{2}\bb{\mathcal{V}' + \lambda_{1}^{2} + C_{k,i}}\right)\left\|\mathcal{S}_{t-1}\right\|_{2}
% &\geq \bb{1-2r}\prod_{i=1}^{n-k_{n}}(1+\eta_{i}\lambda_{1})^{2} - 2\bb{1+c}\bb{1 + r}^{2}\sum_{t=1}^{n-k_{n}-1} W_{k,t}\eta_{t}^2\exp\left(2\lambda_{1}\sum_{t=1}^{n-k_{n}}\eta_{t} + \sum_{t=1}^{n-k_{n}}\eta_{t}^{2}\bb{\mathcal{V}' + \lambda_{1}^{2} + C_{k,t}}\right)\prod_{i=1}^{n-k_{n}-1}(1+\eta_{i}\lambda_{1})^{2}\\
% &\geq \prod_{i=1}^{n-k_{n}}(1+\eta_{i}\lambda_{1})^{2}\bb{1 - 2r - 2(1+3r)\sum_{t=1}^{n-k_{n}-1} W_{k,t}\eta_{t}^2\exp\left(2\lambda_{1}\sum_{t=1}^{n-k_{n}}\eta_{t} + \sum_{t=1}^{n-k_{n}}\eta_{t}^{2}\bb{\mathcal{V}' + \lambda_{1}^{2} + C_{k,t}}\right)}
}
where second step followed from Theorem \ref{theorem:v1upperbound} and \ref{eq:delta_n_minus_kn_bound}. \\ \\ 
Using the inequalities $\forall x \in \mathbb{R}, \; 1+x \leq e^{x}$ and $\forall x \in \mathbb{R}, \; x \geq 0,  1+x \geq e^{x - x^{2}}$, \;\; $\forall t$ we have, 
\bas{
    & \left\|\mathcal{S}_{t}\right\|_{2} = \prod_{i=1}^{t}(1+\eta_{i}\lambda_{1})^{2} \leq \exp\bb{2\lambda_{1}\sum_{i=1}^{t}\eta_{i}} \text{, and } \\ 
    & \left\|\mathcal{S}_{t}\right\|_{2} = \prod_{i=1}^{t}(1+\eta_{i}\lambda_{1})^{2} \geq \exp\bb{2\lambda_{1}\sum_{i=1}^{t}\eta_{i} - 4\lambda_{1}^{2}\sum_{i=1}^{t}\eta_{i}^{2}} 
}
Therefore denoting $\theta_{\epsilon} := 2\bb{1+\epsilon}\exp\bb{2\lambda_{1}^{2}\sum_{i=1}^{n}\eta_{i}^{2}}$, we have 
\bas{
    &\delta_{n,0} \\
    &\geq \exp\bb{2\lambda_{1}\sum_{i=1}^{n-k_{n}}\eta_{i} - 4\lambda_{1}^{2}\sum_{i=1}^{n-k_{n}}\eta_{i}^{2}}\bbb{\bb{1-2r} - \theta_{\epsilon}\bb{1+r}^{2}\sum_{t=1}^{n-k_{n}} W_{k,t}\eta_{t}^2\exp\left(\sum_{i=t+1}^{n-k_{n}}\eta_{i}^{2}\bb{\mathcal{V}' + \lambda_{1}^{2} + C_{k,i}}\right)} \\ 
    &\geq \exp\bb{2\lambda_{1}\sum_{i=1}^{n-k_{n}}\eta_{i} - 4\lambda_{1}^{2}\sum_{i=1}^{n-k_{n}}\eta_{i}^{2}}\bbb{\bb{1-2r} - \theta_{\epsilon}\bb{1+r}^{2}\sum_{t=1}^{n-k_{n}} W_{k,t}\eta_{t}^2\exp\left(\sum_{i=t+1}^{n-k_{n}}\eta_{i}^{2}\bb{\mathcal{V}' + \lambda_{1}^{2} + C_{k,i}}\right)} \\ 
    &\geq \exp\bb{2\lambda_{1}\sum_{i=1}^{n-k_{n}}\eta_{i} - 4\lambda_{1}^{2}\sum_{i=1}^{n-k_{n}}\eta_{i}^{2}}\bbb{1 - \bb{2r + \theta_{\epsilon}\bb{1+r}^{2}\sum_{t=1}^{n-k_{n}} W_{k,t}\eta_{t}^2\exp\left(\sum_{i=t+1}^{n-k_{n}}\eta_{i}^{2}\overline{\mathcal{V}_{k,i}}\right)}}
}
where $\overline{\mathcal{V}_{k,i}}$ is defined in \ref{assumptions:important_theorems}. Hence proved.
\end{proof}

\begin{theorem} (General Version)
\label{theorem:appendix:v1squareupperbound}
Under Assumptions~\ref{ass:model},~\ref{assumption:variance_bound} and~\ref{assumption:norm_bound}, for all $n>k_n$, and decaying step-size $\eta_i$ satisfying~\textbf{C.1} and~\textbf{C.2}, we have:
\begin{align*}
& \mathbb{E}\left[\left(v_{1}^{T}\newB{1}{n}\newB{1}{n}^{T}v_{1}\right)^{2}\right] \leq \bb{1+r}^{4}\exp\left(\sum_{i=1}^{n-k_{n}}4\eta_{i}\lambda_{1} + \sum_{i=1}^{n-k_{n}}\eta_{i}^{2}  \zeta_{k,i} \right)
\end{align*}
where 
% $\zeta_{k,t} := 60k_{t+1}\bb{\mathcal{M}+\lambda_{1}}^{2}$
% and 
$\newB{i}{j}$ has been defined in \ref{definition:Bji}.
% and $ \forall t, \; G_{k,t} :=  2\mathcal{M}^{2} + 16 k_{t+1}\mathcal{M}\bb{\mathcal{M}+\lambda_{1}} $.
\end{theorem} 

\begin{proof}
% \bas{
% \kappa_{n,t} := \mathbb{E}\left[(v_1^T\newB{t}{n}\newB{t}{n}^{T}v_1)^2\right] &= \E\bbb{\bb{v_1^T\newB{t+1}{j}\newB{t+1}{j}^{T} v_1((1+\eta_{t}\lambda_1)^2) + \eta_{t}^2Q_{n,t} + 2\eta_{t}P_{n,t}}^{2}}
% }
Define $Q_{n,t} := v_1^T\newB{t+1}{n}(A_t-\Sigma)^2\newB{t+1}{n}^{T} v_1$, and $P_{n,t}:=v_1^T\newB{t+1}{n}(I+\eta_t\Sigma)(A_t-\Sigma)\newB{t+1}{n}^{T} v_1$.
Using \ref{eq:v1}, we have, for $n \geq t \geq 1$,
\bas{
0\leq v_1^T\newB{t}{n}\newB{t}{n}^{T}v_1&=v_1^T\newB{t+1}{n}(I+\eta_t\Sigma)^2\newB{t+1}{n}^{T} v_1+\eta_t^2Q_{n,t}+2\eta_tP_{n,t}\\&\leq v_1^T\newB{t+1}{j}\newB{t+1}{j}^{T} v_1(1+\eta_{t}\lambda_1)^2 + \eta_{t}^2\mathcal{M}^{2}\bb{v_1^T\newB{t+1}{n}\newB{t+1}{n}^{T} v_1} + 2\eta_{t}P_{n,t}\\
&\leq v_1^T\newB{t+1}{j}\newB{t+1}{j}^{T} v_1\underbrace{\left((1+\eta_{t}\lambda_1)^2+\eta_{t}^2\mathcal{M}^{2}\right)}_{c_{t}}+2\eta_{t}P_{n,t}
}
Thus, we have -
\ba{
\kappa_{n,t} := \mathbb{E}\left[(v_1^T\newB{t}{n}\newB{t}{n}^{T}v_1)^2\right] &\leq \E\bbb{\bb{c_tv_1^T\newB{t+1}{n}\newB{t+1}{n}^{T} v_1 + 2\eta_{t}P_{n,t}}^{2}} \notag \\
&\leq c_t^2 \kappa_{n,t+1}+4\eta_t^2\mathbb{E}\left[P_{n,t}^{2}\right]+4c_{t}\eta_{t}\mathbb{E}\left[\bb{v_1^T\newB{t+1}{n}\newB{t+1}{n}^{T}v_1}P_{n,t}\right] \label{eq:kappa_recursion}
}
% Now, we note that 
% \bas{
%     Q_{n,t} := v_1^T\newB{t+1}{n}(A_t-\Sigma)^2\newB{t+1}{n}^{T} v_1 &\leq \mathcal{M}^{2}\bb{v_1^T\newB{t+1}{n}\newB{t+1}{n}^{T} v_1}
% }
% Therefore,
% \bas{
% \kappa_{n,t} &\leq 
%  \mathbb{E}\left[\bb{v_1^T\newB{t+1}{n}\newB{t+1}{n}^T v_1\underbrace{\left((1+\eta_{t}\lambda_1)^2+\eta_{t}^2\mathcal{M}^{2}\right)}_{c_{t}}+2\eta_{t}P_{n,t}}^2\right]\\
% &\leq c_t^2 \kappa_{n,t+1}+4\eta_t^2\mathbb{E}\left[P_{n,t}^{2}\right]+4c_{t}\eta_{t}\mathbb{E}\left[\bb{v_1^T\newB{t+1}{n}\newB{t+1}{n}^{T}v_1P_{n,t}}\right]
% }
% where $P_{n,t}$ has been defined in Theorem \ref{theorem:v1upperbound}. 
Note that,
\bas{
\mathbb{E}\left[P_{n,t}^2\right] &\leq \mathbb{E}\bbb{\bb{v_1^T\newB{t+1}{n}(I+\eta_t\Sigma)(A_t-\Sigma)\newB{t+1}{n}^{T} v_1}^2}\\
&\leq (1+\eta_{t}\lambda_1)^2\mathcal{M}^{2}\E
\bbb{\bb{v_1^T\newB{t+1}{n}\newB{t+1}{n}^{T} v_1}^2}\\
&=(1+\eta_{t}\lambda_1)^2\mathcal{M}^{2}\kappa_{n,t+1}
}
Now we work on the cross-term. For the convenience of notation, let's denote $k:= k_{t+1}$ unless otherwise specified. Let $\newB{t+1}{n} = \newB{t+k}{n}\left(I + R\right)$ with,
\bas{
    \|R\|_{2} \leq (1+c)\eta_{t+1}k(\M+\lambda_{1})=:r_t \leq \epsilon\bb{1+\epsilon}
}
Using Lemma \ref{lemma:etakproduct}, we have
\ba{\label{eq:Y1}
|\underbrace{v_1^T\newB{t+1}{n}\newB{t+1}{n}^{T}v_1-v_1^T\newB{t+k}{n}\newB{t+k}{n}^{T}v_1}_{Y_1}| &= |v_1^T\newB{t+k}{n}(R+R^T+RR^T)\newB{t+k}{n}^{T}v_1|\notag\\
&\leq |v_{1}^{T}\newB{t+k}{n}\newB{t+k}{n}^{T}v_{1}|\left(2r_t + r_t^{2}\right)
% &\leq r_t(2+r_t)\left(\mathcal{M}+\lambda_{1}\right)|v_{1}^{T}\newB{t+k}{n}\newB{t+k}{n}^{T}v_{1}|
}
We will also bound 
\ba{\label{eq:Y2}
&|\underbrace{v_1^T\newB{t+1}{n}(I+\eta_t\Sigma)(A_t-\Sigma)\newB{t+1}{n}^{T}v_1-v_1^T\newB{t+k}{n}(I+\eta_t\Sigma)(A_t-\Sigma)\newB{t+k}{n}^{T}v_1}_{Y_2}|\notag\\
&\;\;\;\; =|v_1^T\newB{t+k}{n}R(I+\eta_t\Sigma)(A_t-\Sigma)(I+R^{T})\newB{t+k}{n}^{T}v_1+
v_1^T\newB{t+k}{n}(I+\eta_t\Sigma)(A_t-\Sigma)R^{T}\newB{t+k}{n}^{T}v_1| \notag \\
%&\;\;\;\; \leq 3(1+c)\eta_{t+1}k\left(1+\eta_{t+1}\lambda_{1}\right)\mathcal{M}\left(\mathcal{M}+\lambda_{1}\right)|v_{1}^{T}\newB{t+k}{n}\newB{t+k}{n}^{T}v_{1}|
&\;\;\;\; \leq (2r_t+r_t^2)\left(1+\eta_{t}\lambda_{1}\right)\mathcal{M}|v_{1}^{T}\newB{t+k}{n}\newB{t+k}{n}^{T}v_{1}|
}
So, now we have:
\bas{
&\mathbb{E}\left[\bb{v_1^T\newB{t+1}{n}\newB{t+1}{n}^{T}v_1P_{n,t}}\right] \\
&=\mathbb{E}\left[(v_1^T\newB{t+1}{n}\newB{t+1}{n}^{T}v_1)(v_1^T\newB{t+1}{n}(I+\eta_t\Sigma)(A_t-\Sigma)\newB{t+1}{n}^{T}v_1)\right]\\
&=\mathbb{E}\left[(Y_1+v_1^T\newB{t+k}{n}\newB{t+k}{n}^{T}v_1)(Y_2+v_1^T\newB{t+k}{n}(I+\eta_t\Sigma)(A_t-\Sigma)\newB{t+k}{n}^{T}v_1)\right]\\
&=\underbrace{\mathbb{E}\left[Y_1Y_2\right]}_{T_{1}}+\underbrace{\mathbb{E}\left[Y_1v_1^T\newB{t+k}{n}(I+\eta_t\Sigma)(A_t-\Sigma)\newB{t+k}{n}^{T}v_1\right]}_{T_{2}}+\underbrace{\mathbb{E}\left[Y_2v_1^T\newB{t+k}{n}\newB{t+k}{n}^{T}v_1\right]}_{T_{3}}\\
&\;\; +\underbrace{\mathbb{E}\left[(v_1^T\newB{t+k}{n}\newB{t+k}{n}^{T}v_1)(v_1^T\newB{t+k}{n}(I+\eta_t\Sigma)(A_t-\Sigma)\newB{t+k}{n}^{T}v_1)\right]}_{T_{4}}
}
% \bl
% Why is this $\gamma_{n,t+k-1}$, shouldnt it be $\gamma_{n,t+k}$. Ditto for $Y_1Y_2$
% \rd
% \rd which lemma says that $\kappa_{n,t+k}\leq \kappa_{n,t}$? \bk
% \bl That's Lemma \ref{lemma:inner_product_monotonicity}\bk. 
Lets start with the last term, $T_{4}$. Using Lemma \ref{lemma:inner_product_monotonicity} we have,
\bas{
|T_{4}| &\leq \left|\mathbb{E}\left[(v_1^T\newB{t+k}{n}\newB{t+k}{n}^{T}v_1)(v_1^T\newB{t+k}{n}(I+\eta_t\Sigma)\mathbb{E}\left[(A_t-\Sigma)|s_{t+k}\right]\newB{t+k}{n}^{T}v_1)\right]\right| \\&\leq 2(1+\eta_t\lambda_1)\mathcal{M}d_{\text{mix}}\bb{k}\kappa_{n,t+k} \\
&\leq 2\eta_{t+1}^2(1+\eta_t\lambda_1)\mathcal{M}\kappa_{n,t+k} \\
&\leq 2\eta_{t+1}^2(1+\eta_t\lambda_1)\mathcal{M}\kappa_{n,t+1}
}
Using Eqs~\ref{eq:Y1} and~\ref{eq:Y2} the first three terms can be bounded as:
\bas{
|T_{1}| \leq \mathbb{E}\left[|Y_1Y_2|\right] &\leq \bb{2r_{t}+r_{t}^{2}}^{2}\left(1+\eta_{t}\lambda_{1}\right)\mathcal{M}\kappa_{n,t+k} \\
&\leq \bb{2r_{t}+r_{t}^{2}}^{2}\left(1+\eta_{t}\lambda_{1}\right)\mathcal{M}\kappa_{n,t+1} \text{ using Lemma } \ref{lemma:inner_product_monotonicity} \\
&= \bb{2+r_{t}}^{2}r_{t}^{2}\left(1+\eta_{t}\lambda_{1}\right)\mathcal{M}\kappa_{n,t+1} \\
&\leq \bb{1+\epsilon}^{2}\bb{2+\epsilon\bb{1+\epsilon}}^{2}\left(1+\eta_{t}\lambda_{1}\right)\eta_{t+1}^{2}k_{t+1}^{2}\mathcal{M}\bb{\mathcal{M}+\lambda_{1}}^{2}\kappa_{n,t+1} \\
&\leq \bb{1+\epsilon}^{3}\bb{2+\epsilon+\epsilon^{2}}^{2}\eta_{t+1}^{2}k_{t+1}^{2}\mathcal{M}\bb{\mathcal{M}+\lambda_{1}}^{2}\kappa_{n,t+1} \text{ since } \eta_{t}\lambda_{1} \leq \epsilon
}
\bas{
|T_{2}| &\leq \mathbb{E}\left[|Y_1v_1^T\newB{t+k}{n}(I+\eta_t\Sigma)(A_t-\Sigma)\newB{t+k}{n}^{T}v_1|\right] \\&\leq \bb{2+r_{t}}r_{t}\bb{1+\eta_{t}\lambda_{1}}\mathcal{M}\kappa_{n,t+k} \\
&\leq \bb{2+r_{t}}r_{t}\bb{1+\eta_{t}\lambda_{1}}\mathcal{M}\kappa_{n,t+1} \text{ using Lemma } \ref{lemma:inner_product_monotonicity} \\
&\leq \bb{2+\epsilon + \epsilon^{2}}\bb{1+\epsilon}\bb{1+\eta_{t}\lambda_{1}}\eta_{t+1}k_{t+1}\bb{\mathcal{M}+\lambda_{1}}\mathcal{M}\kappa_{n,t+1} \\
&\leq \bb{1+\epsilon}^{2}\bb{2+\epsilon + \epsilon^{2}}\eta_{t+1}k_{t+1}\mathcal{M}\bb{\mathcal{M}+\lambda_{1}}\kappa_{n,t+1}
% \mathbb{E}\left[Y_1v_1^T\newB{t+k}{n}(I+\eta_t\Sigma)(A_t-\Sigma)\newB{t+k}{n}^{T}v_1\right] &\leq 6\eta_{t+1}k\left(1+\eta_{t}\lambda_{1}\right)\mathcal{M}\left(\mathcal{M}+\lambda_{1}\right)\kappa_{n,t+k-1} \\
% &\leq 6\eta_{t+1}k\left(1+\eta_{t}\lambda_{1}\right)\mathcal{M}\left(\mathcal{M}+\lambda_{1}\right)\kappa_{n,t+1}
}and similarly,
\bas{
|T_{3}| &\leq \mathbb{E}\left[Y_2v_1^T\newB{t+k}{n}\newB{t+k}{n}^{T}v_1\right] \\
&\leq r_{t}\bb{2+r_{t}}\bb{1+\eta_{t}\lambda_{1}}\mathcal{M}\kappa_{n,t+k} \\
&\leq \bb{1+\epsilon}\bb{2+\epsilon+\epsilon^{2}}\bb{1+\eta_{t}\lambda_{1}}\eta_{t+1}k_{t+1}\mathcal{M}\bb{\mathcal{M}+\lambda_{1}}\kappa_{n,t+k} \\
&\leq \bb{1+\epsilon}^{2}\bb{2+\epsilon+\epsilon^{2}}\eta_{t+1}k_{t+1}\mathcal{M}\bb{\mathcal{M}+\lambda_{1}}\kappa_{n,t+k} \\ 
&\leq \bb{1+\epsilon}^{2}\bb{2+\epsilon+\epsilon^{2}}\eta_{t+1}k_{t+1}\mathcal{M}\bb{\mathcal{M}+\lambda_{1}}\kappa_{n,t+1} \text{ using Lemma } \ref{lemma:inner_product_monotonicity}
}
Note that
\bas {
    c_{t} &:= (1+\eta_{t}\lambda_1)^2+\eta_{t}^2\mathcal{M}^{2} \leq 1+2\epsilon+2\epsilon^2 \text{, and } \\ 
    c_{t}^{2} &= \bb{1 + 2\eta_{t}\lambda_{1} + \eta_{t}^{2}\bb{\mathcal{M}^{2}+\lambda_{1}^{2}}}^{2} \\ 
    &= 1 + 4\eta_{t}^{2}\lambda_{1}^{2} + \eta_{t}^{4}\bb{\mathcal{M}^{2}+\lambda_{1}^{2}}^{2} + 4\eta_{t}\lambda_{1} + 4\eta_{t}^{3}\lambda_{1}\bb{\mathcal{M}^{2}+\lambda_{1}^{2}} + 2\eta_{t}^{2}\bb{\mathcal{M}^{2}+\lambda_{1}^{2}} \\ 
    &\leq 1 + 4\eta_{t}\lambda_{1} + \eta_{t}^{2}\bb{2\mathcal{M}^{2}+6\lambda_{1}^{2}+\epsilon\mathcal{M}+\epsilon\lambda_{1}} + 4\eta_{t}^{3}\lambda_{1}\bb{\mathcal{M}^{2}+\lambda_{1}^{2}} \\ 
    &\leq 1 + 4\eta_{t}\lambda_{1} + 6\eta_{t}^{2}\bb{\mathcal{M}+\lambda_{1}}^{2} + 4\eta_{t}^{3}\lambda_{1}\bb{\mathcal{M}+\lambda_{1}}^{2}
}
Define 
\bas{
    \phi_{\epsilon} &:= \bb{1+\epsilon}\bb{2+\epsilon+\epsilon^{2}} \\ 
    \omega_{\epsilon} &:= 1+2\epsilon+2\epsilon^{2} \\ 
    \zeta_{k,t} &:= \bb{10 + 8\bb{1 + \epsilon} + 4\bb{1 + 2\epsilon}\phi_{\epsilon}}\phi_{\epsilon}c_{t}k_{t+1}\bb{\mathcal{M}+\lambda_{1}}^{2}
}
Putting everything together in Eq~\ref{eq:kappa_recursion}, for $t \leq n-k_{t+1}$ we have, 
\bas{
    &\frac{\kappa_{n,t}}{\kappa_{n,t+1}}\\
    &\leq c_{t}^{2} + 4\eta_{t}^{2}\bb{1+\eta_{t}\lambda_{1}}^{2}\mathcal{M}^{2} + 4\bb{1+\epsilon}c_{t}\eta_{t}\mathcal{M}\bb{2\phi_{\epsilon}\eta_{t+1}k_{t+1}\bb{\mathcal{M}+\lambda_{1}} + \bb{2 + \phi_{\epsilon}^{2}k_{t+1}^{2}\bb{\mathcal{M}+\lambda_{1}}^{2}}\eta_{t+1}^{2}} \\
    &\leq c_{t}^{2} + 4\eta_{t}^{2}\bb{1+\eta_{t}\lambda_{1}}^{2}\mathcal{M}^{2} + 4\bb{1 + \epsilon}c_{t}\eta_{t}\mathcal{M}\bb{2\phi_{\epsilon}\eta_{t}k_{t+1}\bb{\mathcal{M}+\lambda_{1}} + \bb{2 + \phi_{\epsilon}^{2}k_{t+1}^{2}\bb{\mathcal{M}+\lambda_{1}}^{2}}\eta_{t}^{2}} \\
    &= c_{t}^{2} + 4\eta_{t}^{2}\bbb{\mathcal{M}^{2} + 2\phi_{\epsilon}\bb{1 + \epsilon}c_{t}\mathcal{M}\bb{\mathcal{M}+\lambda_{1}}k_{t+1}} + 4\eta_{t}^{3}\bbb{\bb{1 + 2\epsilon}\lambda_{1} + \bb{1 + \epsilon}c_{t}\mathcal{M}\bb{2 + \phi_{\epsilon}^{2}k_{t+1}^{2}\bb{\mathcal{M}+\lambda_{1}}^{2}}} \\ 
    &\leq c_{t}^{2} + 4\eta_{t}^{2}\bbb{2 + 2\phi_{\epsilon}\bb{1 + \epsilon}c_{t}k_{t+1}}\mathcal{M}\bb{\mathcal{M}+\lambda_{1}} + 4\bb{1 + 2\epsilon}\eta_{t}^{3}\bbb{\lambda_{1} + c_{t}\mathcal{M}\bb{2 + \phi_{\epsilon}^{2}k_{t+1}^{2}\bb{\mathcal{M}+\lambda_{1}}^{2}}} \\ 
    &\leq 1 + 4\eta_{t}\lambda_{1} + \eta_{t}^{2}\bbb{10 + 8\phi_{\epsilon}\bb{2 + \epsilon}c_{t}k_{t+1}}\bb{\mathcal{M}+\lambda_{1}}^{2} + 4\bb{1 + 2\epsilon}\eta_{t}^{3}\bbb{\lambda_{1}+ 2c_{t}\mathcal{M} + c_{t} \phi_{\epsilon}^{2}k_{t+1}^{2}\bb{\mathcal{M}+\lambda_{1}}^{3}}
}
\bas{
    &\leq \exp\bb{4\eta_{t}\lambda_{1} + \eta_{t}^{2}\bb{10 + 8\phi_{\epsilon}\bb{1 + \epsilon}c_{t}k_{t+1}}\bb{\mathcal{M}+\lambda_{1}}^{2} + 4\bb{1 + 2\epsilon}\eta_{t}^{3}\bb{\lambda_{1}+ c_{t}\mathcal{M} + 2c_{t} \phi_{\epsilon}^{2}k_{t+1}^{2}\bb{\mathcal{M}+\lambda_{1}}^{3}}} \\
    &\leq \exp\bb{4\eta_{t}\lambda_{1} + \eta_{t}^{2}\bb{10 + 8\phi_{\epsilon}\bb{1 + \epsilon}c_{t}k_{t+1}}\bb{\mathcal{M}+\lambda_{1}}^{2} + 4\epsilon\bb{1 + 2\epsilon}\eta_{t}^{2}\bb{2 c_{t} + c_{t} \phi_{\epsilon}^{2}k_{t+1}\bb{\mathcal{M}+\lambda_{1}}^{2}}} \\
    &\leq \exp\bb{4\eta_{t}\lambda_{1} + \eta_{t}^{2}\bb{8\epsilon\bb{1 + 2\epsilon}\omega_{\epsilon} + \bb{10 + \bb{8\bb{1 + \epsilon} + 4\epsilon\bb{1 + 2\epsilon}\phi_{\epsilon}}\phi_{\epsilon}\omega_{\epsilon}k_{t+1}}\bb{\mathcal{M}+\lambda_{1}}^{2}}} \\ 
    &\leq \exp\bb{4\eta_{t}\lambda_{1} + \eta_{t}^{2}\bb{1 + \bb{10 + 20k_{t+1}}\bb{\mathcal{M}+\lambda_{1}}^{2}}} \\ 
    &\leq \exp\bb{4\eta_{t}\lambda_{1} + \eta_{t}^{2}\bb{1 + \bb{10 + 20k_{t+1}}\bb{\mathcal{M}+\lambda_{1}}^{2}}} \\ 
    &\leq \exp\bb{4\eta_{t}\lambda_{1} + 40\eta_{t}^{2}k_{t+1}\bb{\mathcal{M}+\lambda_{1}}^{2}} \text{ since } \bb{\mathcal{M}+\lambda_{1}}, k_{t+1} \geq 1 
}
Recall our definition of $k := k_{t+1}$. We can use the above recursion for $1 \leq t \leq n - k_{t+1}$. We note that $t = n-k_{n}$ satisfies the conditions. Therefore, 
\bas{
\kappa_{n,1} &\leq \exp\left(\sum_{i=1}^{n-k_{n}}4\eta_{i}\lambda_{1} + \sum_{i=1}^{n-k_{n}}\eta_{i}^{2}\zeta_{k,i} \right)\kappa_{n,n-k_{n}+1}
}
Let $B_{n,n-k_{n}+1} = I + R'$,  with $\|R'\|_2\leq r$ a.s.
\begin{align*}
    \kappa_{n,n-k_{n}+1} & = \mathbb{E}\left[\left(v_{1}^{T}B_{n,n-k_{n}+1}B_{n,n-k_{n}+1}^{T}v_1\right)^{2}\right] \\
    &= \mathbb{E}\left[\left(v_{1}^{T}v_{1} + v_{1}^{T}(R' + R^{\prime T})v_{1} + v_{1}^{T}R'R^{\prime T}v_{1}\right)^{2}\right] \\
    &\leq \left(1+ 2r+r^{ 2}\right)^{2}\mathbb{E}\left[\left(v_{1}^{T}v_{1}\right)^{2}\right]
\end{align*}
% Note that $(1+r')^4=1+4r'+6(r')^2+4(r')^3+(r')^4\leq 1+\csqone r'$ for large enough $n$, specified in Lemma \ref{lemma:additional_assumptions}. 
Using Lemma \ref{lemma:etakproduct}, we have 
\bas{
   r   & \leq \bb{1+\epsilon}k_{n+1}\eta_{n-k_{n+1}}\left(\mathcal{M}+\lambda_{1}\right) \\
         & \leq \bb{1+\epsilon}k_{n}\eta_{n-k_{n}}\left(\mathcal{M}+\lambda_{1}\right) \\
         & \leq 2\bb{1+\epsilon}k_{n}\eta_{n}\left(\mathcal{M}+\lambda_{1}\right) \;\; \text{since } \eta_{n-k_{n}} \leq 2\eta_{n}
}
which completes our proof.
\end{proof}

\section{Main Results : Details and Proofs}
\label{appendix:proofs_of_main_results}

\subsection{Proof of Theorem \ref{theorem:oja_convergence_rate}}
\begin{lemma}
\label{lemma:additional_assumptions}
This lemma proves conditions required later in the proof. 
Let the step-sizes be set according to Lemma \ref{lemma:learning_rate_schedule} and $m := 200$. Define
\bas{
    & r := 2\bb{1+\epsilon}\eta_{n}k_{n}\left(\mathcal{M}+\lambda_{1}\right), \\ 
    & s := 3(1+r)^{2}\sum_{t=1}^{n-k_{n}-1} W_{k,t}\eta_{t}^2\exp\left( \sum_{i=t+1}^{n-k_{n}-1}\overline{\mathcal{V}_{k,i}}\eta_i^2\right) 
    % & f\bb{i} := \frac{\bb{\lambda_{1}-\lambda_{2}}\bb{\beta + i}}{\sqrt{\gamma}\alpha},  \; \forall i \in [n]
}
where $W_{k,t}$ is defined in Theorem \ref{theorem:v1lowerbound}, $\overline{\mathcal{V}_{k,i}}$ is defined in \ref{assumptions:important_theorems} and $\alpha, \beta, f\bb{.}, \delta$ are defined in Lemma \ref{lemma:learning_rate_schedule}. Then for sufficiently large number of samples $n$,  such that 
\bas{
    \frac{n}{\log\bb{f\bb{n}}} > \frac{\beta}{\log\bb{f\bb{0}}}
} we have
\begin{enumerate}
    \item $2r + s \leq \frac{1}{2}$ (\ref{assumption:lower_bound_n_1})
    \item $r = 2\bb{1+\epsilon}\eta_{n}k_{n}\left(\mathcal{M}+\lambda_{1}\right) <\frac{1}{50}\frac{\delta/m}{1+\delta/m}$ (\ref{assumption:r_delta_condition})
\end{enumerate}
\end{lemma}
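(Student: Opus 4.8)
The two claims of Lemma~\ref{lemma:additional_assumptions} are asymptotic-in-$n$ statements: both $r$ and $s$ go to zero as $n\to\infty$, so for $n$ large enough (quantified by the displayed condition $n/\ln f(n) > \beta/\ln f(0)$) they become smaller than the fixed constants $\tfrac14$ and $\tfrac1{50}\cdot\tfrac{\delta/m}{1+\delta/m}$. So the plan is simply to produce explicit decaying upper bounds on $r$ and on $s$ and then verify the thresholds.

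\textbf{Step 1: bound $r$.} Recall $r = 2(1+\epsilon)\eta_n k_n(\mathcal{M}+\lambda_1)$ and $k_n = \tau_{\text{mix}}(\eta_n^2)$. From Lemma~\ref{lemma:learning_rate_schedule} (or rather Eq.~\ref{eq:ki_definition} in its proof) we have $k_n \le \frac{4\tau_{\text{mix}}}{\ln 2}\ln f(n)$, so $\eta_n k_n(\mathcal{M}+\lambda_1) \le \frac{4\tau_{\text{mix}}(\mathcal{M}+\lambda_1)}{\ln 2}\cdot\frac{\ln f(n)}{f(n)}$ where $f(n) = (\lambda_1-\lambda_2)(\beta+n)/\alpha$. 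Since $x\mapsto \ln(x)/x$ is decreasing for $x\ge e$, the hypothesis $n/\ln f(n) > \beta/\ln f(0)$ together with $f(n) \asymp (\lambda_1-\lambda_2)n/\alpha$ forces $\ln f(n)/f(n)$ to be a small multiple of $\ln f(0)/f(0)$, which by Eq.~\ref{eq:condition1_learning_rate} is at most $\tfrac{\ln 2}{400\tau_{\text{mix}}(\mathcal{M}+\lambda_1)}$. One then picks the constant implicit in ``sufficiently large $n$'' so that $r$ drops below $\tfrac1{50}\cdot\tfrac{\delta/m}{1+\delta/m}$; since $\delta\in(0,1)$ and $m=200$ this target is a fixed positive number, and $r = O\!\big(\ln(n)/n\big)\to 0$, so the threshold is met for all $n$ past a $\delta$-dependent cutoff. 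This gives claim~(2) and, a fortiori, $2r \le \tfrac1{100}$ say.

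\textbf{Step 2: bound $s$.} Write $s = 3(1+r)^2\sum_{t=1}^{n-k_n-1} W_{k,t}\,\eta_t^2 \exp\!\big(\sum_{i=t+1}^{n-k_n-1}\overline{\mathcal{V}_{k,i}}\eta_i^2\big)$ with $W_{k,t} = \mathcal{V}'+\xi_{k,t}$. By part~(3) of Lemma~\ref{lemma:learning_rate_schedule}, $\sum_i \overline{\mathcal{V}_{k,i}}\eta_i^2 \le \sum_i(\overline{\mathcal{V}_{k,i}}+\zeta_{k,i}+4\lambda_1^2)\eta_i^2 \le \ln(1+\delta/m)$, so the exponential factor is bounded by $1+\delta/m \le 2$ uniformly. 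Then $s \le 6(1+r)^2 \sum_{t=1}^{n} W_{k,t}\eta_t^2$, and $\sum_t W_{k,t}\eta_t^2 = \sum_t(\mathcal{V}'+\xi_{k,t})\eta_t^2$ is controlled by part~(3) of Lemma~\ref{lemma:learning_rate_schedule} again (note $\xi_{k,t}=\eta_{t-k_t}\psi_{k,t} \le (1+2\epsilon)\eta_t\psi_{k,t}$, which after substituting $\psi_{k,t}=6\mathcal{M}[1+3k_{t+1}^2(\mathcal{M}+\lambda_1)^2]$ is dominated by the $\zeta_{k,t}$-type term, up to absolute constants), hence $\sum_t W_{k,t}\eta_t^2 \le C\ln(1+\delta/m)$ for an absolute constant $C$. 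Since $\ln(1+\delta/m)\to 0$ only if $\delta\to 0$, this alone does not make $s$ arbitrarily small; instead the genuine smallness comes from the fact that the partial sum $\sum_{t=1}^{n-k_n-1}$ leaves the tail $\sum_{t=n-k_n}^{n}$ out, but more importantly from re-examining: the correct decaying bound uses the sharper estimate $\sum_{t=1}^{n} W_{k,t}\eta_t^2 g(t)$ with $g(t)\le((t+\beta+1)/(n+\beta+1))^{2\alpha}$ as in part~(4) of Lemma~\ref{lemma:learning_rate_schedule} — but here there is no such damping factor. The resolution is that $s$ need only be bounded by the \emph{constant} $\tfrac14 - 2r$, not driven to zero: combining $2r \le \tfrac1{100}$ from Step~1 with $s \le 6\cdot(1.01)^2\cdot C\ln(1+\delta/(200))$, and using $\ln(1+\delta/200) \le \delta/200 \le 1/200$, one checks $6\cdot 1.03\cdot C/200 < \tfrac14$ provided the absolute constant $C$ coming out of Lemma~\ref{lemma:learning_rate_schedule}(3) is not too large; if it is, one tightens the ``$m := 200$'' constant (equivalently shrinks $\beta$'s denominator) so that $\sum_t W_{k,t}\eta_t^2$ is below whatever fixed fraction of $\tfrac14$ is needed. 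This yields claim~(1): $2r+s \le \tfrac14 + \tfrac14 = \tfrac12$.

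\textbf{Main obstacle.} The delicate point is \emph{not} any single inequality but the bookkeeping: ensuring that the absolute constants produced by Lemma~\ref{lemma:learning_rate_schedule}(3)–(4) — which were chosen to make $\beta$'s three max-terms work — are simultaneously small enough that $6(1+r)^2\sum_t W_{k,t}\eta_t^2$ stays under $\tfrac14-2r$. Concretely, one must track that $\xi_{k,t}\eta_t^2$ is absorbed into the $(\overline{\mathcal{V}_{k,i}}+\zeta_{k,i}+4\lambda_1^2)\eta_i^2$ sum with the right constant, and that the chosen value $m=200$ (rather than a larger number) is still compatible; if a clash appears, the fix is to enlarge $m$ — i.e.\ demand $\sum_i(\cdots)\eta_i^2 \le \ln(1+\delta/m)$ for a larger $m$ — which only strengthens the hypotheses on $\beta$ and does not affect the main theorem. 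Everything else (the $\ln(n)/n$ decay of $r$, the boundedness of the exponential factors) is routine given the earlier lemmas.
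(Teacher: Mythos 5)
Your overall skeleton matches the paper's: bound $r$ through $k_n\le \frac{4\tau_{\text{mix}}}{\ln 2}\ln f\bb{n}$ so that $r=O\bb{\ln f\bb{n}/f\bb{n}}$, and bound $s$ by using Lemma~\ref{lemma:learning_rate_schedule}-(3) to control both the inner exponential and the outer sum. But there is a genuine gap in your treatment of claim (2). The lemma does not say ``for some sufficiently large $\delta$-dependent $n$''; it asserts that the \emph{specific} stated condition $\frac{n}{\ln f\bb{n}}>\frac{\beta}{\ln f\bb{0}}$ already implies $r<\frac{1}{50}\frac{\delta/m}{1+\delta/m}$, and Theorem~\ref{theorem:oja_convergence_rate} invokes exactly this condition, so you are not free to ``pick the constant implicit in sufficiently large $n$.'' The inequality you quote, $\frac{\ln f\bb{0}}{f\bb{0}}\le\frac{\ln 2}{400\tau_{\text{mix}}\bb{\mathcal{M}+\lambda_1}}$, carries no $\delta$-dependence and can only give $r\lesssim 1/50$, never the $\delta$-dependent target. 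The missing step is the one the paper actually performs: from the hypothesis, $\frac{\ln f\bb{n}}{f\bb{n}}\le\frac{\alpha}{\lambda_1-\lambda_2}\cdot\frac{\ln f\bb{n}}{n}<\frac{\alpha}{\lambda_1-\lambda_2}\cdot\frac{\ln f\bb{0}}{\beta}$, and then one must plug in the lower bound on $\beta$ from Lemma~\ref{lemma:learning_rate_schedule} that contains the factor $\frac{1}{\ln\bb{1+\delta/m}}$ (together with $\ln\bb{1+\delta/m}\le\delta/m$, $\alpha>2$ and $\frac{\mathcal{M}+\lambda_1}{\lambda_1-\lambda_2}\ge 1$) to show $\frac{\beta}{\ln f\bb{0}}$ dominates the threshold $\frac{400\tau_{\text{mix}}\bb{1+\delta/m}\bb{1+\epsilon}}{\ln 2}\frac{\bb{\mathcal{M}+\lambda_1}\alpha}{\bb{\lambda_1-\lambda_2}\delta/m}$. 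That is precisely how the $\delta/m$ in the target is produced from the stated hypothesis; without it claim (2) is not proved.

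Your Step 2 also leaves a loose end that the paper does not have: you defer the constant check with ``if the absolute constant is too large, one tightens $m:=200$,'' but $m=200$ is fixed in the statement and used downstream (it enters $\beta$ and the probability bounds), so it cannot be adjusted here. In fact no adjustment is needed: since $W_{k,t}=\mathcal{V}'+\xi_{k,t}\le\overline{\mathcal{V}_{k,t}}$, Lemma~\ref{lemma:learning_rate_schedule}-(3) gives directly $\sum_t W_{k,t}\eta_t^2\le\ln\bb{1+\frac{\delta}{m}}\le\frac{\delta}{m}\le\frac{1}{200}$, so $s\le 3\bb{1+r}^2\bb{1+\frac{\delta}{m}}\ln\bb{1+\frac{\delta}{m}}$ is already a small numerical constant, and the paper then closes (1) by checking the quadratic $\frac{3}{25}+\frac{56}{25}r+\frac{3}{25}r^2\le\frac12$ for $r\le\frac1{10}$, with $r\le\frac1{10}$ guaranteed because (2) implies it. Your detour about the missing damping factor $g\bb{t}$ and absorbing $\xi_{k,t}$ into the $\zeta_{k,t}$ term is unnecessary, and the final numeric verification you wave at is exactly the part that must be written down.
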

\begin{proof}
    For (1), using Lemma \ref{lemma:learning_rate_schedule}-(3), we note that 
    \ba{
        s &\leq 3(1+r)^{2}\sum_{t=1}^{n-k_{n}-1} W_{k,t}\eta_{t}^2\exp\left(\sum_{i=t+1}^{n-k_{n}-1}\overline{\mathcal{V}_{k,i}}\eta_i^2\right) \notag \\
        &\leq 3(1+r)^{2}\sum_{t=1}^{n-k_{n}-1} W_{k,t}\eta_{t}^2\bb{1+\frac{\delta}{m}} \notag \\ 
        &\leq \frac{3(1+r)^{2}}{100}\bb{1+\frac{\delta}{m}}\log\bb{1+\frac{\delta}{m}} \label{eq:epsilon_delta_bound} \\ 
        &\leq \frac{3(1+r)^{2}\log\bb{2}}{50} \text{ since } \frac{\delta}{m} < 1 \notag
        % &\leq \frac{3\bb{1+r}^{2}}{25} \text{ since } \theta_{\epsilon} \leq 8 \notag
    }
    Therefore, 
    \ba{
        2r + s &\leq 2r + \frac{3\bb{1+r}^{2}}{25} \notag \\ 
                      &= \frac{3}{25} + \frac{56}{25}r + \frac{3}{25}r^{2} \label{eq:2r_eps_bound}
    }
    Setting $\frac{3}{25} + \frac{56}{25}r + \frac{3}{25}r^{2} \leq \frac{1}{2}$, we have, 
    \bas{
        & \;\;\;\;\;\;\;\; \frac{3}{25} + \frac{56}{25}r + \frac{3}{25}r^{2} \leq \frac{1}{2} \\ 
        & \implies 6r^{2} + 112r - 19 \leq 0
    }
    which holds for $r \in \bbb{0,\frac{1}{10}}$. 
    \\ \\
    For $\bb{2}$, using Lemma \ref{lemma:learning_rate_schedule} and substituting the value of $k_{i} := \tau_{\text{mix}}\bb{\eta_{i}^{2}} \leq \frac{2\tau_{\text{mix}}}{\log\bb{2}}\log\bb{\frac{1}{\eta_{i}^{2}}}$ for $\eta_{i} < 1$, we note that
    \bas{
        r &\leq \frac{8\bb{1+\epsilon}\tau_{\text{mix}}\bb{\mathcal{M}+\lambda_{1}}}{\log\bb{2}}\frac{\alpha}{\bb{\lambda_{1}-\lambda_{2}}\bb{\beta + n}}\log\bb{\frac{\bb{\lambda_{1}-\lambda_{2}}\bb{\beta + n}}{\alpha}} \\ 
        &= \frac{8\bb{1+\epsilon}\tau_{\text{mix}}\bb{\mathcal{M}+\lambda_{1}}}{\log\bb{2}}\frac{\log\bb{\frac{\bb{\lambda_{1}-\lambda_{2}}\bb{\beta + n}}{\alpha}}}{\frac{\bb{\lambda_{1}-\lambda_{2}}\bb{\beta + n}}{\alpha}} \\ 
        &= \frac{8\bb{1+\epsilon}\tau_{\text{mix}}\bb{\mathcal{M}+\lambda_{1}}}{\log\bb{2}}\frac{\log\bb{f\bb{n}}}{f\bb{n}}
    }
    Therefore (2) holds for sufficiently large $n$, i.e,
    \ba{
        & \;\;\;\;\;\;\;\;\;\; \frac{f\bb{n}}{\log\bb{f\bb{n}}}  \geq \frac{400\bb{1 + \frac{\delta}{m}}\bb{1+\epsilon}\tau_{\text{mix}}\bb{\mathcal{M}+\lambda_{1}}}{\log\bb{2}\frac{\delta}{m}} \notag
    }
    This is satisfied if 
    \ba{
         \frac{n}{\log\bb{f\bb{n}}} \geq \frac{400\tau_{\text{mix}}\bb{1 + \frac{\delta}{m}}\bb{1+\epsilon}}{\log\bb{2}}\frac{\bb{\mathcal{M}+\lambda_{1}}\alpha}{\bb{\lambda_{1}-\lambda_{2}}\frac{\delta}{m}}
        \label{eq:lnn_n_condition}
    }
    From Lemma \ref{lemma:learning_rate_schedule}, we have 
    \bas{
        \frac{\beta}{\log\bb{f\bb{0}}} &\geq \frac{600\tau_{\text{mix}}\bb{1+2\epsilon}^{2}\bb{\mathcal{M}+\lambda_{1}}^{2}\alpha^{2}}{\bb{\lambda_{1}-\lambda_{2}}^{2}\log\bb{1+\frac{\delta}{m}}} \stackrel{\bb{i}}\geq \frac{400\tau_{\text{mix}}\bb{1 + \frac{\delta}{m}}\bb{1+\epsilon}}{\log\bb{2}}\frac{\bb{\mathcal{M}+\lambda_{1}}\alpha}{\bb{\lambda_{1}-\lambda_{2}}\frac{\delta}{m}}
    }
    where $\bb{i}$ follows since $\frac{\mathcal{M}+\lambda_{1}}{\lambda_{1}-\lambda_{2}} > 1, \alpha > 2$ and $\log\bb{1+x} \leq x  \; \forall x$. Therefore, $\frac{n}{\log\bb{f\bb{n}}} > \frac{\beta}{\log\bb{f\bb{0}}}$ suffices. Further, we note that (2) implies (1) for m = 200, $\delta \leq 1$. Therefore, the condition on $n$ is sufficient for both results. Hence proved.
\end{proof}

\begin{lemma}
\label{lemma:u_bound}
Let \bas{
    \init &:= \min\left\{i : i \in [n], i-k_{i} \geq 0 \right\}
}
where $k_{i}$ is defined in Lemma \ref{lemma:learning_rate_schedule}. Then, 
    \bas{u \leq \lfloor \beta \rfloor  \leq \beta}
\end{lemma}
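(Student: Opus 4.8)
\textbf{Proof plan for Lemma~\ref{lemma:u_bound}.}
The goal is to show that the first index $i$ at which we can ``look back'' a full window $k_i$ satisfies $u \le \lfloor \beta \rfloor$. The natural approach is to exhibit a concrete index $i \le \lfloor\beta\rfloor$ for which $i - k_i \ge 0$, and then invoke the definition of $u$ as the minimum of such indices. The cleanest candidate is $i = \lfloor\beta\rfloor$ itself (or, if needed, $i = \lceil\beta\rceil$), so the whole argument reduces to verifying the single inequality $k_{\lfloor\beta\rfloor} \le \lfloor\beta\rfloor$.

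First I would recall from Eq.~\ref{eq:ki_definition} in Lemma~\ref{lemma:learning_rate_schedule} that $k_i = \tau_{\text{mix}}(\eta_i^2) \le \frac{4\tau_{\text{mix}}}{\ln 2}\ln(f(i))$ where $f(i) = \frac{(\lambda_1-\lambda_2)(\beta+i)}{\alpha}$, valid as long as $\eta_i < 1$ (which holds since $f(0)\ge e$ and $f$ is increasing). Evaluating at $i = \lfloor\beta\rfloor$, I would bound $k_{\lfloor\beta\rfloor} \le \frac{4\tau_{\text{mix}}}{\ln 2}\ln(f(\lfloor\beta\rfloor))$. Then I need $\frac{4\tau_{\text{mix}}}{\ln 2}\ln(f(\lfloor\beta\rfloor)) \le \lfloor\beta\rfloor$, i.e.\ $\frac{\ln(f(\lfloor\beta\rfloor))}{\lfloor\beta\rfloor} \le \frac{\ln 2}{4\tau_{\text{mix}}}$. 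Since $\ln(f(\lfloor\beta\rfloor)) = \ln\lfloor\beta\rfloor + \ln\frac{\lambda_1-\lambda_2}{\alpha} + \ln\left(1 + \frac{\beta}{\lfloor\beta\rfloor} - \frac{\beta - \lfloor\beta\rfloor}{\lfloor\beta\rfloor}\right)$-type manipulations are messy, I would instead compare directly with $f(0) = \frac{\beta(\lambda_1-\lambda_2)}{\alpha}$: because $\lfloor\beta\rfloor \ge \beta/2$ (for $\beta$ not too small, which is guaranteed since $\beta$ is a large constant) and $f$ is increasing with $\ln(f(x))/x$ decreasing for $x \ge e$, we get $\frac{\ln f(\lfloor\beta\rfloor)}{\lfloor\beta\rfloor} \le \frac{2\ln f(\beta)}{\beta} \le \frac{C\ln f(0)}{f(0)}\cdot\frac{\lambda_1-\lambda_2}{\alpha}$ up to harmless constants. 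The condition $\eta_i k_i(\mathcal{M}+\lambda_1)\le\epsilon$ from Lemma~\ref{lemma:learning_rate_schedule}(1), rewritten via Eq.~\ref{eq:condition1_learning_rate} as $\frac{\ln f(0)}{f(0)} \le \frac{\ln 2}{400\tau_{\text{mix}}(\mathcal{M}+\lambda_1)}$, then gives exactly the slack needed since $\mathcal{M}+\lambda_1 \ge 1$.

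With $k_{\lfloor\beta\rfloor} \le \lfloor\beta\rfloor$ established, the index $i = \lfloor\beta\rfloor$ belongs to the set $\{i : i \in [n],\ i - k_i \ge 0\}$ (assuming $n \ge \lfloor\beta\rfloor$, which is part of the ``sufficiently large $n$'' hypothesis in Theorem~\ref{theorem:oja_convergence_rate}). Hence $u = \min\{i : i\in[n],\ i - k_i \ge 0\} \le \lfloor\beta\rfloor \le \beta$, which is the claim.

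\textbf{Main obstacle.} The only real subtlety is the constant bookkeeping in showing $k_{\lfloor\beta\rfloor}\le\lfloor\beta\rfloor$: I must be careful that replacing $\beta$ by $\lfloor\beta\rfloor$ (which could in principle lose up to a factor close to $2$ in the ratio $\ln f(i)/i$ for small $\beta$) is absorbed by the large constant $600$ sitting in the definition of $\beta$ in Lemma~\ref{lemma:learning_rate_schedule}. Since $\beta$ is forced to be a large absolute multiple of $\tau_{\text{mix}}\ln f(0)(\mathcal{M}+\lambda_1)$, there is ample room, and the monotonicity facts ($x/\ln x$ increasing, $\ln x/x$ decreasing for $x\ge e$) already proven in Lemma~\ref{lemma:learning_rate_schedule} do all the heavy lifting; no new ideas are required beyond careful constant-chasing.
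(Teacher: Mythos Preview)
Your proposal is correct and follows essentially the same route as the paper: both arguments show $\lfloor\beta\rfloor - k_{\lfloor\beta\rfloor}\ge 0$ by combining the bound $k_i \le \frac{4\tau_{\text{mix}}}{\ln 2}\ln f(i)$ with $\lfloor\beta\rfloor \ge \beta/2$, $\ln(2f(0))\le 2\ln f(0)$, and the condition \eqref{eq:condition1_learning_rate} from Lemma~\ref{lemma:learning_rate_schedule}, then conclude $u\le\lfloor\beta\rfloor$ by minimality. The paper carries out the constant-chasing explicitly (arriving at $\beta\bigl(\tfrac12-\tfrac{8}{120\ln 2}\bigr)\ge 0$), which is exactly the bookkeeping you flag as the only obstacle.
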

\begin{proof}
    Using the definition of $k_{i}$ mentioned in Lemma \ref{lemma:learning_rate_schedule}, we have 
    \bas{
        k_{i} := \tau_{\text{mix}}\bb{\eta_{i}^{2}} &\leq \frac{2\tau_{\text{mix}}}{\log\bb{2}}\log\bb{\frac{1}{\eta_{i}^{2}}} \\ 
        &= \frac{4\tau_{\text{mix}}}{\log\bb{2}}\log\bb{\frac{\bb{\lambda_{1}-\lambda_{2}}\bb{\beta+i}}{\alpha}}
    }
    Therefore, 
    \bas{
        \lfloor \beta \rfloor - k_{\lfloor \beta \rfloor} &\geq 
        \lfloor \beta \rfloor - \frac{4\tau_{\text{mix}}}{\log\bb{2}}\log\bb{\frac{\beta + \lfloor \beta \rfloor} {\frac{\alpha}{\lambda_{1}-\lambda_{2}}}} \\
        &\geq \frac{\beta}{2} - \frac{4\tau_{\text{mix}}}{\log\bb{2}}\log\bb{\frac{2\beta}{\frac{\alpha}{\lambda_{1}-\lambda_{2}}}} \text{ since } \beta > 1 \\ 
        &= \beta\bbb{\frac{1}{2} - \frac{4\tau_{\text{mix}}}{\log\bb{2}}\frac{\log\bb{2f\bb{0}}}{\beta}}, \text{ where } f\bb{.} \text{ is defined in Lemma } \ref{lemma:learning_rate_schedule}
    }
    Now, from Lemma \ref{lemma:learning_rate_schedule}, we know that $f\bb{0} > e$. Therefore, $\log\bb{2f\bb{0}} \leq 2\log\bb{f\bb{0}}$. Then,
    \bas{
        \lfloor \beta \rfloor - k_{\lfloor \beta \rfloor} &\geq \beta\bbb{\frac{1}{2} - \frac{8\tau_{\text{mix}}}{\log\bb{2}}\frac{\log\bb{f\bb{0}}}{\beta}}
    }
    Again, from the conditions in Lemma \ref{lemma:learning_rate_schedule}, we know that 
    \bas{
        \frac{\log\bb{f\bb{0}}}{\beta} \leq \frac{\epsilon}{6\tau_{\text{mix}}}\frac{\lambda_{1}-\lambda_{2}}{\bb{\mathcal{M}+\lambda_{1}}\alpha} \leq \frac{1}{120\tau_{\text{mix}}} \text{ since } \alpha > 2, \frac{\lambda_{1}-\lambda_{2}}{\mathcal{M}+\lambda_{1}} \leq 1, \epsilon \leq \frac{1}{100}
    }
    Therefore, 
    \bas{
        \lfloor \beta \rfloor - k_{\lfloor \beta \rfloor} \geq \beta\bb{\frac{1}{2}-\frac{8}{120\log\bb{2}}} \geq 0
    }
    Hence proved.
\end{proof}

\subsubsection{Numerator}

Using Theorem \ref{theorem:appendix:Vperpupperbound} and Markov's Inequality, we have with probability atleast $\bb{1-\delta}$
\begin{align*}
& \Tr\left(V_{\perp}^{T}B_{n}B_{n}^{T}V_{\perp}\right) \leq \\
& \;\;\;\; 1.05\frac{\exp\left(\sum_{i=\init+1}^n 2\eta_i\lambda_2+\overline{\mathcal{V}_{k,i}}\eta_{i-k_{i}}^2\right)}{\delta}\left(d + \sum_{i=\init+1}^{n}\bb{\mathcal{V}'+\xi_{k,i}}C_{k,i}'\eta_{i-k_{i}}^{2}\exp\left(\sum_{j=\init+1}^{i}2\eta_j\left( \lambda_1-\lambda_{2}\right)\right)\right)
\end{align*}

\subsubsection{Denominator}

Using Chebyshev's Inequality we have, with probability atleast $\bb{1-\delta}$
\ba{
    v_{1}^{T}\B_{n}\B_{n}^{T}v_{1} &\geq \mathbb{E}\left[v_{1}^{T}\B_{n}\B_{n}^{T}v_{1}\right]\bb{1 - \sqrt{\frac{1}{\delta}}\sqrt{\frac{\mathbb{E}\left[\bb{v_{1}^{T}\B_{n}\B_{n}^{T}v_{1}}^{2}\right]}{\mathbb{E}\left[v_{1}^{T}\B_{n}\B_{n}^{T}v_{1}\right]^{2}} - 1}} \label{eq:chebyshev_denominator}
}
Let $r := 2\bb{1+\epsilon}\eta_{n}k_{n}\left(\mathcal{M}+\lambda_{1}\right) \leq \etakub$. Using Theorem \ref{theorem:appendix:Vperpupperbound}, we have
\bas{
\mathbb{E}\left[\bb{v_{1}^{T}\B_{n}\B_{n}^{T}v_{1}}^{2}\right] \leq \bb{1+r}^{4}\exp\left(\sum_{i=1}^{n-k_{n}}4\eta_{i}\lambda_{1} + \sum_{i=1}^{n-k_{n}}\eta_{i}^{2}\zeta_{k,t} \right)
}
Using Theorem \ref{theorem:appendix:v1lowerbound}, we have
\begin{align*}
& \mathbb{E}\left[v_{1}^{T}\newB{1}{n}\newB{1}{n}^{T}v_{1}\right] \geq \exp\bb{2\lambda_{1}\sum_{i=1}^{n-k_{n}}\eta_{i} - 4\lambda_{1}^{2}\sum_{i=1}^{n-k_{n}}\eta_{i}^{2}}\bbb{1 - \bb{2r + 3\bb{1+r}^{2}\sum_{t=1}^{n-k_{n}} W_{k,t}\eta_{t}^2\exp\left(\sum_{i=t+1}^{n-k_{n}}\eta_{i}^{2}\overline{\mathcal{V}_{k,i}}\right)}}
\end{align*}
Let 
\bas{
    s := 3\bb{1+r}^{2}\sum_{t=1}^{n-k_{n}} W_{k,t}\eta_{t}^2\exp\left(\sum_{i=t+1}^{n-k_{n}}\eta_{i}^{2}\overline{\mathcal{V}_{k,i}}\right)
}
Then,
\bas{
\frac{\mathbb{E}\left[\bb{v_{1}^{T}\B_{n}\B_{n}^{T}v_{1}}^{2}\right]}{\mathbb{E}\left[v_{1}^{T}\B_{n}\B_{n}^{T}v_{1}\right]^{2}} &\leq \frac{\bb{1+r}^{4}}{\bb{1 - 2r - s}^{2}}\exp\left( \sum_{i=1}^{n-k_{n}}\eta_{i}^{2}\bb{\zeta_{k,i} + 4\lambda_{1}^{2}}\right)
}
% can we call 2r+epsilon something?
By Lemma \ref{lemma:additional_assumptions}, we have that
\begin{align}
    2r + s \leq \frac{1}{2} \label{assumption:lower_bound_n_1}.
\end{align}
Then, using
\bas{
& \frac{1}{\bb{1-x}^{2}} \leq 1 + 6x \text{ for } x \in \left[0,\frac{1}{2}\right] \text{ and, } \bb{1+x}^{4} \leq 1 + 5x \text{ for } x \in \bbb{0,\frac{1}{10}}
}
we have, 
\bas{
\frac{\mathbb{E}\left[\bb{v_{1}^{T}\B_{n}\B_{n}^{T}v_{1}}^{2}\right]}{\mathbb{E}\left[v_{1}^{T}\B_{n}\B_{n}^{T}v_{1}\right]^{2}} &\leq \bb{1 + \mydiv{\csq}{4} r}\bb{1 + 12r + 6s}\exp\left( \sum_{i=1}^{n-k_{n}}\eta_{i}^{2}\bb{\zeta_{k,i} + 4\lambda_{1}^{2}} \right) \\
&\leq \bb{1 + \add{\mydiv{\csq}{4}}{12}r + 6s + \mult{\mydiv{\csq}{4}}{12}r^{2} + \mult{\mydiv{\csq}{4}}{6}rs}\exp\left( \sum_{i=1}^{n-k_{n}}\eta_{i}^{2}\bb{\zeta_{k,i} + 4\lambda_{1}^{2}} \right)\\
&\leq \bb{1 + \add{\mydiv{\csq}{4}}{17}r + 12s}\exp\left( \sum_{i=1}^{n-k_{n}}\eta_{i}^{2}\bb{\zeta_{k,i} + 4\lambda_{1}^{2}} \right) \text{ since } r \leq \etakub
% &\leq \add{\mydiv{\csq}{4}}{17}r\exp\bb{\sum_{i=1}^{n-k_{n}}\eta_{i}^{2}\bb{10 + \omega_{c}k_{i+1}\bb{\mathcal{M}+\lambda_{1}}^{2} + \zeta_{k,i}}}\\
% &+\bb{1+12\epsilon}\exp\bb{\sum_{i=1}^{n-k_{n}}\eta_{i}^{2}\bb{10 + \omega_{c}k_{i+1}\bb{\mathcal{M}+\lambda_{1}}^{2} + \zeta_{k,i}}} 
}
By Lemma \ref{lemma:learning_rate_schedule}-(3), we have that
\begin{align}
    & \exp\left( \sum_{i=1}^{n-k_{n}}\eta_{i}^{2}\bb{\zeta_{k,i} + 4\lambda_{1}^{2}} \right) \leq 1+\frac{\delta}{m} \label{assumption:eta2_assumption2}
\end{align}
By \ref{eq:epsilon_delta_bound}, we have that
\ba{
    12s  &\leq \frac{48(1+r)^{2}}{100}\bb{1+\frac{\delta}{m}}^{2}\log\bb{1+\frac{\delta}{m}} \notag \\ 
        &\leq \frac{3}{5}\bb{1+\frac{\delta}{m}}^{2}\log\bb{1+\frac{\delta}{m}} \text{ since } r \leq \frac{1}{10} \label{eq:epsilon_delta_bound_1}
}
% Then since $\forall t, W_{k,t} <\frac{\mathcal{V}\left|\lambda_{2}\bb{P}\right|}{1-\left|\lambda_{2}\bb{P}\right|} + C_{k,t}$ and $1+\frac{\delta}{m} \leq 2$,
% \bas{
% \frac{\mathbb{E}\left[\bb{v_{1}^{T}B_{n}B_{n}^{T}v_{1}}^{2}\right]}{\mathbb{E}\left[v_{1}^{T}B_{n}B_{n}^{T}v_{1}\right]^{2}} &\leq \add{\mydiv{\csq}{4}}{17}r\bb{1 + \frac{\delta}{m}} + \exp\bb{20\sum_{i=1}^{n-k_{n}}\bb{\mathcal{V}+\lambda_{1}^{2}+G_{k,i}}\eta_{i}^{2} + 24(1+3r)\sum_{t=1}^{n-k_{n}-1}W_{k,t}\eta_{t}^{2}\bb{1 + \frac{\delta}{m}} } \\ 
% &\leq \add{\mydiv{\csq}{4}}{17}r\bb{1 + \frac{\delta}{m}} + \exp\bb{20\sum_{i=1}^{n-k_{n}}\bb{\mathcal{V}+\lambda_{1}^{2}+G_{k,i}}\eta_{i}^{2} + 30\sum_{t=1}^{n-k_{n}-1}W_{k,t}\eta_{t}^{2}\bb{1 + \frac{\delta}{m}} }
% }
% \bas{
% &\leq \add{\mydiv{\csq}{4}}{17}r\bb{1 + \frac{\delta}{m}} + \exp\bb{60\sum_{i=1}^{n-k_{n}}\bb{\mathcal{V}+\lambda_{1}^{2}+G_{k,i}+W_{k,i}}\eta_{i}^{2} } \\
% &\leq \add{\mydiv{\csq}{4}}{17}r\bb{1 + \frac{\delta}{m}} + \exp\bb{60\sum_{i=1}^{n-k_{n}}\bb{\frac{\mathcal{V}}{1-\left|\lambda_{2}\bb{P}\right|}+\lambda_{1}^{2}+G_{k,i}+C_{k,i}}\eta_{i}^{2} } \\
% &\leq \add{\mydiv{\csq}{4}}{17}r\bb{1 + \frac{\delta}{m}} + \bb{1 + \frac{\delta}{m}} = \bb{1 + \frac{\delta}{m}}\bb{1 + \add{\mydiv{\csq}{4}}{17}r}
% } 
By Lemma \ref{lemma:additional_assumptions}, we have that 
\begin{align}
    r = 2\bb{1+\epsilon}\eta_{n}k_{n}\left(\mathcal{M}+\lambda_{1}\right) <\frac{1}{50}\frac{\delta/m}{1+\delta/m} \label{assumption:r_delta_condition}
\end{align} 
% Then, 
% \bas{
% \frac{\mathbb{E}\left[\bb{v_{1}^{T}B_{n}B_{n}^{T}v_{1}}^{2}\right]}{\mathbb{E}\left[v_{1}^{T}B_{n}B_{n}^{T}v_{1}\right]^{2}}-1
%  &\leq \frac{\delta}{m}+\add{\mydiv{\csq}{4}}{17}r \bb{1 + \frac{\delta}{m}} \leq \frac{2\delta}{m}
% }
Then, 
\bas{
\frac{\mathbb{E}\left[\bb{v_{1}^{T}\B_{n}\B_{n}^{T}v_{1}}^{2}\right]}{\mathbb{E}\left[v_{1}^{T}\B_{n}\B_{n}^{T}v_{1}\right]^{2}} &\leq \bb{1+22r+12s}\bb{1+\frac{\delta}{m}} \\ 
&= 1 + \frac{\delta}{m} + 22r\bb{1+\frac{\delta}{m}} + 12s\bb{1+\frac{\delta}{m}} \\ 
&\leq 1 + \frac{\delta}{m} + \frac{22}{50}\frac{\delta}{m} + \frac{3}{5}\bb{1+\frac{\delta}{m}}^{3}\log\bb{1+\frac{\delta}{m}} \\ 
&\leq 1 + \frac{\delta}{m} + \frac{22}{50}\frac{\delta}{m} + \frac{7}{10}\log\bb{1+\frac{\delta}{m}} \text{ since } \delta \leq 1, m = 200 \\ 
&\leq 1 + \frac{\delta}{m} + \frac{22}{50}\frac{\delta}{m} + \frac{7}{10}\frac{\delta}{m} \text{ since }  \forall x, \; \log\bb{1+x} \leq x \\ 
&\leq 1 + 3\frac{\delta}{m}
}
% this wont work. you need 1+something small. this will give you a 4*exp(blah)
Then setting $m = 200$, from \ref{eq:chebyshev_denominator} we have 
\bas{
v_{1}^{T}\B_{n}\B_{n}^{T}v_{1} &\geq \exp\bb{\sum_{i=1}^{n-k_{n}}2\eta_{i}\lambda_{1}-4\eta_{i}^{2}\lambda_{1}^{2}}\bb{1 - 2r - s}\bb{1 - \sqrt{\frac{1}{\delta}}\sqrt{\frac{3\delta}{m}}} \\
&\geq \exp\bb{\sum_{i=1}^{n-k_{n}}2\eta_{i}\lambda_{1}-4\eta_{i}^{2}\lambda_{1}^{2}}\bb{1 - \frac{1}{25}\frac{\delta/m}{1+\delta/m} - \frac{1}{20}\bb{1+\frac{\delta}{m}}^{2}\log\bb{1+\frac{\delta}{m}}}\bb{1 - \sqrt{\frac{3}{m}}}  \\ 
&\geq \frac{5}{6}\exp\bb{\sum_{i=1}^{n-k_{n}}2\eta_{i}\lambda_{1}-4\eta_{i}^{2}\lambda_{1}^{2}} \text{ since } \delta \leq 1 \text{ and } m = 200
% &\geq \exp\bb{\sum_{i=1}^{n-k_{n}}2\eta_{i}\lambda_{1}-4\eta_{i}^{2}\lambda_{1}^{2}}\bb{1 - 2r - 2\bb{1 + 3r}\log\bb{1 + \frac{\delta}{m}}}\bb{1 - \sqrt{\frac{2}{m}}} \\
% &\geq \exp\bb{\sum_{i=1}^{n-k_{n}}2\eta_{i}\lambda_{1}-4\eta_{i}^{2}\lambda_{1}^{2}}\bb{1 - 2r - 2\bb{1 + 3r}\frac{\delta}{m}}\bb{1 - \sqrt{\frac{2}{m}}} \text{ since } \log\bb{1 + x} \leq x \; \forall \; x \geq 0 \\
% &\geq \exp\bb{\sum_{i=1}^{n-k_{n}}2\eta_{i}\lambda_{1}-4\eta_{i}^{2}\lambda_{1}^{2}}\bb{1 - 2r - \frac{\bb{1 + 3r}}{100}}\frac{9}{10} \\
% &= \frac{9}{10}\exp\bb{\sum_{i=1}^{n-k_{n}}2\eta_{i}\lambda_{1}-4\eta_{i}^{2}\lambda_{1}^{2}}\bb{\frac{99 - 203r}{100}} \\
% &\geq \frac{72}{100}\exp\bb{\sum_{i=1}^{n-k_{n}}2\eta_{i}\lambda_{1}-4\eta_{i}^{2}\lambda_{1}^{2}} \text{ since } r \leq \frac{1}{12} \leq \frac{19}{203}
}
The second inequality uses Eqs~\ref{eq:epsilon_delta_bound_1},~\ref{assumption:r_delta_condition}.
\subsubsection{Fraction}
Now that we have established this result let's calculate the fraction. Let the step-sizes be set according to Lemma \ref{lemma:learning_rate_schedule}. Define 
\bas {
    & \mathcal{S} := \exp\bb{\sum_{i=u+1}^{n}\overline{\mathcal{V}_{k,i}}\eta_{i-k_{i}}^2 + \sum_{i=1}^{n-k_{n}}4\lambda_{1}^{2}\eta_{i}^{2}} \\
    & Q_{\init} := \exp\bb{2\lambda_{1}\bb{\sum_{j=1}^{\init}\eta_{j} - \sum_{j=n-k_{n}+1}^{n}\eta_{j}}} \\
    & \mathcal{R}_{k,t} := \frac{\exp\bb{\sum_{j=1}^{t-\init}\eta_{j}^{2}\bb{\overline{\mathcal{V}_{k,j}}-\overline{\mathcal{V}_{k,j+u}}}}\exp\bb{2\lambda_{1}\sum_{j=n-k_{n}+1}^{n}\eta_{j}}}{\exp\bb{2\lambda_{1}\sum_{j=1}^{\init}\eta_{t-\init+j}}}
}
Then, recall that
% \begin{align} 
%     r &:= 2\bb{1+\epsilon}k_{n}\eta_{n}\left(\mathcal{M}+\lambda_{1}\right)\notag\\
%     \xi_{k,t} &:= 6\mathcal{M}\bbb{1 + 3k_{t+1}^{2}\bb{\mathcal{M}+\lambda_{1}}^{2}},\qquad\zeta_{k,t} := 40k_{t+1}\bb{\mathcal{M}+\lambda_{1}}^{2}\notag\\
%     \mathcal{V}' &:= \frac{1 + \bb{3 + 4\epsilon}|\lambda_{2}\bb{P}|}{1 - \left|\lambda_{2}\bb{P}\right|}\mathcal{V},\ \  \qquad\qquad\overline{\mathcal{V}_{k,t}} := \mathcal{V}' + \lambda_{1}^{2} + \eta_t\xi_{k,t}
% \end{align}
\bas {
    \init &:= \min\left\{i : i \in [n], i-k_{i} \geq 0 \right\} \\ 
    \xi_{k,t} &:= 6\eta_{t-k_{t}}\mathcal{M}\bbb{1 + 3k_{t+1}^{2}\bb{\mathcal{M}+\lambda_{1}}^{2}} \\
    % \rho_{k,t} &:= 1 + 4\bb{1+c}\eta_{t-1}k_{t-1}\left(\mathcal{M}+\lambda_{1}\right) + 4\bb{1+c}^{2}\eta_{t-1}^{2}k_{t-1}^{2}\left(\mathcal{M}+\lambda_{1}\right)^{2} \\
    \mathcal{V}' &:= \bb{\frac{1 + \bb{3 + 4\epsilon}|\lambda_{2}\bb{P}|}{1 - \left|\lambda_{2}\bb{P}\right|}}\mathcal{V} \\ 
    \overline{\mathcal{V}_{k,t}} &:= \mathcal{V}' + \lambda_{1}^{2} + \xi_{k,t} \\
    C_{k,t}' &:= \exp\bb{2\lambda_{1}\sum_{j=1}^{\init}\bb{\eta_{j}-\eta_{t-\init+j}}+\sum_{j=1}^{t-\init}\eta_{j}^{2}\bb{\overline{\mathcal{V}_{k,j}}-\overline{\mathcal{V}_{k,j+u}}}} = Q_{\init}\mathcal{R}_{k,t}
}
Therefore, 
\ba{
& \frac{\Tr\left(V_{\perp}^{T}\B_{n}\B_{n}^{T}V_{\perp}\right)}{v_{1}^{T}\B_{n}\B_{n}^{T}v_{1}} \notag \\
& \leq \frac{1.3}{\delta}\frac{\exp\left(\sum_{i=\init+1}^n 2\eta_i\lambda_2+\overline{\mathcal{V}_{k,i}}\eta_{i-k_{i}}^2\right)}{\exp\bb{\sum_{i=1}^{n-k_{n}}2\eta_{i}\lambda_{1}-4\eta_{i}^{2}\lambda_{1}^{2}}}\left(d + \sum_{i=\init+1}^{n}\bb{\mathcal{V}'+\xi_{k,i}}C_{k,i}'\eta_{i-k_{i}}^{2}\exp\left(\sum_{j=\init+1}^{i}2\eta_j\left( \lambda_1-\lambda_{2}\right)\right)\right) \notag \\ 
&\leq \frac{1.3}{\delta}\frac{\mathcal{S}}{Q_{u}}\exp\bb{\sum_{i=\init+1}^{n}2\eta_{i}\bb{\lambda_{2}-\lambda_{1}}}\left(d + \sum_{i=\init+1}^{n}\bb{\mathcal{V}'+\xi_{k,i}}C_{k,i}'\eta_{i-k_{i}}^{2}\exp\left(\sum_{j=\init+1}^{i}2\eta_j\left( \lambda_1-\lambda_{2}\right)\right)\right) \notag \\ 
&\leq \frac{1.3}{\delta}\mathcal{S}\left(\underbrace{\frac{d\exp\bb{\sum_{i=\init+1}^{n}2\eta_{i}\bb{\lambda_{2}-\lambda_{1}}}}{Q_{u}}}_{X_{1}} + \underbrace{\sum_{i=\init+1}^{n}\bb{\mathcal{V}'+\xi_{k,i}}\mathcal{R}_{k,i}\eta_{i-k_{i}}^{2}\exp\left(-\sum_{j=i+1}^{n}2\eta_j\left( \lambda_1-\lambda_{2}\right)\right)}_{X_{2}}\right) \label{X_1_X_2_decomposition}
% & \leq \frac{50}{9\delta}\frac{\exp\left(\sum_{i=\init+1}^n 2\eta_i\lambda_2+5\overline{\mathcal{V}_{k,i}}\eta_{i-k_{i}}^2\right)}{\exp\bb{\sum_{i=1}^{n-k_{n}}2\eta_{i}\lambda_{1}-4\eta_{i}^{2}\lambda_{1}^{2}} }\left(d + \sum_{i=\init+1}^{n}(\frac{\mathcal{V}}{1-\left|\lambda_{2}\bb{P}\right|}+C_{k,i})C_{k,i}'\eta_{i-k_{i}}^{2}\exp\left(\sum_{j=\init+1}^{i}2\eta_j \left(\lambda_{1}-\lambda_{2}\right)\right)\right) \\
% & = \frac{50}{9\delta }\frac{\mathcal{S}}{Q_{\init}}\exp\bb{\sum_{i=\init+1}^{n}2\eta_{i}\bb{\lambda_{2}-\lambda_{1}}}\left(d + \sum_{i=\init+1}^{n}(\frac{\mathcal{V}}{1-\left|\lambda_{2}\bb{P}\right|}+C_{k,i})C_{k,i}'\eta_{i-k_{i}}^{2}\exp\left(\sum_{j=\init+1}^{i}2\eta_j \left(\lambda_{1}-\lambda_{2}\right)\right)\right) \\
% & \leq \frac{50\mathcal{S}}{9\delta }\left(\underbrace{\frac{d\exp\bb{\sum_{i=\init+1}^{n}2\eta_{i}\bb{\lambda_{2}-\lambda_{1}}}}{Q_{u}}}_{X_{1}} + \underbrace{\sum_{i=\init+1}^{n}\left(\frac{\mathcal{V}}{1-\left|\lambda_{2}\bb{P}\right|}+C_{k,i}\right)\mathcal{R}_{k,i}\eta_{i-k_{i}}^{2}\exp\left(-\sum_{j=i+1}^{n}2\eta_j \left(\lambda_{1}-\lambda_{2}\right)\right)}_{X_{2}}\right)
}
For $X_{1}$,  we have
\bas{
X_{1} &\leq \frac{d\exp\bb{\sum_{i=\init+1}^{n}2\eta_{i}\bb{\lambda_{2}-\lambda_{1}}}}{Q_{u}} \\
    &= \frac{d\exp\bb{\sum_{i=\init+1}^{n}2\eta_{i}\bb{\lambda_{2}-\lambda_{1}}}}{\exp\bb{2\lambda_{1}\bb{\sum_{j=1}^{\init}\eta_{j} - \sum_{j=n-k_{n}+1}^{n}\eta_{j}}}} \\
    &\leq \frac{d\exp\bb{\sum_{i=\init+1}^{n}2\eta_{i}\bb{\lambda_{2}-\lambda_{1}}}}{\exp\bb{-2\lambda_{1}\bb{\sum_{j=n-k_{n}+1}^{n}\eta_{j}}}} \\
    &\leq d\exp\bb{\sum_{i=u+1}^{n}2\eta_{i}\bb{\lambda_{2}-\lambda_{1}}}\exp\bb{2\lambda_{1}\bb{\sum_{j=n-k_{n}+1}^{n}\eta_{j}}}
    % \\
    % \\
    % \\
    %   &= \chi_{c}\frac{d\exp\bb{\sum_{i=1}^{n}2\eta_{i}\bb{\lambda_{2}-\lambda_{1}}}}{\exp\bb{\sum_{i=1}^{u}2\eta_{i}\bb{\lambda_{2}-\lambda_{1}} + 2\lambda_{1}\sum_{j=1}^{\init}\bb{\eta_{j}-\eta_{n-\init+j}}}} \\ 
    %   &= \chi_{c}\frac{d\exp\bb{\sum_{i=1}^{n}2\eta_{i}\bb{\lambda_{2}-\lambda_{1}}}}{\exp\bb{2\lambda_{2}\sum_{i=1}^{u}\eta_{i} - 2\lambda_{1}\sum_{j=1}^{\init}\eta_{n-\init+j}}} \\
    %   &\leq \chi_{c}\frac{d\exp\bb{\sum_{i=1}^{n}2\eta_{i}\bb{\lambda_{2}-\lambda_{1}}}}{\exp\bb{2\lambda_{2}\sum_{i=1}^{u}\eta_{n-\init+j} - 2\lambda_{1}\sum_{j=1}^{\init}\eta_{n-\init+j}}} \\
    %   &= \chi_{c}d\exp\bb{\sum_{i=1}^{u}2\eta_{n-\init+j}\bb{\lambda_{1}-\lambda_{2}}}\exp\bb{\sum_{i=1}^{n}2\eta_{i}\bb{\lambda_{2}-\lambda_{1}}} \\
    %   &\leq \chi_{c}d\bb{\frac{\beta+n}{\beta+n-u}}^{2\alpha}\bb{\frac{\beta}{\beta + n}}^{2\alpha} \text{ using } \ref{ineq:eta_lower_bound} \text{ and } \ref{ineq:eta_upper_bound} \\
    %   &\leq \chi_{c}d\bb{\frac{\beta}{\beta + n - u}}^{2\alpha}
}
Note that
\bas{
    \exp\bb{2\lambda_{1}\sum_{j=n-k_{n}+1}^{n}\eta_{j}} &\leq \exp\bb{2\bb{1+2\epsilon}\lambda_{1}k_{n}\eta_{n-k_{n}+1}} \text{ using monotonicity of } \eta_{i} \\ 
    &\leq \exp\bb{4\bb{1+2\epsilon}\lambda_{1}k_{n}\eta_{n}} \text{ using slow-decay of } \eta_{i} \\ 
    &\leq 1+2\frac{\delta}{m} \text{ using Lemma } \ref{lemma:additional_assumptions} \text{ along with } e^{x} \leq 1 + x + x^{2} \text{ for } x \in \bb{0,1}
}
Therefore, using \ref{ineq:eta_lower_bound}
\bas{
    X_{1} &\leq d\bb{1+\frac{2\delta}{m}}\bb{\frac{\beta + \init}{n}}^{2\alpha}
}

Next, for $X_{2}$, we first have
\bas{
    \mathcal{R}_{k,t} &:= \frac{\exp\bb{\sum_{j=1}^{t-\init}\eta_{j}^{2}\bb{\overline{\mathcal{V}_{k,j}}-\overline{\mathcal{V}_{k,j+u}}}}\exp\bb{2\lambda_{1}\sum_{j=n-k_{n}+1}^{n}\eta_{j}}}{\exp\bb{2\lambda_{1}\sum_{j=1}^{\init}\eta_{t-\init+j}}} \\ 
    &\leq \exp\bb{\sum_{j=1}^{t-\init}\eta_{j}^{2}\overline{\mathcal{V}_{k,j}}}\exp\bb{2\lambda_{1}\sum_{j=n-k_{n}+1}^{n}\eta_{j}} \\ 
    &\leq \bb{1+\frac{2\delta}{m}}^{2} \text{ using Lemmas } \ref{lemma:learning_rate_schedule}-(3),  
 \ref{lemma:additional_assumptions} \text{ and } e^{x} \leq 1 + x + x^{2} \text{ for } x \in \bb{0,1}
}
Now, using \ref{lemma:learning_rate_schedule}-(4) we have, 
\bas{
    & \sum_{i=1}^{n}\overline{\mathcal{V}_{k,i}}\eta_{i-k_{i}}^{2}\exp\left(-\sum_{j=i+1}^{n}2\eta_j\left( \lambda_1-\lambda_{2}\right)\right) \leq \\
    & \;\; \bb{\frac{2\bb{1+10\epsilon}\alpha^{2}}{2\alpha - 1}}\frac{\mathcal{V}'}{\bb{\lambda_{1}-\lambda_{2}}^{2}}\frac{1}{n} +  \bb{\frac{800\bb{1+10\epsilon}\alpha^{3}}{\bb{\alpha - 1}}}\frac{\mathcal{M}\bb{\mathcal{M}+\lambda_{1}}^{2}\tau_{\text{mix}}^{2}}{\bb{\lambda_{1}-\lambda_{2}}^{3}}\frac{\log^{2}\bb{\frac{\bb{\beta+n}\bb{\lambda_{1}-\lambda_{2}}}{\alpha}}}{n^{2}}
}
Then,
\bas{
    X_{2} &\leq \bb{1+\frac{2\delta}{m}}^{2}\bbb{\underbrace{\bb{\frac{2\bb{1+10\epsilon}\alpha^{2}}{2\alpha - 1}}}_{C_{1}}\frac{\mathcal{V}'}{\bb{\lambda_{1}-\lambda_{2}}^{2}}\frac{1}{n} +  \underbrace{\bb{\frac{24\bb{1+10\epsilon}\alpha^{3}}{\bb{\alpha - 1}}}}_{C_{2}}\frac{\mathcal{M}\bb{\mathcal{M}+\lambda_{1}}^{2}}{\bb{\lambda_{1}-\lambda_{2}}^{3}}\frac{k_n^{2}}{n^{2}}}
}
Therefore substituting in \ref{X_1_X_2_decomposition},
\ba {
    &\frac{\Tr\left(V_{\perp}^{T}\B_{n}\B_{n}^{T}V_{\perp}\right)}{v_{1}^{T}\B_{n}\B_{n}^{T}v_{1}} \leq \frac{1.3\mathcal{S}}{\delta }\bb{1+\frac{2\delta}{m}}^{2}\bbb{d\bb{\frac{\beta+\init}{n}}^{2\alpha} + \frac{C_{1}\mathcal{V}'}{\bb{\lambda_{1}-\lambda_{2}}^{2}}\frac{1}{n} + \frac{C_{2}\mathcal{M}\bb{\mathcal{M}+\lambda_{1}}^{2}}{\bb{\lambda_{1}-\lambda_{2}}^{3}}\frac{k_n^{2}}{n^{2}}} \label{eq:fraction_bound}
}

\begin{proof}[Proof of Theorem \ref{theorem:oja_convergence_rate}]
    To complete our proof, we bound $\mathcal{S}$ to simplify \ref{eq:fraction_bound}.
    We note that under the learning rate schedule presented in Lemma \ref{lemma:learning_rate_schedule}-(3), 
    \bas{
        \mathcal{S}  &\leq \bb{1 + \frac{\delta}{m}}
    }
    Therefore, 
    \bas{
    \frac{\Tr\left(V_{\perp}^{T}\B_{n}\B_{n}^{T}V_{\perp}\right)}{v_{1}^{T}\B_{n}\B_{n}^{T}v_{1}} &\leq \frac{1.3}{\delta }\bb{1+\frac{2\delta}{m}}^{3}\bbb{d\bb{\frac{\beta+\init}{n}}^{2\alpha} + \frac{C_{1}\mathcal{V}'}{\bb{\lambda_{1}-\lambda_{2}}^{2}}\frac{1}{n} + \frac{C_{2}\mathcal{M}\bb{\mathcal{M}+\lambda_{1}}^{2}}{\bb{\lambda_{1}-\lambda_{2}}^{3}}\frac{k_n^{2}}{n^{2}}} \\  
    &\leq \frac{1.4}{\delta}\bbb{d\bb{\frac{\beta+\init}{n}}^{2\alpha} + \frac{C_{1}\mathcal{V}'}{\bb{\lambda_{1}-\lambda_{2}}^{2}}\frac{1}{n} + \frac{C_{2}\mathcal{M}\bb{\mathcal{M}+\lambda_{1}}^{2}}{\bb{\lambda_{1}-\lambda_{2}}^{3}}\frac{k_n^{2}}{n^{2}}}
    }
    Using lemma \ref{lemma:u_bound}, we have that $u \leq \beta$. Then, using Lemma 3.1 from \cite{jain2016streaming} completes our proof.
\end{proof}

\subsection{Proof of Corollary \ref{cor:data-drop}}

\begin{proof}[Proof of Corollary \ref{cor:data-drop}]
We note that the downsampled data stream can be considered to be drawn from a Markov chain with transition kernel $P^{k}\bb{.,.}$ since each data-point is $k$ steps away from the previous one. We will denote the parameters of this transformed chain by  $\tilde{y}$ when the corresponding parameter is $y$ under the original chain. For example, $\tilde{\tau}_{\text{mix}}$ is the mixing time of the new chain.

Note that this modified transition matrix has the same stationary distribution $\pi$. It is also reversible. This can be seen by considering the diagonal matrix of stationary distribution probabilities $\Pi$, where $\Pi_{ii}=\pi_i$. For a reversible Markov Chain, we have $\Pi P=P\Pi$. However, that also implies $\Pi P^2 = (\Pi P) P= (P\Pi) P=P(\Pi P)=P^2\Pi$. This same technique works for $P^k$ yielding $\Pi P^k=P^k\Pi$.

Using standard results on Markov chains~\cite{levin2017Markov},
\ba{
        \frac{|\lambda_{2}\bb{P}|}{1-|\lambda_{2}\bb{P}|}\log\bb{\frac{1}{2\epsilon}} \leq  \tau_{\text{mix}}(\epsilon) \leq \frac{1}{1-|\lambda_{2}\bb{P}|}\log\bb{\frac{1}{\epsilon \pi_{\text{min}}}},\label{eq:mixing_time_eigengap}
}
where $\pi_{\min}:=\min_i \pi_i$. 
Therefore, as noted in the theorem statement, we substitute the modified parameters in the bound we have proven for Theorem~\ref{theorem:oja_convergence_rate}.

First, we will show that the mixing time for this new chain is $\Theta(1)$. We will use $k := \tau_{\text{mix}}\bb{\eta_{n}^{2}}$.
So by definition the $d_{\text{mix}}(k)\leq \eta_n^2$ using the definition of $d_{\text{mix}}$ in Section~\ref{section:preliminaries}.
Hence $d_{\text{mix}}(k)\leq 1/4$ using conditions on the learning rate schedule imposed in Theorem~\ref{theorem:oja_convergence_rate}. Therefore, in the transformed chain, the ``new'' $\tilde{\tau}_{\text{mix}}$ is $\Theta(1)$.

We also have:
\bas{
k\leq \frac{2\tau_{\text{mix}}}{\log\bb{2}}\log\bb{\frac{1}{\eta_{n}^{2}}} \leq \underbrace{\frac{2\log (4/\pi_{\text{min}})}{\log 2}}_{C}\frac{1}{1-\left|\lambda_{2}\bb{P}\right|}\log\bb{n}
}
We see that $C>1$.
%For sufficiently large $k$, this implies that the mixing time of this chain is $\Theta\bb{1}$. 
% Since $k$ is fixed here, therefore, $u = 1$ since the algorithm uses the data-points $\left\{k,2k,\cdots n\right\}$. 
Next, we note that for the transition kernel $P^{k}\bb{.,.}$, the second-largest absolute eigenvalue is given as $\left|\lambda_{2}\bb{P}\right|^{k}$. 
%Therefore, for $k := \tau_{\text{mix}}\bb{\eta_{n}^{2}} \leq \frac{2\tau_{\text{mix}}}{\log\bb{2}}\log\bb{\frac{1}{\gamma\eta_{n}^{2}}} \leq C\frac{1}{1-\left|\lambda_{2}\bb{P}\right|}\log\bb{n}, C > 1$, using standard bounds on mixing times for reversible Markov chains (see for example, \cite{levin2017Markov}). 
Consider the function $f\bb{x} := x^{\frac{1}{1-x}}$ for $x \in \bb{0,1}$. Then, 
\bas{
    f'\bb{x} &= f\bb{x}\bb{\frac{1-x-x\log\bb{x}}{x\bb{1-x}^{2}}} > 0
}
Therefore, $f\bb{x} < \lim_{x \rightarrow 1}f\bb{x} = \frac{1}{e} < 1$. which implies $\left|\lambda_{2}\bb{P}\right|^{k} \stackrel{(i)}\leq \bb{\frac{1}{e}}^{C\log\bb{n}} < \frac{1}{e}$. Here $(i)$ follows if $C > 1, n > 3$, which is true. Therefore, 
\bas{
    \widetilde{\mathcal{V}}' := \bb{\frac{1 + \bb{3 + 4\epsilon}|\lambda_{2}\bb{P}|^{k}}{1 - \left|\lambda_{2}\bb{P}\right|^{k}}}\mathcal{V} \leq 5\mathcal{V}
}
% Therefore, 
% \bas{
%         \frac{4\alpha^{2}}{\bb{2\alpha - 1}\bb{\lambda_{1}-\lambda_{2}}^{2}}\frac{\mathcal{V}}{\bb{1-\left|\lambda_{2}\bb{P}\right|^{k}}} &\leq \frac{8\alpha^{2}\mathcal{V}}{\bb{2\alpha - 1}\bb{\lambda_{1}-\lambda_{2}}^{2}}      
% }
% and
% \bas{
%     \frac{80\alpha c_{1}\mathcal{M} \bb{\mathcal{M}+\lambda_{1}}^{2}\tau_{\text{mix}}^{2}}{\gamma^{2}\bb{\alpha-1}\log\bb{2}^{2}\bb{\lambda_{1}-\lambda_{2}}} = \frac{80\alpha c_{1}\mathcal{M} \bb{\mathcal{M}+\lambda_{1}}^{2}}{\gamma^{2}\bb{\alpha-1}\log\bb{2}^{2}\bb{\lambda_{1}-\lambda_{2}}}
% }
This also implies that the mixing time for the new Markov chain for sub-sampled data is $\Theta\bb{1}$. The bound then follows by substituting $n$ to be $\frac{n}{k} = n_{k}  = \Theta\bb{\frac{n}{C\tau_{\text{mix}}\log\bb{n}}}$ and setting the $\tau_{\text{mix}}$ in the original expression of Theorem \ref{theorem:oja_convergence_rate} to a constant.
\end{proof}

\section{Additional Experiments}
\label{appendix:additional_experiments}

In this section, we provide additional experiments to support the results established in Section \ref{section:main_results} of the manuscript. %Here we show that the distribution means at each state, $\mu_{s}, s \in \Omega$, need not be zero individually as long as $\mu = \sum_{s \in \Omega}\pi_{s}\mu_{s} = 0$.
We present experiments with distributions that have nonzero mean vectors at each state, but zero mean with respect to the stationary distribution. This means that the $Z_i$'s are not necessarily zero-mean with respect to each state distribution $D\bb{s}$. To normalize the data-points, we estimate the mean $\mu$ and covariance matrix $\Sigma$ empirically from a much larger independently generated dataset.   

% \rd Is Fig 2a) and 3a) also averaged? or just b) and c)'s \bk
We experiment with two different settings here - Figure \ref{fig:Bernoulli_extended} contains the results for each state distribution being $D(s):=$ Bernoulli($p_s$) with $p_{s}\sim \mathcal{U}\bb{0,0.05}$ being fixed for each dataset. Figure \ref{fig:Uniform_extended} provides results for each state distribution being $D\bb{s} := \mathcal{U}\bb{0,\ell_{s}}$ with $\ell_{s} \sim \mathcal{U}\bb{0,10}$ being selected at the start of each random run. We observe that these experiments depict similar trends to those shown in the main manuscript, which validates our results for the case of non-zero state means. Furthermore, the Bernoulli data, being sparse compared to the Uniform one, seems to exhibit a clearer difference between data downsampling and the traditional Oja's algorithm.
To provide clear plots demonstrating the relative behavior of the algorithms considered in this paper, we have shown the averaged $\sin^{2}$ errors in Figures \ref{fig:Bernoulli_extended} and \ref{fig:Uniform_extended}. In Figure~\ref{fig:Bernoulli_random_runs} we show six random runs where we fixed the $p_s,s\in\Omega$ for each state for all runs. These figures clearly show that in general, Downsampled Oja has a worse performance than Oja's algorithm, which has a similar performance as the offline algorithm. It also shows that the Downsampled algorithm has the most variability, whereas Oja's algorithm on the whole dataset has much less variability, and finally, and not surprisingly, the offline algorithm has the least variability. Similar qualitative trends can be observed for the other settings.
\begin{figure}[!hbt]
    \centering
    \begin{subfigure}[b]{0.3\textwidth}
        \centering
        \includegraphics[width=\textwidth]{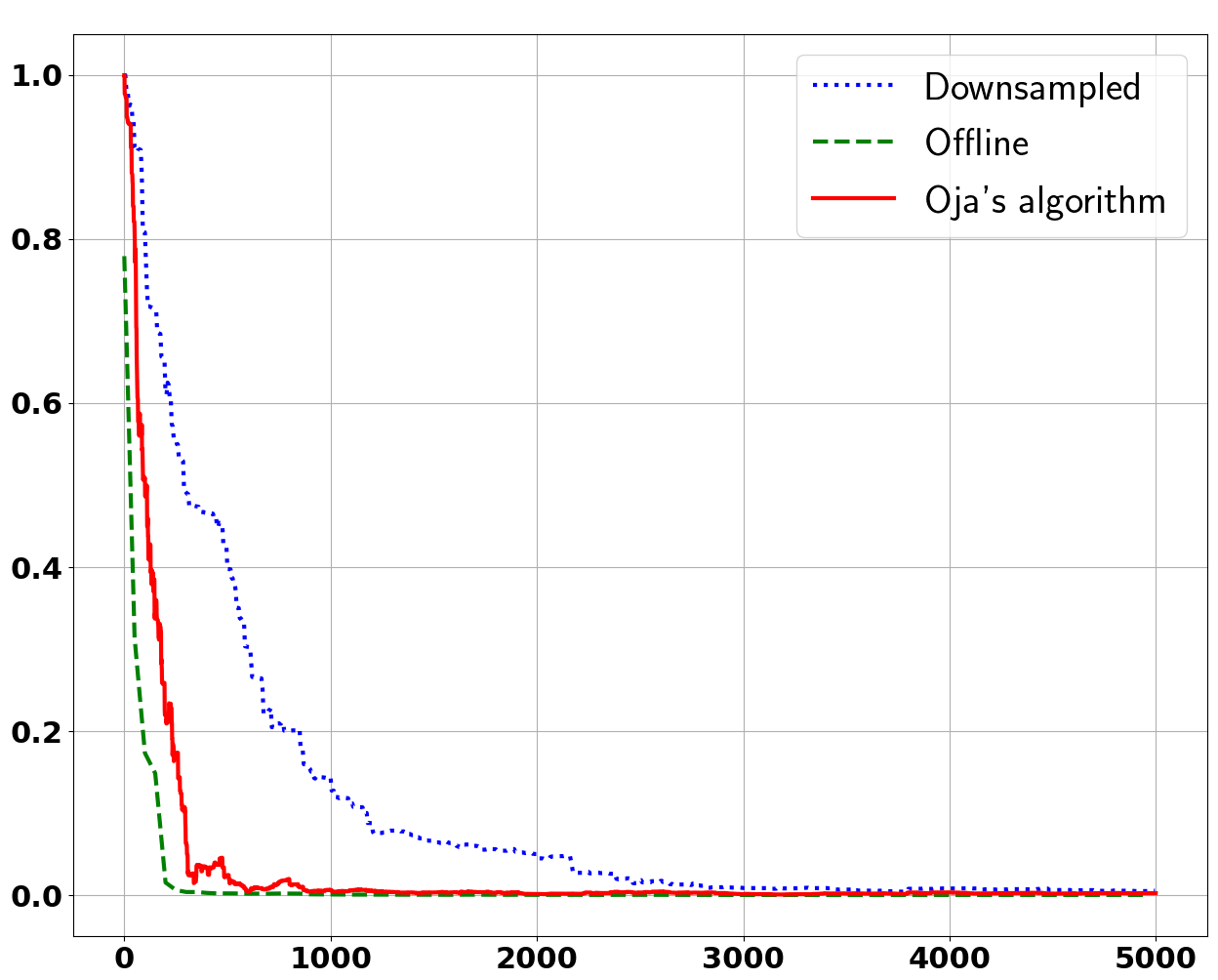}
        \caption{\label{fig:bernoulli_algo_comparison}}
    \end{subfigure}
    \begin{subfigure}[b]{0.3\textwidth}
        \centering
        \includegraphics[width=\textwidth]{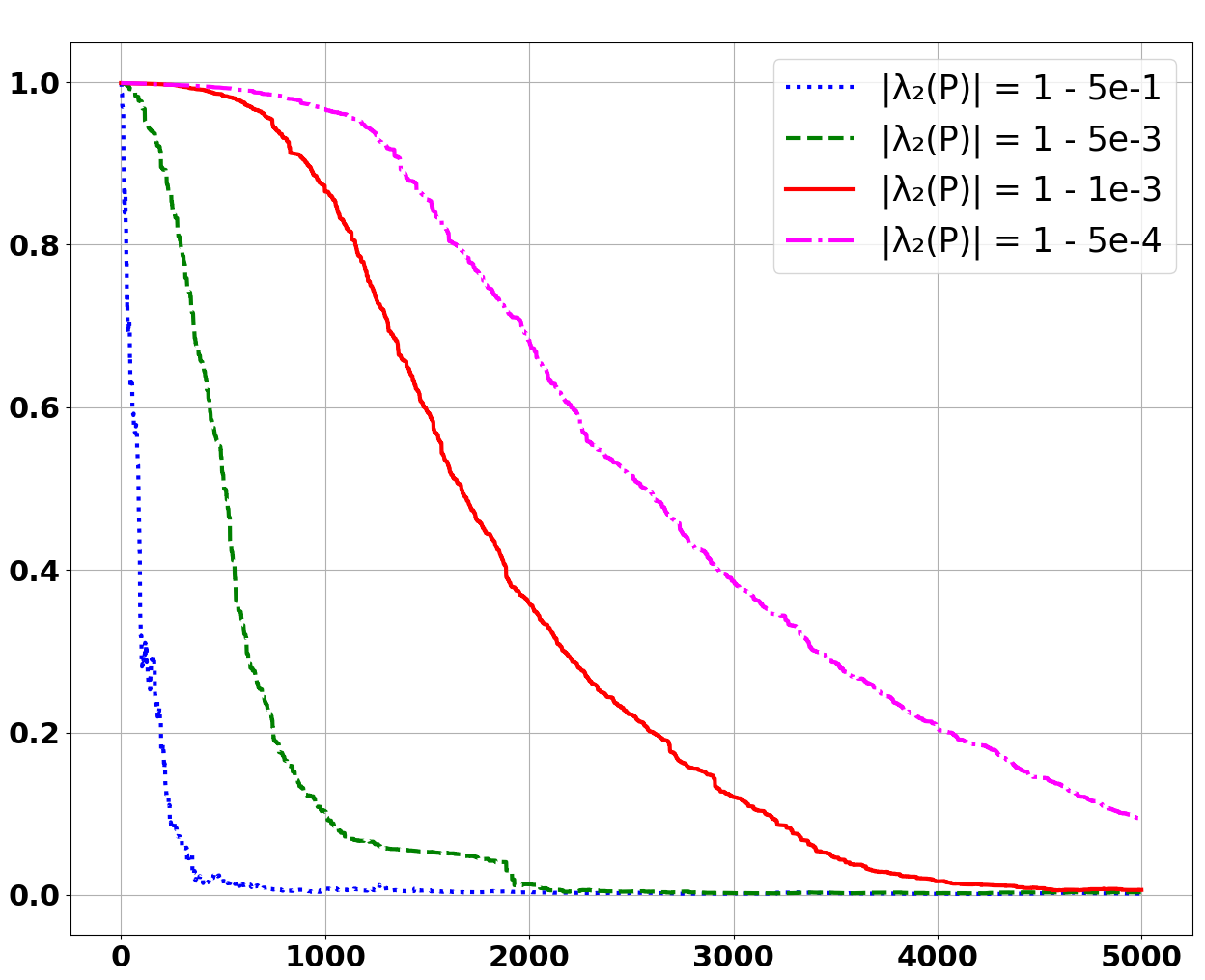}
        \caption{\label{fig:bernoulli_eigengap_comparison}}
    \end{subfigure}
    \begin{subfigure}[b]{0.3\textwidth}
        \centering
        \includegraphics[width=\textwidth]{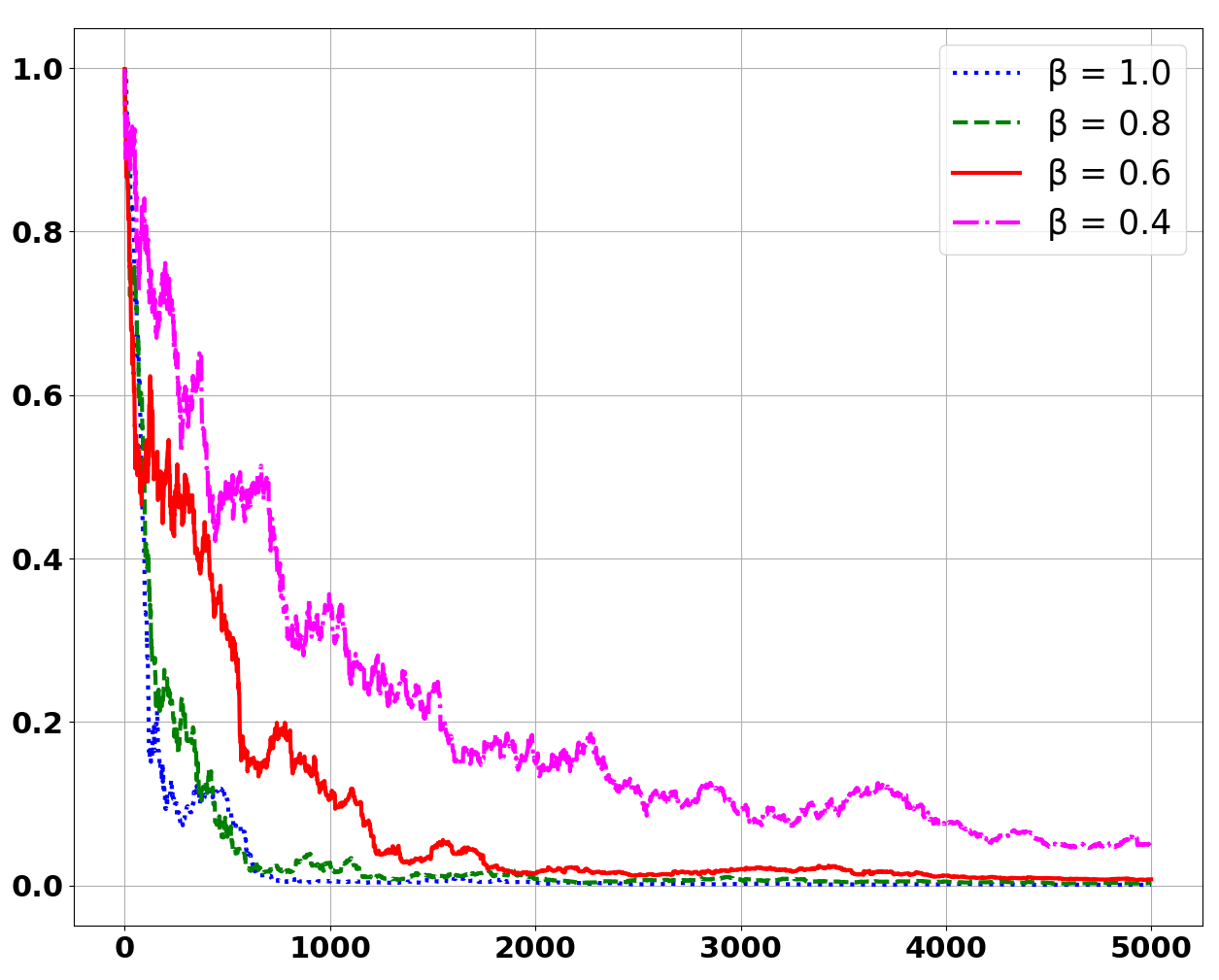}
        \caption{\label{fig:bernoulli_beta_comparison}}
    \end{subfigure}
    \caption{Experiments with Bernoulli data. \ref{fig:bernoulli_algo_comparison} compares the three different algorithms, \ref{fig:bernoulli_eigengap_comparison} shows effect of changing the eigengap of the transition_matrix and \ref{fig:bernoulli_beta_comparison} records the variation in performance on changing the eigengap of the data covariance matrix.}
    \label{fig:Bernoulli_extended}
\end{figure}

\begin{figure}[!hbt]
    \centering
    \begin{subfigure}[b]{0.3\textwidth}
        \centering
        \includegraphics[width=\textwidth]{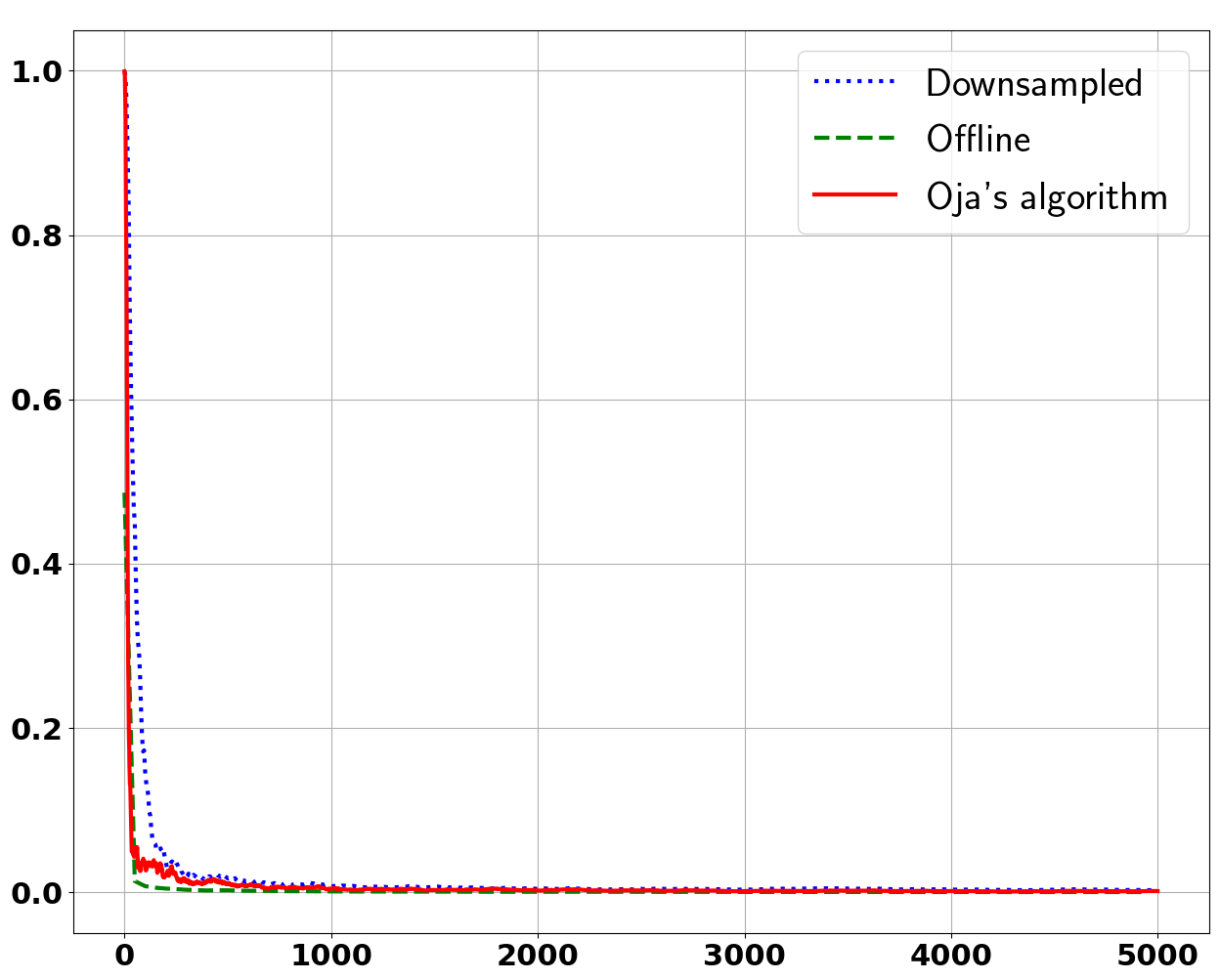}
        \caption{\label{fig:uniform_algo_comparison}}
    \end{subfigure}
    \begin{subfigure}[b]{0.3\textwidth}
        \centering
        \includegraphics[width=\textwidth]{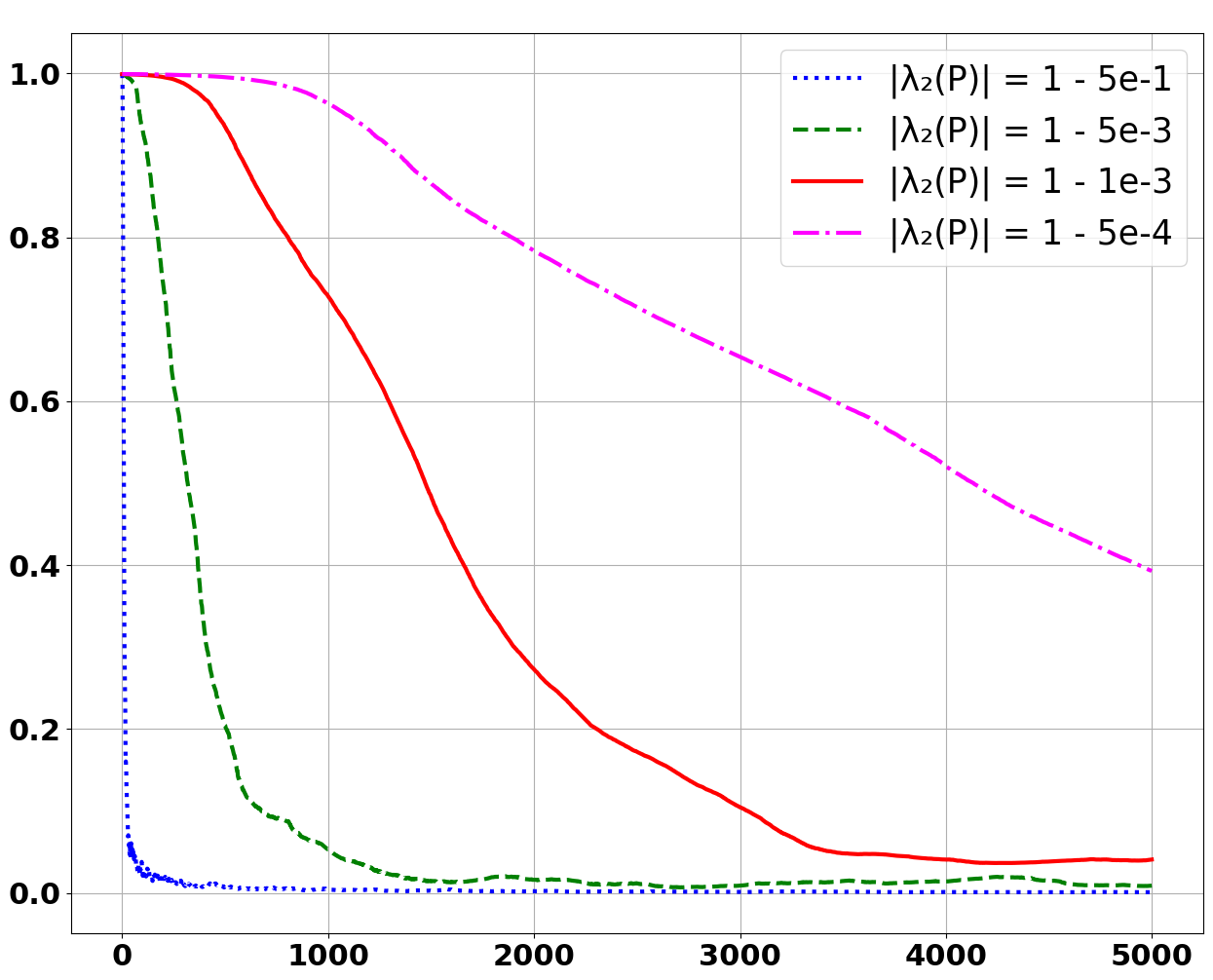}
        \caption{\label{fig:uniform_eigengap_comparison}}
    \end{subfigure}
    \begin{subfigure}[b]{0.3\textwidth}
        \centering
        \includegraphics[width=\textwidth]{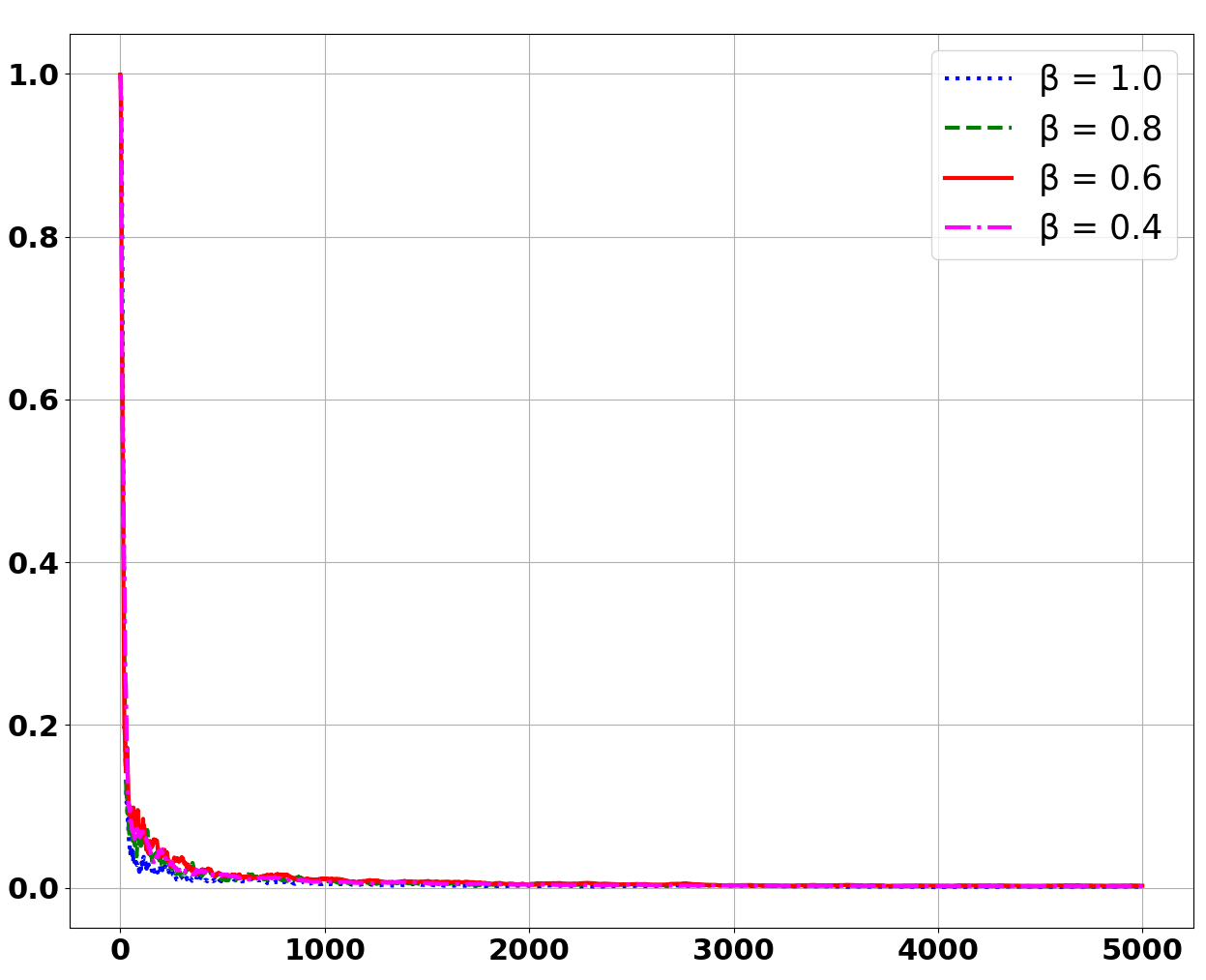}
        \caption{\label{fig:uniform_beta_comparison}}
    \end{subfigure}
    \caption{Experiments with Uniform data. \ref{fig:uniform_algo_comparison} compares the three different algorithms, \ref{fig:uniform_eigengap_comparison} shows effect of changing the eigengap of the transition matrix and \ref{fig:uniform_beta_comparison} records the variation in performance on changing the eigengap of the data covariance matrix.}
    \label{fig:Uniform_extended}
\end{figure}
 %However, our results show that the Downsampled algorithm has the most variability, whereas Oja's algorithm on the whole dataset has much less variability, and finally, and not surprisingly, the offline algorithm has the least variability. Note that quantifying the uncertainty of the stochastic algorithms is out of the scope of this paper. We aim to obtain a sharp high probability error bound on the $\sin^2$ error of different stochastic algorithms for streaming PCA. These experiments provide a ``proof of concept'' of our theoretical results.
% Figures \ref{fig:Bernoulli_extended} and \ref{fig:Uniform_extended} present the averaged $\sin^{2}$ errors, we also measure the variability of 

% While Figures \ref{fig:Bernoulli_extended} and \ref{fig:Uniform_extended} present the averaged $\sin^{2}$ errors, we also provide convergence results for a few randomly chosen runs in \ref{fig:Bernoulli_random_runs} for the Bernoulli case. The Uniform case converges faster but follows similar trends. We observe that Oja's algorithm on the entire dataset generally performs better than the downsampled variant.

\begin{figure}[H]
    \centering
    \begin{subfigure}[b]{0.3\textwidth}
        \centering
        \includegraphics[width=\textwidth]{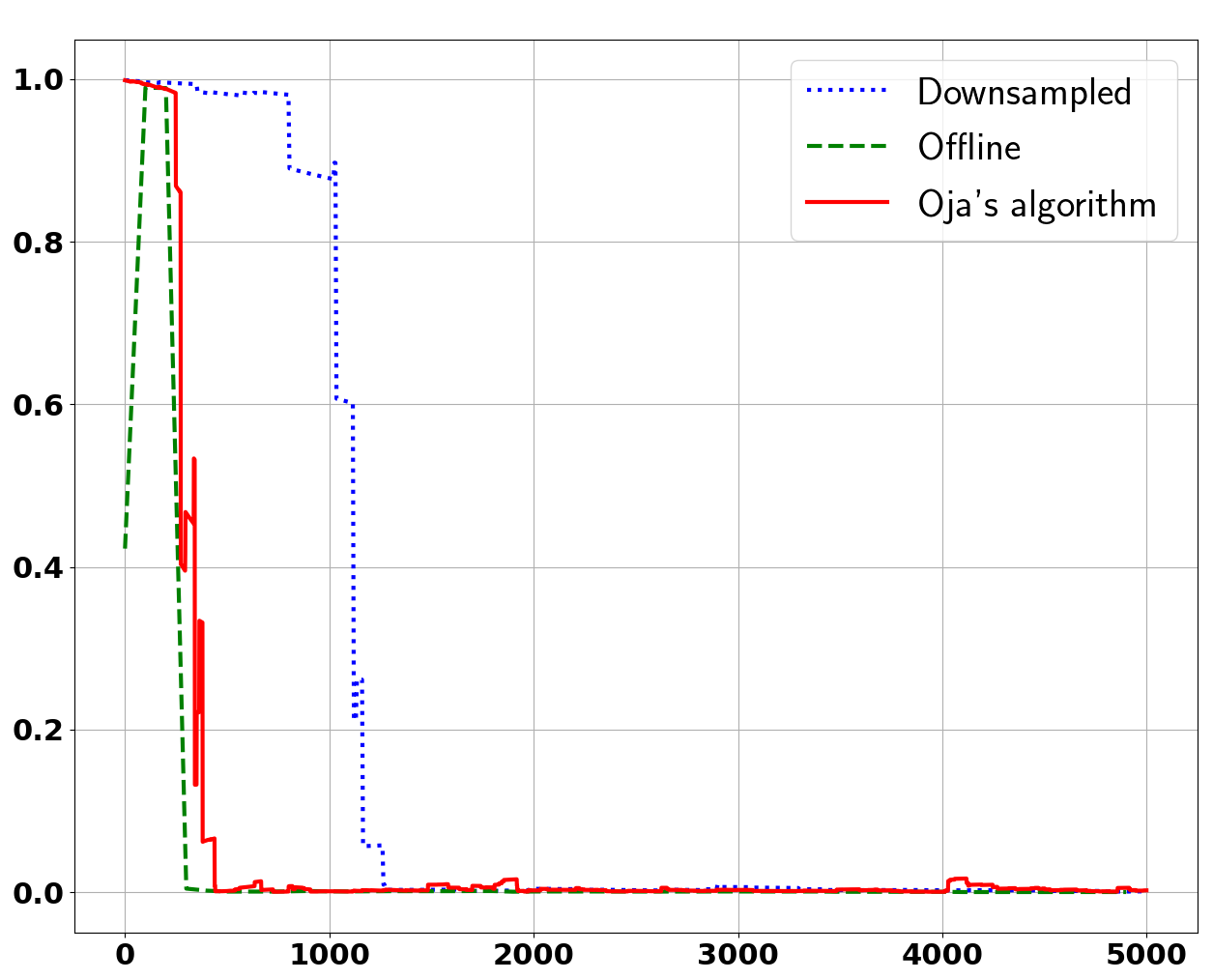}
        \caption{\label{fig:bernoulli_random_run1}}
    \end{subfigure}
    \begin{subfigure}[b]{0.3\textwidth}
        \centering
        \includegraphics[width=\textwidth]{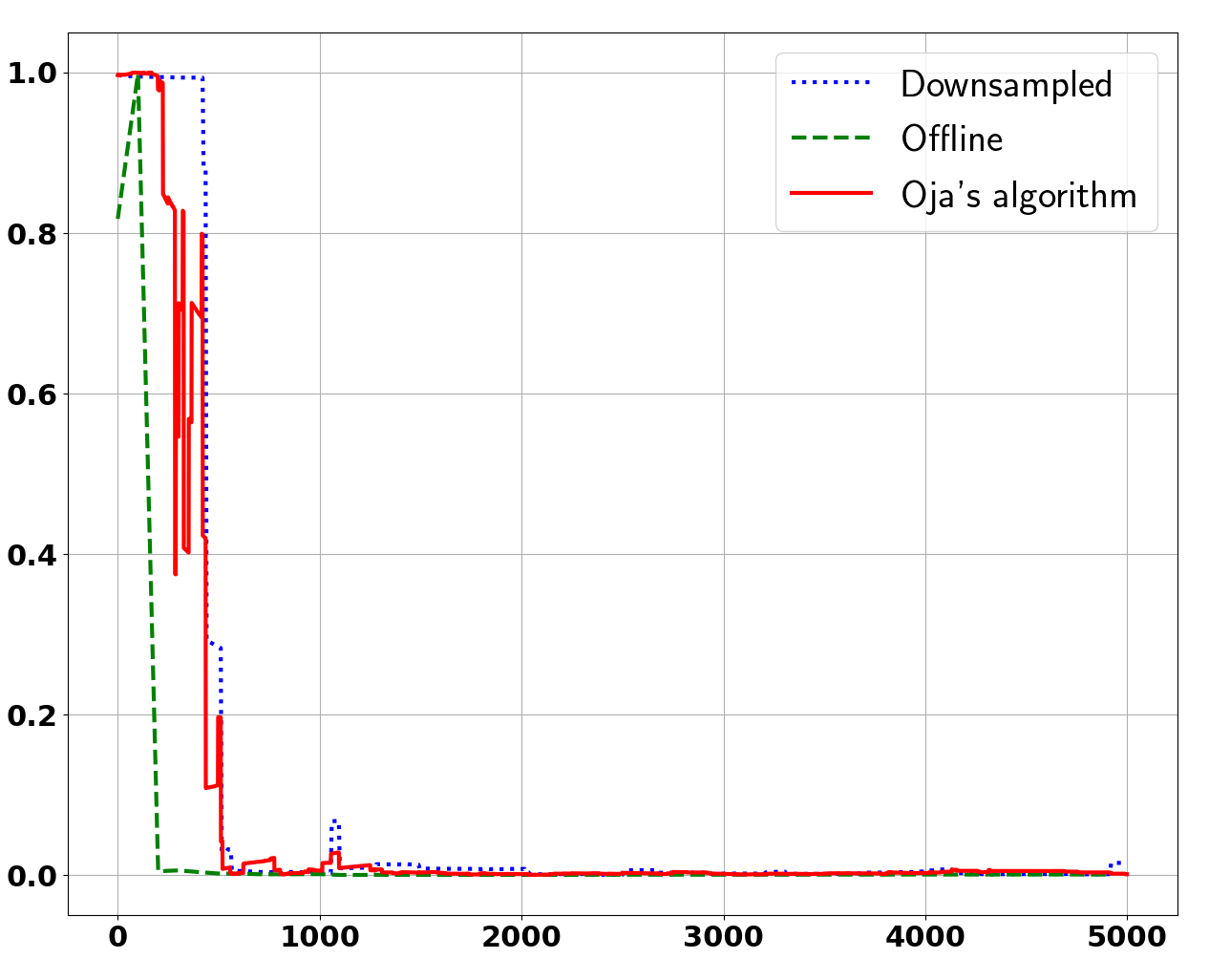}
        \caption{\label{fig:bernoulli_random_run2}}
    \end{subfigure}
    \begin{subfigure}[b]{0.3\textwidth}
        \centering
        \includegraphics[width=\textwidth]{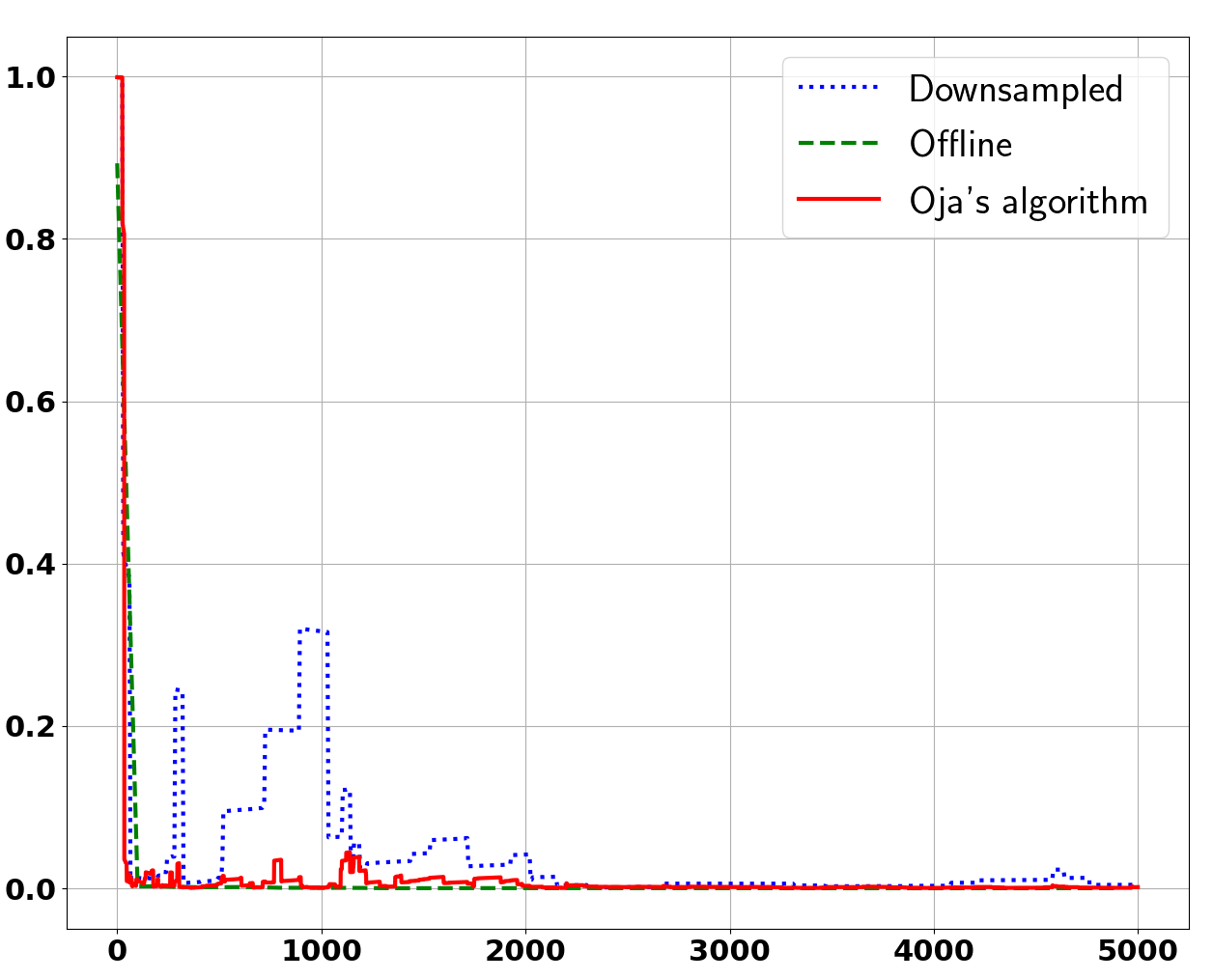}
        \caption{\label{fig:bernoulli_random_run3}}
    \end{subfigure}
    \begin{subfigure}[b]{0.3\textwidth}
        \centering
        \includegraphics[width=\textwidth]{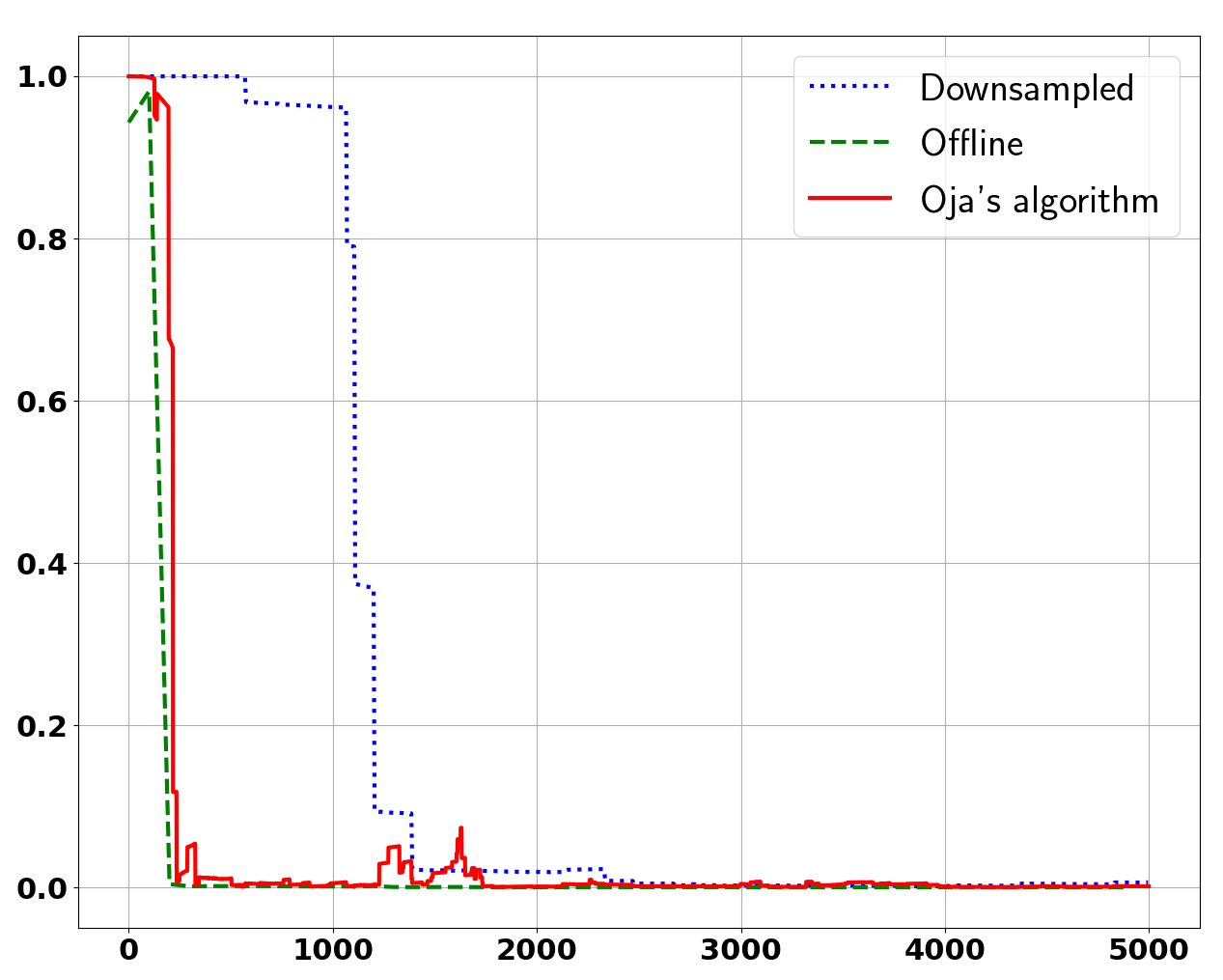}
        \caption{\label{fig:bernoulli_random_run4}}
    \end{subfigure}
    \begin{subfigure}[b]{0.3\textwidth}
        \centering
        \includegraphics[width=\textwidth]{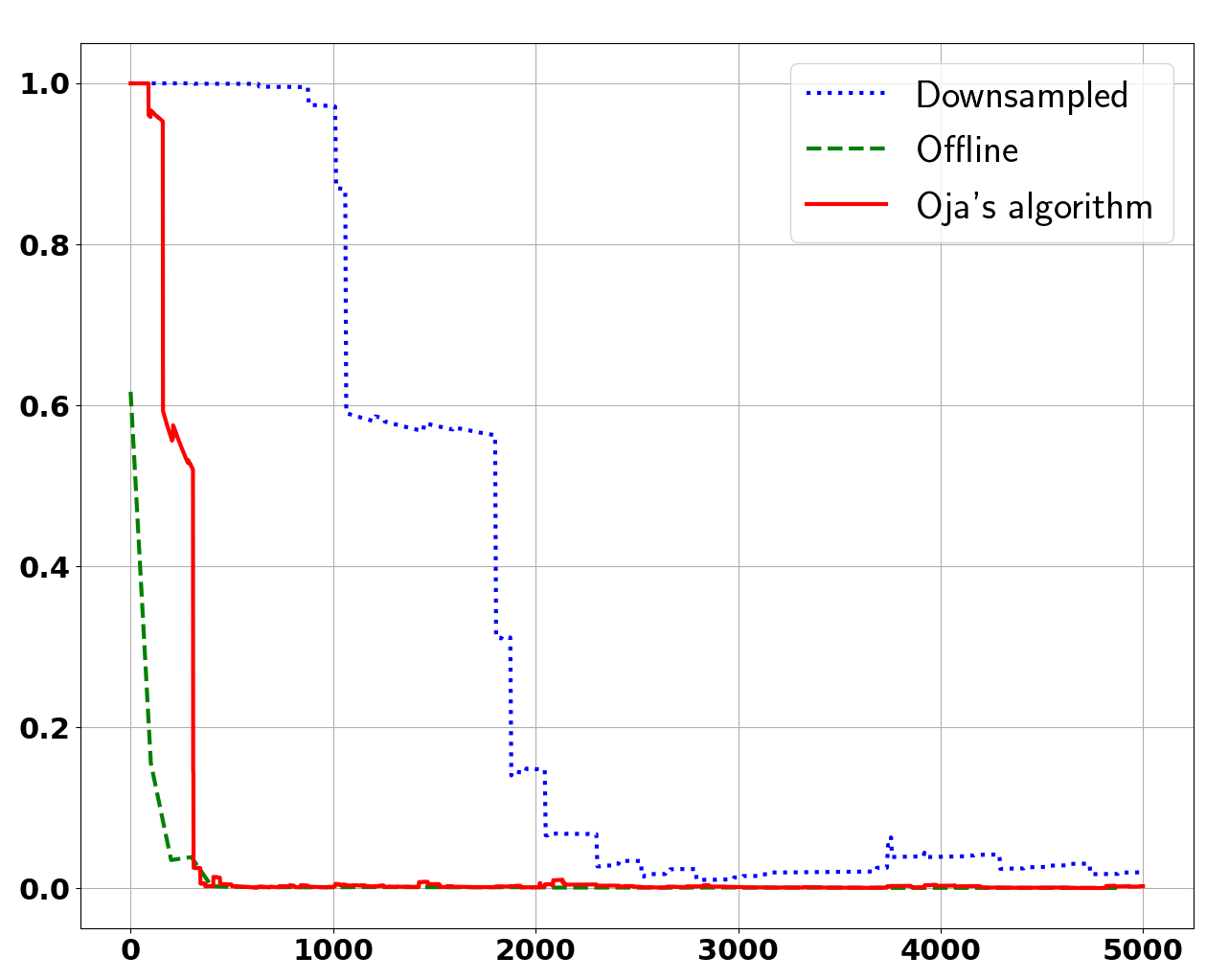}
        \caption{\label{fig:bernoulli_random_run5}}
    \end{subfigure}
    \begin{subfigure}[b]{0.3\textwidth}
        \centering
        \includegraphics[width=\textwidth]{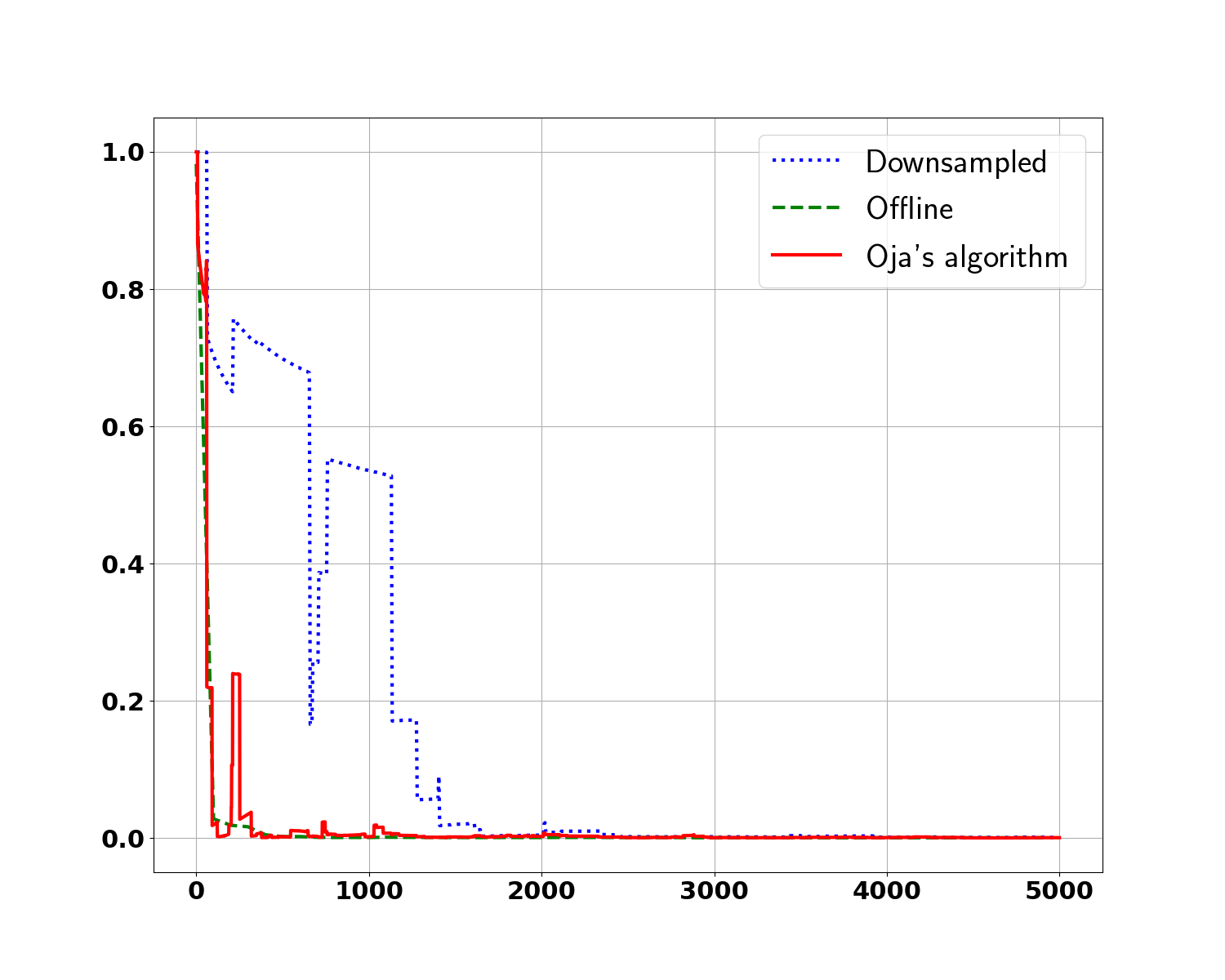}
        \caption{\label{fig:bernoulli_random_run6}}
    \end{subfigure}
    \caption{Randomly chosen runs for the Bernoulli case}
    \label{fig:Bernoulli_random_runs}
\end{figure}

\end{document}